\def\comment#1{{\sf{[#1]}}}
\def\Z{{\mathbb Z}}
\def\Q{{\mathbb Q}}
\def\N{{\mathbb N}}
\def\R{{\mathbb R}}
\def\C{{\mathbb C}}
\def\P{{\mathbb P}}
\def\A{{\mathbb A}}
\def\bD{{\mathbb D}}
\def\bE{{\mathbb E}}
\def\H{{\mathbb H}}
\def\L{{\mathbb L}}
\def\OO{{\mathbb O}}
\def\bS{{\mathbb S}}
\def\V{{\mathbb V}}
\def\W{{\mathbb W}}
\def\cA{{\mathcal A}}
\def\cC{{\mathcal C}}
\def\cD{{\mathcal D}}
\def\E{{\mathcal E}}
\def\Fr{{\mathcal F}}
\def\cG{{\mathcal G}}
\def\cH{{\mathcal H}}
\def\K{{\mathcal K}}
\def\cL{{\mathcal L}}
\def\M{{\mathcal M}}
\def\cN{{\mathcal N}}
\def\cO{{\mathcal O}}
\def\cP{{\mathcal P}}
\def\cS{{\mathcal S}}
\def\U{{\mathcal U}}
\def\cV{{\mathcal V}}
\def\f{{\mathfrak f}}
\def\g{{\mathfrak g}}
\def\h{{\mathfrak h}}
\def\m{{\mathfrak m}}
\def\n{{\mathfrak n}}
\def\p{{\mathfrak p}}
\def\r{{\mathfrak r}}
\def\u{{\mathfrak u}}
\def\B{{\mathfrak B}}
\def\fM{{\mathfrak M}}
\def\bK{{\mathbf K}}
\def\bA{{\pmb \cA}}
\def\bcG{{\pmb \cG}}
\def\bg{{\pmb\g}}
\def\bp{{\pmb\p}}
\def\bu{\pmb{\u}}
\def\bubar{\overline{\bu}}
\def\a{\mathbf{a}}
\def\b{\mathbf{b}}
\def\e{\mathbf{e}}
\def\v{\mathbf{v}}
\def\bh{\mathbf{h}}
\def\bw{\mathbf{w}}
\def\br{{\sf r}}
\def\w{{\omega}}
\def\D{{\Delta}}
\def\G{{\Gamma}}
\def\etilde{{\tilde{\e}}}
\def\xtilde{\tilde{x}}
\def\ytilde{\tilde{y}}
\def\ztilde{\tilde{z}}
\def\gtilde{\tilde{\g}}
\def\Ftilde{{\widetilde{F}}}
\def\Hbar{{\overline{\cH}}}	
\def\Vbar{{\overline{\cV}}}
\def\Xbar{{\overline{X}}}
\def\Ghat{{\widehat{\G}}}
\def\cGhat{{\widehat{\cG}}}
\def\Uhat{{\widehat{\U}}}
\def\uhat{{\widehat{\u}}}
\def\cGtilde{{\widetilde{\cG}}}
\def\Omegahat{{\widehat{\Omega}}}
\def\ghat{\widehat{\g}}
\def\uhat{\widehat{\u}}
\def\xhat{{\widehat{x}}}
\def\rhotilde{{\tilde{\rho}}}
\def\rhohat{{\hat{\rho}}}
\def\phihat{{\hat{\phi}}}
\def\taubar{\overline{\tau}}
\def\ubar{\overline{u}}
\def\wbar{\overline{\w}}
\def\Omegatilde{{\widetilde{\Omega}}}
\def\Mdual{\check{M}}
\def\Mbar{\overline{\M}}
\def\epsilondual{\check{\epsilon}}
\def\edual{\check{\e}}
\def\MHS{{\sf MHS}}
\def\HRep{{\sf HRep}}
\def\sl{\mathfrak{sl}}
\def\Sp{{\mathrm{Sp}}}
\def\SL{{\mathrm{SL}}}
\def\SO{{\mathrm{SO}}}
\def\GL{{\mathrm{GL}}}
\def\PSL{{\mathrm{PSL}}}
\def\Ga{{\mathbb{G}_a}}
\def\Gm{{\mathbb{G}_m}}
\def\un{\mathrm{un}}
\def\cts{\mathrm{cts}}
\def\fin{\mathrm{fin}}
\def\tot{\mathrm{tot}\,}
\def\eis{\mathrm{eis}}
\def\cusp{{\mathrm{cusp}}}
\def\DR{{\mathrm{DR}}}
\def\adual{{\check{\a}}}
\def\bdual{{\check{\b}}}
\def\vv{{\vec{v}}}
\def\vw{{\vec{w}}}
\def\dot{{\bullet}}
\def\bs{{\backslash}}
\def\bbs{{\bs\negthickspace \bs}}
\def\blank{{\phantom{x}}}
\def\ll{\langle\langle}
\def\rr{\rangle\rangle}
\def\Pminus{{\P^1-\{0,1,\infty\}}}
\def\Mdual{\check{M}}
\def\Rdual{\check{R}}
\def\LCS{{\mathrm{LCS}}}
\def\cone{\mathrm{cone}}
\def\Frbar{{\overline{\Fr}}}
\def\delbar{\bar{\partial}}
\def\Ql{{\Q_\ell}}
\newcommand\im{\operatorname{im}}               
\newcommand\ad{\operatorname{ad}}
\newcommand\Span{\operatorname{span}}
\newcommand\Hom{\operatorname{Hom}}
\newcommand\Ext{\operatorname{Ext}}
\newcommand\End{\operatorname{End}}
\newcommand\Aut{\operatorname{Aut}}
\newcommand\Der{\operatorname{Der}}
\newcommand\Out{\operatorname{Out}}
\newcommand\OutDer{\operatorname{OutDer}}
\newcommand\InnDer{\operatorname{InnDer}}
\newcommand\Gr{\operatorname{Gr}}
\newcommand\Isom{\operatorname{Isom}}
\newcommand\Res{\operatorname{Res}}
\newcommand\Sym{\operatorname{Sym}}
\newcommand\comptensor{\operatorname{\hat{\otimes}}}
\newcommand\Dec{\operatorname{Dec}}
\newcommand\Gal{\operatorname{Gal}}
\renewcommand\Im{\operatorname{Im}}
\numberwithin{equation}{section}
\newtheorem{theorem}[equation]{Theorem}
\newtheorem{lemma}[equation]{Lemma}
\newtheorem{proposition}[equation]{Proposition}
\newtheorem{corollary}[equation]{Corollary}
\theoremstyle{definition}
\newtheorem{definition}[equation]{Definition}
\newtheorem{example}[equation]{Example}
\theoremstyle{remark}
\newtheorem{remark}[equation]{Remark}
\begin{document}

\title{The Hodge-de~Rham Theory of Modular Groups}

\author{Richard Hain}
\address{Department of Mathematics\\ Duke University\\
Durham, NC 27708-0320}
\email{hain@math.duke.edu}

\thanks{Supported in part by grant DMS-1005675 from the National Science
Foundation}

\date{\today}



\maketitle

\tableofcontents

\section{Introduction}

The completion $\cG_\G$ of a finite index subgroup $\G$ of $\SL_2(\Z)$ with
respect to the inclusion $\rho : \G \hookrightarrow \SL_2(\Q)$ is a proalgebraic
group, defined over $\Q$, which is an extension
$$
1 \to \U_\G \to \cG_\G \to \SL_2 \to 1
$$
of $\SL_2$ by a prounipotent group $\U_\G$, and a Zariski dense homomorphism $\G
\to \cG_\G(\Q)$. The main result of \cite{hain:malcev} implies that, for each
choice of a base point of the associated orbi-curve $X_\G = \G\bbs\h$, the
coordinate ring $\cO(\cG_\G)$  has a canonical mixed Hodge structure (MHS) that
is compatible with its product, coproduct and antipode. This MHS induces one on
the Lie algebra $\g_\G$ of $\cG_\G$.\footnote{There is more structure: if $X_\G$
is defined over the number field $K$ and if $x \in X_\G$ is a $K$-rational
point, then one also has a Galois action on $\cG\otimes \Ql$. This and the
canonical MHS on $\cG_\G$ should be the Hodge and $\ell$-adic \'etale
realizations of a motivic structure on $\cG$ that depends on $x$.}

In this paper we give a detailed exposition of the construction and basic
properties of the natural MHS on (the coordinate ring and Lie algebra of)
relative completions of modular groups. Part~\ref{part:rel_compln} is an
exposition of the basic properties of relative completion. It also contains a
direct construction of the MHS on the relative completion of the fundamental
group (and, more generally, of path torsors) of a smooth affine (orbi) curve
with respect to the monodromy representation of a polarized variation of Hodge
structure (PVHS). Part~\ref{part:mod_gps} is an exploration of the MHS on
relative completions of modular groups and their associated path torsors,
especially in the case of the full modular group $\SL_2(\Z)$.

Completions of modular groups are interesting because of their relationship to
modular forms and to categories of admissible variations of MHS over modular
curves. Because the inclusion $\G \to \SL_2(\Q)$ is injective, one might expect
the prounipotent radical $\U_\G$ of $\cG_\G$ to be trivial. However, this is not
the case. Its Lie algebra $\u_\G$ is a pronilpotent Lie algebra freely
topologically generated (though not canonically) by
\begin{equation}
\label{eqn:h1}
\prod_{m\ge 0} H^1(\G,S^m H)^\ast \otimes S^m H
\end{equation}
where $H$ denotes the defining representation of $\SL_2$ and $S^m H$ its $m$th
symmetric power. Because $H^1(\G,S^m H)$ is a space of modular forms of $\G$ of
weight $m+2$ (Eichler-Shimura), there should be a close relationship between the
MHS on $\cG_\G$ and the geometry and arithmetic of elliptic curves.

To explain the connection with admissible variations of MHS, consider the
category $\MHS(X_\G,\H)$ of admissible variations of MHS $\V$ over $X_\G$ whose
weight graded quotients have the property that the monodromy representation
$$
\G \to \Aut \Gr^W_m \V
$$
factors through an action of the algebraic group $\SL_2$ for all $m$. The
monodromy representation
$$
\G \cong \pi_1(X_\G,x) \to \Aut V_x
$$
of such a variation $\V$ factors through the canonical homomorphism $\G \to
\cG_\G(\Q)$, so that one has a natural coaction
\begin{equation}
\label{eqn:hodge_coaction}
V_x \to V_x \otimes \cO(\cG_\G).
\end{equation}
In Section~\ref{sec:avmhs} we show that there is an equivalence of categories
between $\MHS(X_\G,\H)$ and the category of ``Hodge representations'' of
$\cG_\G$ --- that is the category of representations of $\G$ on a MHS $V$ that
induce a homomorphism $\cG_\G \to \Aut V$ for which the coaction
(\ref{eqn:hodge_coaction}) is a morphism of MHS. The prounipotent radical
$\U_\G$ of $\cG_\G$, and hence modular forms via (\ref{eqn:h1}), control
extensions in $\MHS(X_\G,\H)$. This is a special case of a more general result
which is proved in \cite{hain-pearlstein}.

Modular forms give simple extensions in $\MHS(X_\G,\H)$. The fundamental
representation $H$ of $\SL_2$ corresponds to the polarized variation of Hodge
structure $\H$ of weight 1 over $X_\G$ whose fiber over the point $x\in X_\G$ is
the first cohomology group of the corresponding elliptic curve. The
classification of admissible variations of MHS over $X_\G$ in the previous
paragraph and the computation (\ref{eqn:h1}) imply that there are extensions of
variations
$$
0 \to H^1(X_\G,S^m\H)^\ast \otimes S^m\H \to \bE \to \Q \to 0.
$$
When $\G$ is a congruence subgroup, this variation splits as the sum of
extensions
$$
0 \to \Mdual_f \otimes S^m\H \to \bE_f \to \Q \to 0,
$$
where $f$ is a normalized Hecke eigenform of weight $m+2$, $M_f$ is the
corresponding Hodge structure, and $\Mdual_f=M_f(m+1)$ is its dual. When $f$ is
a cusp form, $\Mdual_f \otimes S^m\H$ has weight $-1$. In the case where
$\G=\SL_2(\Z)$, we give an explicit construction of these extensions and the
corresponding normal functions in Section~\ref{sec:cusp}. When $f$ is an
Eisenstein series, $\Mdual_f=\Q(m+1)$ and the extension is of the form
$$
0 \to S^m\H(m+1) \to \bE_f \to \Q \to 0.
$$
These extensions are constructed explicitly in Section~\ref{sec:eisenstein} when
$\G=\SL_2(\Z)$. They correspond to the elliptic polylogarithms of Beilinson
and Levin \cite{beilinson-levin}.

This work also generalizes and clarifies Manin's work on ``iterated Shimura
integrals'' \cite{manin1,manin2}. The exact relationship is discussed in
Section~\ref{sec:manin}. The periods of the MHS on $\cO(\cG_\G)$ are iterated
integrals (of the type defined in \cite{hain:malcev}) of the logarithmic forms
in Zucker's mixed Hodge complex that computes the MHS on the cohomology groups
$H^1(X_\G,S^m\H)$, whose definition is recalled in Section~\ref{sec:mhc}.
Manin's iterated Shimura integrals are iterated integrals of elements of the
subcomplex of holomorphic forms in Zucker's complex. They form a Hopf algebra
whose spectrum is a quotient $\U_A$ of $\U_\G$ by the normal subgroup generated
by $F^0\U_\G$. This quotient is not motivic as it does not support a MHS for
which the quotient mapping $\U_\G \to \U_A$ is a morphism of MHS. There is a
further quotient $\U_B$ of $\U_A$ that is dual to the Hopf algebra generated by
iterated integrals of Eisenstein series. In Section~\ref{sec:relations} we show
that it is not motivic by relating it to the {\em Eisenstein quotient} of
$\U_\G$, described below.

Fix a base point $x\in X_\G$, so that $\cG_\G$ denotes the completion of
$\pi_1(X_\G,x) \cong \G$ with its natural MHS. The ``Eisenstein quotient''
$\cG_\G^\eis$ of $\cG_\G$, defined in Section~\ref{sec:eisen_quot}, is the
maximal quotient of $\cG_\G$ whose Lie algebra $\g_\G^\eis$ has a MHS whose
weight graded quotients are sums of Tate twists of the natural Hodge structure
on $S^n H_x$. Its isomorphism type does not depend on the base point $x$. As $x$
varies in $X_\G$, the coordinate rings of the Eisenstein quotients form an
admissible VMHS over $X_\G$.

Denote the Lie algebra of $\U_B$ by $\u_B$ and of the prounipotent radical
$\U_\G^\eis$ of $\cG^\eis_\G$ by $\u_\G^\eis$. Since the Hodge structure
$\Mdual_f \otimes S^n H_x$ associated to an eigencuspform $f$ is not of this
type, such Hodge structures will lie in the kernel of
$$
H_1(\u_\G) \to H_1(\u_\G^\eis),
$$
which implies that $H_1(\u_\G^\eis)$ is generated by one copy of $S^m H_x(m+1)$
for each normalized Eisenstein series of weight $m+2$. In particular, when $\G =
\SL_2(\Z)$,
$$
H_1(\u_\G^\eis) \cong \prod_{n\ge 1} S^{2n} H_x(2n+1).
$$
There is a natural projection $\U_B \to \U_\G^\eis$ from Manin's quotient of
$\U_\G$ to $\U_\G^\eis$ that induces an isomorphism
$$
H_1(\u_B) \cong H_1(\u_\G^\eis).
$$
But, as we show in Section~\ref{sec:relations}, the cuspidal generators
$\Mdual_f\otimes S^{2n}H_x$ of $\u_\G$ become non-trivial relations in
$\u_\G^\eis$. Such relations were suggested by computations in the $\ell$-adic
\'etale version with Makoto Matsumoto (cf.\ \cite{hain-matsumoto:mem}). Evidence
for them was provided by Aaron Pollack's undergraduate thesis \cite{pollack} in
which he found quadratic relations between the generators of the image of the
representation $\Gr^W_\dot\u_\G^\eis \to \Der\L(H)$ induced by the natural
action of $\U_\G^\eis$ on the unipotent fundamental group of a once punctured
elliptic curve, which we construct in Section~\ref{sec:monodromy}.\footnote{He
also found, for each cusp form, relations of all degrees $\ge 3$ that hold in a
certain quotient of the image of this representation.} The arguments in
Section~\ref{sec:relations} and the computations of Brown \cite{brown} and
Terasoma \cite{terasoma} (Thm.~\ref{thm:cup}) imply that Pollack's quadratic
relations also hold in $\u_\G^\eis$. Since $\u_B$ is free and since $\u^\eis$ is
not, Manin's quotient $\u_B$ does not support a natural MHS.

The starting point of much of this work is the theory of ``universal mixed
elliptic motives'' \cite{hain-matsumoto:mem} developed with Makoto Matsumoto.
The origin of that project was a computation in 2007 of the $\ell$-adic weighted
completion of $\pi_1(\M_{1,1/\Z[1/\ell]})$ in which we observed that cuspidal
generators of the relative completion of the geometric fundamental group of
$\M_{1,1/\Z[1/\ell]}$ appeared to become relations in the weighted completion of
its arithmetic fundamental group. Pollack's thesis \cite{pollack} added evidence
that these cuspidal generators had indeed become relations in the weighted
completion. 

Finally, we mention related work by Levin and Racinet \cite{levin-racinet},
Brown and Levin \cite{brown-levin}, and Calaque, Enriquez and Etingof
\cite{cee}, and subsequent work of Enriquez.

Although the paper contains many new results, it is expository. The intended
audience is somebody who is familiar with modern Hodge theory. Several standard
topics, such as a discussion of modular symbols, are included to fix notation
and point of view, and also to make the paper more accessible. The reader is
assumed to be familiar with the basics of mixed Hodge structures, their
construction and their variations, including the basics of computing limit mixed
Hodge structures.

\bigskip

\noindent{\em Acknowledgments:} It is a pleasure to acknowledge the
mathematicians with whom I have had fruitful discussions, which helped shape my
view of the subject of these notes. In particular, I would like to thank my long
term collaborator, Makoto Matsumoto, as well as Aaron Pollack and Francis Brown.
I am indebted to Francis Brown and Tomohide Terasoma, each of whom communicated
their computation of the cup product, Theorem~\ref{thm:cup}. I am also grateful
to Francis Brown for his interest in the project and for his numerous
constructive comments and corrections. 
\vspace{1cm}

\subsection{Notation and Conventions}

\subsubsection{Path multiplication and iterated integrals} In this paper we use
the topologist's convention (which is the opposite of the algebraist's
convention) for path multiplication. Two paths $\alpha, \beta:[0,1]\to X$ in a
topological space $X$ are composable when $\alpha(1)=\beta(0)$. The product
$\alpha\ast \beta$ of two composable paths first traverses $\alpha$ and then
$\beta$.

Denote the complex of smooth $\C$-valued forms on a smooth manifold $M$ by
$E^\dot(M)$. Iterated integrals are defined using Chen's original definition: if
$\w_1,\dots,\w_r\in E^1(M)\otimes A$ are 1-forms on a manifold $M$ that take
values in an associative $\C$-algebra $A$ and $\alpha : [0,1]\to M$ is a
piecewise smooth path, then
$$
\int_\alpha \w_1\w_2\dots \w_r
= \int_{\D^r} f_1(t_1)\dots f_r(t_r)dt_1dt_2\dots dt_r.
$$
where $f_j(t)dt = \alpha^\ast \w_j$ and $\D^r$ is the ``time ordered''
$r$-simplex
$$
\D^r = \{(t_1,\dots,t_r)\in \R^n : 0 \le t_1 \le t_2 \le \cdots \le t_r \le 1\}.
$$
An exposition of the basic properties of iterated integrals can be found
in \cite{hain:geom,hain:prospects}.

\subsubsection{Filtrations}
The lower central series (LCS) $L^\dot G$ of a group $G$ is defined by
$$
G = L^1 G \supseteq L^2 G \supseteq L^3 G \supseteq \cdots
$$
where $L^{m+1}G = [G,L^mG]$. Its associated graded $\Gr^\dot_\LCS G$ is a graded
Lie algebra over $\Z$ whose $m$th graded quotient is $\Gr^m_\LCS G := L^m
G/L^{m+1}G$.

The lower central series $L^\dot \g$ of a Lie algebra $\g$ is defined similarly.
A Lie algebra $\g$ is {\em nilpotent} if $L^N \g = 0$ for some $N\ge 0$.

\subsubsection{Hodge theory} All mixed Hodge structures will be $\Q$ mixed Hodge
structures unless otherwise stated. The category of $\Q$-mixed Hodge structures
will be denoted by $\MHS$. The category of $\R$-mixed Hodge structures will be
denoted by $\MHS_\R$.

Often we will abbreviate mixed Hodge structure by MHS, variation of MHS by VMHS,
mixed Hodge complex by MHC, cohomological MHC by CMHC. The category of
admissible VMHS over a smooth variety $X$ will be denoted by $\MHS(X)$.

\section{Preliminaries}

\subsection{Proalgebraic groups}
\label{sec:alg_gps}

In this paper, the term {\em algebraic group} will refer to a linear algebraic
group. Suppose that $F$ is a field of characteristic zero. A proalgebraic group
$G$ over $F$ is an inverse limit of algebraic $F$-groups $G_\alpha$. The
coordinate ring $\cO(G)$ of $G$ is the direct limit of the coordinate rings of
the $G_\alpha$. The Lie algebra $\g$ of $G$ is the inverse limit of the Lie
algebras $\g_\alpha$ of the $G_\alpha$.  It is a Hausdorff topological Lie
algebra. The neighbourhoods of $0$ are the kernels of the canonical projections
$\g \to \g_\alpha$.

The continuous cohomology of $\g=\varprojlim \g_\alpha$ is defined by
$$
H^\dot(\g) := \varinjlim_\alpha H^\dot(\g_\alpha).
$$
Its homology is the full dual:
$$
H_\dot(\g) := \Hom_F(H^\dot(\g),F) \cong \varprojlim H_\dot(\g_\alpha)
$$
Each homology group is a Hausdorff topological vector space; the neighbourhoods
of $0$ are the kernels of the natural maps $H_\dot(\g) \to H_\dot(\g_\alpha)$.

Continuous cohomology can be computed using continuous Chevalley-Eilenberg
cochains:
$$
\cC^\dot(\g) = \Hom^\cts_F(\Lambda^\dot\g,F)
:= \varinjlim_\alpha \Hom_F(\Lambda^\dot\g_\alpha,F)
$$
with the usual differential.

If, instead, $\g=\bigoplus_m \g_m$ is a graded Lie algebra, then the homology
and cohomology of $\g$ are also graded. This follows from the fact that the
grading of $\g$ induces a grading of the Chevalley-Eilenberg chains and cochains
of $\g$.

\subsection{Prounipotent groups and pronilpotent Lie algebras}

A prounipotent $F$-group is a proalgebraic group that is an inverse limit of
unipotent $F$-groups.

A pronilpotent Lie algebra over a $F$ is an inverse limit of finite dimensional
nilpotent Lie algebras. The Lie algebra of a prounipotent group is a
pronilpotent Lie algebra. The functor that takes a prounipotent group to its Lie
algebra is an equivalence of categories between the category of unipotent
$F$-groups and the category of pronilpotent Lie algebras over $F$.

The following useful result is an analogue for pronilpotent Lie algebras of a
classical result of Stallings \cite{stallings}. A proof can be found in \cite[\S
3]{hain:rel_wt}.

\begin{proposition}
\label{prop:stallings}
For a homomorphism $\varphi : \n_1 \to \n_2$ of pronilpotent Lie algebras,
the following are equivalent:
\begin{enumerate}

\item $\varphi$ is an isomorphism,

\item $\varphi^\ast : H^\dot(\n_2)\to H^\dot(\n_1)$ is an isomorphism,

\item $\varphi^\ast : H^j(\n_2) \to H^j(\n_1)$ is an isomorphism when
$j=1$ and injective when $j=2$.

\end{enumerate} \qed
\end{proposition}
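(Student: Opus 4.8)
The plan is to prove Proposition~\ref{prop:stallings} by reducing it to the finite-dimensional nilpotent case, where one invokes the classical theorem of Stallings (or rather its Lie-algebra analogue), and then carefully managing the inverse limits. The implications $(i)\Rightarrow(ii)\Rightarrow(iii)$ are formal, so the content is entirely in $(iii)\Rightarrow(i)$. So assume $\varphi^\ast$ is an isomorphism on $H^1$ and injective on $H^2$.

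\textbf{Step 1: Dualize and work with the lower central series quotients.} Since the $H^j$ are defined as direct limits over the finite-dimensional quotients $\n_i = \varprojlim \n_i^{(\alpha)}$, and since homology is the full dual, the hypothesis on $H^1$ says that $\varphi$ induces an isomorphism on $H_1(\n_i) = \n_i/[\n_i,\n_i]$ (as topological vector spaces), and the hypothesis on $H^2$ says that $H_2(\n_2)\to H_2(\n_1)$ is a (topological) surjection. The key structural input is that for a pronilpotent Lie algebra, the maps $\n \to \n/L^{m}\n$ exhibit $\n$ as the inverse limit of its own (now nilpotent) lower central series quotients; so it suffices to prove that each induced map $\bar\varphi_m : \n_1/L^{m}\n_1 \to \n_2/L^{m}\n_2$ is an isomorphism, and then pass to the limit.

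\textbf{Step 2: Induct on the nilpotence length.} This is the heart of the matter, and it is exactly the Lie-algebra version of Stallings' argument. For $m=2$ the map is $H_1(\n_1)\cong H_1(\n_2)$, which holds by hypothesis. For the inductive step one compares the central extensions
\begin{equation*}
0 \to L^{m}\n_i / L^{m+1}\n_i \to \n_i/L^{m+1}\n_i \to \n_i/L^{m}\n_i \to 0.
\end{equation*}
Assuming inductively that $\n_1/L^{m}\n_1\to \n_2/L^{m}\n_2$ is an isomorphism, one must show the map on the kernels $\Gr^{m}_\LCS\n_1 \to \Gr^{m}_\LCS\n_2$ is an isomorphism too. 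Surjectivity is immediate because $\Gr^m_\LCS$ is generated by iterated brackets of $H_1$, and $\varphi$ is surjective on $H_1$. Injectivity is where $H_2$ enters: the five-term exact sequence in Lie algebra cohomology (low-degree exact sequence) for the central extension above reads
\begin{equation*}
H^2(\n_i/L^{m}\n_i) \to H^2(\n_i/L^{m+1}\n_i) \to \Hom(\Gr^{m}_\LCS\n_i, F) \to H^3(\dots),
\end{equation*}
and more to the point one uses the standard fact that the obstruction to a class in $\Hom(\Gr^m_\LCS\n_i,F)^{} = H^1(L^m/L^{m+1})^{\n_i/L^m}$ being ``new'' (i.e.\ not pulled back) lives in $H^2(\n_i/L^m\n_i)$; compatibility of these sequences under $\varphi$, together with the isomorphism on $H^1$ and the injectivity on $H^2$, forces $\Gr^m_\LCS\n_1\to\Gr^m_\LCS\n_2$ to be injective. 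Hence $\bar\varphi_{m+1}$ is an isomorphism by the five lemma.

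\textbf{Step 3: Pass to the limit and account for topology.} Having shown $\n_1/L^m\n_1 \cong \n_2/L^m\n_2$ for all $m$, take the inverse limit to conclude $\varphi$ is an isomorphism of pronilpotent Lie algebras. One must check that the limit of the (continuous) inverses is continuous, which is automatic since each $L^m\n_i$ is open and these form a neighbourhood basis of $0$.

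\textbf{Main obstacle.} The genuinely delicate point is Step~3 combined with the topological bookkeeping in Step~1: a priori $\n_i$ is presented as an inverse limit over some index set $\{\alpha\}$ that has nothing to do with the lower central series, so one needs that the lower central series topology agrees with the given pro-topology, i.e.\ that each $L^m\n_i$ is closed and that $\bigcap_m L^m\n_i = 0$ with the $L^m$ cofinal among open ideals. For a pronilpotent Lie algebra (inverse limit of \emph{finite-dimensional nilpotent} Lie algebras) this is true, and it is precisely the place where the hypothesis ``pronilpotent'' rather than merely ``complete filtered'' is used; but it deserves a careful sentence rather than being swept under the rug. The finite-dimensionality also ensures the full duals behave well, so that ``$\varphi^\ast$ iso on $H^1$'' really does dualize to ``$\varphi$ iso on $H_1$'' at each finite level. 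Given that, the rest is the classical Stallings argument transported verbatim to Lie algebras, and one can simply cite \cite[\S 3]{hain:rel_wt} for the details.
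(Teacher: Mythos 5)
Your overall route is the right one, and in fact it is the same one the paper relies on: the paper gives no proof of Proposition~\ref{prop:stallings}, it simply cites \cite[\S 3]{hain:rel_wt}, and the argument there is exactly the Stallings induction on lower central series quotients followed by passage to the limit, with the topological points you flag (closed LCS, cofinality with the pro-topology, full duals at each finite level) handled as you indicate. Two corrections, though, one of which is a genuine gap as written. First, a direction slip: injectivity of $\varphi^\ast : H^2(\n_2)\to H^2(\n_1)$ dualizes to \emph{surjectivity of} $\varphi_\ast : H_2(\n_1)\to H_2(\n_2)$; there is no induced map $H_2(\n_2)\to H_2(\n_1)$.

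The gap is in Step~2. You run the inflation--restriction sequence only for the central extension $0\to\Gr^m_\LCS\n_i\to\n_i/L^{m+1}\n_i\to\n_i/L^m\n_i\to 0$, so every cohomology group in your display is a cohomology group of a nilpotent quotient; the hypothesis, which concerns $H^2(\n_i)$ of the full pronilpotent Lie algebras, never enters, and ``compatibility of these sequences forces injectivity'' is not an argument. The step that actually works is to use the five-term sequence of the extension $0\to L^m\n_i\to\n_i\to\n_i/L^m\n_i\to 0$, namely
$$
0 \to H^1(\n_i/L^m\n_i) \to H^1(\n_i) \to \Hom^\cts(\Gr^m_\LCS\n_i,F)
\to H^2(\n_i/L^m\n_i) \to H^2(\n_i),
$$
in which $H^2(\n_i)$ does appear. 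For $m\ge 2$ the first map is an isomorphism, so $\Hom^\cts(\Gr^m_\LCS\n_i,F)=\ker\big(H^2(\n_i/L^m\n_i)\to H^2(\n_i)\big)$; the inductive isomorphism $\n_1/L^m\n_1\cong\n_2/L^m\n_2$ makes the middle verticals isomorphisms, and injectivity of $H^2(\n_2)\to H^2(\n_1)$ then identifies the two kernels, giving $\Hom^\cts(\Gr^m_\LCS\n_2,F)\cong\Hom^\cts(\Gr^m_\LCS\n_1,F)$ and hence, by duality of linearly compact spaces, $\Gr^m_\LCS\n_1\cong\Gr^m_\LCS\n_2$; the five lemma applied to your central extensions then completes the induction. (Equivalently, dualize everything and run Stallings' original homological chase with $H_1$ an isomorphism and $H_2(\n_1)\to H_2(\n_2)$ surjective.) With that substitution, and Step~3 as you describe it, the proof is complete and agrees with the cited one.
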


Another useful fact that we shall need is the following exact sequence, which
is essentially due to Sullivan \cite{sullivan}.

\begin{proposition}
\label{prop:cup}
If $\n$ is a pronilpotent Lie algebra over $F$, then the sequence
$$
\xymatrix{
0 \ar[r] & \big(\Gr_\LCS^2 \n\big)^\ast \ar[r]^{[\blank,\blank]^\ast} &
\Lambda^2 H^1(\n) \ar[r]^{\text{cup}} & H^2(\n)
}
$$
is exact, where $(\blank)^\ast = \Hom^\cts(\blank,F)$ and $[\blank,\blank]^\ast$
denotes the continuous dual of the bracket $\Lambda^2 H_1(\n) \to \Gr^2_\LCS
\n$.
\end{proposition}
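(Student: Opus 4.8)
The plan is to work entirely in the continuous Chevalley--Eilenberg complex $\cC^\dot(\n)$, in which the differential $d\colon \cC^1(\n)=\n^\ast\to\cC^2(\n)=\Lambda^2\n^\ast$ is (minus) the continuous dual of the bracket, $(d\xi)(x,y)=-\xi([x,y])$. Since $d\colon\cC^0(\n)\to\cC^1(\n)$ vanishes, $H^1(\n)$ is the space of closed $1$-cochains, i.e.\ the $\xi\in\n^\ast$ annihilating $L^2\n$, so $H^1(\n)$ is canonically the continuous dual of $V:=\Gr^1_\LCS\n=\n/L^2\n$. As $d$ is a derivation and kills $V^\ast$, every element of $\Lambda^2V^\ast\subseteq\Lambda^2\n^\ast=\cC^2(\n)$ is a closed $2$-cochain, and under $H^1(\n)=V^\ast$ the cup product $\Lambda^2H^1(\n)\to H^2(\n)$ is simply $\omega\mapsto[\omega]$. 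Thus the first task becomes a cochain statement: for $\omega\in\Lambda^2V^\ast$ one has $\mathrm{cup}(\omega)=0$ if and only if $\omega=d\xi$ for some $\xi\in\n^\ast$.

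Next I would remove the topology by a limit argument. Write $\n=\varprojlim_\alpha\n_\alpha$ as an inverse limit of finite-dimensional nilpotent Lie algebras with surjective transition maps. Surjections preserve the lower central series and the system is Mittag--Leffler, so $L^k\n=\varprojlim_\alpha L^k\n_\alpha$ for all $k$; hence $\Gr^2_\LCS\n=\varprojlim_\alpha\Gr^2_\LCS\n_\alpha$ and $(\Gr^2_\LCS\n)^\ast=\varinjlim_\alpha(\Gr^2_\LCS\n_\alpha)^\ast$, while also $\Lambda^2H^1(\n)=\varinjlim_\alpha\Lambda^2H^1(\n_\alpha)$ and $H^2(\n)=\varinjlim_\alpha H^2(\n_\alpha)$, all compatibly with the bracket duals and the cup products. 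Filtered direct limits are exact, so it suffices to prove the proposition when $\n$ is finite-dimensional, where no topological subtleties arise.

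So assume $\dim\n<\infty$. The bracket descends to a surjection $\beta\colon\Lambda^2V\twoheadrightarrow\Gr^2_\LCS\n$, $\bar x\wedge\bar y\mapsto\overline{[x,y]}$, with some kernel $R$; dualizing $0\to R\to\Lambda^2V\xrightarrow{\beta}\Gr^2_\LCS\n\to0$ gives
\[
0 \to (\Gr^2_\LCS\n)^\ast \xrightarrow{\ \beta^\ast\ } \Lambda^2V^\ast \to R^\ast \to 0,
\]
in which $\beta^\ast$ is exactly $[\blank,\blank]^\ast$. This already shows $[\blank,\blank]^\ast$ is injective and identifies its image with $R^\perp=\{\omega\in\Lambda^2V^\ast:\omega|_R=0\}$, so it remains to prove $R^\perp=\ker(\mathrm{cup})$, i.e.\ (by the first paragraph) that $\omega\in\Lambda^2V^\ast$ equals some coboundary $d\xi$ iff $\omega|_R=0$. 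If $\omega=d\xi$, then for $x\in L^2\n$ and any $y$ we get $\xi([x,y])=-\omega(x,y)=0$, so $\xi$ annihilates $L^3\n=[L^2\n,\n]$; hence for any $\sum_i x_i\wedge y_i$ whose image in $\Lambda^2V$ lies in $R$ (equivalently $\sum_i[x_i,y_i]\in L^3\n$) we have $\omega(\sum_i\bar x_i\wedge\bar y_i)=\sum_i\omega(x_i,y_i)=-\xi(\sum_i[x_i,y_i])=0$, so $\omega\in R^\perp$. Conversely, if $\omega\in R^\perp$ then $\omega=\bar\omega\circ\beta$ for a unique $\bar\omega\in(\Gr^2_\LCS\n)^\ast$; choosing a linear splitting $\n=L^2\n\oplus C$ and defining $\xi$ by $\xi|_C=0$ and $\xi(z)=-\bar\omega(\bar z)$ for $z\in L^2\n$, where $\bar z\in L^2\n/L^3\n=\Gr^2_\LCS\n$ is the class of $z$, a direct computation gives $(d\xi)(x,y)=-\xi([x,y])=\bar\omega(\overline{[x,y]})=\omega(x,y)$, so $\omega=d\xi$. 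This establishes the exact sequence.

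I expect the only genuinely delicate point to be the reduction in the second paragraph — that $\Gr^2_\LCS$, and hence its continuous dual and the map $[\blank,\blank]^\ast$, commute with the inverse limit presenting $\n$ — since once that is in place the rest is the classical Sullivan computation and only routine bookkeeping (including getting the signs right) remains.
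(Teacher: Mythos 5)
Your argument is correct. Note, however, that the paper itself offers no proof of Proposition~\ref{prop:cup}: it simply attributes the sequence to Sullivan \cite{sullivan}, so there is no internal argument to compare yours against. Your route --- identify $H^1(\n)$ with the continuous dual of $\n/L^2\n$, observe that classes in $\Lambda^2 H^1(\n)$ are represented by closed $2$-cochains pulled back from $\n/L^2\n$, reduce to the finite-dimensional nilpotent case by writing $\n=\varprojlim\n_\alpha$ with surjective transition maps (so that $\Gr^2_\LCS$, continuous duals, $H^1$, $H^2$, the bracket dual and the cup product all pass to the filtered (co)limit, which is exact), and then run the classical cochain computation dualizing $0\to R\to\Lambda^2(\n/L^2\n)\to\Gr^2_\LCS\n\to0$ --- is exactly the standard proof underlying the citation, and the details you give (both implications of $R^\perp=\ker(\mathrm{cup})$, including the construction of the primitive $\xi$ from a splitting $\n=L^2\n\oplus C$) are sound. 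Two small points worth making explicit: first, whether $L^k\n$ means the algebraic lower central series or its closure is immaterial here, since continuous functionals killing $[\n,\n]$ (resp.\ $L^3\n$) kill the closure, so the identifications you use are unaffected; second, your colimit framing is also what makes the map $(\Gr^2_\LCS\n)^\ast\to\Lambda^2 H^1(\n)$ land where the statement says it does (a continuous functional on $\Gr^2_\LCS\n$ factors through some $\Gr^2_\LCS\n_\alpha$), which is the cohomological counterpart of the caveat in the remark following the proposition that the homological version requires the completed tensor product when $H_1(\n)$ is infinite dimensional.
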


\begin{remark}
When $H_1(\n)$ is finite dimensional, one can dualize to obtain the exact
sequence
$$
\xymatrix{
H_2(\n) \ar[r]^(.45){\text{cup}^\ast} & \Lambda^2 H_1(\n)
\ar[r]^{[\blank,\blank]} & \Gr^2_\LCS \n \ar[r] & 0.
}
$$
However, when $H_1(\n)$ is infinite dimensional, this sequence is not exact.
Instead, one needs to replace $\Lambda^2 H_1(\n)$ by the alternating part of the
completed tensor product $H_1(\n)\comptensor H_1(\n)$.
\end{remark}

\subsection{Free Lie algebras}
\label{sec:free}

Suppose that $F$ is a field of characteristic 0 and that $V$ is a vector space
over $F$. Here we are not assuming $V$ to be finite dimensional. The free Lie
algebra generated by $V$ will be denoted by $\L(V)$. It is characterized by the
property that a linear map $V \to \g$ into a Lie algebra over $F$ induces a
unique Lie algebra homomorphism $\L(V) \to \g$. The Poincar\'e-Birkhoff-Witt
Theorem implies \cite{serre:lalg} that $\L(V)$ is the Lie subalgebra of the
tensor algebra $T(V)$ (with bracket $[A,B]=AB-BA$) generated by $V$ and that the
inclusion $\L(V) \to T(V)$ induces an isomorphism $U\L(V) \to T(V)$ from the
enveloping algebra of $\L(V)$ to $T(V)$. The cohomology of $\L(V)$ with trivial
coefficients vanishes in degrees $>1$.

If $\f$ is a Lie algebra, then any splitting of the projection $\f \to H_1(\f)$
induces a homomorphism $\L(H_1(\f)) \to \f$. If $\f$ is free, then this
homomorphism is an isomorphism \cite{serre:lalg}. It induces a {\em canonical}
isomorphism
\begin{equation}
\label{eqn:free}
\Gr^\dot_\LCS \f \cong \L(H_1(\f))
\end{equation}
of the graded Lie algebra associated to the lower central series (LCS) of
$\f$ with the free Lie algebra generated by its first graded quotient
$H_1(\f) = \f/L^2\f$.

The {\em free completed Lie algebra} $\L(V)^\wedge$ generated by $V$ is defined
to be
$$
\L(V)^\wedge = \varprojlim \n,
$$
where  $\n$ ranges over all finite dimensional nilpotent quotients of $\L(V)$.
It is viewed as a topological Lie algebra. It is useful to note that there is a
canonical isomorphism
$$
\L(V)^\wedge = \varprojlim_{W,n} \L(W)/L^n\L(W) 
$$
of topological Lie algebras, where $W$ ranges over all finite dimensional
quotients of $V$ and $n$ over all positive integers.

We can regard $V$ as a topological vector space: the neighbourhoods of 0 are the
subspaces of $V$ of finite codimension. Every continuous linear mapping $V \to
\u$ from $V$ into a pronilpotent Lie algebra induces a unique continuous
homomorphism $\L(V)^\wedge \to \u$. The continuous cohomology of $\L(V)^\wedge$
vanishes in degrees $\ge 2$.

If $\n$ is a pronilpotent Lie algebra, then any continuous section of the
quotient mapping $\n \to H_1(\n)$ induces a continuous surjective homomorphism
$\L(H_1(\n))^\wedge \to \n$. Applying Proposition~\ref{prop:stallings} to
this homomorphism, we obtain:

\begin{proposition}
A pronilpotent Lie algebra is free if and only if $H^2(\n)=0$. \qed
\end{proposition}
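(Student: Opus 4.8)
The plan is to reduce the statement to Proposition~\ref{prop:stallings} applied to the canonical surjection $\phi : \L(H_1(\n))^\wedge \to \n$ furnished by a continuous section of $\n \to H_1(\n)$, which exists because every pronilpotent Lie algebra is an inverse limit of finite-dimensional nilpotent ones and the quotient map is a limit of surjections of finite-dimensional spaces. First I would record the ``only if'' direction: if $\n$ is free, meaning $\n \cong \L(V)^\wedge$ for some topological vector space $V$ of the indicated type, then the continuous cohomology of $\L(V)^\wedge$ vanishes in degrees $\ge 2$, as was stated in the discussion of free completed Lie algebras just above; in particular $H^2(\n) = 0$.

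For the ``if'' direction, suppose $H^2(\n) = 0$. The surjection $\phi$ induces $\phi^\ast : H^\dot(\n) \to H^\dot(\L(H_1(\n))^\wedge)$. In degree $1$ both sides are canonically $H_1(\n)^\ast$ (the first cohomology of a pronilpotent Lie algebra depends only on the abelianization, and $\phi$ induces an isomorphism on $H_1$ by construction of the section), so $\phi^\ast$ is an isomorphism there. In degree $2$, the target $H^2(\L(H_1(\n))^\wedge)$ vanishes since $\L(H_1(\n))^\wedge$ is a free completed Lie algebra, so $\phi^\ast : H^2(\n) \to H^2(\L(H_1(\n))^\wedge)$ is trivially injective — indeed it is the zero map out of the zero group $H^2(\n)=0$, hence certainly injective. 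Thus condition~(iii) of Proposition~\ref{prop:stallings} is met, and $\phi$ is an isomorphism, so $\n$ is free.

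The one point that needs a little care — and the closest thing to an obstacle — is justifying that $\phi^\ast$ is an isomorphism in degree $1$ for the continuous cohomology of pronilpotent Lie algebras, and that $\phi$ is well-defined and surjective as a continuous homomorphism. Both follow from facts already in place: the continuous universal property of $\L(V)^\wedge$ (a continuous linear map from $V$ into a pronilpotent Lie algebra extends uniquely) gives $\phi$; surjectivity is because the image contains a set of topological generators of $\n$ and $\n$ is pronilpotent; and $H^1$ of any (pro)nilpotent Lie algebra is the dual of its first homology, with $\phi$ inducing the identity on $H_1$ by the choice of section. Once these are noted, the proof is simply the invocation of Proposition~\ref{prop:stallings}, and no genuine computation is required.
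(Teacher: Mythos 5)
Your proof is correct and follows exactly the paper's route: a continuous section of $\n \to H_1(\n)$ yields a continuous surjection $\L(H_1(\n))^\wedge \to \n$, and Proposition~\ref{prop:stallings} together with the vanishing of $H^{\ge 2}$ of a free completed Lie algebra finishes both directions. The only slight slip is the phrase suggesting injectivity in degree $2$ follows from the vanishing of the \emph{target}; the relevant point, which you also state, is that the source $H^2(\n)$ vanishes by hypothesis.
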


\part{Completed Path Torsors of Affine Curves}
\label{part:rel_compln}

\section{Relative Completion in the Abstract}

Suppose that $\G$ is a discrete group and that $R$ is a reductive algebraic
group over a field $F$ of characteristic zero. The completion of $\G$ relative
to a Zariski dense representation $\rho : \G \to R(F)$ is a proalgebraic
$F$-group $\cG$ which is an extension
$$
1 \to \U \to \cG \to R \to 1
$$
of $R$ by a prounipotent group $\U$, and a homomorphism $\rhohat : \G \to
\cG(F)$ such that the composite
$$
\xymatrix{
\G \ar[r]^(.45)\rhohat & \cG(F) \ar[r] & R(F)
}
$$
is $\rho$. It is universal for such groups: if $G$ is a proalgebraic $F$ group
that is an extension of $R$ by a prounipotent group, and if $\phi : \G \to G(F)$
is a homomorphism whose composition with $G \to R$ is $\rho$, then there is a
homomorphism $\phihat : \cG \to G$ of proalgebraic $F$-groups such that the
diagram
$$
\xymatrix{
\G \ar[r]^\rhohat\ar[d]_\phi & \cG(F) \ar[dl]_\phihat\ar[d] \cr
G(F) \ar[r] & R(F)
}
$$
commutes.

When $R$ is trivial, $\rho$ is trivial and $\cG = \U$ is the unipotent
completion of $\G$ over $F$.

Relative completion can be defined as follows:
Let $\cL(\G,R)$ denote the category of finite dimensional $F$-linear
representations $V$ of $\G$ that admit a filtration
$$
0 = V_0 \subset V_1 \subset \cdots \subset V_N = V
$$
by $\G$-submodules with the property that each graded quotient $V_j/V_{j-1}$ is
an $R$-module and the action of $\G$ on it factors through $\rho$. It is a
neutral tannakian category. The completion of $\G$ relative to $\rho$ is the
fundamental group of this category with respect to the fiber functor that takes
a representation to its underlying vector space.

We will generally be sloppy and not distinguish between a proalgebraic group $G$
and its group $G(F)$ of $F$-rational points. For example, in the context of
relative completion, $\rho$ will be a homomorphism $\G \to R$.

\subsection{Levi splittings}

The following generalization of Levi's Theorem implies that the relative
completion $\cG$ of a finitely generated group $\G$ can be expressed
(non-canonically) as a semi-direct product $\cG\cong R\ltimes\U$. The Lie
algebra $\u$ of $\U$ is then a pronilpotent Lie algebra in the category of
$R$-modules. The isomorphism type of $\cG$ is determined by $\u$ with its
$R$-action.

Suppose that $F$ is a field of characteristic 0 and that $R$ is a reductive
$F$-group. Call an extension
$$
1 \to \U \to \cG \to R \to 1
$$
of $R$ by a prounipotent group in the category of affine $F$-groups {\em
quasi-finite} if for all finite dimensional $R$-modules $V$,
$
\Hom_R(V,H_1(\U))
$
is finite dimensional. The results in the following section imply that the
completion of a finitely generated group $\G$ relative to a homomorphism $\rho :
\G \to R(F)$ is a quasi finite extension of $R$.

\begin{proposition}
\label{prop:levi}
Every quasi-finite extension of $R$ by a prounipotent group $\U$ is split.
Moreover, any two splittings are conjugate by an element of $\U(F)$.
\end{proposition}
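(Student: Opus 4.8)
The plan is to reduce to the finite-dimensional case and then pass to the inverse limit, using the vanishing of a suitable cohomology group to kill obstructions. Write $\U = \varprojlim_\alpha \U_\alpha$ as an inverse limit of finite-dimensional unipotent quotients, chosen so that each $\U_\alpha$ is $R$-stable (equivalently, each $\cG_\alpha := \cG/\ker(\U \to \U_\alpha)$ is an algebraic group which is an extension of $R$ by $\U_\alpha$). A splitting of $\cG \to R$ is the same thing as a compatible system of splittings of the $\cG_\alpha \to R$, so the problem has two parts: (a) split each finite level, and (b) show the splittings can be chosen compatibly, i.e. that the inverse system of nonempty sets of splittings has nonempty limit.

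For (a), I would argue by induction along a filtration of $\U_\alpha$ by $R$-submodules with successive quotients that are $R$-modules on which $\cG_\alpha$ (hence $\U_\alpha$) acts trivially — such a filtration exists because $\U_\alpha$ is unipotent and $R$-equivariant refinements of the lower central series do the job. At each inductive step one is splitting an extension $1 \to \cG'_\alpha \to \cG_\alpha \to R \to 1$ modulo a central (as a $\cG_\alpha$-module) $R$-submodule $A \subset \U_\alpha$; given a splitting of the quotient extension, the obstruction to lifting it lies in $H^2(R, A)$, and the set of lifts (once one exists) is a torsor under $H^1(R,A)$. Since $R$ is reductive and $\charac F = 0$, all higher (rational) cohomology of $R$ with coefficients in any rational $R$-module vanishes: $H^i(R,A) = 0$ for $i > 0$. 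This gives existence at each stage and hence a splitting $s_\alpha : R \to \cG_\alpha$, and it also shows that at each stage the lift is \emph{unique up to conjugation by an element of $\U_\alpha(F)$} — indeed $H^1(R,A)=0$ says any two lifts over the same base splitting are conjugate by an element of $\exp A \subset \U_\alpha$, and induction up the filtration composes these into a single element of $\U_\alpha(F)$.

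For (b), the same cohomology vanishing shows the transition maps in the inverse system of splitting-sets are surjective: any splitting of $\cG_\beta \to R$ pushes forward to one of $\cG_\alpha \to R$ for $\beta \ge \alpha$, and conversely a splitting at level $\alpha$ lifts to level $\beta$ because the obstruction lives in an $H^2$ of $R$, which vanishes. An inverse limit of nonempty sets with surjective transition maps (over a directed index set, which here one can arrange to be countable, or one invokes the surjectivity directly) is nonempty, yielding a splitting $s : R \to \cG$. For the conjugacy statement, given two splittings $s, s'$, at each level they differ by conjugation by some $u_\alpha \in \U_\alpha(F)$; the set of such $u_\alpha$ is a coset of the stabilizer $Z_{\U_\alpha}(s_\alpha(R))$, and to get a compatible choice $u = \varprojlim u_\alpha \in \U(F)$ one again needs the relevant inverse system of nonempty cosets to have surjective transitions — this follows because the obstruction to lifting a conjugating element from level $\alpha$ to level $\beta$ is measured by $H^1(R, \ker(\U_\beta \to \U_\alpha))$, which vanishes by reductivity of $R$.

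The main obstacle is purely bookkeeping at the inverse-limit stage: one must be careful that the inverse systems of splittings (and later of conjugating elements) are genuinely \emph{surjective} systems, not merely systems of nonempty sets, and that indexing subtleties (uncountable directed systems, where $\varprojlim^1$ can obstruct) do not arise — this is exactly where the hypothesis enters, since quasi-finiteness guarantees that $H_1(\U)$, and hence each graded piece appearing, is ``not too big'' so that the tower of quotients can be taken to be a countable tower and the Mittag-Leffler condition holds. Everything else is the standard reductive-group cohomology vanishing $H^{>0}(R,-)=0$ applied repeatedly.
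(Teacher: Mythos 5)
Your argument is correct in substance, and it is best described as a cohomological repackaging of the paper's proof rather than the same proof, so let me compare. The paper quotes Mostow's theorem for the finite-dimensional abelian case, uses quasi-finiteness to write each abelian layer as an $R$-equivariant \emph{product} of finite-dimensional modules (so that splittings are assembled componentwise), and then inducts up the lower central series of $\U$: the tower $\cG_n=\cG/L^{n+1}\U$ is indexed by $\N$ and the compatible sections $s_n$ are constructed one at a time, so no inverse-limit subtleties ever arise. You instead re-derive the finite-dimensional case from the vanishing of rational cohomology $H^{>0}(R,-)$ in characteristic zero (a perfectly good proof of Mostow's theorem, granted the standard fact that an extension of $R$ by a vector group admits a scheme-theoretic section because such torsors over an affine base are trivial), and you push all of the pro-group bookkeeping into a ``surjective inverse system of splitting sets'' argument. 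What the paper's organization buys is precisely the avoidance of your step (b): quasi-finiteness enters there as the isotypic finiteness needed to split each graded quotient $\Gr^n_\LCS\U$ piece by piece, not as a countability device.

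Two points in your write-up need shoring up, though neither is fatal. First, the claim that quasi-finiteness lets one take a countable cofinal tower tacitly uses that $\Rdual$ is countable; this is true for a reductive group in characteristic zero (highest-weight theory over $\overline{F}$ together with Noether--Deuring), but it is an extra fact and should be stated --- or avoided altogether by constructing compatible sections inductively along the LCS tower as the paper does, where only a sequential limit with already-chosen compatible elements appears. Second, when you lift a splitting (or a conjugating element) from level $\alpha$ to level $\beta$, the kernel $\ker(\U_\beta\to\U_\alpha)$ is in general nonabelian, so the obstruction does not literally lie in $H^2(R,\ker)$, nor the ambiguity in $H^1$; you must run your filtration d\'evissage on that kernel, exactly as in your part (a), so this is a matter of wording rather than a genuine gap.
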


\begin{proof}[Sketch of Proof]
The classical case where $\U$ is an abelian unipotent group (i.e., a finite
dimensional vector space) was proved by Mostow in \cite{mostow}. (See also,
\cite[Prop.~5.1]{borel-serre}.)

First consider the case where $\U$ is an abelian proalgebraic group. The
quasi-finiteness assumption implies that there are (finite dimensional) abelian
unipotent groups $\U_\alpha$ with $R$-action and an $R$-equivariant isomorphism
$$
\U \cong \prod_\alpha \U_\alpha.
$$
The extension of $\cG$ by $\U$ can be pushed out along the projection
$\U \to \U_\alpha$ to obtain extensions
$$
1 \to \U_\alpha \to \cG_\alpha \to R \to 1.
$$
The classical case, stated above, implies that each of these has a splitting
$s_\alpha$ and that this splitting is unique up to conjugation by an element of
$\U_\alpha$. These sections assemble to give a section $s=(s_\alpha)$ of $\cG
\to R$ that is defined over $F$. Every section of $\cG \to R$ is of this form.
Any two are conjugate by an element of $\U(F)$.

To prove the general case, consider the extensions
\begin{equation}
\label{eqn:gp_extn}
1 \to \U_n \to \cG_n \to R \to 1
\end{equation}
where $\cG_n = \cG/L^{n+1}\U$, $\U_n = \U/L^{n+1}\U$, and where $L^n\U$ denotes
the $n$th term of the LCS of $\U$. The result is proved by constructing a
compatible sequence of sections of these extensions. We have already
established the $n=1$ case. Suppose that $n>1$ and that we have constructed a
splitting of $s_{n-1}$ of $\cG_{n-1}\to R$ and shown that any two such
splittings are conjugate by an element of $\U_{n-1}$.

Pulling back the extension
$$
1 \to \Gr^n_\LCS \U \to \cG_n \to \cG_{n-1} \to 1
$$
along $s_{n-1}$ gives an extension
$$
1 \to \Gr^n_\LCS \U \to G \to R \to 1.
$$
The quasi-finite assumption implies that the $R$-module $\Gr^n_\LCS \U$ is a
product of finite dimensional $R$-modules. The $n=1$ case implies that this
extension is split and that any two splittings are conjugate by an element of
$\Gr^n_\LCS \U$. If $s$ is a section of $G \to R$, then the composition of $s$
with the inclusion $G \hookrightarrow \cG_n$ is a section $s_n$  of
(\ref{eqn:gp_extn}) that is compatible with $s_{n-1}$:
$$
\xymatrix{
1 \ar[r] & \Gr^n_\LCS \U \ar[r]\ar@{=}[d] & G \ar[r]\ar[d] &
R \ar[r]\ar[d]^{s_{n-1}}\ar@/_1pc/[l]_s \ar[ld]_{s_n} & 1
\cr
1 \ar[r] & \Gr^n_\LCS \U \ar[r] & \cG_n \ar[r] & \cG_{n-1} \ar[r] & 1
}
$$
The uniqueness of $s$ implies that any two such lifts of $s_{n-1}$ are conjugate
by an element of $\Gr^n_\LCS\U(F)$. This and the fact that $s_{n-1}$ is unique
up to conjugation by an element of $\U(F)$ implies that $s_n$ is as well.
\end{proof}

\subsection{Cohomology}
\label{sec:coho}
We continue with the notation above, where $\cG$ is the relative completion of
$\G$. When $R$ is reductive, the structure of $\g$ and $\u$ are closely related
to the cohomology of $\G$ with coefficients in rational representations of $R$.
We will assume also that $H^j(\G,V)$ is finite dimensional when $j\le 2$ for all
rational representations $V$ of $R$. This condition is satisfied when $\G$ is
finitely presented and thus by fundamental groups of all complex algebraic
varieties.

For each rational representation $V$ of $R$ there are natural isomorphisms
$$
\Hom_R^\cts(H_\dot(\u),V) \cong [H^\dot(\u)\otimes V]^R
\cong H^\dot(\cG,V).
$$
The homomorphism $\G \to \cG(F)$ induces a homomorphism
\begin{equation}
\label{eqn:homom}
H^\dot(\u,V)^R \cong H^\dot(\cG,V) \to H^\dot(\G,V)
\end{equation}
It is an isomorphism in degrees $\le 1$ and an injection in degree 2.

Denote the set of isomorphism classes of finite dimensional irreducible
representations of $R$ by $\Rdual$. Fix an $R$-module $V_\lambda$ in each
isomorphism class $\lambda \in \Rdual$. If each irreducible representation of
$R$ is absolutely irreducible\footnote{This is the case when $R=\Sp_g$ over any
field of characteristic zero.} and if $H^j(\G,V)$ is finite dimensional for all
rational representations $V$ of $R$ when $j=1,2$, then (\ref{eqn:homom}) implies
that there is an isomorphism
\begin{equation}
\label{eqn:h1u}
\prod_{\lambda \in \check{R}} [H^1(\G,V_\lambda)]^\ast \otimes_F V_\lambda
\cong H_1(\u)
\end{equation}
of topological modules, and that there is
a continuous $R$-invariant surjection
$$
\prod_{\lambda \in \check{R}} [H^2(\G,V_\lambda)]^\ast \otimes_F V_\lambda
\to H_2(\u).
$$
In both cases, the LHS has the product topology.

\subsection{Base change}
\label{sec:basechange}

When discussing the mixed Hodge structure on a relative completion of the
fundamental group of a complex algebraic manifold $X$, we need to be able to
compare  the completion of $\pi_1(X,x)$ over $\R$ (or $\Q$) with its completion
over $\C$. For this reason we need to discuss the behaviour of relative
completion under base change.

The cohomological properties of relative completion stated above imply that it
behaves well under base change. To explain this, suppose that $K$ is an
extension field of $F$. Then $\rho_K : \G \to R(K)$ is Zariski dense in
$R\times_F K$, so one has the completion $\cG_K$ of $\G$ relative to $\rho_K$.
It is an extension  of $R\times_F K$ by a prounipotent group. The universal
mapping property of $\cG_K$ implies that the homomorphism $\G \to \cG(K)$
induces a homomorphism $\cG_K \to \cG\times_F K$ of proalgebraic $K$-groups. The
fact that (\ref{eqn:homom}) is an isomorphism in degree 1 and injective in
degree 2 implies that this homomorphism is an isomorphism.

\subsection{Examples}

Here the coefficient field $F$ will be $\Q$. But because of base change, the
discussion is equally valid when $F$ is any field of characteristic 0.

\subsubsection{Free groups}
Suppose that $\G$ is a finitely generated free group and that $\rho : \G \to
R(F)$ is a Zariski dense reductive representation. Denote the completion of $\G$
with respect to $\rho$ by $\cG$ and its unipotent radical by $\U$. Denote their
Lie algebras by $\g$ and $\u$. Since $H^j(\G,V)$ vanishes for all $R$-modules
$V$ for all $j\ge 2$, $\u$ is free. Consequently, the homomorphism
(\ref{eqn:homom}) is an isomorphism in all degrees.

\subsubsection{Modular groups}
\label{sec:modular}
Suppose that $\G$ is a modular group --- that is, a finite index subgroup of
$\SL_2(\Z)$. Let $R = \SL_2$ and $\rho : \G \to \SL_2(\Q)$ be the inclusion.
This has Zariski dense image. Denote the completion of $\G$ with respect to
$\rho$ by $\cG$ and its unipotent radical by $\U$.

Every torsion free subgroup $\G'$ of $\SL_2(\Z)$ is the fundamental group of the
quotient $\G'\bs\h$ of the upper half plane by $\G'$. Since this is a
non-compact Riemann surface, $\G'$ is free. Since $\SL_2(\Z)$ has finite index
torsion free subgroups (e.g., the matrices congruent to the identity mod $m$ for
any $m\ge 3$), every modular group is virtually free. This implies that
$H^j(\G,V)=0$ whenever $j\ge 2$ and $V$ is a rational vector space. The results
of Section~\ref{sec:coho} imply that the Lie algebra $\u$ of $\U$ is a free
pronilpotent Lie algebra. As in the case of a free group, this implies that the
homomorphism (\ref{eqn:homom}) is an isomorphism in all degrees.

The set $\Rdual$ of isomorphism classes of irreducible $R$-modules is $\N$. The
natural number $n$ corresponds to the $n$th symmetric power $S^n H$ of the
defining representation $H$ of $\SL_2$. The results of Section~\ref{sec:coho}
imply that there is a non-canonical isomorphism
$$
\u \cong \L\Big(\bigoplus_{n\ge 0} H^1(\G,S^n H)^\ast\otimes S^n H\Big)^\wedge
$$
of pronilpotent Lie algebras in the category of $\SL_2$ modules. (Cf.\
Remarks~3.9 and 7.2 in \cite{hain:torelli}.) So we have a complete description
of $\cG$ as a proalgebraic group:
$$
\cG \cong \SL_2\ltimes \exp\u.
$$
In Section~\ref{sec:concrete} we give a method for constructing a
homomorphism $\G \to \SL_2(\C)\ltimes \exp\u$ that induces an isomorphism
$\cG \to \SL_2(\C)\ltimes \exp\u$.

\subsubsection{Unipotent completion of fundamental groups of punctured elliptic
curves}
\label{sec:unipt_fund}
Here $E$ is a smooth elliptic curve over $\C$ and $\G = \pi_1(E',x)$ where
$E'=E-\{0\}$ and $x\in E'$. In this case we take $R$ to be trivial. The
corresponding completion of $\G$ is the unipotent completion of $\pi_1(E',x)$.
Since $H^2(E')=0$, the results of Section~\ref{sec:coho} imply that the Lie
algebra $\p$ of the unipotent completion of $\pi_1(E',x)$ is (non-canonically
isomorphic to) the completion of the free Lie algebra generated by $H_1(E,\Q)$:
$$
\p \cong \L(H_1(E))^\wedge.
$$
This induces a canonical isomorphism $\Gr^\dot_\LCS \p \cong \L(H_1(E))$ of the
associated graded Lie algebra of the lower central series (LCS) of $\p$ with the
free Lie algebra generated by $H_1(E)$.

\subsection{Naturality and Right exactness}

The following naturality property is easily proved using either the universal
mapping property of relative completion or its tannakian description.

\begin{proposition}
\label{prop:naturality}
Suppose that $\G$ and $\G'$ are discrete groups and that $R$ and $R'$ are
reductive $F$-groups. If one has a commutative diagram
$$
\xymatrix{
\G' \ar[r]^{\rho'}\ar[d] & R'\ar[d]  \cr
\G \ar[r]^\rho & R
}
$$
in which $\rho$ and $\rho'$ are Zariski dense, then one has a commutative
diagram
$$
\xymatrix{
\G' \ar[r]\ar[d] & \cG'\ar[d]\ar[r] & R'\ar[d]  \cr
\G \ar[r] & \cG \ar[r] & R
}
$$
where $\cG$ and $\cG'$ denote the completions of $\G$ and $\G'$ with respect
to $\rho$ and $\rho'$. \qed
\end{proposition}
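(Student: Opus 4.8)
The plan is to use the universal mapping property of relative completion. Write $1\to\U\to\cG\to R\to 1$ for the completion of $\G$ with respect to $\rho$, equipped with its canonical Zariski-dense homomorphism $\rhohat:\G\to\cG(F)$, and similarly let $\cG'$ denote the completion of $\G'$ with respect to $\rho'$. The one point needing a little thought is that $\cG$ is an extension of $R$, not of $R'$, so it is not directly a target to which the universal property of $\cG'$ applies. The remedy is to pull back along $R'\to R$: writing $\cG=\varprojlim\cG_\alpha$, the fiber product $\cG\times_R R'$ is a proalgebraic $F$-group, being the inverse limit of the algebraic $F$-groups $\cG_\alpha\times_R R'$, and it sits in an exact sequence
$$
1\to\U\to\cG\times_R R'\to R'\to 1,
$$
so it is an extension of $R'$ by a prounipotent group.

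Next I would build a homomorphism $\G'\to(\cG\times_R R')(F)$ lying over $\rho'$. Composing $\G'\to\G$ with $\rhohat$ gives $\G'\to\cG(F)$; paired with $\rho':\G'\to R'(F)$, this lands in the fiber product provided the two composites $\G'\to\cG(F)\to R(F)$ and $\G'\to R'(F)\to R(F)$ agree. But the first is $\rho$ precomposed with $\G'\to\G$ and the second is $R'\to R$ postcomposed with $\rho'$, and these coincide by the commutativity of the given square. So one obtains $\phi:\G'\to(\cG\times_R R')(F)$ whose composite with $\cG\times_R R'\to R'$ is $\rho'$, and the universal property of $\cG'$ yields a unique homomorphism $\psi:\cG'\to\cG\times_R R'$ of proalgebraic $F$-groups compatible with the homomorphisms from $\G'$ and with the projections to $R'$. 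Composing $\psi$ with $\cG\times_R R'\to\cG$ defines the required $\cG'\to\cG$.

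It then remains to check that
$$
\xymatrix{
\G' \ar[r]\ar[d] & \cG'\ar[d]\ar[r] & R'\ar[d]  \cr
\G \ar[r] & \cG \ar[r] & R
}
$$
commutes, which is immediate from the construction. The left square commutes because $\G'\to\cG'\xrightarrow{\psi}\cG\times_R R'$ equals $\phi$, and projecting further to $\cG$ gives $\rhohat$ precomposed with $\G'\to\G$, i.e. the map $\G'\to\G\to\cG$. The right square commutes because $\psi$ respects the projections to $R'$ while $\cG\times_R R'\to\cG\to R$ equals $\cG\times_R R'\to R'\to R$ by definition of the fiber product; hence $\cG'\to\cG\to R$ equals $\cG'\to R'\to R$.

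I do not expect any genuine obstacle; the only substantive points are the two verifications above (that $\cG\times_R R'$ is an extension of $R'$ by a prounipotent group, so that the universal property of $\cG'$ applies, and that the two boundary composites into $R(F)$ coincide), both of which are routine. As the statement notes, one can instead argue tannakianly: restriction along $\G'\to\G$, together with the commutativity of the given square, sends an object of $\cL(\G,R)$ to an object of $\cL(\G',R')$ --- a filtration by $\G$-submodules with $R$-module graded quotients on which $\G$ acts through $\rho$ becomes a filtration by $\G'$-submodules with $R'$-module graded quotients on which $\G'$ acts through $\rho'$, since $\G'$ acts on each graded quotient via $\G'\to\G\to R$, which equals $\G'\to R'\to R$. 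This gives an exact tensor functor $\cL(\G,R)\to\cL(\G',R')$ commuting with the fiber functors, and hence the homomorphism $\cG'\to\cG$ on tannakian fundamental groups, with all the asserted compatibilities built in.
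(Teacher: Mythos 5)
Your proof is correct and follows exactly the route the paper indicates: the paper gives no details, asserting only that the result "is easily proved using either the universal mapping property of relative completion or its tannakian description," and your argument fleshes out both. The fiber product $\cG\times_R R'$ is the right device to make the universal-property argument apply (it is indeed an extension of $R'$ by $\U$, since surjectivity of $\cG\to R$ is preserved under base change), and your tannakian sketch via restriction $\cL(\G,R)\to\cL(\G',R')$ is likewise sound.
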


Relative completion is not, in general, an exact functor.  However, it is right
exact. The following is a special case of this right exactness. It can be proved
using the universal mapping property of relative completion. (A similar argument
can be found in \cite[\S 4.5]{hain-matsumoto:weighted}.)

\begin{proposition}
\label{prop:rt_exact}
Suppose that $\G$, $\G'$ and $\G''$ are discrete groups and that $R$, $R'$ and
$R''$ are reductive $F$-groups. Suppose that one has a diagram
$$
\xymatrix{
1 \ar[r] & \G' \ar[r]\ar[d]^{\rho'} & \G \ar[r]\ar[d]^\rho &
\G'' \ar[r]\ar[d]^{\rho''} & 1 \cr
1 \ar[r] & R' \ar[r] & R \ar[r] & R'' \ar[r] & 1 
}
$$
with exact rows in which $\rho$, $\rho'$ and $\rho''$ are Zariski dense, then
the corresponding diagram
$$
\xymatrix{
  & \cG' \ar[r]\ar[d] & \cG \ar[r]\ar[d] & \cG'' \ar[r]\ar[d] & 1 \cr
1 \ar[r] & R' \ar[r] & R \ar[r] & R'' \ar[r] & 1
}
$$
of relative completions has right exact top row. \qed
\end{proposition}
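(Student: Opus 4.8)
The plan is to exhibit $\cG''$ as the cokernel of $\cG'\to\cG$ by recognizing it as the relative completion of $\G''$. Existence and commutativity of the displayed diagram already follow from Proposition~\ref{prop:naturality}: apply it to $\G'\hookrightarrow\G$, $R'\hookrightarrow R$ for the left-hand square, and to the quotient maps $\G\to\G''$, $R\to R''$ for the right-hand square (here one uses that $\G\to R\to R''$ agrees with $\G\to\G''\xrightarrow{\rho''}R''$, which is the commutativity of the given diagram, together with Zariski density of $\rho$ and $\rho''$). Since $\G'\to\G\to\G''$ is trivial and the image of $\G'$ is Zariski dense in $\cG'$, the composite $\cG'\to\cG\to\cG''$ is trivial. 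Likewise $\cG\to\cG''$ has Zariski dense image, because that image contains the canonical (Zariski dense) image of $\G''$; being also a closed pro-subgroup, $\cG\to\cG''$ is surjective.

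Next I would analyze $\cG/K$, where $K$ is the smallest closed normal subgroup of $\cG$ containing $\im(\cG'\to\cG)$, so that $\cG/K$ is a proalgebraic $F$-group and $\cG\to\cG''$ factors through a surjection $\cG/K\twoheadrightarrow\cG''$. Let $\cG_{R'}=\ker(\cG\to R'')$, the preimage of $R'\subseteq R$; it is an extension of $R'$ by the prounipotent radical $\U$ of $\cG$, hence (by the Levi theory of Proposition~\ref{prop:levi}, which applies to the quasi-finite extensions arising here) splits, $\cG_{R'}=\U\rtimes L$ with $L\cong R'$. By compatibility of the structure maps, $\cG'\to\cG\to R$ has image exactly $R'$, so $\im(\cG')$ lies in $\cG_{R'}$ and maps onto $\cG_{R'}/\U\cong R'$; choosing a Levi of $\cG'$ and pushing it forward produces a Levi of $\cG_{R'}$ inside $\im(\cG')$, which by the $\U$-conjugacy clause of Proposition~\ref{prop:levi} we may take to be $L$ after conjugating by an element of $\U(F)$. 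Then $L\subseteq K\subseteq\cG_{R'}$, so $\cG_{R'}=K\cdot\U$ and $\cG_{R'}/K\cong\U/(\U\cap K)$ is prounipotent. Since $\cG_{R'}=\ker(\cG\to R'')$ and $\cG\to R''$ is onto, $\cG/K$ is an extension of $R''$ by the prounipotent group $\U/(\U\cap K)$.

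It then remains to invert $\cG/K\twoheadrightarrow\cG''$. Because $\cG'\to\cG\to\cG/K$ is trivial, the homomorphism $\G\to(\cG/K)(F)$ kills the normal subgroup $\G'$ and so factors through a homomorphism $\G''\to(\cG/K)(F)$ whose composite with $\cG/K\to R''$ equals $\rho''$. By the universal property of the relative completion $\cG''$ of $\G''$ this yields a homomorphism $\cG''\to\cG/K$ of proalgebraic $F$-groups compatible with all structure maps. The two composites $\cG''\to\cG/K\to\cG''$ and $\cG/K\to\cG''\to\cG/K$ restrict to the identity on the Zariski dense images of $\G''$ and of $\G$, respectively, hence are the identity; so $\cG/K\cong\cG''$, which is precisely the asserted right exactness of the top row.

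I expect the middle paragraph to be the main obstacle: verifying that $\cG/K$ really is a \emph{prounipotent} extension of $R''$, i.e.\ that forming the normal closure $K$ of $\im(\cG')$ collapses a Levi subgroup of $\cG_{R'}$. This is where the quasi-finiteness/finiteness hypotheses enter, through Proposition~\ref{prop:levi} (existence and $\U$-conjugacy of Levi splittings), and it is also where one must be slightly careful that ``image'' and ``normal closure'' are well behaved in the pro-algebraic setting — each finite-dimensional quotient of $\cG$ receives a closed image of $\cG'$, and $K$ is the inverse limit of the resulting normal closures.
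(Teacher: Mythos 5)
Your skeleton — form the closed normal subgroup $K$ generated by $\im(\cG'\to\cG)$, show $\cG/K$ is a prounipotent extension of $R''$, and then use the universal mapping property plus Zariski density of the images of $\G''$ and $\G$ to identify $\cG/K$ with $\cG''$ — is exactly the universal-mapping-property argument the paper alludes to (it gives no details, only a pointer to \cite[\S 4.5]{hain-matsumoto:weighted}), and your first and last paragraphs carry it out correctly.

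The step I would not accept as written is the appeal to Proposition~\ref{prop:levi}. Proposition~\ref{prop:rt_exact} assumes no finiteness of $\G$ or $\G'$, so neither $\cG'\to R'$ nor $\cG_{R'}\to R'$ is known to be a quasi-finite extension; moreover quasi-finiteness of $H_1(\U)$ as an $R$-module does not pass to $R'$ (infinitely many distinct irreducible $R$-modules can all contain a fixed irreducible $R'$-module upon restriction), so even under finiteness hypotheses the splitting of $\cG_{R'}$ is not covered by that proposition. Fortunately the whole Levi detour is unnecessary: you have already observed that $\im(\cG')$ lies in $\cG_{R'}=\ker(\cG\to R'')$ and maps \emph{onto} $\cG_{R'}/\U\cong R'$, because $\cG'\to R'$ is surjective by construction. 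Since $\im(\cG')\subseteq K$ and $\U$ is normal, $K\U$ is a closed subgroup of $\cG_{R'}$ containing $\U$ and surjecting onto $\cG_{R'}/\U$, hence $K\U=\cG_{R'}$; therefore $\cG_{R'}/K$ is the image of $\U$ and so prounipotent, with no mention of Levi subgroups or their conjugacy. (The only structural fact needed is the one you flag at the end: the image of a homomorphism of proalgebraic $F$-groups is a closed subgroup, so $K$ and $\cG/K$ are well defined and surjectivity arguments of the above kind are legitimate.) With that replacement, which is both simpler and valid under the stated hypotheses, your proof is complete.
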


\begin{example}

The moduli space of $n\ge 1$ pointed genus 1 curves will be denoted by
$\M_{1,n}$. It will be regarded as an orbifold. It is isomorphic to (and will be
regarded as) the moduli space of elliptic curves $(E,0)$ with $n-1$ distinct
labelled points $(x_1,\dots,x_{n-1})$. The point of $\M_{1,n}$ that corresponds
to $(E,0,x_1,\dots,x_{n-1})$ will be denoted by $[E,x_1,\dots,x_{n-1}]$.

The fiber of the projection $\M_{1,2} \to \M_{1,1}$ that takes $[E,x]$ to $[E]$
is $E' := E-\{0\}$. Fix a base point $x_o = [E,x]$ of $\M_{1,2}$ and $t_o = [E]
\in \M_{1,1}$. The (orbifold) fundamental group of $\M_{1,2}$ is an extension
$$
1 \to \pi_1(E',x) \to \pi_1(\M_{1,2},x_o) \to \pi_1(\M_{1,1},t_o) \to 1.
$$
Denote the completion of $\pi_1(\M_{1,2},x_o)$ with respect to the natural
homomorphism to $\SL(H_1(E)) \cong \SL_2(\Q)$ by $\cGtilde$. Functoriality and
right exactness of relative completion implies that we have an exact sequence
$$
\pi_1(E',x)^\un \to \cGtilde \to \cG \to 1.
$$
In this case, we can prove exactness on the left as well.

This is proved using the conjugation action of $\pi_1(\M_{1,2},x_o)$ on
$\pi_1(E',x)$, which induces an action of $\pi_1(\M_{1,2},x_o)$ on the Lie
algebra $\p$ of $\pi_1(E',x)^\un$. This action preserves the lower central
series filtration of $\p$ and therefore induces an action on $\Gr^\dot_\LCS\p
\cong \L(H_1(E))$. This action is determined by its action on $H_1(E)$, and
therefore factors through the homomorphism $\pi_1(\M_{1,2},x_o) \to \SL_2(\Q)$.
The universal mapping property of relative completion implies that this induces
an action $\cGtilde \to \Aut\p$ and the corresponding Lie algebra homomorphism
$\gtilde \to \Der \p$. The composite $\p \to \gtilde \to \Der\p$ is the
homomorphism induced by the conjugation action of $\pi_1(E',x)^\un$ on itself
and is therefore the adjoint action. Since $\p$ is free of rank $>1$, it has
trivial center, which implies that the adjoint action is faithful and that $\p
\to \gtilde$ is injective.

\end{example}

\subsection{Hodge Theory}

Suppose that $X$ is the complement of a normal crossings divisor in a compact
K\"ahler manifold. Suppose that $F=\Q$ or $\R$ and that $\V$ is a polarized
variation of $F$-Hodge structure over $X$. Pick a base point $x_o \in
X$.\footnote{We also allow tangential base points.} Denote the fiber over $\V$
over $x_o$ by $V_o$. The Zariski closure of the image of the monodromy
representation
$$
\rho : \pi_1(X,x_o) \to \Aut(V_o)
$$
is a reductive $F$-group, \cite[4.2.6]{deligne:hodge2}. Denote it by $R$. Then
one has the relative completion $\cG$ of $\pi_1(X,x_o)$ with respect to $\rho :
\pi_1(X,x_o) \to R(F)$.

\begin{theorem}[\cite{hain:malcev}]
The coordinate ring $\cO(\cG)$ is a Hopf algebra in the category of Ind-mixed
Hodge structures over $F$. It has the property that $W_{-1}\cO(\cG)=0$ and
$W_0\cO(\cG) = \cO(R)$.
\end{theorem}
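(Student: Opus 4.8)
The plan is to realize $\cO(\cG)$ as a direct limit of sub-MHS coming from the iterated-integral description of the relative completion. Concretely, relative completion of $\pi_1(X,x_o)$ with respect to $\rho$ has a de~Rham/Betti description via a bar-type construction on a complex computing $H^\bullet(X,\V^{\otimes\bullet})$ (or equivalently, via the coordinate ring of the tannakian category $\cL(\G,R)$). The first step is to fix a functorial mixed Hodge complex $K^\bullet$ (Zucker's logarithmic complex, using the smooth compactification $\Xbar$ of $X$ and the canonical extensions of the variations) computing the cohomology $H^\bullet(X,\Sym^m\V)$ together with cup products, so that the reduced bar construction $B(K^\bullet)$ carries a natural mixed Hodge structure with the usual bar weight and Hodge filtrations. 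One then identifies the $0$-th cohomology $H^0(B(K^\bullet))$, or rather the sub-Hopf-algebra of it cut out by the tannakian conditions defining $\cL(\G,R)$, with $\cO(\cG)$. This identification is exactly the content of \cite{hain:malcev}; here I would quote it. The Hopf algebra structure (product from shuffle, coproduct from deconcatenation, antipode from path reversal) is visibly compatible with the Hodge and weight filtrations because each of these operations is induced by a morphism of mixed Hodge complexes, so $\cO(\cG)$ is a Hopf algebra object in Ind-MHS.

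Next I would pin down the weight bounds. Because $\V$ is a \emph{polarized} variation of Hodge structure, in the mixed Hodge complex computing $H^\bullet(X,\Sym^m\V)$ the weights satisfy $W_{m+k-1}H^k = 0$ for the open curve/variety (the weights on $H^k(X,\Sym^m\V)$ are $\ge m+k$, strictly $>m+k-1$), and crucially $H^0(X,\Sym^m\V)=0$ for $m>0$ since the monodromy group $R$ is reductive and $\Sym^m\V$ has no invariants unless $m=0$. Now the reduced bar construction $\overline{B}$ is built only from the \emph{positive-degree, positive-weight} part of the bar complex, and the bar filtration adds the weights of the tensor factors. A length-$r$ bar element $[\alpha_1|\cdots|\alpha_r]$ with each $\alpha_i$ of cohomological degree $1$ contributing from $H^1(X,\Sym^{m_i}\V)$ has weight at least $\sum_i(m_i+1)\ge r\ge 1$; since $\alpha_i$ of degree $0$ can only contribute through $H^0(X,\Sym^0\V)=\Q$ (weight $0$) and such factors are killed in the \emph{reduced} bar complex, every element of $\overline{B}$ in positive bar-degree has weight $\ge 1>0$. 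Combined with the augmentation $\overline{B}\oplus\Q = B$, this gives $W_{-1}\cO(\cG)=0$.

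For the statement $W_0\cO(\cG)=\cO(R)$: the weight-zero part $\Gr^W_0\cO(\cG)$ must be spanned by bar-degree-zero elements (the constants $\Q$) together with potential weight-zero contributions from higher bar degree — but we just argued those all have weight $\ge 1$. Hence $W_0\cO(\cG)$ is the bar-degree-zero part. Under the extension $1\to\U\to\cG\to R\to 1$, the inclusion $\cO(R)\hookrightarrow\cO(\cG)$ is precisely the inclusion of the invariants of the $\U$-coaction, equivalently (after Levi splitting, Proposition~\ref{prop:levi}) the functions pulled back along $\cG\to R$; on the bar/tannakian side these are exactly the functions of bar-degree zero, i.e.\ matrix coefficients of $R$-representations with trivial $\U$-part. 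One checks $\cO(R)$ is a sub-MHS of weight $0$ (its Hodge structure is the trivial weight-$0$ one, since $R$ acts through Hodge structures and $\cO(R)$ is a sum of $V_\lambda^*\otimes V_\lambda$, each carrying a pure weight-zero Tate-type structure — here I would use that the polarization of $\V$ makes the adjoint/symmetric powers pure). Therefore $W_0\cO(\cG)=\cO(R)$ as MHS.

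The main obstacle I anticipate is not the weight estimate but the clean identification of $\cO(\cG)$ with the sub-Hopf-algebra of the bar construction and the verification that \emph{all} the structure (product, coproduct, antipode, and the $R$-coaction) is simultaneously filtered — i.e.\ reducing rigorously to \cite{hain:malcev}. Once that reduction is in hand, the two displayed equalities $W_{-1}\cO(\cG)=0$ and $W_0\cO(\cG)=\cO(R)$ follow from the bar-degree/weight bookkeeping together with vanishing of $H^0(X,\Sym^m\V)$ for $m>0$, which in turn rests on reductivity of $R$ and positivity of weights for polarized variations. A secondary subtlety is the treatment of tangential base points, where one must use the limit mixed Hodge structure and check the same weight bounds persist; I would handle this by the standard nearby-cycles/canonical-extension argument, which does not change the lower weight bound.
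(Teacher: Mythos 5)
Your overall strategy --- realize $\cO(\cG)$ as the degree-zero cohomology of a bar construction on a mixed Hodge complex and read off the weight bounds by bar-length bookkeeping --- is exactly the route of \cite{hain:malcev}, which is all the paper itself offers for this statement (it is quoted, and later reproved in the curve case by the explicit connection form $\Omega\in F^0W_{-1}K^1(C,D;\bu)$ in Theorem~\ref{thm:mhs}). The architecture is therefore fine; the flaw is in how you handle the degree-zero part, and it would make the argument fail as written. The vanishing $H^0(X,\Sym^m\V)=0$ for $m>0$ that you invoke is neither true in general nor needed: reductivity of $R$ does not prevent $\Sym^m\V$ from containing trivial summands (take $\V$ with a constant direct factor, or orthogonal monodromy; in the extreme case $R=\{1\}$, $\V=\Q$, every $H^0(X,\Sym^m\V)$ is $\Q$ and the theorem still holds). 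Relatedly, your description of the bar-degree-zero part is internally inconsistent: you first say it consists of the constants $\Q$ --- which would give $W_0\cO(\cG)=\Q$, contradicting the conclusion whenever $R$ is nontrivial --- and later that it is $\cO(R)$.

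The correct mechanism, which is the one in \cite{hain:malcev} and in the paper's proof of Theorem~\ref{thm:mhs}, is that the dga being barred is the complex of $R$-finite forms with coefficients in the local system $\OO_x$ (fiber $\cO(R_x)$), whose $H^0$ is $\cO(R)$, pure of weight $0$; this dga is not connected, so the reduced bar construction of a connected algebra does not apply, and one must use the two-sided bar construction $B\big(\C,\,\cdot\,,\cO(R)\big)$, i.e.\ work relative to the subalgebra $H^0=\cO(R)$. With that correction the bookkeeping you intend goes through: each positive bar degree contributes a factor of $H^1$ with weight-zero coefficients, whose weights are $\ge 1$ by Zucker/Saito, so positive bar length lands in weight $\ge 1$, while the module factor $\cO(R)$ is pure of weight $0$; hence $W_{-1}\cO(\cG)=0$ and $W_0\cO(\cG)=\cO(R)$. (Your intermediate bound ``weight $\ge\sum_i(m_i+1)$'' also overlooks the twist by the constant dual fiber $V_{\lambda,x}^\ast$; the honest bound is weight $\ge 1$ per tensor factor, which is all that is needed.)
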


A slightly weaker version of the theorem is stated in terms of Lie algebras.
Denote the prounipotent radical of $\cG$ by $\U$. Denote their Lie algebras by
$\g$ and $\u$, and the Lie algebra of $R$ by $\r$.

\begin{corollary}[\cite{hain:malcev}]
\label{cor:hodge_la}
The Lie algebra $\g$ is a Lie algebra in the category of pro-mixed Hodge
structures over $F$. It has the property that
$$
\g = W_0 \g,\ \u= W_{-1}\g, \text{ and } \Gr^W_0 \g \cong \r.
$$
If $\V$ is a PVHS over $X$ with fiber $V_o$ over the base point $x_o$, then the
composite
$$
H^\dot(\u,V_o)^R \to H^\dot(\G,V_o) \to H^\dot(X,\V)
$$
of (\ref{eqn:homom}) with the canonical homomorphism is a morphism of MHS. It is
an isomorphism in degrees $\le 1$ and injective in degree 2. When $X$ is an
(orbi) curve, it is an isomorphism in all degrees.
\end{corollary}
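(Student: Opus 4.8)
The plan is to derive both assertions from the Theorem --- the Lie-algebra statement by dualising the Hopf-algebra mixed Hodge structure, and the cohomology statement by realising the comparison map at the level of mixed Hodge complexes.

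For the Lie algebra: $\g$ is the continuous dual of the Ind-MHS $\m/\m^2$, where $\m\subset\cO(\cG)$ is the augmentation ideal (a sub-Ind-MHS, since the counit is a morphism of MHS); hence $\g$ is a pro-MHS, and its bracket, built from the dual of the coproduct of $\cO(\cG)$, is a morphism. Dualising $W_{-1}\cO(\cG)=0$ gives $\g=W_0\g$. As for $W_0\cO(\cG)=\cO(R)$: the inclusion $\cO(R)\hookrightarrow\cO(\cG)$ is the inclusion $W_0\cO(\cG)\hookrightarrow\cO(\cG)$, so the structure map $\g\to\r$ is identified with the weight-$0$ quotient $\g\to\Gr^W_0\g$; consequently $\Gr^W_0\g\cong\r$ (which is therefore pure of weight $0$) and $\u=\ker(\g\to\r)=W_{-1}\g$.

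For the cohomology statement, compute $H^\dot(\cG,V_o)$ from the cobar complex $\{V_o\otimes\cO(\cG)^{\otimes n}\}_{n\ge 0}$ of the $\cO(\cG)$-comodule $V_o$. Here $V_o$ carries the Hodge structure of the fibre of $\V$ at $x_o$, or its limit MHS if $x_o$ is tangential, and the coaction $V_o\to V_o\otimes\cO(\cG)$ is a morphism of MHS because $V_o$, being the fibre of a polarized variation whose monodromy is Zariski dense in $R$, is a Hodge representation of $\cG$ in the sense of the introduction. Hence the cobar complex lies in Ind-MHS with differentials that are morphisms, $H^\dot(\cG,V_o)$ acquires a MHS, and --- via the Chevalley--Eilenberg complex of $\u$, a complex in Ind-MHS by the previous paragraph --- this agrees with the MHS on $[H^\dot(\u)\otimes V_o]^R\cong H^\dot(\u,V_o)^R$ of Section~\ref{sec:coho}. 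That the composite to $H^\dot(X,\V)$ is a morphism of MHS is where Chen's de~Rham theorem for relative completion enters: Chen's iterated-integral construction yields a morphism of mixed Hodge complexes from a complex computing $H^\dot(\cG,V_o)$ to Zucker's mixed Hodge complex computing $H^\dot(X,\V)$ which realises this map --- indeed this is precisely the compatibility along which the MHS on $\cO(\cG)$ is constructed in \cite{hain:malcev}. For the degree bounds, factor the composite as $H^\dot(\cG,V_o)\to H^\dot(\G,V_o)\to H^\dot(X,\V)$: the first arrow is \eqref{eqn:homom}, an isomorphism in degrees $\le 1$ and injective in degree $2$, while the second (the comparison of group cohomology with the cohomology of $X$ induced by $X\to B\G$) has the same property for any $X$ and is an isomorphism in all degrees when $X$ is aspherical. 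When $X$ is an affine (orbi) curve, $\G$ is virtually free, so $H^{\ge 2}(\G,V)=0$ for every rational representation $V$ of $R$ and hence $H^{\ge 2}(\u,V)^R=0$ (by \eqref{eqn:homom}, which is then an isomorphism in all degrees, $\u$ being free), while $X$ is rationally aspherical, so $H^{\ge 2}(X,\V)=0$; the comparison is thus an isomorphism in every degree. (A compact curve would instead require the $K(\G,1)$ property together with Poincar\'e duality.)

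The hard part will be the MHS-compatibility of the comparison map: from the bare existence of the MHS on $\cO(\cG)$ one cannot deduce it, since one must know that this MHS is the one transported from the iterated-integral mixed Hodge complex --- equivalently, that Chen's de~Rham isomorphism is filtered. Carrying the logarithmic connection on the canonical extension of $\V$ over a smooth compactification $\Xbar$ through Chen's bar construction, with the correct weight shift, is exactly the work done in \cite{hain:malcev}, of which the present corollary is essentially a reformulation; everything else above is formal.
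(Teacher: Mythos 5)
Your argument is correct and follows essentially the route the paper itself takes: the corollary is stated there without proof as a Lie-algebra reformulation of the quoted theorem of \cite{hain:malcev}, and your formal dualization of the Hopf-algebra MHS (giving $\g=W_0\g$, $\u=W_{-1}\g$, $\Gr^W_0\g\cong\r$), combined with the degree bounds from (\ref{eqn:homom}) and the classifying-map comparison $H^\dot(\G,V_o)\to H^\dot(X,\V)$, is exactly that deduction, with the genuinely hard point --- that the comparison is a morphism of MHS, i.e.\ that the iterated-integral/bar-construction de~Rham isomorphism is bifiltered --- correctly deferred to \cite{hain:malcev}. The only soft spot is the final clause about curves: your argument fully covers affine (orbi) curves, where $\u$ is free and both sides vanish in degrees $\ge 2$ (the case actually used in this paper), while the compact-curve case you flag parenthetically is likewise supplied by \cite{hain:malcev} rather than by anything proved here.
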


The existence of the mixed Hodge structure on $\u$ in the unipotent case and
when $X$ is not necessarily compact is due to Morgan \cite{morgan} and Hain
\cite{hain:malcev}. The results in this section also hold in the orbifold case.

\begin{example}
\label{ex:monod}
The local system $R^1f_\ast \Q$ over $\M_{1,1}$ associated to the universal
elliptic curve $f:\E \to \M_{1,1}$ is a polarized variation of Hodge structure
of weight 1. This variation and its pullback to $\M_{1,n}$ will be denoted by
$\H$. It has fiber $H^1(E)$ over $[E]$. The Zariski closure of the monodromy
representation $\pi_1(\M_{1,1},[E]) \to \Aut H^1(E)$ is $\SL(H_1(E))$, which is
isomorphic to $\SL_2$. Poincar\'e duality $H_1(E)\cong H^1(E)(1)$ induces an
isomorphism of $\H(1)$ with the local system over $\M_{1,1}$ whose fiber over
$[E]$ is $H_1(E)$.

The choice of an elliptic curve $E$ and a non-zero point $x$ of $E$ determines
compatible base points of $E'$, $\M_{1,1}$ and $\M_{1,2}$. Denote the
Lie algebras of the relative completions of $\pi_1(\M_{1,1},[E])$ and
$\pi_1(\M_{1,2},[E,x])$ by $\g$ and $\gtilde$, respectively. Denote the
Lie algebra of the unipotent completion of $\pi_1(E',x)$ by $\p$. The
results of this section imply that each has a natural MHS and that the
sequence
$$
0 \to \p \to \gtilde \to \g \to 0
$$
is exact in the category of MHS. The adjoint action of $\gtilde$ on $\p$
induces an action
$$
\gtilde \to \Der\p
$$
Since the inclusion $\p \to \gtilde$ is a morphism of MHS, this homomorphism is
a morphism of MHS.

Since the functor $\Gr^\dot_W$ is exact on the category of MHS, one can study
this action by passing to its associated graded action
$$
\Gr^W_\dot \gtilde \to \Der\L(H_1(E)).
$$
\end{example}

\section{A Concrete Approach to Relative Completion}
\label{sec:concrete}

Suppose that $M$ is the orbifold quotient\footnote{For a detailed and elementary
description of what this means, see \cite[\S3]{hain:china}.} $\G\bs X$ of a
simply connected manifold $X$ by a discrete group $\G$. We suppose that $\G$
acts properly discontinuously and virtually freely\footnote{That is, $\G$ has a
finite index subgroup that acts freely on $X$.} on $X$. Our main example will be
when $X=\h$ and $\G$ is a finite index subgroup of $\SL_2(\Z)$.

Suppose that $R$ is a complex (or real) Lie group and that $(\u_\alpha)_\alpha$
is an inverse system of finite dimensional nilpotent Lie algebras in the
category of left $R$-modules. Its limit 
$$
\u = \varprojlim_\alpha \u_\alpha
$$
is a pronilpotent Lie algebra in the category of $R$-modules. Denote the
unipotent Lie group corresponding to $\u_\alpha$ by $U_\alpha$. The prounipotent
Lie group corresponding to $\u$ is the inverse limit of the $U_\alpha$.

The action of $R$ on $\u$ induces an action of $R$ on $\U$, so we can form the
semi-direct product\footnote{To be clear, the group $R\ltimes U$ is the set
$U\times R$ with multiplication
$$
(u_1,r_1)(u_2,r_2) = (u_1 (r_1\cdot u_2),r_1r_2),
$$
where $r\cdot u$ denotes the action of $R$ on $U$. We will omit the dot when it
is clear from the context that $ru$ means the action of $R$ on $U$.} $R\ltimes
\U$. This is the inverse limit
$$
R\ltimes \U = \varprojlim_\alpha (R\ltimes U_\alpha) 
$$
If $R$ is an algebraic group, then $R\ltimes \U$ is a proalgebraic group.

Suppose that $\rho : \G \to R$ is a representation. At this stage, we do not
assume that $\rho$ has Zariski dense image. The following assertion is easily
proved.

\begin{lemma}
Homomorphisms $\rhohat : \G \to R\ltimes\U$ that lift $\rho$ correspond to
functions $F : \G \to \U$ that satisfy the 1-cocycle condition
$$
F(\gamma_1\gamma_2) = F(\gamma_1)(\gamma_1\cdot F(\gamma_2)).
$$
The homomorphism $\rhohat$ corresponds to the function $\G \to \U\times R$
$$
\gamma \mapsto \big(F(u),\rho(\gamma)\big)
$$
under the identification of $R\ltimes \U$ with $\U\times R$. \qed
\end{lemma}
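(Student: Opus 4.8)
The plan is to unwind both sides of the claimed correspondence and check that the two conditions---being a group homomorphism lifting $\rho$, and satisfying the twisted $1$-cocycle identity---are literally the same equation after transporting along the bijection $R\ltimes\U \cong \U\times R$ of underlying sets. First I would fix the convention: a set-theoretic lift $\rhohat:\G\to R\ltimes\U$ of $\rho$ is, under the identification with $\U\times R$, a map $\gamma\mapsto(F(\gamma),\rho(\gamma))$ for a uniquely determined function $F:\G\to\U$ (the second coordinate is forced to be $\rho(\gamma)$ precisely because $\rhohat$ lifts $\rho$, using the footnoted description of the semi-direct product in which projection to $R$ is the second coordinate). So the content is entirely about when such an $F$ makes $\rhohat$ multiplicative.

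Next I would compute $\rhohat(\gamma_1)\rhohat(\gamma_2)$ using the multiplication rule from the paper's footnote,
$$
(u_1,r_1)(u_2,r_2) = \big(u_1\,(r_1\cdot u_2),\, r_1 r_2\big),
$$
applied with $u_i = F(\gamma_i)$ and $r_i=\rho(\gamma_i)$. This gives
$$
\rhohat(\gamma_1)\rhohat(\gamma_2)
= \big(F(\gamma_1)\,(\rho(\gamma_1)\cdot F(\gamma_2)),\ \rho(\gamma_1)\rho(\gamma_2)\big).
$$
On the other hand $\rhohat(\gamma_1\gamma_2) = \big(F(\gamma_1\gamma_2),\,\rho(\gamma_1\gamma_2)\big)$. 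Since $\rho$ is already a homomorphism, the second coordinates agree automatically, so $\rhohat$ is a homomorphism if and only if the first coordinates agree, i.e.
$$
F(\gamma_1\gamma_2) = F(\gamma_1)\,\big(\rho(\gamma_1)\cdot F(\gamma_2)\big),
$$
which---writing $\gamma_1\cdot F(\gamma_2)$ for $\rho(\gamma_1)\cdot F(\gamma_2)$ as in the paper's notational remark---is exactly the displayed $1$-cocycle condition. Conversely, any $F$ satisfying that identity yields a multiplicative $\rhohat$ by reading the same computation backwards; and $\rhohat$ so defined visibly lifts $\rho$. This establishes the bijection; the final sentence of the lemma is just the explicit formula for $\rhohat$ in terms of $F$, which we have already written down.

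There is essentially no obstacle here---the statement is a direct computation---so the only things to be careful about are bookkeeping: making sure the semi-direct product convention (action on the left, and which factor is which in $\U\times R$) is used consistently, and noting that automatic continuity/algebraicity is not being claimed (at this stage $\rho$ need not have Zariski-dense image, and $\rhohat$ is just a homomorphism of abstract groups), so no topological hypotheses enter. One should also remark that a cocycle necessarily has $F(1)=1$, which follows by setting $\gamma_1=\gamma_2=1$, so that $\rhohat$ automatically sends the identity to the identity; this is worth noting but is a consequence of the cocycle identity rather than an extra condition.
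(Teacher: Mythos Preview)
Your proof is correct and is precisely the direct verification the paper has in mind; indeed, the paper states the lemma with a \qed\ and no argument, prefacing it only with ``The following assertion is easily proved.'' Your unwinding of the semi-direct product multiplication from the footnote is exactly the intended computation.
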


Cocycles can be constructed from $\G$-invariant 1-forms on $X$ with values
in $\u$. Define
$$
E^\dot(X)\comptensor \u := \varprojlim_\alpha E^\dot(X)\otimes \u_\alpha,
$$
where $E^\dot(X)$ denotes the complex of smooth $\C$-valued forms on $X$. The
group $\G$ acts on $E^\dot(X)\comptensor \u$ by
$$
\gamma \cdot \w = \big((\gamma^\ast)^{-1}\otimes \gamma\big)\w.
$$
Such a form $\w$ is invariant if
$$
(\gamma^\ast \otimes 1)\w = (1\otimes \gamma)\w
$$
for all $\gamma \in \G$.

Let $\G$ act on $X\times \U$ diagonally: $\gamma : (u,x) \to (\gamma u, \gamma
x)$. The projection
$$
X \times \U \to X
$$
is a $\G$-equivariant principal right $\U$-bundle. Its sections correspond to
functions $f : X \to \U$. Each $\w \in E^1(X)\comptensor \u$ defines a
connection on this bundle invariant under the right $\U$ action via the formula
$$
\nabla f = df + \w f,
$$
where $f$ is a $\U$-valued function defined locally on $X$. The connection is
$\G$-invariant if and only if $\w$ is $\G$-invariant. It is flat if and only if
$\w$ is integrable:
$$
d\w + \frac{1}{2}[\w,\w] = 0 \quad \text{ in } E^2(X)\comptensor \u.
$$
In this case, parallel transport defines a function
$$
T : PX \to \U
$$
from the path space of $X$ into $\U$. With our conventions, this satisfies
$T(\alpha\ast\beta) = T(\beta)T(\alpha)$. When $\w$ is integrable, $T(\alpha)$
depends only on the homotopy class of $\gamma$ relative to its endpoints. Chen's
transport formula implies that the inverse transport function is given by the
formula
\begin{equation}
\label{eqn:transport}
T(\alpha)^{-1} =
1 + \int_\alpha \w + \int_\alpha \w\w + \int_\alpha \w\w\w + \cdots
\end{equation}
Cf.\ \cite[Cor.~5.6]{hain:kzb}.

Fix a point $x_o \in X$. Since $X$ is simply connected, for each $\gamma \in \G$
there is a unique homotopy class $c_\gamma$ of paths from $x_o$ to $\gamma\cdot
x_o$.

\begin{proposition}
\label{prop:monodromy}
If $\w \in E^1(X)\comptensor \u$ is $\G$-invariant and integrable, then the
function $\Theta_{x_o} : \G \to \U$ defined by
$$
\Theta_{x_o}(\gamma) = T(c_\gamma)^{-1}
$$
is a well-defined (left) 1-cocycle with values in $\U$:
$$
\Theta_{x_o}(\gamma\mu) =
\Theta_{x_o}(\gamma)\big(\gamma\cdot\Theta_{x_o}(\mu)\big).
$$
Consequently, the function $\rhotilde_{x_o} : \G \to R\ltimes \U$ defined by
$\gamma \mapsto \big(\Theta_{x_o}(\gamma),\rho(\gamma)\big)$ is a homomorphism.
\end{proposition}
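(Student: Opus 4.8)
The key point is that the cocycle identity for $\Theta_{x_o}$ is forced by two facts: (1) parallel transport is multiplicative along concatenation of paths, and (2) the form $\w$ is $\G$-invariant, so that the $\G$-action on $X$ interacts correctly with parallel transport. Once the cocycle identity is established, the final assertion about $\rhotilde_{x_o}$ being a homomorphism is immediate from the preceding Lemma, which identifies lifts of $\rho$ to $R\ltimes\U$ with $\U$-valued $1$-cocycles. So the whole proof reduces to checking that $\Theta_{x_o}(\gamma\mu) = \Theta_{x_o}(\gamma)\,\big(\gamma\cdot\Theta_{x_o}(\mu)\big)$.

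First I would record the behaviour of parallel transport under the $\G$-action. If $\alpha$ is a path in $X$ and $\gamma\in\G$, then $\gamma\cdot\alpha := \gamma\circ\alpha$ is again a path, and because $\w$ is $\G$-invariant (i.e.\ $(\gamma^\ast\otimes 1)\w = (1\otimes\gamma)\w$), Chen's transport formula \eqref{eqn:transport} together with the change-of-variables $\int_{\gamma\cdot\alpha}\w\cdots\w = \int_\alpha \gamma^\ast\w\,\cdots\,\gamma^\ast\w = \gamma\cdot\int_\alpha\w\cdots\w$ gives the identity $T(\gamma\cdot\alpha)^{-1} = \gamma\cdot\big(T(\alpha)^{-1}\big)$, where on the right $\gamma$ acts via the $R$-action on $\U$ through $\rho$. (Here I am using that each iterated integral $\int_\alpha\w\cdots\w$ lies in $\u_\alpha$ and that $\gamma$ acts on $E^\dot(X)\comptensor\u$ diagonally, so pulling the form back by $\gamma^\ast$ and then applying $\gamma$ to the coefficients is exactly the diagonal action, which fixes $\w$.) This is the step I expect to require the most care: one must be careful that "$\gamma\cdot$" on the form side and "$\gamma\cdot$" on the $\U$-coefficient side are set up so that $\G$-invariance of $\w$ translates into precisely this equivariance of inverse transport, and one must check convergence / compatibility at each finite level $\u_\alpha$ before passing to the inverse limit.

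Next I would assemble the cocycle identity. By definition $\Theta_{x_o}(\gamma\mu) = T(c_{\gamma\mu})^{-1}$, where $c_{\gamma\mu}$ is the homotopy class of paths from $x_o$ to $\gamma\mu\cdot x_o$. Since $X$ is simply connected, $c_{\gamma\mu}$ is the concatenation $c_\gamma \ast (\gamma\cdot c_\mu)$: first go from $x_o$ to $\gamma\cdot x_o$ along $c_\gamma$, then apply $\gamma$ to a path $c_\mu$ from $x_o$ to $\mu\cdot x_o$ to get a path from $\gamma\cdot x_o$ to $\gamma\mu\cdot x_o$. Both sides are homotopy classes of paths with the same endpoints in a simply connected space, hence equal. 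Now apply multiplicativity of transport, $T(\alpha\ast\beta) = T(\beta)T(\alpha)$, which for inverse transport reads $T(\alpha\ast\beta)^{-1} = T(\alpha)^{-1}T(\beta)^{-1}$; thus
$$
\Theta_{x_o}(\gamma\mu) = T\big(c_\gamma \ast (\gamma\cdot c_\mu)\big)^{-1}
= T(c_\gamma)^{-1}\, T(\gamma\cdot c_\mu)^{-1}
= \Theta_{x_o}(\gamma)\,\big(\gamma\cdot T(c_\mu)^{-1}\big)
= \Theta_{x_o}(\gamma)\,\big(\gamma\cdot\Theta_{x_o}(\mu)\big),
$$
using the equivariance from the previous step. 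Well-definedness of $\Theta_{x_o}(\gamma)$ is clear since $\w$ is integrable, so $T(c_\gamma)$ depends only on the homotopy class $c_\gamma$, which is unique by simple connectedness. Finally, by the Lemma preceding the Proposition, a function $\G\to\U$ satisfying exactly this $1$-cocycle condition is the same data as a homomorphism $\rhotilde_{x_o}:\G\to R\ltimes\U$ lifting $\rho$, namely $\gamma\mapsto\big(\Theta_{x_o}(\gamma),\rho(\gamma)\big)$; this gives the last sentence and completes the proof.
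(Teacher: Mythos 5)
Your proof is correct and follows essentially the same route as the paper, which deduces the cocycle identity from the relation $c_{\gamma\mu} = c_\gamma \ast (\gamma\cdot c_\mu)$ together with the transport formula; you simply spell out the $\G$-equivariance of the inverse transport (via $\G$-invariance of $\w$ and Chen's formula) that the paper leaves implicit. The final reduction to the preceding Lemma is also exactly as intended.
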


\begin{proof}
This follows directly from the fact that $c_{\gamma\mu} = c_\gamma \ast
(\gamma\cdot c_\mu)$ and the transport formula above.
\begin{figure}[!ht]
\epsfig{file=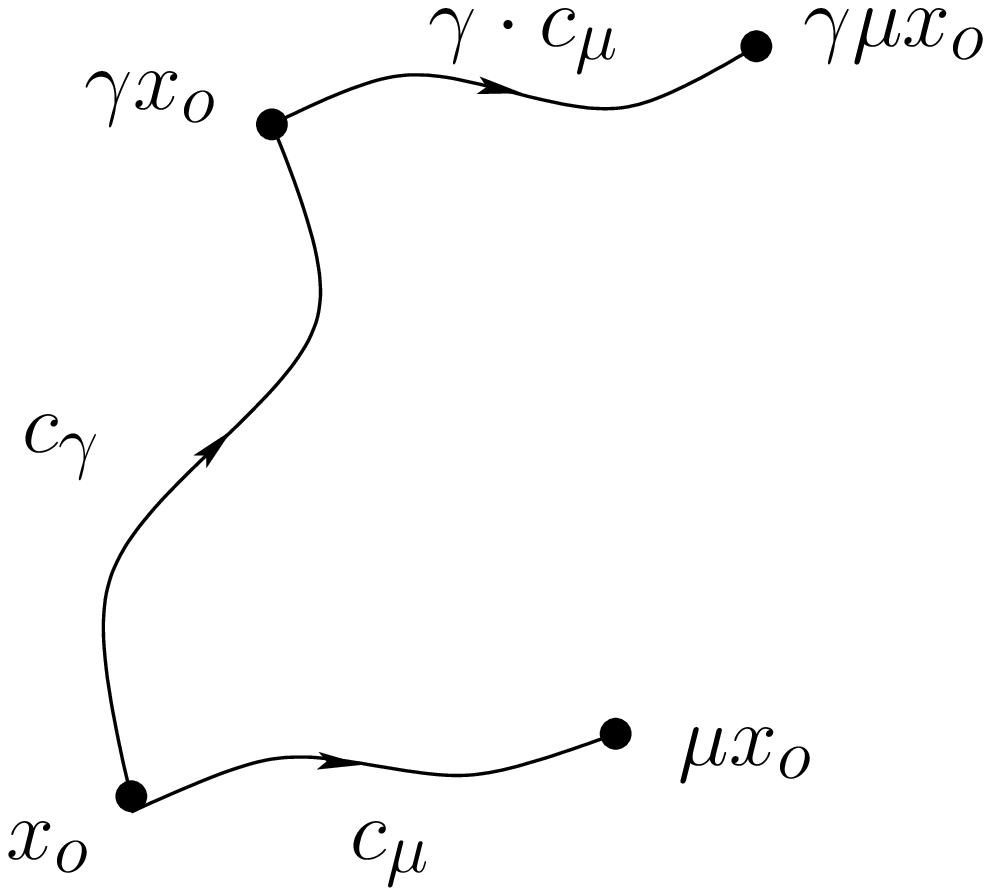, width=1.5in}
\caption{The cocycle relation $c_{\gamma\mu} = c_\gamma \ast
(\gamma\cdot c_\mu)$}
\label{fig:cocycle}
\end{figure}
\end{proof}

\begin{remark}
\label{rem:cobound}
The dependence of $\Theta_{x_o}$ and $\rhotilde_{x_o}$ on $x_o$ is easily
determined. Suppose that $x'$ is a second base point. If $e$ is the unique
homotopy class of paths in $X$ from $x_o$ to $x'$, then
$
c_\gamma' := e^{-1}\ast c_\gamma \ast (\gamma\cdot e)
$
is the unique homotopy class of paths in $X$ from $x'$ to $\gamma\cdot x'$. Thus
$$
T(c_\gamma')^{-1} = T(e)T(c_\gamma)^{-1} (\gamma\cdot T(e)^{-1}).
$$
Since, for $u,v\in\U$ and $r\in R$,
$$
(v,1)(u,r)(v^{-1},1) = (v (r\cdot v^{-1}),\gamma)
$$
in $R\ltimes \U$, the previous formula implies that $\rhotilde_{x'}$ is
obtained from $\rhotilde_{x_o}$ by conjugation by $T(e)\in \U$ and that
$$
\Theta_{x'}(\gamma) = T(e)\Theta_{x_o}(\gamma\cdot T(e)^{-1}).
$$
\end{remark}

\subsection{Variant}
\label{sec:variant}

Later we will sometimes need a slightly more general setup. This material is
standard. More details can be found in \cite[\S5]{hain:kzb}. As above,
$R$ acts on $\U$ on the left.

Recall that a {\em factor of automorphy} is a smooth function
$$
M : \G \times X \to R,\quad (\gamma,x)\mapsto M_\gamma(x)
$$
that satisfies $M_{\gamma\mu}(x) = M_\gamma(\mu x) M_\mu(x)$ for all $x\in X$
and $\gamma,\mu\in\G$. One checks easily that the function
$$
\gamma : (x,u) \mapsto (\gamma x, M_\gamma(x)u)
$$
defines a left action of $\G$ on the right principal $\U$ bundle $X\times \U$.

Suppose that $\nabla_0$ is a connection on the bundle $X\times \U \to X$. A
1-form $\w \in E^1(X)\comptensor \u$ defines a right $\U$-invariant connection
$\nabla$ on this bundle by
$$
\nabla f = \nabla_0 f + \w f,
$$
where $f : X \to \U$ is a section. This connection is $\G$-invariant if and only
if
$$
(\gamma^\ast\otimes 1)\w =
(1\otimes M_\gamma)\w - (\nabla_0 M_\gamma) M_\gamma^{-1}
\text{ all }\gamma \in \G
$$
and flat if and only if $\nabla_0\w + [\w,\w]/2 = 0$.

When $\nabla$ is a flat $\G$-invariant connection, the monodromy representation
$$
\Theta_{x_o} : \G \to R\ltimes \U
$$
is given by
$$
\Theta_{x_o} : \gamma \mapsto \big(T(c_\gamma)^{-1},M_\gamma(x_o)\big),
$$
where for a path $\alpha \in X$, $T(\alpha)$ is given by the formula
(\ref{eqn:transport}).

The setup of the previous section is a special case where $M_\gamma(x) =
\rho(\gamma)$ and where the connection $\nabla_0$ is trivial; that is, $\nabla_0
= d$.

\subsection{A characterization of relative unipotent completion}
\label{sec:charn}

To give a concrete construction of the completion of $\G$ relative to $\rho : \G
\to R$, we need a useful criterion for when a homomorphism $\G \to R\ltimes \U$
induces an isomorphism $\cG \to R\ltimes \U$ with the relative completion. In
the present situation, the criterion is cohomological.

Suppose that $\w \in E^1(X)\comptensor \u$ is an integrable $\G$-invariant
1-form on $X$, as above. Suppose that $V$ is a finite dimensional $R$-module and
that $\U \to \Aut V$ is an $R$-invariant homomorphism.\footnote{Equivalently,
$V$ is an $R\ltimes \U$-module.} Denote the action of $u\in\U$ on $v\in V$ by
$u\cdot v$. Let $M$ be the orbifold quotient of $X\times V$ by the diagonal
$\G$-action: $\gamma : (x,v)\mapsto \big(\gamma x,\rho(\gamma)v\big)$. The
formula
$$
\nabla v = dv + \w\cdot v
$$
defines a flat $\G$-invariant connection on the vector bundle $X\times V \to X$.
The complex
$$
(E^\dot(X)\otimes V)^\G :=
\{\w \in E^\dot(X)\otimes V : (\gamma^\ast\otimes 1)\w = (1\otimes \gamma)\w\}
$$
of $\G$-invariant $V$-valued forms on $X$ has differential defined by
$$
\nabla (\eta \otimes v) = d\eta\otimes v + \eta\wedge (\w\cdot v),
$$
where $\eta\in E^\dot(X)$ and $v\in V$. It computes the cohomology
$H^\dot(M,\V)$ of the orbifold $M$ with coefficients in the orbifold local
system $\V$ of locally constant sections of the flat vector bundle $\G\bs
(X\times V)$ over $M$.

\begin{lemma}
If $R$ is reductive, each integrable $\G$-invariant 1-form $\w \in
E^1(X)\comptensor \u$ induces a ring homomorphism
$$
H^\dot(\u,V)^R \to H^\dot(M,\V).
$$
\end{lemma}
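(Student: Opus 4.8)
The plan is to construct the map on cochains explicitly and then check it descends to cohomology and respects the ring structures. Recall that the Chevalley--Eilenberg complex computing $H^\dot(\u,V)^R$ is $\big(\Hom^\cts(\Lambda^\dot\u,V)\big)^R$ with the usual differential twisted by the $\u$-action on $V$, and that its cohomology is $H^\dot(\u,V)^R \cong H^\dot(\cG,V)$ as in Section~\ref{sec:coho}. The target $H^\dot(M,\V)$ is computed by the $\G$-invariant de~Rham complex $(E^\dot(X)\otimes V)^\G$ with the connection differential $\nabla$. The $\u$-valued form $\w$ gives, via its components $\w_i \in E^1(X)$ tensored with a (topological) basis of $\u$, a way to pair iterated products of $\w$ against elements of $\Lambda^\dot\u$. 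Concretely, I would define, for a cochain $\phi \in \Hom^\cts(\Lambda^k\u,V)$, a $V$-valued $k$-form on $X$ by contracting the $k$-fold ``wedge'' $\w\wedge\cdots\wedge\w$ (an element of $E^k(X)\comptensor\Lambda^k\u$, suitably antisymmetrized) against $\phi$, i.e.\ $\phi \mapsto \langle \phi, \w^{\wedge k}\rangle$. This is the standard pullback-of-the-universal-cochain construction; $\G$-invariance of $\w$ guarantees that the resulting form lies in $(E^k(X)\otimes V)^\G$, and $R$-invariance of $\phi$ is used to make the contraction well defined (it matches the $R$-action implicit in $\w$ taking values in $\u$ as an $R$-module).

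Next I would verify that this assignment is a chain map. The key computation is that $\nabla$ applied to $\langle\phi,\w^{\wedge k}\rangle$ splits into two pieces: one coming from $d$ of the form-part, which by the integrability equation $d\w + \tfrac12[\w,\w]=0$ produces exactly the term in the Chevalley--Eilenberg differential built from the bracket $\Lambda^{k+1}\u \to \Lambda^k\u$; and one coming from the connection term $\w\cdot(-)$, which produces the term built from the action $\u \otimes V \to V$. So $\nabla\langle\phi,\w^{\wedge k}\rangle = \langle d_{CE}\phi, \w^{\wedge(k+1)}\rangle$, which is the assertion that we have a map of complexes, hence an induced map on cohomology $H^\dot(\u,V)^R \to H^\dot(M,\V)$. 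For the ring structure, I would note that the cup product on $H^\dot(\u,V)$ (for $V$ a ring, or more properly, using the product $V \otimes V' \to V''$ of $R\ltimes\U$-modules and here $V$ an algebra) is computed on Chevalley--Eilenberg cochains by the shuffle/antisymmetrization product, and that the de~Rham cup product on $(E^\dot(X)\otimes V)^\G$ is wedge of forms times multiplication in $V$; the assignment $\phi\mapsto\langle\phi,\w^{\wedge\bullet}\rangle$ intertwines these because wedging $\w^{\wedge k}$ with $\w^{\wedge \ell}$ is (up to the shuffle combinatorics that also appears in the cochain product) $\w^{\wedge(k+\ell)}$.

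The main obstacle I expect is bookkeeping rather than conceptual: making precise the ``contraction'' $\langle\phi,\w^{\wedge k}\rangle$ in the pro-setting. Since $\u = \varprojlim_\alpha \u_\alpha$ and $\w \in E^1(X)\comptensor\u = \varprojlim_\alpha E^1(X)\otimes\u_\alpha$, while $\phi$ is a \emph{continuous} cochain (hence factors through some $\Lambda^k\u_\alpha$), one must check the pairing is compatible with the inverse system — i.e.\ that a $\phi$ factoring through $\u_\alpha$ pairs only with the $\u_\alpha$-component of $\w$, and that changing $\alpha$ is consistent. This is where the continuity in $\Hom^\cts$ and the product topology on the various objects do the work; none of it is hard, but it is the one place where one cannot be cavalier. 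A secondary point to pin down is the precise sign and combinatorial normalization in the Chevalley--Eilenberg differential so that the integrability equation matches on the nose; I would fix conventions once (following \cite{hain:kzb}) and carry them through. Everything else — $\G$-invariance, flatness giving well-definedness of the connection on $X\times V$, and the identification of the target cohomology with $H^\dot(M,\V)$ — has already been set up in the paragraph preceding the lemma, so the proof reduces to the chain-map identity and the multiplicativity check.
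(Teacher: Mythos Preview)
Your proposal is correct and is essentially the paper's own argument, phrased dually: the paper views $\w$ as a map $\theta_\w : \Hom^\cts(\u,\C)\to E^1(X)$, extends it to an algebra homomorphism $\Lambda^\dot\Hom^\cts(\u,\C)\to E^\dot(X)$, and observes that $\G$-invariance and integrability of $\w$ are exactly the conditions for this to be a $\G$-equivariant dga map --- which packages your chain-map and multiplicativity checks into one statement. The one point you underemphasize is where the hypothesis that $R$ is reductive actually enters: it is not to ``make the contraction well defined'' but to guarantee that the cohomology of the $R$-invariant subcomplex $\cC^\dot(\u,V)^R$ agrees with $H^\dot(\u,V)^R$, i.e.\ that taking $R$-invariants commutes with taking cohomology.
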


\begin{proof}
Note that $\G$ acts on $\u$ on the left by the formula $\gamma : u \mapsto
\rho(\gamma)\cdot u$. A 1-form $\w \in E^1(X)\comptensor \u$ can be regarded as
a function
$$
\theta_\w : \Hom^\cts(\u,\C) \to E^1(X),
\quad \varphi \mapsto (1\otimes\varphi)\w.
$$
It is $\G$-equivariant if and only if $\w$ is. The induced algebra homomorphism
$$
\cC^\dot(\u) = \Lambda^\dot \Hom^\cts(\u,\C) \to E^\dot(X)
$$
commutes with differentials if and only if $\w$ is integrable. So a
$\G$-invariant and integrable 1-form  $\w\in E^1(X)\comptensor \u$ induces a dga
homomorphism
$$
\cC^\dot(\u,V)^R = [(\Lambda^\dot\Hom^\cts(\u,\C))\otimes V]^R
\to (E^\dot(X)\otimes V)^\G.
$$
Since $R$ is reductive, the natural map
$$
H^\dot(\cC^\dot(\u,V)^R) \to H^\dot(\cC^\dot(\u,V))^R
$$
is an isomorphism. The result follows.
\end{proof}

The desired characterization of relative completion is:

\begin{proposition}
\label{prop:char}
If $R$ is reductive and $\rho : \G\to R$ has Zariski dense image, then the
homomorphism $\rhotilde_{x_o} : \G \to R\ltimes \U$ constructed from $\w\in
(E^1(X)\comptensor\u)^\G$ above is the completion of $\G$ with respect to $\rho$
if and only if the homomorphism
$$
\theta_\w^\ast : [H^j(\u)\otimes V]^R \to H^j(M,\V)
$$
induced by $\theta_\w$ is an isomorphism when $j=0,1$ and injective when $j=2$
for all $R$-modules $V$.
\end{proposition}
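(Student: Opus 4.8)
The plan is to verify the universal mapping property of relative completion by showing that the homomorphism $\rhotilde_{x_o} : \G \to R\ltimes\U$ exhibits $R\ltimes\U$ as the relative completion precisely when the stated cohomological conditions hold. The strategy has two directions. For the "if" direction, suppose the map $\theta_\w^\ast : [H^j(\u)\otimes V]^R \to H^j(M,\V)$ is an isomorphism for $j=0,1$ and injective for $j=2$ for all $R$-modules $V$. We want to produce the comparison homomorphism $\cG \to R\ltimes\U$ (coming from the universal property applied to $\rhotilde_{x_o}$) and show it is an isomorphism. The first step is to note that by Levi splitting (Proposition~\ref{prop:levi}) both $\cG$ and $R\ltimes\U$ are semidirect products of $R$ with their prounipotent radicals, so the comparison map is $R$-equivariant and restricts to a homomorphism $\U_\cG \to \U$ of prounipotent groups, equivalently $\u_\cG \to \u$ of pronilpotent Lie algebras in the category of $R$-modules. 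To check this is an isomorphism, I would use the pronilpotent Stallings-type criterion (Proposition~\ref{prop:stallings}): it suffices to show the induced map on $H^1$ is an isomorphism and on $H^2$ is injective, and by reductivity of $R$ it suffices to check this after applying $[\,\cdot\,\otimes V]^R$ for each irreducible $R$-module $V$. Now $[H^j(\u_\cG)\otimes V]^R \cong H^j(\cG,V) \to H^j(\G,V)$ is an isomorphism for $j\le 1$ and injective for $j=2$ by the general cohomological properties of relative completion recalled in Section~\ref{sec:coho} (equation~(\ref{eqn:homom})), and $H^j(\G,V) \cong H^j(M,\V)$ since $M = \G\bs X$ with $X$ contractible, so the orbifold cohomology with coefficients in the local system is exactly group cohomology. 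The hypothesis on $\theta_\w^\ast$ gives the matching statement for $[H^j(\u)\otimes V]^R$. One then needs that the comparison map $\u_\cG\to\u$ is compatible with the identification of both sides' cohomology with $H^\dot(M,\V)$ — this is where the explicit construction of $\rhotilde_{x_o}$ via parallel transport and Chen's transport formula (\ref{eqn:transport}) enters, since $\theta_\w$ is literally the de~Rham incarnation of the classifying map.

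For the "only if" direction, assume $\rhotilde_{x_o}$ is the relative completion. Then $\u = \u_\cG$, and we must show $\theta_\w^\ast$ has the asserted properties. Again it suffices, after $[\,\cdot\,\otimes V]^R$ for irreducible $V$, to compare with $H^\dot(M,\V)=H^\dot(\G,V)$; but now by the very definition of relative completion the composite $[H^j(\u)\otimes V]^R \cong H^j(\cG,V)\to H^j(\G,V)$ is an isomorphism for $j\le 1$ and injective for $j=2$, and one checks this composite agrees with $\theta_\w^\ast$. So the two directions are essentially the same computation read forwards and backwards, the content being the identification of the de~Rham map $\theta_\w$ with the cohomological comparison map (\ref{eqn:homom}).

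The main obstacle is precisely that identification: showing that the dga-level map $\cC^\dot(\u,V)^R \to (E^\dot(X)\otimes V)^\G$ induced by $\w$ represents, on cohomology, the same morphism $H^\dot(\cG,V)\to H^\dot(\G,V)$ that appears in (\ref{eqn:homom}). This is a compatibility between the bar-construction/Chevalley--Eilenberg description of $H^\dot(\cG,V)$ and the iterated-integral description coming from parallel transport of the connection $\nabla = d + \w\cdot$. I would establish it by checking that $\rhotilde_{x_o}$, written out via the inverse transport formula (\ref{eqn:transport}), induces on coordinate rings (or on the bar complex) exactly the pullback of the standard cosimplicial/cochain model by $\theta_\w$; the key input is that parallel transport $T(c_\gamma)^{-1}$ is the sum of iterated integrals of $\w$, which is the group-algebra avatar of $\theta_\w$ on $\cC^\dot(\u)$. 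Once this naturality square is in place, Stallings' criterion and reductivity of $R$ finish both implications with no further work. A secondary (routine) point to be careful about is the orbifold bookkeeping — that $X\times V \to M$ has cohomology computed by $(E^\dot(X)\otimes V)^\G$ and that this equals $H^\dot(\G,V)$ — but this is the standard fact that for a contractible $X$ with properly discontinuous virtually free $\G$-action, $\G\bs X$ is a $K(\G,1)$ orbifold, already used implicitly in Section~\ref{sec:concrete}.
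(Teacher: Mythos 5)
Your proposal takes essentially the same route as the paper's proof: use the universal mapping property to obtain $\Psi:\cG\to R\ltimes\U$, fit $\theta_\w^\ast$ and the comparison map (\ref{eqn:homom}) into a commutative square over $H^\dot(M,\V)$ via the orbifold map $M\to B\G$, and conclude with Proposition~\ref{prop:stallings} (plus reductivity, letting $V$ run over the irreducible $R$-modules), the ``only if'' direction being read off the same diagram. One small correction: in the setting of Section~\ref{sec:concrete} the manifold $X$ is only assumed simply connected, not contractible, so you may not identify $H^j(\G,V)$ with $H^j(M,\V)$ in all degrees; you should instead use only that this map is an isomorphism for $j\le 1$ and injective for $j=2$, which is the ``standard topology'' fact the paper invokes and is all your argument actually needs.
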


When $X$ is a universal covering of a manifold, such universal 1-forms can be
constructed using a suitable modification of Chen's method of power series
connections \cite{chen}.

\begin{proof}
Denote the completion of $\Gamma$ relative to $\rho$ by $\cG$. The universal
mapping property of relative completion implies that the homomorphism
$\rhotilde_{x_o}:\G \to R\ltimes\U$ induces a homomorphism $\Psi : \cG \to
R\ltimes \U$ that commutes with the projections to $R$. Denote the prounipotent
radical of $\cG$ by $\cN$ and its Lie algebra by $\n$. Then $\Psi$ induces a
homomorphism $\cN \to \U$ and an $R$-invariant homomorphism
$$
\Psi^\ast : H^\dot(\u) \to H^\dot(\n).
$$
Then for each finite dimensional $R$-module $V$, one has the commutative diagram
$$
\xymatrix{
[H^\dot(\n)\otimes V]^R \ar[r] & H^\dot(\G,V) \ar[d] \cr 
[H^\dot(\u)\otimes V]^R \ar[u]^{\Psi^\ast}\ar[r]_(.55){\theta_\w^\ast} &
H^\dot(M,\V)
}
$$
where the right-hand vertical mapping is induced by the orbifold morphism $M \to
B\G$ of $M$ into the classifying space of $\G$. Standard topology implies that
this is an isomorphism in degrees 0 and 1 and injective in degree 2. Results in
Section~\ref{sec:coho} imply that the top row is an isomorphism in degrees 0 and
1, and injective in degree 2. The assumption implies that the left hand vertical
map is an isomorphism in degrees 0 and 1 and injective in degree 2. Since
$H^\dot(\u)$ and $H^\dot(\n)$ are direct limits of finite dimensional
$R$-modules, by letting $V$ run through all finite dimensional irreducible
$R$-modules, we see that $H^\dot(\u) \to H^\dot(\n)$ is an isomorphism in
degrees 0 and 1 and and injective in degree 2. Proposition~\ref{prop:stallings}
implies that $\n \to \u$ is an isomorphism. This implies that $\Psi$ is an
isomorphism.
\end{proof}

\subsection{Rational structure}

To construct a MHS on the completion of (say) a modular group $\G$, we will
first construct its complex form together with its Hodge and weight filtrations
using an integrable, $\G$-invariant $1$-form, as above. An easy formal argument,
given below, implies that this relative completion has a natural $\Q$ structure
provided that $R$ and $\rho$ are defined over $\Q$. To understand the MHS on
$\cO(\cG)$, we will need a concrete description of this $\Q$-structure on
$R\ltimes\U$ in terms of periods. Explaining this is the goal of this section.

In general, we are not distinguishing between a proalgebraic $F$-group and its
group of $F$ rational points. Here, since we are discussing Hodge theory, we
will distinguish between a $\Q$-group, and its groups of $\Q$ and $\C$ rational
points.

Suppose that $R$ is a reductive group that is defined over $\Q$ and that $\rho :
\G \to R$ takes values in the $\Q$-rational points $R(\Q)$ of $R$. Denote the
completion of $\G$ with respect to $\rho$ over $\Q$ by $\cG$ and its
prounipotent radical by $\cN$. These are proalgebraic $\Q$-groups. We also have
the completion $\cG_\C$ of $\G$ over $\C$ relative to $\rho$, where the
coefficient field is $\C$. Base change (cf.\ Section~\ref{sec:basechange})
implies that the natural homomorphism $\cG\otimes_\Q\C \to \cG_\C$ is an
isomorphism.

When the hypotheses of Proposition~\ref{prop:char} are satisfied we obtain
a canonical $\Q$-structure on $R\ltimes \U$ from the isomorphism
$$
\xymatrix{
\psi : \cG\otimes_\Q\C \ar[r]^(.55)\simeq & R\ltimes \U.
}
$$
induced by $\rhotilde_{x_o}$. The $\Q$-structure on $\U$ is the image of the
restriction $\cN\otimes_\Q\C \to \U$ of this isomorphism to $\cN$. This induces
a canonical $\Q$-structure on $\u$ via the isomorphism $\u \cong
\n\otimes_\Q\C$.

\begin{proposition}
\label{prop:Qstr}
The canonical $\Q$-structure on $\u$ is the $\Q$-Lie subalgebra of $\u$
generated by the set $\{\log \Theta_{x_o}(\gamma) : \gamma \in \G\}$.
\end{proposition}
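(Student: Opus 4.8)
The plan is to show containment in both directions between the canonical $\Q$-structure $\u_\Q$ on $\u$ and the $\Q$-Lie subalgebra $\s$ generated by $\{\log\Theta_{x_o}(\gamma):\gamma\in\G\}$. One inclusion is essentially immediate from the construction: the homomorphism $\rhotilde_{x_o}:\G\to R\ltimes\U$ factors through $\rhohat:\G\to\cG(\C)$, and under the isomorphism $\psi:\cG\otimes_\Q\C\xrightarrow{\sim}R\ltimes\U$ it sends $\gamma$ to an element whose $\U$-component is $\Theta_{x_o}(\gamma)$. Since $\rhohat(\gamma)$ lies in $\cG(\Q)$, its image $(\Theta_{x_o}(\gamma),\rho(\gamma))$ lies in $(R\ltimes\U)(\Q)$; in particular $\Theta_{x_o}(\gamma)\in\U(\Q)$ with respect to the canonical $\Q$-structure, hence $\log\Theta_{x_o}(\gamma)\in\u_\Q$. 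Thus $\s\subseteq\u_\Q$.

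For the reverse inclusion I would argue that $\s$ is already "large enough" to be all of $\u_\Q$, using the universal/Zariski-density property. First I would observe that $\s$ is the Lie algebra of the Zariski closure $\cN'$ in $\cN$ of the subgroup of $\cN(\C)$ generated by the $\Theta_{x_o}(\gamma)$, and that $R\ltimes\cN'$ is a proalgebraic subgroup of $R\ltimes\U$ defined over $\Q$ (since it is defined over $\C$ and generated, together with $R$, by the $\Q$-rational elements $\rhohat(\gamma)$, its closure descends to $\Q$; alternatively invoke that the Zariski closure of a set of $\Q$-points is defined over $\Q$). The homomorphism $\rhotilde_{x_o}$ factors through $\G\to (R\ltimes\cN')(\C)$, and since $R\ltimes\cN'$ is again an extension of $R$ by a prounipotent group with $\G\to (R\ltimes\cN')(\C)$ lifting $\rho$, the universal mapping property of $\cG_\C$ gives a factorization $\cG_\C\to R\ltimes\cN'\hookrightarrow R\ltimes\U$. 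But this composite is the isomorphism $\psi$ by uniqueness in the universal property (both are induced by $\rhotilde_{x_o}$ and commute with projection to $R$); since $\psi$ is an isomorphism, we must have $R\ltimes\cN'=R\ltimes\U$, hence $\cN'=\U$ and $\s=\u$ as $\C$-Lie algebras. Combined with $\s\subseteq\u_\Q\subseteq\u$ and the fact that $\s$ is by construction a $\Q$-form (it is spanned over $\C$ by a set of vectors which it contains over $\Q$), we get $\s=\u_\Q$.

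I would fill in the bridge carefully: $\s\otimes_\Q\C$, being a $\C$-Lie subalgebra of $\u$ containing all $\log\Theta_{x_o}(\gamma)$, equals the $\C$-span, and the above shows this span is dense — more precisely, $\exp(\s\otimes_\Q\C)$ topologically generates $\U(\C)$, so $\s\otimes_\Q\C=\u$. Since also $\s\subseteq\u_\Q$ and $\u_\Q\otimes_\Q\C=\u$, a dimension/codimension count at each finite level $\u/L^{n+1}\u$ (where everything is finite dimensional) forces $\s=\u_\Q$ level by level, hence in the limit.

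\textbf{The main obstacle} I anticipate is the descent argument showing $R\ltimes\cN'$ is defined over $\Q$ and that the factorization $\cG_\C\to R\ltimes\cN'$ is genuinely the restriction of $\psi$ — one must be careful that "the Zariski closure of the $\Q$-rational elements $\rhohat(\gamma)$" is computed inside the $\Q$-group $\cG$ and then base-changed, rather than computed over $\C$ directly, so that the resulting subgroup carries a $\Q$-structure compatible with the canonical one on $R\ltimes\U$. Once one knows that the canonical $\Q$-structure on $\cG_\C$ is exactly $\cG$ (which is precisely the base-change statement of Section~\ref{sec:basechange}, already available), this becomes a routine but slightly delicate bookkeeping of $\Q$-structures, and the Lie-algebra statement follows by taking $\Lie$ and passing to the limit over nilpotent quotients.
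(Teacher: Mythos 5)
Your reverse inclusion is fine, but the ``essentially immediate'' forward inclusion is where the whole content of the proposition lives, and your justification of it is circular. From $\rhohat(\gamma)\in\cG(\Q)$ you do get that $\psi(\rhohat(\gamma))=(\Theta_{x_o}(\gamma),\rho(\gamma))$ is a $\Q$-point of $R\ltimes\U$ for the canonical $\Q$-structure (the one transported from $\cG$ by $\psi$). But to conclude ``in particular $\Theta_{x_o}(\gamma)\in\U(\Q)$'' you need the coordinate projection $R\ltimes\U\to\U$, $(v,r)\mapsto v$, to be defined over $\Q$ for that structure --- equivalently, that the complex Levi section $r\mapsto(1,r)$ used to build $R\ltimes\U$ corresponds under $\psi$ to a $\Q$-subgroup of $\cG$. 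That is not given: the canonical $\Q$-structure is only defined via $\psi$, and if one identifies $\cG$ with $R_\Q\ltimes\cN$ by a $\Q$-rational Levi splitting (Proposition~\ref{prop:levi}), the resulting map to $R\ltimes\U$ is $(n,r)\mapsto(\psi(n)\,u(r\cdot u)^{-1},r)$ for some $u\in\U(\C)$ that need not be rational (this is formula (\ref{eqn:map})). So a $\Q$-point of $R\ltimes\U$ can perfectly well have a $\U$-coordinate that is not obviously in $\psi(\cN(\Q))$; in fact $\Theta_{x_o}(\gamma)=\psi(F(\gamma))\,u(\rho(\gamma)\cdot u)^{-1}$, where $F:\G\to\cN(\Q)$ is the cocycle coming from the $\Q$-splitting. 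Asserting $\Theta_{x_o}(\gamma)\in\U(\Q)$ without controlling this twist is exactly the statement to be proved, not a consequence of base change.

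This is precisely the point the paper's proof is organized around: fix a $\Q$-splitting of $\cG\to R_\Q$, use the conjugacy part of Proposition~\ref{prop:levi} to compare it with the complex splitting defining $R\ltimes\U$, write the image of $\G$ as a cocycle $F$ with values in $\cN(\Q)$, show by the cocycle identity and Zariski density of $\rhohat(\G)$ that the smallest divisible subgroup of $\cN(\Q)$ containing $F(\G)$ is all of $\cN(\Q)$, and only then transfer this through (\ref{eqn:map}) to the $\Theta_{x_o}(\gamma)$ and finish with Baker--Campbell--Hausdorff. Your second half --- that the $\C$-Lie algebra generated by the $\log\Theta_{x_o}(\gamma)$ is all of $\u$ (via the universal property and density of $\rhotilde_{x_o}(\G)$ in $R\ltimes\U$), followed by a level-by-level dimension count --- is correct and parallels the paper's density step; note that for this complex statement you do not need $R\ltimes\cN'$ to be defined over $\Q$, so that descent worry is beside the point. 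But without an honest argument that each $\log\Theta_{x_o}(\gamma)$ lies in the canonical $\Q$-form (which is where the Levi-splitting comparison enters), the proof does not close.
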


\begin{proof}
Fix a $\Q$-splitting $s$ of the surjection $\cG \to R_\Q$. This gives an
identification of $\cG$ with $R_\Q \ltimes \cN$. Levi's Theorem
(Prop.~\ref{prop:levi}) implies that there is a $u\in \U$ such that the
composition of
$$
\xymatrix{
R_\Q \ar[r]^s & \cG \ar[r]^(.45)\psi & R\ltimes \U
}
$$
is conjugated to the section $R_\Q \to R\ltimes \U$ that takes $r$ to $(1,r)$ by
$u$. That is, the first section takes $r\in R$ to $(u(r\cdot u)^{-1},r)\in
R\ltimes \U$. The composite
$$
\xymatrix{
R_\Q \ltimes \cN \ar[r]^(.6)\simeq & \cG \ar[r] &
\cG_\C \ar[r]^(.4)\simeq_(.4)\psi & R\ltimes \U
}
$$
is thus given by the formula
\begin{equation}
\label{eqn:map}
(n,r) \mapsto
(\psi(n),1)(u(r\cdot u)^{-1},r) = (\psi(n)u(r\cdot u)^{-1},r) \in R\ltimes\U.
\end{equation}

The composite
$$
\xymatrix{
\G \ar[r] & \cG(\Q) \ar[r]^(.35)\simeq & R(\Q)\ltimes\cN(\Q)
}
$$
takes $\gamma\in \G$ to $(F(\gamma),\gamma) \in \cN(\Q)\times R(\Q)$ for some
1-cocycle $F : \G \to \cN(\Q)$.
Note that every divisible subgroup of the rational points of a prounipotent
$\Q$-group $N$ is the set of $\Q$-rational points of a $\Q$-subgroup of $N$.
This implies that the smallest subgroup of $\cN(\Q)$ that contains
$\{F(\gamma):\gamma\in \G\}$ is the set of $\Q$-points of a $\Q$-subgroup $\cS$
of $\cN$. The subgroup $\cS$ must be $\cN$. This is because the cocycle
condition implies that it is a $\G$-invariant subgroup:
$$
\gamma_1\cdot F(\gamma_2) = F(\gamma_1)^{-1}F(\gamma_1\gamma_2) \in \cS(\Q).
$$
Since $\G$ is Zariski dense in $\cG$, $\cS$ is a normal subgroup of $\cG$. This
implies that $R_\Q\ltimes \cS$ is a subgroup of $\cG=R_\Q\ltimes\cN$ that
contains the image of the canonical homomorphism $\rhotilde : \G \to \cG(\Q)$.
But since $\rhotilde$ is Zariski dense, we must have $\cG = R_\Q\ltimes \cS$.

Formula (\ref{eqn:map}) now implies that the image of $\cN(\Q)$ in $\U(\C)$ is
the smallest divisible subgroup of $\U(\C)$ that contains the set $\{\psi\circ
F(\gamma):\gamma\in \G\}$. But the formula (\ref{eqn:map}) and the commutativity
of the diagram
$$
\xymatrix{
\G \ar[r] \ar[dr] &  \cG(\Q) \ar[d]^\psi \cr & R\ltimes \U
}
$$
imply that $\psi\circ F = \Theta_{x_o}$, so that the image of $\cN(\Q)$ in
$\U$ is the smallest divisible subgroup of $\U$ that contains the set
$\{\Theta_{x_o}(\gamma):\gamma\in \G\}$. The result now follows from the
Baker-Campbell-Hausdorff formula.
\end{proof}

\begin{remark}
One might think that this $\Q$ structure on $\U$ can be constructed as the image
of the unipotent completion over $\Q$ of $\ker \rho$ in $\U$. This often works,
but it does not when $\G$ is a modular group as $\rho : \G \to R$ is injective
and $\U$ is non-trivial in this case.
\end{remark}

\subsubsection{Complements}

The coordinate ring $\cO(\cG)$ of $\cG$ is isomorphic, as a ring, to
$\cO(R)\otimes\cO(\U)$. Its coproduct is twisted by the action of $R$ on $\U$.
The $\Q$-form of $\cO(\cG)$ consists of those elements of $\cO(R)\otimes\cO(\U)$
that take rational values on the image of $\G \to R\ltimes\U$. Since the
exponential map $\u \to \U$ is an isomorphism of affine schemes, the coordinate
ring of $\U$ consists of the polynomial functions on $\u$ that are continuous
functions $\u \to \C$. Since the coefficients of the logarithm and exponential
functions are rational numbers, $\cO(\U_\Q)$ is the ring of continuous
polynomials on $\u_\Q$ .

\section{Relative Completion of Path Torsors}
\label{sec:path_torsor}

This section can be omitted on a first reading. Here we consider the relative
completion $\cG_{x,y}$ of the torsor of paths from $x$ to $y$ in a manifold $M$
with respect to a reductive local system $\H$. This can be described using
tannakian formalism.\footnote{The category of local systems of finite
dimensional $F$-vector spaces over $M$ that admit a filtration whose graded
quotients are local systems that correspond to representations of the Zariski
closure of $\G \to \Aut H_x$ is tannakian. The completion $\cG_{x,y}$ of the
torsor $\Pi(M;x,y)$ of paths in $M$ from $x$ to $y$ is the torsor of
isomorphisms between the fiber functors at $x$ and $y$. It is an affine scheme
over $F$.} Here we outline a direct approach partly because it is more concrete
and better suits our needs.

We use the setup of the previous section. So $M = \G\bs X$ where $X$ is a simply
connected manifold, $\G$ is a discrete group that acts properly discontinuously
and virtually freely on $X$, and $\rho : \G \to R$ is a representation of $\G$
into an affine $F$-group ($F=\R$ or $\C$), not yet assumed to be reductive or
Zariski dense.

If $M$ is a manifold and $x,y\in M$, then $\Pi(M;x,y)$ denotes the set of
homotopy classes of paths from $x$ to $y$. We need to define what we mean by
$\Pi(M;x,y)$ when the action of $\G$ on $X$ is not free, in which case, $M$ is
an orbifold, but not a manifold. Choose a fundamental domain $D$ for the action
of $\G$ on $X$. The orbit of each $x\in M$ contains a unique point $\xtilde \in
D$. Suppose that $x,y\in M$. Elements of $\Pi(M;x,y)$ can be represented by
pairs $(\gamma,c_\gamma)$, where $\gamma \in \G$ and $c_\gamma$ is a homotopy
class of paths from $\xtilde$ to $\gamma\ytilde$. (This homotopy class is unique
as $X$ is simply connected.) The composition map 
$$
\Pi(M;x,y)\times \Pi(M;y,z) \to \Pi(M;x,z)
$$
is given by
$$
\big((\gamma,c_\gamma),(\mu,c_\mu)\big) \mapsto (\gamma\mu,c_{\gamma\mu})
:= \big(\gamma\mu,c_\gamma\ast(\gamma\cdot c_\mu)\big).
$$
Note that $\Pi(M;x,y)$ is a torsor under the left action of $\pi_1(M,x) :=
\Pi(M,x,x)$ and a torsor under the right action of $\pi_1(M,y)$.
This definition of $\Pi(M;x,y)$ agrees with the standard definition when the
action of $\G$ is fixed point free.

Now suppose that $H$ is a finite dimensional vector space over $F=\C$ (or $\R$),
that $R$ is a reductive subgroup of $\GL(H)$, and $\rho : \G \to R(F) = R$ is a
Zariski dense representation. Let $\H$ be the corresponding (orbifold) local
system over the orbifold $M$.

As in the previous section, we suppose that $\w \in E^1(X)\comptensor \u$ is
an integrable, $\G$-invariant 1-form on $X$. Define
$$
\Theta_{x,y} : \Pi(M;x,y) \to R\ltimes \U \text{ by } (\gamma,c_\gamma)
\mapsto \big(\rho(\gamma),T(c_\gamma)^{-1}\big).
$$
Note that, unless $x=y$, this is not a group homomorphism. The universal
mapping property of $\cG_{x,y}$ implies that $\Theta_{x,y}$ induces a morphism
$\cG_{x,y} \to R\ltimes \U$ of affine schemes such that the diagram
$$
\xymatrix{
\Pi(M;x,y) \ar[r]\ar[dr] & \cG_{x,y} \ar[d] \cr
& R\ltimes \U.
}
$$
If $\w$ satisfies the assumptions of Proposition~\ref{prop:char}, then the
vertical morphism is an isomorphism. This follows as, in this case, $\cG_{x,y}$
and $R\ltimes \U$ are both torsors under the left action of the relative
completion $\cG_{x,x} \cong R\ltimes\U$ of $\G \cong \pi_1(M,x)$ with respect
to $\rho$.

If $R$ and $\rho$ are defined over $\Q$, then $\cG_{x,y}$ has a natural $\Q$
structure consisting of those elements of $\cO(\cG_{x,y})$ that take rational
values on the image of $\Pi(M;x,y)$ in $\cG_{x,y}$.

\section{Zucker's Mixed Hodge Complex}
\label{sec:mhc}

In this section we recall the construction of the natural MHS on the cohomology
of a smooth curve with coefficients in a polarized variation of Hodge structure.

Suppose that $C$ is a compact Riemann surface and that $D$ is a finite subset,
which we assume to be non-empty. Then $C':= C-D$ is a smooth affine curve.
Suppose that $\V$ is a polarized variation of Hodge structure over $C'$ of
weight $m$. For simplicity, we assume that the  local monodromy about each $x\in
D$ is unipotent. Zucker \cite[\S 13]{zucker} constructs a cohomological Hodge
complex $\bK(\V)$ that computes the MHS on $H^\dot(C',\V)$. In this section we
recall the definition of its complex component $\bK_\C(\V)$, together with its
Hodge and weight filtrations. We first recall a few basic facts about mixed
Hodge complexes.

\subsection{Review of mixed Hodge complexes}

This is a very brief outline of how one constructs a mixed Hodge structure on a
graded vector space using a mixed Hodge complex. Full details can be found in
\cite{deligne:hodge2}.

The standard method for constructing a mixed Hodge structure on a graded
invariant $M^\dot$ of a complex algebraic variety is to express the invariant as
the cohomology of a mixed Hodge complex (MHC). Very briefly, a MHC $\bK$
consists of:
\begin{enumerate}

\item two complexes $K^\dot_\Q$ and $K^\dot_\C$, each endowed with a weight
filtration $W_\dot$ by subcomplexes,

\item a $W_\dot$ filtered quasi-isomorphism between $K_\Q\otimes\C$ and $K_\C$.

\item a Hodge filtration $F^\dot$ of $K_\C^\dot$ by subcomplexes.

\end{enumerate}
These are required to satisfy several technical conditions, which we shall
omit, although the lemma below encodes some of them. The complexes $K_\Q^\dot$
and $K_\C^\dot$ compute the $\Q$- and $\C$-forms of the invariant $M$:
$$
M_\Q^\dot \cong H^\dot(K_\Q^\dot) \text{ and } M_\C^\dot \cong H^\dot(K_\C^\dot)
$$
The quasi-isomorphism between them is compatible with these isomorphisms. The
weight filtration of $K_\Q^\dot$ induces a weight filtration of $M_\Q$ by
$$
W_m M^j_\Q = \im\{H^j(W_{m-j}K^\dot_\Q) \to M^j_\Q\}.
$$
The assumption that the quasi-isomorphism between $K^\dot_\Q\otimes\C$ and
$K^\dot_\C$ be $W_\dot$-filtered implies that the weight filtrations of
$K_\Q^\dot$ and $\K_\C^\dot$ induce the same weight filtration on $M^\dot$. That
is,
$$
(W_m M^j_\Q)\otimes\C = \im\{H^j(W_{m-j}K^\dot_\C) \to M^j_\C\}
$$
Finally, the Hodge filtration of $K_\C^\dot$ induces the Hodge filtration
of $M^\dot_\C$ via
$$
F^p M^\dot_\C := \im\{H^\dot(F^p K^\dot_\C) \to M_\C^\dot\}.
$$
If $\bK$ is a MHC, then $M^\dot$ is a MHS with these Hodge and weight
filtrations.

We shall need the following technical statement. As pointed out above, this is
equivalent to several of the technical conditions satisfied by a MHC.

\begin{lemma}[{\cite[3.2.8]{hain:dht1}}]
\label{lem:exact}
Suppose that $(K^\dot_\C,W_\dot,F^\dot)$ is the complex part of a MHC. If $u \in
F^pW_m K^j_\C$ is exact in $K^\dot_\C$, then there exists $v\in
F^pW_{m+1}K_\C^{j-1}$ such that $dv = u$. 
\end{lemma}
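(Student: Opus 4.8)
Here is a plan, not a complete argument. The statement is a joint strictness property of the differential on the bifiltered complex $(K^\dot_\C,W_\dot,F^\dot)$, and the plan is to reduce it to the structural results on mixed Hodge complexes. The first step is a reformulation using Deligne's décalée weight filtration $\Dec W$, given by $(\Dec W)_n K^j = W_{n-j}K^j \cap d^{-1}W_{n-j-1}K^{j+1}$. Since an exact $u \in F^pW_mK^j_\C$ is in particular closed, it lies in $F^p(\Dec W)_{m+j}K^j_\C$; and an element $v \in F^pW_{m+1}K^{j-1}_\C$ with $dv=u$ lies in $F^p(\Dec W)_{m+j}K^{j-1}_\C$ (the degree drops by one, so the weight is allowed to rise by one precisely so that $n = (\text{weight})+(\text{degree})$ stays fixed). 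Thus the assertion is exactly that $d$ is strictly compatible with the pair $(F,\Dec W)$ on $K^\dot_\C$ at level $n=m+j$, i.e.\ $dK^{j-1}_\C \cap F^p(\Dec W)_nK^j_\C = d\bigl(F^p(\Dec W)_nK^{j-1}_\C\bigr)$, where only $\subseteq$ needs proof.

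The second step is to invoke the two pillars of the theory of the complex part of a mixed Hodge complex: (i) the Hodge spectral sequence of $K^\dot_\C$ degenerates at $E_1$, equivalently $d$ is strictly compatible with $F$; and (ii) each weight-graded piece $(\Gr^W_m K^\dot_\C,F)$ is a pure Hodge complex, so on the $E_1$-page $\bigoplus_m H^\dot(\Gr^W_m K^\dot_\C)$ of the weight spectral sequence — a direct sum of pure Hodge structures — the differential $d_1$ is a morphism of Hodge structures, hence strictly compatible with $F$. This is the input of Deligne's ``two filtrations'' argument: it forces $E_2$ of the weight spectral sequence to carry a Hodge structure, forces $d_r=0$ for $r\ge 2$ (being morphisms of pure Hodge structures of distinct weights), and yields that $(F,\Dec W)$ is a compatible pair of filtrations on $K^\dot_\C$ with degenerate associated spectral sequences. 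Granting this package — which is what \cite{hain:dht1}, following \cite{deligne:hodge2}, sets up — the joint strictness of step one is then formal.

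If one prefers to run it by hand, the skeleton is as follows. Given $u=dw_0$ with $u\in F^pW_mK^j_\C$: use (i) to replace $w_0$ by a primitive in $F^pK^{j-1}_\C$, hence in $F^pW_NK^{j-1}_\C$ for the least $N$ with $w_0\in W_NK^{j-1}_\C$. If $N\le m+1$ we are done; if $N\ge m+2$, the image $\bar w_0\in F^p\Gr^W_NK^{j-1}_\C$ is a cocycle, and because $dw_0=u\in W_m\subseteq W_{N-2}$ its class in $H^{j-1}(\Gr^W_NK^\dot_\C)$ lies in $\ker d_1$; by the $E_2$-degeneration of the weight spectral sequence it is the image of a class in $H^{j-1}(W_NK^\dot_\C)$. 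Then, using the $E_1$-degeneration of the $F$-spectral sequence of $\Gr^W_NK^\dot_\C$ together with (i) and the (automatic) strict $F$-compatibility of $W_{N-1}K^\dot_\C\hookrightarrow W_NK^\dot_\C$, one upgrades this to a genuine cocycle $z\in F^pW_NK^{j-1}_\C$ with $z\equiv w_0 \bmod W_{N-1}K^{j-1}_\C$; replacing $w_0$ by $w_0-z$ preserves $d(w_0-z)=u$ and membership in $F^p$ while lowering the weight level to $N-1$, and one iterates. The main obstacle is exactly that upgrading step: keeping $F$ and $W$ under simultaneous control while lowering the weight of the primitive — naively correcting one filtration can spoil the other, and what makes it work (and what makes ``$+1$'' in the weight the correct bound) is the compatibility of $(F,\Dec W)$ coming from purity of the weight-graded pieces. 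In the write-up I would cite this compatibility from Deligne rather than reprove it.
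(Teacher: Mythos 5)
Your proposal is correct and is essentially the route the paper itself takes: the paper offers no independent argument but quotes the statement from \cite[3.2.8]{hain:dht1} as a repackaging of the standard strictness consequences of the MHC axioms, and your d\'ecal\'ee reformulation (exact $u\in F^p(\Dec W)_{m+j}K^j_\C$ should bound some $v\in F^p(\Dec W)_{m+j}K^{j-1}_\C$, i.e.\ joint $(F,\Dec W)$-strictness of $d$) is exactly the form in which it follows from Deligne's two-filtrations package \cite{deligne:hodge2} via purity of $\Gr^W$ and $E_1$-degeneration in $F$. Since you rightly propose to cite that bistrictness rather than reprove it, the hand-run iteration (whose ``upgrading'' step you correctly flag as the delicate point) is dispensable, and nothing further is needed.
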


Finally, a {\em cohomological mixed Hodge complex} (CMHC) is a collection of
filtered complexes of sheaves on a variety (or a topological space) with the
property that the global sections of a collection of acyclic resolutions of its
components is a MHC. For details, see \cite{deligne:hodge2}.

\subsection{Zucker's cohomological MHC}
\label{sec:zucker}

We'll denote Zucker's cohomological MHC for computing the MHS on $H^\dot(C',\V)$
by $\bK(\V)$. We describe only its complex part $\bK_\C(\V)$. When trying to
understand the definition of its weight filtration, the reader may find it
useful to read the following section on the limit MHS of $\V$ at a tangent
vector to a cusp $P\in D$.

Set $\cV = \V\otimes\cO_{C'}$. This has a canonical flat connection $\nabla$.
This extends to a meromorphic connection
$$
\nabla : \Vbar \to \Omega^1_C(\log D) \otimes \Vbar
$$
on Deligne's canonical extension $\Vbar$ of it to $C$. Schmid's Nilpotent Orbit
Theorem \cite{schmid} implies that the Hodge sub-bundles of $\cV$ extend to
holomorphic sub-bundles $F^p\Vbar$ of $\Vbar$.

As a sheaf, $\bK_\C(\V)$ is simply $\Omega^\dot_C(\log D)\otimes \Vbar$ with the
differential $\nabla$. Standard arguments imply that $H^\dot(C,\bK_\C(\V))$ is
isomorphic to $H^\dot(C',\V)$. Its Hodge filtration is defined in the obvious
way:
$$
F^p \bK_\C(\V)
:= \sum_{s+t = p} \big(F^s \Omega^\dot_C(\log D)\big)\otimes F^t\Vbar.
$$
In degree 0 the weight filtration is simply
$$
0 = W_{m-1} \bK_\C^0(\V) \subseteq W_m \bK_\C^0(\V) = \bK_\C^0(\V).
$$
In degree 1, $W_r \bK_\C^1(\V)$ vanishes when $r<m$. To define the remaining
terms in degree 1, consider the reside mapping
$$
\Res_P : \Omega_C^1(\log D)\otimes \Vbar \to V_P
$$
at $P\in C$, which takes values in the fiber $V_P$ of $\Vbar$ over $P$. The
residue $N_P$ of the connection $\nabla$ on $\Vbar$ at $P\in C$ is the local
monodromy logarithm divided by $2\pi i$. It acts on $V_P$. When $r\ge 0$, the
stalk of $W_{m+r} \bK_\C^1(\V)$ at $P$ is
$$
W_{m+r} \bK_\C^1(\V)_P := \Res_P^{-1} (\im N_P + \ker N_P^{r}).
$$
Note that, when $P\notin D$, the reside map vanishes (and so does $N_P$), so
that the stalk of $W_m\bK_\C^1(\V)$ when $P\notin D$ is $\bK_\C(\V)_P$.

The Hodge and weight filtrations on $H^j(C',\V)$ are defined by
$$
F^p H^j(C',\V) = \im\{H^j(C,F^p\bK_\C(\V))\to H^j(C',\V)\}
$$
and
$$
W_m H^j(C',\V) = \im\{H^j(C,W_{m-j}\bK_\C(\V))\to H^j(C',\V)\}.
$$
The definition of the weight filtration implies that $H^0(C',\V)$ has weight $m$
and that the weights on $H^1(C',\V)$ are $\ge 1+m$.

\begin{remark}
Let $j : C' \to C$ denote the inclusion. The complex of sheaves $W_m \bK_\C(\V)$
on $C$ is a cohomological Hodge complex that is easily seen to be
quasi-isomorphic to the sheaf $j_\ast \V$ on $C$. It therefore computes the
intersection cohomology $I\!H^j(C,\V)$ and shows that it has a canonical pure
Hodge structure of weight $m+j$. For more details, see Zucker's paper
\cite{zucker}.
\end{remark}

\subsection{The limit MHS on $V_P$}

Suppose that $P\in D$. For each choice of a non-zero tangent vector $\vv \in
T_PC$ there is a limit MHS, denoted $V_\vv$ on $V_P$. The $p$th term of the
Hodge filtration is the fiber of $F^p\Vbar$ over $P$. The weight filtration is
the monodromy weight filtration shifted so that its average weight is $m$, the
weight of $\V$. The $\Q$ (or $\Z$ structure, if that makes sense) is constructed
by first choosing a local holomorphic coordinate $t$ defined on a disk $\D$
containing $P$ where $t(P)=0$. Assume that $\D\cap D=\{P\}$. Standard ODE theory
(cf.\ \cite[Chapt.~II]{wasow}) implies that there is a trivialization $\D\times
V_P$ of the restriction of $\Vbar$ to $\D$ such that the connection $\nabla$ is
given by
$$
\nabla f = df + N_P(f) \frac{dt}{t}
$$
with respect to this trivialization, where $f : \D \to V_P$ . Suppose that $Q\in
\D-\{P\}$. The $\Q$-structure on $V_P$ corresponding to the tangent vector $\vv
= t(Q)\partial/\partial t$ is obtained from the $\Q$ structure $V_{Q,\Q}$  on
$V_Q$ by identifying $V_P$ with $V_Q$ via the trivialization. This MHS depends
only on the tangent vector and not on the choice of the holomorphic coordinate
$t$.

For all non-zero $\vv \in T_P C$, the monodromy logarithm $N_P : V_\vv \to
V_\vv$ acts as a morphism of type $(-1,-1)$. This implies that $V_\vv/\im N_P$
has a natural MHS for all $\vv\neq 0$. Since $N_P$ acts trivially on this, the
MHS on $V_P/\im N_P$ has a natural MHS that is independent of the choice of the
tangent vector $\vv\in T_P C$. The definition of the weight filtration on
$V_\vv$ implies that the weight filtration on $V_P/\im N_P$ is
$$
W_{m+r} (V_P/\im N_P) = (\im N_P + \ker N_P^{r-1})/\im N_P
$$
when $r\ge 0$ and $W_{m+r} (V_P/\im N_P) = 0$ when $r<0$.

There is a canonical isomorphism
$$
H^1(\D-\{P\},\V) \cong V_P/\im N_P.
$$
from which it follows that this cohomology group has a canonical MHS for each
$P\in D$.

\subsection{An exact sequence}
\label{sec:exact_seqce}

Observe that
$$
\bK_\C(\V)/W_m\bK_\C(\V) = \bigoplus_{P\in D}i_{P\ast} (V_P/N_PV_P)(-1)[-1].
$$
This and the exact sequence of sheaves
$$
0 \to W_m \bK(\V) \to \bK(\V) \to \bK(\V)/W_m \to 0
$$
on $C$ gives the exact sequence of MHS
\begin{multline}
\label{eqn:exact_seqce}
0 \to W_{m+1}H^1(C',\V) \to H^1(C',\V)
\cr
\to \bigoplus_{P\in D} (V_P/\im N_P)(-1) \to I\!H^2(C,\V) \to 0.
\end{multline}
Here we are assuming that we are in the ``interesting case'' where $D$ is
non-empty.

Since $H^0(C',\V) = I\!H^0(C,\V)$, and since this is dual to $I\!H^2(C,\V)$, we
see that the sequence
$$
0 \to W_{m+1}H^1(C',\V) \to H^1(C',\V) \to \bigoplus_{P\in D} (V_P/\im N_P)(-1)
\to 0
$$
is exact when $H^0(C',\V)=0$.

\subsection{A MHC of smooth forms}

To extend this MHS from the cohomology groups $H^1(C',\V)$ to one on relative
completion of its fundamental group, we will need the complex part of a global
MHC of smooth forms. The construction of this from $\bK_\C(\V)$ is standard.
We recall the construction.

The resolution of $\bK_\C(\V)$ by smooth forms is the total complex of the
double complex
\begin{equation}
\label{eqn:sheaf}
\K^{\dot\dot}_\C(\V) := \bK_\C(\V) \otimes_{\cO_C} \E^{0,\dot}_C,
\end{equation} 
where $\E^{0,\dot}_C$ denotes the sheaf of smooth forms on $C$ of type
$(0,\dot)$. The Hodge and weight filtrations extend as
$$
F^p\K^{\dot\dot}_\C(\V) := \K^{\ge p,\dot}_\C(\V)
= (F^p\bK_\C(\V)) \otimes_{\cO_C} \E^{0,\dot}_C
$$
and
$$
W_r\K^{\dot,\dot}_\C(\V) := (W_r\bK_\C(\V)) \otimes_{\cO_C} \E^{0,\dot}_C.
$$
The global sections
$$
K^\dot(C,D;\V) := H^0(C,\tot\K^{\dot,\dot}_\C(\V)).
$$
of (\ref{eqn:sheaf}) is a sub dga of $E^\dot(C',\V)$. It has Hodge and weight
filtrations defined by taking the global sections of the Hodge and weight
filtrations of (\ref{eqn:sheaf}). It is the complex part of a mixed Hodge
complex.

The Hodge and weight filtrations on $H^j(C',\V)$ are
$$
F^p H^j(C',\V) = \im\{H^j(F^p K^\dot(C,D;\V)) \to H^j(C',\V)\}
$$
and
$$
W_m H^j(C',\V) = \im\{H^j(W_{m-j} K^\dot(C,D;\V)) \to H^j(C',\V)\}.
$$

Zucker's MHC and its resolution by smooth forms are natural in the local system
$\V$ and are compatible with tensor products: if $\V_1$, $\V_2$ and $\V_3$ are
PVHS over $C'$, then a morphism $\V_1 \otimes \V_2 \to \V_3$ of PVHS induces
morphism
$$
\bK(\V_1)\otimes \bK(\V_2) \to \bK(\V_1\otimes \V_2) \to \bK(\V_3).
$$
of CMHCs and dga homomorphism
$$
K^\dot(C,D;\V_1)\otimes K^\dot(C,D;\V_2) \to K^\dot(C,D;\V_1\otimes \V_2)
\to K^\dot(C,D;\V_3)
$$
that preserve the Hodge and weight filtrations.

\subsection{Remarks about the orbifold case}
\label{rem:orbi-case}

Zucker's work extends formally to the orbifold case. For us, an orbi-curve
$C'=C-D$ is the orbifold quotient of a smooth curve $X'=X-E$ by a finite group
$G$. This action does not have to be effective. (That is, $G \to \Aut X$ does
not have to be injective.) An orbifold variation of MHS $\V$ over $C'$ is an
admissible variation of MHS $\V_X$ over $X'$ together with a $G$-action such
that the projection $\V \to X'$ is $G$-equivariant. For each $g\in G$, we
require that the map $g : \V_X \to \V_X$ induce an isomorphism of variations of
MHS $g^\ast \V_X \cong \V_X$.

With these assumptions, $G$ acts on $\bK^\dot(X,E;\V_X)$ and
$\bK^\dot(X,E;\V_X)^G$ is a sub MHC. Define
$$
\bK^\dot(C,D;\V) = \bK^\dot(X,E;\V_X)^G.
$$
This computes the cohomology $H^\dot(C',\V)$ and implies that it has a MHS such
that the canonical isomorphism $H^\dot(C',\V) \cong H^\dot(X',\V_X)^G$ is an
isomorphism of MHS.

\section{Relative Completion of Fundamental Groups of Affine Curves}
\label{sec:relcomp_aff}

In this section we use the results of the last two sections to construct, under
suitable hypotheses, a MHS on the relative completion of the fundamental group
of an affine curve. As in the previous section, we suppose that $C$ is a compact
Riemann surface and that $D$ is a finite subset of $C$. Here we suppose, in 
addition, that $D$ is non-empty, so that $C'=C-D$ is an affine curve. Suppose
that $\V$ is a polarized variation of $\Q$-HS over $C'$ with unipotent monodromy
about each $P\in D$. Denote the fiber of $\V$ over $x$ by $V_x$. The Zariski
closure of the monodromy representation
$$
\rho : \pi_1(C',x) \to \Aut(V_x)
$$
is a reductive $\Q$-group \cite[Lem.~2.10]{simpson}, which we will denote by
$R_x$. Fix a base point $x_o\in C$. Set $R=R_{x_o}$. Each monodromy group $R_x$
is isomorphic to $R$; the isomorphism is unique mod inner automorphisms. We thus
have Zariski dense monodromy representations
$$
\rho_x : \pi_1(C',x) \to R_x(\Q)
$$
for each $x\in X$. Denote the completion of $\pi_1(C',x)$ with respect to
$\rho_x$ by $\cG_x$, and its Lie algebra by $\g_x$. We will construct natural
MHSs on $\cO(\cG_x)$ and on $\g_x$ that are compatible with their algebraic
structures (Hopf algebra, Lie algebra). Before doing this we need to show that
the connection form $\Omega$ can be chosen to have coefficients in Zucker's MHC
and be compatible with the various Hodge and weight filtrations.

For simplicity, we make the following assumptions:
\begin{enumerate}

\item Every irreducible representation of $R$ is absolutely irreducible. That
is, it remains irreducible when we extend scalars from $\Q$ to $\C$.

\item For each irreducible representation $V_\lambda$ of $R$, the corresponding
local system $\V_\lambda$ over $C'$ underlies a PVHS over $C'$. The Theorem of
the Fixed Part (stated below) implies that this PVHS is unique up to Tate twist.

\end{enumerate}
These hold in our primary example, where $C'$ is a quotient $X_\G$ of the upper
half plane by a finite index subgroup of $\SL_2(\Z)$ and $R\cong \SL_2$.

\subsection{The bundle $\bu$ of Lie algebras}

Denote the set of isomorphism classes of irreducible $R$-modules by $\Rdual$.
Fix a representative $V_\lambda$ of each $\lambda \in \Rdual$ and the structure
of a PVHS on the corresponding local system $\V_\lambda$ over $C'$. Filter
$$
\Rdual_1 \subset \Rdual_2 \subset \Rdual_3 \subset \cdots \subset
\bigcup_n \Rdual_n = \Rdual
$$
$\Rdual$ by finite subsets such that if $\lambda \in \Rdual_n$ and $\mu \in
\Rdual_m$, then the isomorphism class of $V_\lambda \otimes V_\mu$ is in
$\Rdual_{m+n}$. For example, when $R=\SL_2$, one can take $\Rdual_n$ to be the
set of all symmetric powers $S^m H$ with $m\le n$ of the defining representation
$H$ of $\SL_2$.

For each $\lambda \in \Rdual$, the variation MHS $H^1(C',\V_\lambda)^\ast\otimes
\V_\lambda$, being the tensor product of a constant MHS with a PVHS, is an
admissible variation of MHS over $C'$. Note that the VMHS structure on it does
not change when $\V_\lambda$ is replaced by $\V_\lambda(n)$, so that the VMHS
$H^1(C',\V_\lambda)^\ast\otimes \V_\lambda$ is independent of the choice of the
PVHS structure on $\V_\lambda$. Since the weights of $H^1(C,\V_\lambda)$ are at
least $1+$ the weight of $\V_\lambda$, the weights of
$H^1(C',\V_\lambda)^\ast\otimes \V_\lambda$ are $\le -1$. 

The inverse limit
$$
\bu_1 := \varprojlim_{n}\,\bigoplus_{\lambda \in \Rdual}
H^1(C',\V_\lambda)^\ast\otimes \V_\lambda
$$
is pro-variation of MHS over $\C$ of negative weight.\footnote{Note that this is
a very special kind of variation of MHS --- it is a direct sum of variations
that are the tensor product of a constant MHS with a PVHS. Their asymptotic
behaviour is determined by that of the PVHS that occur in the summands.} That
is, $\bu_1=W_{-1}\bu_1$. Observe that its fiber
$$ 
\varprojlim_{n}\,\bigoplus_{\lambda \in \Rdual}
H^1(C',\V_\lambda)^\ast\otimes V_{\lambda,o} =
\prod_{\lambda\in \Rdual}H^1(C',\V_\lambda)^\ast\otimes V_{\lambda,o}
$$
over $x_o$ is the abelianization of the prounipotent radical of the
completion of $\pi_1(C',x_o)$ with respect to the homomorphism to $R$.

The degree $n$ part $V \mapsto \L_n(V)$ of the free Lie algebra is a Schur
functor, so that it makes sense to apply it to bundles. Set
$$
\bu_n = \L_n(\bu_1) :=
\varprojlim_{n}\,\L_n\big(\bigoplus_{\lambda \in \Rdual}
H^1(C',\V_\lambda)^\ast\otimes \V_\lambda\big)
$$
and
$$
\bu := \varprojlim_{n} \bigoplus_{j=1}^n\bu_j \text{ and }
\bu^N := \varprojlim_{n\ge N} \bigoplus_{j=N}^n\bu_j .
$$
These are admissible pro-variations of MHS over $C'$. Denote the fiber of $\bu$
over $x$ by $\u_x$. It is abstractly isomorphic to the prounipotent radical of
the completion of $\pi_1(C',x)$ relative to the monodromy representation
$\pi_1(C',x) \to R$. The fiber of $\bu^N$ over $x$ is the $n$th term $L^n\u_x$
of the lower central series (LCS) of $\u_x$.

\subsection{Some technicalities}
\label{sec:tech}

The Theorem of the Fixed Part states that if $\A$ is an admissible VMHS over a
smooth variety $X$, then $H^0(X,\A)$ has a natural MHS with the property that
for each $x\in X$, the natural inclusion $H^0(X,\A) \to A_x$ is a morphism of
MHS. In the algebraic case, it is enough to prove this when $X$ is a curve. When
$\A$ is pure, this follows from Zucker's MHS \cite{zucker} on $H^\dot(X,\V)$.
The general case follows from Saito's theory of Hodge and mixed Hodge modules
\cite{saito:hm,saito:mhm}.

The following is a direct consequence of the Theorem of the Fixed Part. Its
proof is left as an exercise.

\begin{lemma}
\label{lem:isotypic}
Assume that $\A$ is an admissible VMHS over $C'$ whose monodromy representation
$\pi_1(C',x_o) \to \Aut A_{x_o}$ factors through $\pi_1(C',x_o) \to R$. With the
assumptions above, each $R$ isotypical component of $\A$ over $C'$ is an
admissible VMHS. If $\V_\lambda$ is a PVHS that corresponds to the irreducible
$R$-module $V_\lambda$, then the natural mapping
$$
\bigoplus_{\lambda \in \Rdual} H^0(C',\V_\lambda)\otimes \V_\lambda \to \A
$$
is an isomorphism of admissible VMHS. In particular, the structure of a PVHS
on $\V_\lambda$ is unique up to Tate twist. \qed
\end{lemma}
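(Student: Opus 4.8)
The plan is to bootstrap the pointwise $R$-isotypical decomposition of $\A$ into a decomposition of variations of mixed Hodge structure by applying the Theorem of the Fixed Part to the internal Hom $\cHom(\V_\lambda,\A)$, and then to read off the uniqueness statement from Schur's Lemma together with the classification of one-dimensional mixed Hodge structures. First I would record the fiberwise picture. Since the monodromy of $\A$ factors through the reductive group $R$ and every irreducible $R$-module is absolutely irreducible, for each $x\in C'$ the fiber $A_x$ decomposes $R$-equivariantly as $\bigoplus_{\lambda\in\Rdual}\Hom_R(V_\lambda,A_x)\otimes V_\lambda$, a finite sum since $\A$ is finite dimensional. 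Parallel transport is $R$-equivariant, so this decomposition is flat; moreover the multiplicity local system $x\mapsto \Hom_R(V_\lambda,A_x)$ has trivial monodromy, since an $R$-equivariant homomorphism is fixed by the monodromy (which acts through $R$), and so is constant. Setting $M_\lambda := H^0(C',\cHom(\V_\lambda,\A))$, the space $M_\lambda$ is exactly the space of global flat sections of $\V_\lambda^\vee\otimes\A$, which by Zariski density of $\pi_1(C',x_o)\to R$ coincides with $\Hom_R(V_{\lambda,o},A_{x_o})$; hence the evaluation map $\bigoplus_\lambda M_\lambda\otimes\V_\lambda\to\A$ is an isomorphism of local systems.

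The second step inserts the Hodge structures. The variation $\cHom(\V_\lambda,\A)\cong\V_\lambda^\vee\otimes\A$ is the tensor product of a PVHS with an admissible VMHS, hence is itself an admissible VMHS by the standard stability of admissibility under such tensor products (on a curve this is visible from Zucker's construction in the pure case, and in general from Saito's theory of mixed Hodge modules). The Theorem of the Fixed Part, recalled above, then endows $M_\lambda$ with a natural MHS such that for every $x$ the inclusion $M_\lambda\hookrightarrow\cHom(V_{\lambda,x},A_x)$ is a morphism of MHS. Regarding $M_\lambda$ as a constant VMHS, the evaluation map $M_\lambda\otimes\V_\lambda\to\A$ is a morphism of VMHS: on each fiber it is this inclusion tensored with $\mathrm{id}$, followed by the evaluation pairing $\cHom(V_{\lambda,x},A_x)\otimes V_{\lambda,x}\to A_x$, both of which are morphisms of MHS. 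Summing over the finitely many relevant $\lambda$ gives a morphism of VMHS $\bigoplus_\lambda M_\lambda\otimes\V_\lambda\to\A$ which, by the first step, is an isomorphism on underlying local systems, hence an isomorphism of VMHS. Each summand $M_\lambda\otimes\V_\lambda$ is a constant MHS tensored with a PVHS, so it is an admissible VMHS; this proves at once that $\A$ is the direct sum of its $R$-isotypical components and that each such component is admissible.

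For the uniqueness of the PVHS structure I would apply the decomposition just obtained to $\A=\V_\lambda'$, where $\V_\lambda'$ is any PVHS whose underlying local system corresponds to $V_\lambda$. Only the $\lambda$-summand survives, so $\V_\lambda'\cong H^0(C',\cHom(\V_\lambda,\V_\lambda'))\otimes\V_\lambda$ as VMHS. The underlying vector space of $H^0(C',\cHom(\V_\lambda,\V_\lambda'))$ is $\Hom_R(V_\lambda,V_\lambda)$, which is one-dimensional by Schur's Lemma and absolute irreducibility. A one-dimensional MHS has its $W$-filtration concentrated in a single degree, hence is pure, and a one-dimensional pure $\Q$-Hodge structure of weight $w$ must have $w=2k$ and type $(k,k)$, i.e. is $\Q(-k)$; therefore $\V_\lambda'$ is isomorphic to a Tate twist of $\V_\lambda$.

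The step that \emph{genuinely} requires the heavy machinery, and which I expect to be the main obstacle in a fully rigorous write-up, is the compatibility of the Theorem of the Fixed Part with the internal Hom: that $\V_\lambda^\vee\otimes\A$ is again admissible (so that the theorem applies) and that the resulting evaluation map $M_\lambda\otimes\V_\lambda\to\A$ is a morphism of VMHS and not merely of local systems. Once these are granted, everything else — the $R$-representation theory, Schur's Lemma, and the classification of one-dimensional mixed Hodge structures — is routine bookkeeping.
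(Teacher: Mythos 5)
Your proof is correct and takes exactly the route the paper intends: the paper leaves this lemma as an exercise, saying only that it is a direct consequence of the Theorem of the Fixed Part, and your argument — applying that theorem to the admissible variation $\cHom(\V_\lambda,\A)$, identifying its global sections with $\Hom_R(V_{\lambda},A_{x_o})$ via Zariski density, and finishing with Schur's Lemma and the classification of one-dimensional mixed Hodge structures — is the expected filling-in of that exercise. You also correctly interpret the displayed map as involving the multiplicity spaces $H^0(C',\cHom(\V_\lambda,\A))$, which is what the statement means.
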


Every pro object of the category of admissible VMHS $\A$ over $C'$ is thus of
the form
$$
\A = \prod_{\lambda \in \Rdual}  \V_\lambda \otimes A_\lambda,
$$
where each $A_\lambda$ is a MHS. Define
$$
\bK(\A) = \prod_{\lambda \in \Rdual} \bK( \V_\lambda)\otimes A_\lambda.
$$
This is a pro-CMHC. In particular, its complex part
$$
K^\dot(C,D;\A) =
\prod_{\lambda \in \Rdual} K^\dot(C,D;\V_\lambda)\otimes A_\lambda
$$
has naturally defined Hodge and weight filtrations; its differential is strict
with respect to the Hodge and weight filtrations (cf.\ Lemma~\ref{lem:exact}).
In particular, for all $n\ge 1$, the complexes $K^\dot(C,D;\bu_n)$ have this
strictness property.

\subsection{The connection form $\Omega$}
\label{sec:omega}

Observe that $H^1(C',H^1(C',\V_\lambda)^\ast\otimes\V_\lambda)$ is naturally
isomorphic (as a MHS) to
$$
H^1(C',\V_\lambda)\otimes H^1(C',\V_\lambda)^\ast
\cong \Hom(H^1(C',\V_\lambda),H^1(C',\V_\lambda)).
$$
So, for each $\lambda \in \Rdual$, there is an element $\xi_\lambda \in F^0 W_0
H^1(C',H^1(C',\V_\lambda)^\ast\otimes\V)$ that corresponds to the identity
mapping $H^1(C',\V_\lambda)\to H^1(C',\V_\lambda)$. Lemma~\ref{lem:exact}
implies that this is represented by a 1-form
$$
\w_\lambda \in F^0 W_{-1} K^1(C,D;H^1(C';\V_\lambda)^\ast\otimes \V_\lambda).
$$
Set
$$
\Omega_1 := \prod_{\lambda \in \Rdual} \w_\lambda \in K^1(C,D;\bu_1).
$$
Note that $d\Omega_1 =0$ and that
$$
\frac{1}{2}[\Omega_1,\Omega_1] \in F^0 W_{-2}K^2(C,D;\bu_2).
$$
Since $C$ is a surface, $[\Omega_1,\Omega_1]$ is closed. Since $C'$ is not
compact, it is exact. Lemma ~\ref{lem:exact} implies that we can find $\Xi_2$
in $F^0W_{-1}K^1(C,D;\bu_2)$ such that $d\Xi_2 =
\frac{1}{2}[\Omega_1,\Omega_1]$.
Set
$$
\Omega_2 = \Omega_1 - \Xi_2 \in F^0 W_{-1} K^1(C,D;\bu_1\oplus\bu_2).
$$
Then
$$
d\Omega_2 + \frac{1}{2}[\Omega_2,\Omega_2] \in F^0 W_{-2} K^2(C,D;\bu^3).
$$
Its component that lies in $K^2(C,D;\bu_3)$ is closed and thus exact. So it is
the exterior derivative of some $\Xi_3 \in F^0 W_{-2}K^1(C,D;\bu_3)$. Set
$$
\Omega_3 = \Omega_2 - \Xi_3 \in F^0 W_{-1} K^1(C,D;\bu_1\oplus\bu_2\oplus\bu_3).
$$
Then
$$
d\Omega_3 + \frac{1}{2}[\Omega_3,\Omega_3] \in F^0 W_{-2} K^2(C,D;\bu^4).
$$
Continuing this way, we obtain a sequence of elements $\Xi_n \in
F^0W_{-1}K^1(C,D;\bu_n)$ such that for all $N\ge 2$
$$
\Omega_N := \Omega_1 -
(\Xi_2 + \cdots + \Xi_N) \in F^0 W_{-1}K^1(C,D;\oplus_{n=1}^N\bu_n)
$$
satisfies
$$
d\Omega_N + \frac{1}{2}[\Omega_N,\Omega_N] \in F^0 W_{-2} K^2(C,D;\bu^{N+1}).
$$
Then the $\bu$-valued 1-form
\begin{equation}
\label{eqn:filts}
\Omega := \varprojlim_N \Omega_N \in F^0 W_{-1} K^1(C,D;\bu)
\end{equation}
is integrable:
$$
d\Omega + \frac{1}{2}[\Omega,\Omega] = 0.
$$

To understand the significance of the form $\Omega$, note that  the bundle
$\bu_1$ over $C'$, and hence each $\bu_n = \L_n(\bu_1)$, is a flat bundle over
$C'$. The monodromy of each factors through the representation $\rho_x :
\pi_1(C',x) \to R$. Summing these, we see that for each $N\ge 1$, the bundle
$$
\bu/\bu^{N+1} \cong \bu_1 \oplus \cdots \oplus \bu_N
$$
is flat with monodromy that factors through $\rho_x$. Denote the limit of these
flat connections by $\nabla_0$. Then
$$
\nabla := \nabla_0 + \Omega
$$
defines a new connection on the bundle $\bu$ over $C'$ which is flat as $\Omega$
is integrable. Here we view $\bu$ (and hence $\Omega$) as acting on each fiber
by inner derivations.

The restriction of the filtration
\begin{equation}
\label{eqn:lcs}
\bu = \bu^1 \supset \bu^2 \supset \bu^3 \supset \cdots
\end{equation}
of $\bu$ to each fiber is the lower central series. Note that
$$
\Gr_\LCS^\dot \bu := \bu^n/\bu^{n+1}
$$
is naturally isomorphic to $\bu_n$.

\begin{lemma}
\label{lem:lcs}
Each term $\bu^n$ of the lower central series filtration of $\bu$ is a flat
sub-bundle of $(\bu,\nabla)$. The induced connection on $\Gr^n_\LCS\bu \cong
\bu_n$ is $\nabla_0$.
\end{lemma}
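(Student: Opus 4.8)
The plan is to use the explicit form of the connection, $\nabla = \nabla_0 + \ad(\Omega)$, i.e.\ $\nabla s = \nabla_0 s + [\Omega,s]$ for a local section $s$ of $\bu$, with the bracket computed fiberwise. Two features of the ingredients make the lemma almost immediate. First, $\nabla_0$ is by construction the inverse limit of the flat connections on the bundles $\bu/\bu^{N+1}\cong \bu_1\oplus\cdots\oplus\bu_N$, and these respect the direct sum decomposition; consequently $\nabla_0$ preserves every sub-bundle $\bu^n$ of the filtration \eqref{eqn:lcs} and induces on $\Gr^n_\LCS\bu\cong\bu_n$ precisely the flat connection on $\bu_n$. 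Second, $\Omega$ is a section of $\bu=\bu^1$ --- its degree-$1$ component is $\Omega_1$ and the correction terms $\Xi_n$ lie in $\bu_n\subset\bu^1$ --- while the remark preceding the lemma identifies the filtration \eqref{eqn:lcs} on each fiber with the lower central series, so that $[\bu^1,\bu^n]\subseteq\bu^{n+1}$ fiberwise.

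Given these, the first step is to show $\nabla$ preserves $\bu^n$. For $s$ a local section of $\bu^n$ one has $\nabla_0 s\in \bu^n$ by the first feature and $[\Omega,s]\in[\bu^1,\bu^n]\subseteq\bu^{n+1}\subseteq\bu^n$ by the second, hence $\nabla s\in \bu^n\otimes E^1$. Thus $\bu^n$ is a sub-bundle stable under $\nabla$; since $\nabla$ is flat (the integrability $d\Omega+\tfrac12[\Omega,\Omega]=0$ has already been established), its restriction to $\bu^n$ is a flat connection, which is the first assertion.

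The second step is to reduce modulo $\bu^{n+1}$. Since $[\Omega,s]\in\bu^{n+1}$ for every local section $s$ of $\bu^n$, the connection that $\nabla$ induces on $\bu^n/\bu^{n+1}$ agrees with the one induced by $\nabla_0$, which by the first feature is the connection $\nabla_0$ on $\bu_n$ under the identification $\Gr^n_\LCS\bu\cong\bu_n$. I do not expect a real obstacle here: the only point demanding care is the bookkeeping that $\Omega$ lands in $\bu^1$ and that the fiberwise bracket shifts the global filtration \eqref{eqn:lcs}, i.e.\ compatibility of that filtration with the lower central series on fibers --- which is exactly the content of the remark just before the lemma. Everything else is formal manipulation with the decomposition $\nabla=\nabla_0+\ad(\Omega)$.
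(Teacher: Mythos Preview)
Your proposal is correct and follows the same approach as the paper: the paper's one-line proof is simply that $\Omega$ takes values in $\bu$ and that inner derivations act trivially on $\Gr^\dot_\LCS\bu$, which is exactly what you spell out in more detail. Your additional remark that $\nabla_0$ already respects the filtration (via the direct-sum description of $\bu/\bu^{N+1}$) is implicit in the paper's setup and is the right thing to note.
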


\begin{proof}
This follows from the fact that $\Omega$ takes values in $\bu$ and that the
inner derivations act trivially on $\Gr^\dot_\LCS\bu$.
\end{proof}

\subsection{Hodge and weight bundles and their extensions to $C$}
\label{sec:holo}

The flat connection $\nabla$ on $\bu$ defines a new complex structure as a (pro)
holomorphic vector bundle over $C'$. To understand it, write $\Omega = \Omega' +
\Omega''$, where $\Omega'$ is of type $(1,0)$ and $\Omega''$ is of type $(0,1)$.
Set
$$
D' = \nabla_0 + \Omega' \text{ and } D'' = \delbar + \Omega''
$$
so that $\nabla = D' + D''$. Then $D''$ is a $(0,1)$-valued form taking values
in $\bu$. Note that $(D'')^2=0$.

\begin{lemma}
A section $s$ of $\bu$ is holomorphic with respect to the complex structure on
$\bu$ defined by the flat connection $\nabla$ if and only if $D''s=0$.
\end{lemma}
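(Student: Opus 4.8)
The new complex structure on $\bu$ is, by construction, the holomorphic structure cut out by the $(0,1)$-part of the flat connection $\nabla$, i.e.\ by the operator $D'' = \delbar + \Omega''$; so the plan is to recognize $D''$ as a genuine $\delbar$-operator and then invoke the fact (Koszul--Malgrange, and elementary here since $C'$ has complex dimension $1$) that such an operator determines a holomorphic structure whose local holomorphic sections are precisely the smooth local sections it annihilates. First I would check that $D''$ satisfies the Leibniz rule $D''(fs) = (\delbar f)\,s + f\,D'' s$ for smooth (resp.\ holomorphic) functions $f$: this holds for $\delbar$, and $\Omega''$ contributes only an $\cO_{C'}$-linear term, since it is a $(0,1)$-form with values in $\bu$ acting by inner derivations. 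Next, $(D'')^2 = 0$, this being the $(0,2)$-component of the curvature of the flat connection $\nabla$ (and in any case there are no nonzero $(0,2)$-forms on a curve). Hence $D''$ is a $\delbar$-operator, and the assertion of the lemma is exactly the characterization of holomorphic sections provided by the associated holomorphic structure.

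To make this identification unambiguous I would spell out both implications. If $D''s = 0$ then $s$ is holomorphic for the new structure by definition. Conversely, if $s$ is holomorphic then near any point of $C'$ there is a local holomorphic frame $(e_i)$ for the new structure --- that is, $D'' e_i = 0$ --- and $s = \sum_i f_i e_i$ with each $f_i \in \cO_{C'}$ on a suitable disk, whence $D''s = \sum_i (\delbar f_i)\,e_i = 0$. The one genuinely analytic ingredient is the existence of such a local holomorphic frame, which amounts to constructing, on a small disk, a smooth gauge transformation $\phi$ of $\bu$ with $\delbar\phi = -\Omega''\phi$. On a curve this follows from the one-variable $\delbar$-Poincar\'e lemma together with a convergent iteration; in the present situation it is even softer, because $\Omega''$ acts by inner derivations and so strictly lowers the lower central series filtration, so at each weight the equation reduces to finitely many scalar equations $\delbar g = (\text{given})$ and the iteration terminates level by level.

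Finally, for the pro-vector bundle $\bu = \varprojlim_N \bu/\bu^{N+1}$ I would reduce to the finite-rank quotients. By Lemma~\ref{lem:lcs} each $\bu^n$ is a $\nabla$-flat sub-bundle, hence is preserved by $D'' = \nabla^{0,1}$, so $D''$ descends to each $\bu/\bu^{N+1}$; thus $D''s = 0$ iff $D''s_N = 0$ for every $N$, where $s_N$ is the image of $s$ in $\bu/\bu^{N+1}$. Likewise the new holomorphic structure on $\bu$ is the inverse limit of the corresponding structures on the $\bu/\bu^{N+1}$, so $s$ is holomorphic iff every $s_N$ is. This reduces the lemma to the finite-rank case already discussed. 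In short, the lemma is essentially a matter of correctly identifying the new complex structure with the one determined by $D''$; once that is pinned down the rest is formal, and the only place any analysis enters --- local solvability of the $\delbar$-equation --- is routine on a curve, so I do not anticipate a real obstacle.
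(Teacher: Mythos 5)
Your write-up proves a slightly different statement from the one asserted, and at the decisive point it assumes what is to be shown. In the paper, the complex structure that $\nabla$ defines on $\bu$ is the one a flat connection always defines: a local section is holomorphic precisely when it is an $\cO_{C'}$-linear combination of flat local sections (this is how the structure is used immediately after the lemma). The content of the lemma is that this structure is cut out by $D''$; by declaring at the outset that the new structure is ``by construction'' the one determined by $D''$ (Koszul--Malgrange), you have taken the conclusion as the definition, and nothing in your argument ties $\ker D''$ back to the flat sections. The missing bridge is the one-line observation on which the paper's proof rests: for a smooth section $s$, $D''s=\delbar s+\Omega''s$ is exactly the $(0,1)$-component of $\nabla s$, so every flat local section satisfies $D''s=0$. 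Granting that, both implications follow from your own frame computation with the flat frame in place of a $D''$-holomorphic frame: $D''$ is linear over holomorphic functions, so any $\cO_{C'}$-combination of flat sections lies in $\ker D''$; conversely, writing $s=\sum_i f_i s_i$ in a local flat frame with $f_i$ smooth, $D''s=0$ forces $\delbar f_i=0$, hence $s$ is holomorphic for the structure defined by $\nabla$.

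Once that identification is made, the analysis you invoke is unnecessary: the local flat frame, which exists because $\nabla$ is flat, is already a $D''$-closed frame, so there is no need for the $\delbar$-Poincar\'e lemma, the iterative solution of $\delbar\phi=-\Omega''\phi$, or the Koszul--Malgrange theorem --- this is why the paper's proof is two lines (``$D''$ is $\cO_{C'}$-linear, so it suffices to check flat sections, for which $D''s$ is the $(0,1)$-part of $\nabla s=0$''). Your reduction of the pro-bundle to the finite-rank quotients $\bu/\bu^{N+1}$ via Lemma~\ref{lem:lcs} is correct and harmless, and your gauge-theoretic construction is sound as far as it goes; it would be the natural argument if the holomorphic structure had been \emph{defined} by the operator $D''$ and one then wanted local holomorphic frames. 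But as a proof of the lemma as stated it is incomplete without the flat-section observation above.
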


\begin{proof}
Since $D''$ is $\cO_{C'}$ linear, it suffices to show that $D''s=0$ when $s$ is
a flat local section of $\bu$. But this follows as $D''s$ is the $(0,1)$
component of $\nabla s$, which vanishes as $s$ is flat.
\end{proof}

Denote $\bu$ with this complex structure by $(\bu,D'')$. Since all holomorphic
sections of $(\bu,D'')$ are $\cO$-linear combinations of flat sections,
Lemma~\ref{lem:lcs} implies:\footnote{This also follows from the fact that
$\Omega''$ is $\bu$-valued thus acts trivially on the graded quotients of the
lower central series filtration of $\bu$.}

\begin{lemma}
The lower central series filtration (\ref{eqn:lcs}) of $\bu$ is a filtration by
holomorphic sub-bundles. The isomorphism
$$
\Gr^n_\LCS (\bu,D'')\cong \bu_n
$$
is an isomorphism of holomorphic vector bundles. \qed
\end{lemma}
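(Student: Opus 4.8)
\emph{Proof proposal.} The plan is to deduce both statements from Lemma~\ref{lem:lcs} together with the fact, noted just above, that the holomorphic sections of $(\bu,D'')$ are exactly the $\cO_{C'}$-linear combinations of the $\nabla$-flat sections. Everything is the inverse limit of its truncations $\bu/\bu^{N+1}\cong\bu_1\oplus\cdots\oplus\bu_N$, and $\nabla$, $\nabla_0$ and $D''$ are all compatible with the truncation maps $\bu/\bu^{N+2}\to\bu/\bu^{N+1}$, so it suffices to argue at each finite level and pass to the limit; I will not belabour this point.

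For the first assertion, recall from Lemma~\ref{lem:lcs} that $\bu^n$ is a flat sub-bundle of $(\bu,\nabla)$. Hence $\bu^n$ has, locally, a frame of $\nabla$-flat sections. Such a section $s$ satisfies $D''s=0$, since $D''s$ is the $(0,1)$-component of $\nabla s=0$; by the characterisation of holomorphic sections recalled above, $s$ is a holomorphic section of $(\bu,D'')$ lying in $\bu^n$. Thus $\bu^n$ is locally spanned by holomorphic sections, i.e.\ it is a holomorphic sub-bundle of $(\bu,D'')$, and since the $\bu^n$ are nested the lower central series~(\ref{eqn:lcs}) is a filtration by holomorphic sub-bundles.

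For the second assertion, since $D''$ preserves $\bu^n$ it induces a $\delbar$-operator on $\Gr^n_\LCS(\bu,D'')=\bu^n/\bu^{n+1}$, and I would identify it. Write $D''=\delbar+\Omega''$, where $\Omega''$ acts fibrewise by the inner derivation $\ad(\Omega'')$. Because $[\bu^1,\bu^n]\subseteq\bu^{n+1}$ by the definition of the lower central series and $\Omega''$ is $\bu$-valued, $\ad(\Omega'')$ carries $\bu^n$ into $\bu^{n+1}$ and so induces the zero operator on $\bu^n/\bu^{n+1}$. Hence the operator induced by $D''$ on $\Gr^n_\LCS\bu$ equals the one induced by $\delbar$, i.e.\ by the $(0,1)$-component of $\nabla_0$. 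By Lemma~\ref{lem:lcs} the connection that $\nabla$ induces on $\Gr^n_\LCS\bu\cong\bu_n$ is exactly $\nabla_0$, whose $(0,1)$-component is by construction the $\delbar$-operator defining the holomorphic structure of the bundle $\bu_n=\L_n(\bu_1)$ underlying the variation. Therefore the canonical isomorphism $\Gr^n_\LCS(\bu,D'')\cong\bu_n$ is an isomorphism of holomorphic vector bundles.

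I do not expect a genuine obstacle here: the substance is only that flat sub-bundles are holomorphic and that inner derivations act trivially on the graded quotients of the lower central series. The one step that warrants care is the final identification, where the $\delbar$-operator induced by $\nabla_0$ on $\Gr^n_\LCS\bu$ must be recognised as the intrinsic holomorphic structure of $\bu_n$; this is immediate from the definition of $\nabla_0$ as the limit of the flat connections underlying the variations $\bu_1\oplus\cdots\oplus\bu_N$, together with the fact that the Schur functor $\L_n$ is applied at the level of these flat (equivalently, holomorphic) bundles.
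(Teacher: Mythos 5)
Your argument is correct and is essentially the paper's own: the paper disposes of the lemma in one sentence (flat sub-bundles from Lemma~\ref{lem:lcs} plus the fact that holomorphic sections of $(\bu,D'')$ are $\cO$-combinations of flat ones), and its footnote records exactly your second step, that $\Omega''$ is $\bu$-valued and hence acts trivially on $\Gr^\dot_\LCS\bu$, so the induced $\delbar$-operator on $\Gr^n_\LCS\bu$ is that of $\bu_n$. Your write-up is simply a fuller version of the same reasoning, with no gaps.
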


Denote the canonical extension of $\bu_1$ to $C$ by $\bubar_1$. Then, since the
local monodromy operators are unipotent, $\bubar_n := \L_n(\bubar_1)$ is the
canonical extension of $\bu_n$ to $C$. Define
$$
\bubar := \varprojlim_{n} \bigoplus_{j=1}^n\bubar_j \text{ and }
\bubar^N := \varprojlim_{n\ge N} \bigoplus_{j=N}^n\bubar_j .
$$
Then $(\bubar,\nabla_0)$ is the canonical extension of $(\bu,\nabla_0)$ to $C$.

Our next task is to show that $(\bubar,\nabla)$ is the canonical extension of
$(\bu,\nabla)$ to $C$. Since smooth logarithmic $(0,1)$-forms on $C$ with poles
on $D$ are smooth on $C$, it follows that $\Omega''$ is a smooth,
$\bubar$-valued $(0,1)$-form on $C$. It follows that $D''$ extends to a $(0,1)$
form-valued operator on smooth sections of $\bubar$. Since $(D'')^2=0$, it
defines a complex structure on $\bubar$. A smooth locally defined section $s$ of
$\bubar$ is holomorphic if and only if $D''s = 0$. We'll denote this complex
structure by $(\bubar,D'')$.

Suppose that $P\in D$ and that $t$ is a local holomorphic coordinate on $C$
centered at $P$. Since $t\Omega$ is a smooth $\bubar$-valued form on $C$ in a
neighbourhood $\Delta$ of $P$, and since $t\nabla_0$ takes smooth sections of
$\bubar$ defined on $\Delta$ to smooth 1-forms with values in $\bubar$, it
follows that $t\nabla$ is a differential operator on sections of $\bubar$ over
$\Delta$. This implies that $\nabla$ is a meromorphic connection on
$(\bubar,D'')$ with regular singular points along $D$.

\begin{proposition}
\label{prop:canon_extn}
The bundle $(\bubar,D'')$ with the connection $D'$ is Deligne's canonical
extension of $(\bu,\nabla)$ to $C$.
\end{proposition}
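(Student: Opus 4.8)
The plan is to verify that $(\bubar,D'')$, together with its holomorphic connection $D'$ (so that the full connection is $\nabla=D'+D''$), satisfies Deligne's characterization of the canonical extension of $(\bu,\nabla)$ across $D$: it must be a (pro-)holomorphic bundle on $C$ restricting on $C'$ to the holomorphic bundle underlying $\nabla$, the connection $\nabla$ must have at worst a logarithmic pole along $D$, and at each $P\in D$ the residue $\Res_P\nabla$ must have eigenvalues with real part in $[0,1)$. The first condition is exactly what the discussion preceding the statement establishes, and the second is immediate from it: in a local coordinate $t$ centred at $P$, the operator $t\nabla=t\nabla_0+t\Omega$ carries smooth sections of $\bubar$ to smooth $\bubar$-valued $1$-forms, since $t\Omega$ is smooth near $P$ and $(\bubar,\nabla_0)$ is already the canonical extension of $(\bu,\nabla_0)$. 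So, by the uniqueness part of Deligne's theorem, the whole proof reduces to showing that $\Res_P\nabla$ is pro-nilpotent; once it is nilpotent, all its eigenvalues are $0$, so the third condition holds.

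To compute the residue I would use the splitting $\nabla=\nabla_0+\Omega$ from Section~\ref{sec:omega}, in which $\Omega\in F^0W_{-1}K^1(C,D;\bu)$ (cf.\ (\ref{eqn:filts})) acts on $\bu$ by inner derivations. Writing $N^0_P:=\Res_P\nabla_0$ for the residue of the background connection, a derivation of the fibre Lie algebra $\bubar_P$, and $\xi_P:=\Res_P\Omega\in\bubar_P$, the fact that $\Omega$ acts by $\ad$ gives, on the fibre at $P$,
$$
\Res_P\nabla=N^0_P+\ad(\xi_P).
$$
Here $N^0_P$ is pro-nilpotent: $\bubar_1$ is by construction the canonical extension of $\bu_1$, and the local monodromy of $\bu_1$ about $P$ is unipotent — it acts via $\pi_1(C',x)\to R$, and the local monodromy of $\V$ is a unipotent element of $R$, which acts unipotently on every finite-dimensional $R$-module, hence on each $\V_\lambda$ and so on $\bu_1$ — so the residue of $\nabla_0$ is nilpotent on each finite-dimensional truncation of $\bubar_1$, and being a derivation it is pro-nilpotent on each $\bubar_n=\L_n(\bubar_1)$ and on $\bubar$. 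The point where genuine work is needed is that $N^0_P$ and $\ad(\xi_P)$ do \emph{not} commute: one has $[N^0_P,\ad(\xi_P)]=\ad(N^0_P\xi_P)$ and $N^0_P\xi_P$ need not be central, so pro-nilpotence of the sum is not formal. I expect this to be the only real obstacle.

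The way around it, which I would carry out next, is to exploit the weight filtration $W_\dot\bu$. Since $\bu=W_{-1}\bu$ (each $H^1(C',\V_\lambda)^\ast\otimes\V_\lambda$ has weights in $\{-1,-2\}$, so each $\bu_n\subseteq\bu_1^{\otimes n}$ has weights in $\{-2n,\dots,-n\}$) and the bracket of $\bu$ is a morphism of variations of MHS, we get $[W_{-1}\bu,W_k\bu]\subseteq W_{k-1}\bu$; hence $\ad(\xi_P)$ carries $W_k\bubar_P$ into $W_{k-1}\bubar_P$ and so acts as $0$ on $\Gr^W_\dot\bubar_P$. Each $W_k\bu$ is a sub-variation, hence a flat subbundle of $(\bu,\nabla_0)$, so its canonical extension $W_k\bubar$ is a $\nabla_0$-stable subbundle of $\bubar$, $N^0_P$ preserves $W_k\bubar_P$, and on $\Gr^W_k\bubar_P$ it acts as the residue of $\nabla_0$ on the pure variation $\Gr^W_k\bu$, which is nilpotent. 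Therefore $\Res_P\nabla$ preserves the filtration $W_\dot\bubar_P$ and induces $N^0_P$ on its associated graded, where it is nilpotent; since $W_\dot$ is finite on each $\bu_n$, the elementary fact that a filtration-preserving endomorphism that is nilpotent on each graded piece of a finite filtration is itself nilpotent — applied on each finite-dimensional truncation of $\bubar$ — shows that $\Res_P\nabla$ is pro-nilpotent. This establishes the residue condition, and hence identifies $(\bubar,D'')$ with $D'$ as Deligne's canonical extension of $(\bu,\nabla)$.
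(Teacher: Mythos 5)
Your proof is correct, and its overall strategy is the one the paper uses: reduce, via the regular-singular-point discussion preceding the statement, to showing that $\Res_P\nabla=\Res_P\nabla_0+\ad(\Res_P\Omega)$ is pronilpotent, and then kill the perturbation term on the associated graded of a suitable filtration. The one genuine difference is the filtration: the paper filters by the lower central series, observing that $\Res_P\Omega$ is an element of $\u_P$ and hence, acting by inner derivations, is automatically trivial on $\Gr^\dot_\LCS\u_P$, while the residue of $\nabla_0$ on $\Gr^\dot_\LCS\bu\cong\bu_n$ is pronilpotent by the unipotent local monodromy assumption. You instead use the weight filtration, which costs you two extra inputs that the LCS route gets for free — namely that $\bu=W_{-1}\bu$ and that the bracket is compatible with $W_\dot$ (so that $\ad$ of any element strictly lowers weight), plus the flatness of the weight sub-bundles for $\nabla_0$ — but these are all established in the surrounding text (and your justification that unipotence of the local monodromy in $R$ forces unipotence on every $\V_\lambda$, hence pronilpotence of $\Res_P\nabla_0$, is a point the paper leaves implicit). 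Both filtrations are finite on each truncation, so the "filtration-preserving and nilpotent on the graded pieces implies nilpotent" step goes through identically; your version is a touch longer, the paper's a touch slicker, and there is no gap in either.
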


\begin{proof}
Since all singularities of $\nabla$ are regular singular points, it suffices to
check that the residue of $\nabla$ at each $P\in D$ is pronilpotent
endomorphism of $\u_P$, the fiber of $\bubar$ over $P$. This is an immediate
consequence of the fact that the residue of $(\Gr^\dot_\LCS\bu,\nabla)
\cong(\bu,\nabla_0)$ at $P$ is pronilpotent by assumption and that the residue
of $\Omega$ at $P$ is an element of $\u_P$, which acts trivially on
$\Gr^\dot_\LCS\u_P$.
\end{proof}

We now turn our attention to the behaviour of the Hodge bundles. Since each
$\bu_n$ is a sum of variations of MHS that are tensor products of a constant MHS
with a PVHS, they behave well near each cusp $P\in D$. In particular, the Hodge
bundles $F^p\bu_n$ extend to sub-bundles of $\bubar_n$. This implies that the
the Hodge bundles $F^p\bu$ extend to holomorphic sub-bundles of $\bubar$.
Consequently, they extend as $C^\infty$ sub-bundles of $(\bubar,D'')$.

\begin{lemma}
\label{lem:hodge_bdles}
The Hodge sub-bundles of $\bubar$ are holomorphic and the connection $\nabla$
satisfies Griffiths transversality: if $s$ is a local holomorphic section of
$F^p\bubar$, then $\nabla s$ is a local section $\Omega^1_C(\log D)\otimes
F^p\bubar$.
\end{lemma}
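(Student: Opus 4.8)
The plan is to verify the two assertions — holomorphicity of the Hodge sub-bundles $F^p\bubar$ and Griffiths transversality of $\nabla$ — by reducing both to statements about the graded quotients $\Gr^\dot_\LCS\bu$, where everything is known from the properties of the constituent PVHS $\V_\lambda$. The key structural fact is that $\Omega$ takes values in $\bu$ acting by inner derivations, so $\Omega$ acts trivially on $\Gr^\dot_\LCS\bu$; this is what made Lemma~\ref{lem:lcs} work, and it should again be the mechanism here.

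First I would treat holomorphicity. We already know $F^p\bu$ extends to holomorphic sub-bundles $F^p\bubar_n$ of each $\bubar_n$ (since $\bu_n$ is a sum of tensor products of a constant MHS with a PVHS, whose Hodge bundles extend holomorphically by Schmid's Nilpotent Orbit Theorem, as recalled in Section~\ref{sec:zucker}), and that $F^p\bu$ therefore extends as a $C^\infty$ sub-bundle of $(\bubar,D'')$. To see it is in fact $D''$-holomorphic, it suffices to show $D''$ preserves $F^p\bubar$. Write $D'' = \delbar + \Omega''$. The operator $\delbar$ is the canonical-extension $\delbar$-operator on $\bubar$; since $F^p\bubar$ is holomorphic for the original complex structure (it is the direct sum of the $F^p\bubar_n$), $\delbar$ preserves it. For the term $\Omega''$: since $\Omega \in F^0 W_{-1}K^1(C,D;\bu)$ by (\ref{eqn:filts}), its $(0,1)$-part $\Omega''$ lies in $F^0$ applied to the smooth forms $\E^{0,1}_C\otimes\bubar$, and acting by inner derivation, $\ad(\Omega'')$ maps $F^p\bubar$ into $F^{p+0}\bubar = F^p\bubar$ — because a filtered Lie algebra satisfies $[F^0,F^p]\subseteq F^p$, and $\bubar$ with its Hodge filtration is (fiberwise, and compatibly in families near the cusps) a Lie algebra in filtered vector spaces. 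Hence $D''$ preserves $F^p\bubar$, so $F^p\bubar$ is a holomorphic sub-bundle of $(\bubar,D'')$.

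Next, Griffiths transversality. We must show $\nabla(F^p\bubar) \subseteq \Omega^1_C(\log D)\otimes F^{p-1}\bubar$. Write $\nabla = \nabla_0 + \Omega$ with $\Omega = \Omega' + \Omega''$. For $\nabla_0$: it is the limit of the flat connections on $\bubar_n = \L_n(\bubar_1)$, and on each $\bubar_n$ Griffiths transversality of $\nabla_0$ follows from that of the PVHS $\V_\lambda$ and its functoriality under the Schur functor $\L_n$ and tensor product — so $\nabla_0(F^p\bubar)\subseteq \Omega^1_C(\log D)\otimes F^{p-1}\bubar$. For the $\Omega$ term: since $\Omega\in F^0 W_{-1}K^1(C,D;\bu)$, the endomorphism-valued $1$-form $\ad(\Omega)$ lies in $F^0$, so $\ad(\Omega)(F^p\bubar)\subseteq \Omega^1_C(\log D)\otimes F^p\bubar \subseteq \Omega^1_C(\log D)\otimes F^{p-1}\bubar$. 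Adding the two contributions gives the claim.

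The main obstacle I expect is making precise the statement that $\bubar$, together with its Hodge filtration, is a "Lie algebra object in filtered bundles near the cusps" — i.e.\ that the bracket $\bubar\otimes\bubar\to\bubar$ is a filtered morphism not just fiberwise but as a morphism of holomorphic bundles on all of $C$ compatibly with the canonical extensions, so that $[F^0,F^p]\subseteq F^p$ holds at the level of sheaves. This follows because $\bu$ is built functorially (free Lie algebra, then $\L_n$, then the Hodge-weight structure from Section~\ref{sec:tech}) from PVHS, and Zucker's CMHC is compatible with tensor products and the bracket is induced by morphisms of CMHCs; the canonical extensions of the $\bu_n$ are the $\L_n(\bubar_1)$ precisely because the monodromy is unipotent, and the bracket $\bu_m\otimes\bu_n\to\bu_{m+n}$ extends to $\bubar_m\otimes\bubar_n\to\bubar_{m+n}$ compatibly with Hodge filtrations by the Schur-functoriality already used. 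Once this compatibility is recorded, the two transversality/holomorphicity checks are the short $F$-filtration bookkeeping arguments sketched above, using $\Omega\in F^0$ as the single key input.
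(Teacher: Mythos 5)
Your argument is essentially the paper's: both split $D''=\delbar+\Omega''$ and $\nabla=\nabla_0+\Omega$, handle $\delbar$ and $\nabla_0$ via the PVHS structure of the $\bu_n$ (Schur functoriality and the nilpotent orbit theorem), and handle the correction terms using $\Omega\in F^0W_{-1}K^1(C,D;\bu)$ together with the filtration--compatibility of the bracket; the compatibility you worry about at the end is exactly what the paper uses silently, and your resolution of it is the intended one. One intermediate claim is misstated, though harmlessly: it is not true that $\ad(\Omega)$ preserves $F^p\bubar$, because membership in $F^0$ of Zucker's complex forces only the $(0,1)$-part to satisfy $\Omega''\in E^{0,1}(C)\comptensor F^0\bubar$, while the $(1,0)$-part need only lie in logarithmic $1$-forms with values in $F^{-1}\bubar$ (its coefficients, e.g.\ $\e_0^j\cdot\e_f'$ of type $(-1-j,j)$, are not in $F^0\bu$; the $F^1$ of the logarithmic form supplies the missing degree). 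The correct consequence, that $\ad(\Omega)$ carries $F^p\bubar$ into logarithmic $1$-forms with values in $F^{p-1}\bubar$, is precisely what the paper invokes and is all your proof needs, so your conclusion stands; note also that the $F^p\bubar$ in the displayed statement of the lemma should read $F^{p-1}\bubar$ (Griffiths transversality), and it is that statement which both you and the paper actually prove.
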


\begin{proof}
To prove that $F^p\bubar$ is a holomorphic sub-bundle with respect to the
complex structure $D''$, it suffices to show that if $s$ is a local $C^\infty$
section of $F^p\bubar$, then $D''s$ is a $(0,1)$-form with values in
$F^p\bubar$. Since $F^p\bu$ is a holomorphic sub-bundle of $(\bu,\delbar)$, it
follows that $\delbar s$ is a $(0,1)$-form with values in $F^p\bubar$. And since
$$
\Omega'' \in F^0 K^{0,1}(C,D;\bu) = E^{0,1}(C)\comptensor F^0\bubar
$$
it follows that $\Omega''(s) \in E^{0,1}(C)\comptensor F^p\bubar$, which implies
that $D''s \in E^{0,1}(C)\comptensor F^p\bubar$, as required.

Griffiths transversality follows for similar reasons. Suppose that $s$ is a
local $C^\infty$ section of $F^p\bu$. Since $(\bu,\nabla_0)$ satisfies Griffiths
transversality, $\nabla_0 s$ is a 1-form valued section of $F^{p-1}\bubar$.
Since
$$
\Omega \in F^0 K^1(C,D;\bu) \subseteq E^1(C\log D)\comptensor F^{-1}\bubar,
$$
$\Omega(s)$ is a 1-form valued section of $F^{p-1}\bubar$. It follows that
$\nabla(s)$ takes values in $F^{-1}\bubar$.
\end{proof}

\begin{lemma}
\label{lem:wt_bdles}
The weight sub-bundles $W_m \bu$ are flat sub-bundles of $(\bubar,\nabla)$.
Moreover, the identity induces an isomorphism of PVHS
$$
(\Gr^W_m\bubar,\nabla) \cong (\Gr^W_m\bubar,\nabla_0).
$$
\end{lemma}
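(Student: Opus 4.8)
The plan is to use the decomposition $\nabla=\nabla_0+\Omega$ of Section~\ref{sec:omega}, in which $\Omega$ acts by inner derivations and lies in $W_{-1}K^1(C,D;\bu)$, and to treat $\nabla_0$ and the correction $\ad\Omega$ separately.

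For $\nabla_0$ this is classical. Each $\bu_n=\L_n(\bu_1)$ is, by the Schur-functor construction, a pro-direct sum of variations of the form $A\otimes\V_\mu$ with $A$ a constant MHS and $\V_\mu$ a PVHS; the weight filtration of such a summand is a direct sum of flat sub-bundles (when $\V_\mu$ is pure of weight $w$ one has $W_k(A\otimes\V_\mu)=W_{k-w}A\otimes\V_\mu$), so $W_m\bu$ is a sub-VMHS of $\bu$ and in particular is $\nabla_0$-flat. Since the local monodromy about each $P\in D$ is unipotent, $W_m\bu$ extends to a locally split holomorphic sub-bundle $W_m\bubar$ of $\bubar$ preserved by the logarithmic connection $\nabla_0$; this is the weight analogue of the part of Lemma~\ref{lem:hodge_bdles} dealing with the Hodge sub-bundles. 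For the $\Omega$-term, the point is that $\ad\Omega$ strictly lowers the weight: the bracket $\bu\otimes\bu\to\bu$ is a morphism of variations of MHS, so it respects weights, $[W_a\bu,W_b\bu]\subseteq W_{a+b}\bu$, and $\Omega\in W_{-1}K^1(C,D;\bu)$ is a logarithmic $1$-form valued in $W_{-1}\bubar$; hence $\ad\Omega$ carries sections of $W_m\bubar$ into $\Omega^1_C(\log D)\otimes W_{m-1}\bubar$. Therefore $\nabla=\nabla_0+\ad\Omega$ preserves each $W_m\bubar$, which is the first assertion of the lemma.

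For the second assertion, because $\ad\Omega$ lowers the weight by at least one it induces the zero operator on $\Gr^W_m\bubar=W_m\bubar/W_{m-1}\bubar$; thus $\nabla$ and $\nabla_0$ induce the same connection there, so the identity map is the asserted isomorphism $(\Gr^W_m\bubar,\nabla)\cong(\Gr^W_m\bubar,\nabla_0)$, and the right-hand side is the canonical extension of a PVHS since $\Gr^W_m\bu$ is a graded quotient of the $\bu_n$ and hence again a direct sum of variations of the form (pure Hodge structure) $\otimes$ (PVHS). In the orbi-curve case one runs the same argument on a finite étale cover and passes to $G$-invariants, as in \S\ref{rem:orbi-case}. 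The step requiring the most care is the behaviour at the cusps: that $W_m\bu$ extends to a locally split sub-bundle $W_m\bubar$ of $\bubar$ and that $\Omega$ stays $W_{-1}\bubar$-valued near $D$, where Zucker's weight filtration in degree one is defined through the residues $N_P$. Both facts rest on the unipotent-monodromy hypothesis and on the special structure of the $\bu_n$, each of which is a direct sum of tensor products of a constant MHS with a PVHS, so that its asymptotics near $D$ are governed by those of the PVHS factors; the verification is formally identical to the one already carried out for the Hodge bundles in Lemma~\ref{lem:hodge_bdles}. Once $W_m\bubar$ is known to be a sub-bundle, its $\nabla$-stability propagates from $C'$ to all of $C$ because the $\cO$-linear obstruction to stability vanishes on the dense open set $C'$, hence everywhere.
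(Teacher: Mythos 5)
Your proof is correct and follows essentially the same route as the paper: everything rests on the fact that $\Omega$ acts by inner derivations and takes values in $\bu = W_{-1}\bu$, so $\ad\Omega$ strictly lowers the weight, whence $\nabla = \nabla_0 + \ad\Omega$ preserves each $W_m$ and induces $\nabla_0$ on $\Gr^W_\dot\bu$. The only remark worth making is that you do not need to invoke the finer statement $\Omega \in W_{-1}K^1(C,D;\bu)$ (whose weight filtration in degree one involves a shift and residue conditions): that the values of $\Omega$ lie in $W_{-1}\bubar$ is immediate from $\bu = W_{-1}\bu$, which is exactly the fact the paper uses.
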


\begin{proof}
Both assertions follow from the fact that $\Omega$ (and hence $\Omega''$ as
well) takes values in $\bu$ and that $\bu = W_{-1}\bu$. This implies that the
adjoint action of $\Omega$ and $\Omega''$ on $\Gr^W_\dot\bu$ is trivial. It
follows that $\nabla$ respects the weight filtration of $\bu$ and that the
induced connection on $\Gr^W_\dot\bu$ is $\nabla_0$.
\end{proof}

Since $\Omega$ acts trivially on $\Gr^\dot_\LCS\bu$, we have:

\begin{lemma}
\label{lem:gr_lcs}
For all $n\ge 1$ there is a natural isomorphism of 
$$
(\Gr^n_\LCS\bu,\nabla) \cong (\Gr^n_\LCS\bu,\nabla_0) \cong \bu_n
$$
local systems. \qed
\end{lemma}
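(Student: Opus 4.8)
The plan is to deduce the lemma directly from the description of $\nabla$ established above together with Lemma~\ref{lem:lcs}; the only content beyond that lemma is the passage from flat bundles to local systems.

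First I would recall that, by construction, $\Omega$ takes values in $\bu$, so that it acts on each fiber of $\bu$ by inner derivations, i.e.\ via $\ad$. For the lower central series of any Lie algebra one has $[L^1,L^n] = L^{n+1}$, so $\ad(\xi)$ carries $\bu^n$ into $\bu^{n+1}$ for every local section $\xi$ of $\bu = \bu^1$; hence $\ad$ of any section of $\bu$ induces the zero map on each graded quotient $\Gr^n_\LCS\bu$. Applying this fiberwise, the $\ad\Omega$-term of $\nabla = \nabla_0 + \ad\Omega$ induces the zero $\End$-valued $1$-form on $\Gr^n_\LCS\bu$. Since $\nabla$ preserves the lower central series filtration (Lemma~\ref{lem:lcs}), the connection that $\nabla$ induces on $\Gr^n_\LCS\bu$ therefore coincides with the one induced by $\nabla_0$; and by Lemma~\ref{lem:lcs} the latter is $\nabla_0$ itself under the canonical identification $\Gr^n_\LCS\bu \cong \bu_n$. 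Thus the identity is an isomorphism of flat bundles $(\Gr^n_\LCS\bu,\nabla) \cong (\Gr^n_\LCS\bu,\nabla_0)$, and the filtration-versus-grading isomorphism identifies the target with $\bu_n$ carrying $\nabla_0$.

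To pass to the level of local systems I would then invoke the standard equivalence between flat (pro)bundles on $C'$ and local systems on $C'$ given by the functor of horizontal sections. Under this equivalence the flat bundle $(\bu_n,\nabla_0)$ --- which is $\L_n$ applied to the flat bundle underlying $\bu_1$, namely $\varprojlim_n\bigoplus_{\lambda\in\Rdual_n}H^1(C',\V_\lambda)^\ast\otimes\V_\lambda$ with its flat structure --- corresponds to the local system $\bu_n$, because the Schur functor $\L_n$ commutes with the passage to horizontal sections. Hence the isomorphisms of flat bundles produced above are isomorphisms of local systems, which is the assertion.

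The only point requiring a little care is the pro-object bookkeeping: $\bu$ and each $\Gr^n_\LCS\bu$ are pro-objects, so I would run the argument at each finite truncation $\bu/\bu^{N+1} \cong \bu_1\oplus\cdots\oplus\bu_N$, where everything is a genuine finite-rank flat bundle, and pass to the inverse limit, noting that all the maps in sight are the evident ones and so are compatible with the transition maps. I do not expect any genuine obstacle: the lemma is essentially a repackaging of Lemma~\ref{lem:lcs} in the form needed later, and the observation preceding its statement, that $\Omega$ acts trivially on $\Gr^\dot_\LCS\bu$, is precisely the key point.
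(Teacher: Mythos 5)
Your argument is correct and is essentially the paper's own: the lemma is stated there as an immediate consequence of the observation that $\Omega$, taking values in $\bu$ and acting by inner derivations, acts trivially on $\Gr^\dot_\LCS\bu$, so that by Lemma~\ref{lem:lcs} the induced connection on $\Gr^n_\LCS\bu\cong\bu_n$ is $\nabla_0$. Your additional remarks on passing to horizontal sections and the pro-object truncations are just the routine bookkeeping the paper leaves implicit.
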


\subsection{The ind-variation $\OO(P_x)$}
\label{sec:OO}

Here we dispense of a few technicalities, in preparation for the construction of
a MHS on $\cO(\cG_x)$ in the next section.

Denote the fibers of $\V$ over $x,y\in C'$ by $V_x$ and $V_y$, respectively.
Denote the Zariski closure of the image of the parallel transport mapping
$$
\rho_{x,y} : \Pi(C';x,y) \to \Hom(H_x,H_y)
$$
by $R_{x,y}$. Set $R_x = R_{x,x}$. Then $R_{x,y}$ is a left $R_x$ torsor and a
right $R_y$ torsor.

The theorem of the fixed part (Sec.~\ref{sec:tech}) implies that the coordinate
ring of $R_{x,y}$ is an algebra in the category of ind-Hodge structures of
weight 0. This HS is characterized by the property that the monodromy coaction
\begin{equation}
\label{eqn:coaction}
E_x \to \cO(R_{x,y}) \otimes E_y
\end{equation}
is a morphism of HS for all polarized variations of Hodge structure $\bE$ over
$C'$ whose monodromy representation factors through $\rho_x : \pi_1(C',x) \to
R_x$. (Here $E_z$ denotes the fiber of $\bE$ over $z\in C'$.) Under our
assumption that each irreducible representation $V_\lambda$ of $R$ is absolutely
irreducible and that the corresponding local system $\V_\lambda$ underlies a
PVHS over $C'$, there is an isomorphism of HS
$$
\cO(R_{x,y}) \cong \bigoplus_\lambda \Hom(V_{\lambda,x},V_{\lambda,y})^\ast.
$$
When $C' = \G\bs \h$, $\G$ a finite index subgroup of $\SL_2(\Z)$, and $\H$ is
the standard variation of HS,
$$
\cO(R_{x,y}) = \bigoplus_{n\ge 0} \Hom(S^nH_x,S^n H_y)^\ast.
$$

Denote the local system over $C'$ whose fiber over $y$ is $\cO(R_{y,x})$ by
$\OO_x$. In concrete terms
$$
\OO_x = \bigoplus_\lambda \Hom(\V_\lambda,V_{\lambda,x})^\ast
\cong \bigoplus_\lambda \V_\lambda\otimes V_{\lambda,x}^\ast.
$$
Note that this is local system of algebras and that there is a natural left
$R_x$ action that preserves the algebra structure.

For every local system $\bE$ whose monodromy representation factors through
$\rho : \pi_1(X,x) \to R_x$, there is a natural isomorphism
\begin{equation}
\label{eqn:isom}
\V \cong [\OO_x \otimes E_x]^R
\end{equation}

The $R$-finite vectors in the de~Rham complex of $C'$ with coefficients in
$\OO_x$ form a complex $E^\dot_\fin(C',\OO_x)$. In concrete terms, this is
$$
E^\dot_\fin(C',\OO_x) =
\bigoplus_\lambda E^\dot(C',\V_\lambda)\otimes V_{\lambda,x}^\ast.
$$
It is a (graded commutative) differential graded algebra. Similarly, one
defines the ind-MHC
$$
\bK(\OO_x) = \bigoplus_\lambda \bK(\V_\lambda)\otimes V_{\lambda,x}^\ast
$$
whose complex part is
$$
K^\dot(C,D;\OO_x) =
\bigoplus_\lambda K^\dot(C,D;\V_\lambda)\otimes V_{\lambda,x}^\ast.
$$
The relevance of these complexes is that the iterated integrals of their
elements are elements of the coordinate ring of $\cG_x$. (Cf.\
\cite{hain:malcev}.)

\begin{lemma}
\label{lem:isom}
If $\bE$ is a PVHS over $C'$, then the isomorphism (\ref{eqn:isom}) induces an
isomorphism of bifiltered complexes:
$$
K^\dot(C,D;\bE) \cong (K^\dot(C,D;\OO_x)\otimes \bE)^R. \qed
$$
\end{lemma}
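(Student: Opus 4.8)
The plan is to reduce the statement to the $R$-isotypical decomposition of $\bE$ and then appeal to the functoriality of Zucker's mixed Hodge complex together with its compatibility with tensoring by a fixed MHS. Throughout, $\bE$ is a PVHS over $C'$ whose monodromy factors through $R$, as required for (\ref{eqn:isom}), and on the right-hand side I read $\bE$ as its fibre $E_x$, a finite-dimensional $R$-module with its pure Hodge structure.

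First I would use the Theorem of the Fixed Part (Lemma~\ref{lem:isotypic} and the discussion after it in Section~\ref{sec:tech}) to write $\bE \cong \bigoplus_{\lambda\in\Rdual}\V_\lambda\otimes B_\lambda$ as an isomorphism of admissible VMHS over $C'$, where each $B_\lambda$ is a MHS --- the $R$-isotypical multiplicity space --- and the sum is finite since $\bE$ is a PVHS. Taking fibres at $x$ gives $E_x\cong\bigoplus_\lambda V_{\lambda,x}\otimes B_\lambda$ as Hodge structures, and since each $V_\lambda$ is absolutely irreducible, Schur's lemma identifies $B_\lambda\cong(V_{\lambda,x}^\ast\otimes E_x)^R$ as a MHS (the identity line in $\End(V_{\lambda,x})$ being of Hodge type $(0,0)$); this is the identification already built into Section~\ref{sec:tech}.

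Next I would push the decomposition of $\bE$ through $K^\dot(C,D;-)$. By the construction $\bK(\A)=\prod_\lambda\bK(\V_\lambda)\otimes A_\lambda$ of Section~\ref{sec:tech} (here a finite direct sum) and the naturality of Zucker's MHC and of its smooth-forms resolution in the local system, this yields an isomorphism of complex parts $K^\dot(C,D;\bE)\cong\bigoplus_\lambda K^\dot(C,D;\V_\lambda)\otimes B_\lambda$ that is strict for both the Hodge and the weight filtration, the bifiltration on the right being the tensor-product bifiltration. On the other side, $K^\dot(C,D;\OO_x)=\bigoplus_\lambda K^\dot(C,D;\V_\lambda)\otimes V_{\lambda,x}^\ast$ by the definition of $\bK(\OO_x)$, with the left $R$-action living entirely on the coefficient spaces $V_{\lambda,x}^\ast$ and trivial on each $K^\dot(C,D;\V_\lambda)$; hence
$$
\big(K^\dot(C,D;\OO_x)\otimes E_x\big)^R
\cong \bigoplus_\lambda K^\dot(C,D;\V_\lambda)\otimes (V_{\lambda,x}^\ast\otimes E_x)^R
\cong \bigoplus_\lambda K^\dot(C,D;\V_\lambda)\otimes B_\lambda
$$
as bifiltered complexes, since passing to $R$-invariants in the Hodge structure $V_{\lambda,x}^\ast\otimes E_x$ is a morphism of Hodge structures with image $B_\lambda$. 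Comparing the two descriptions gives the asserted bifiltered isomorphism, and evaluating on isotypical summands shows it is the map induced by (\ref{eqn:isom}).

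The main obstacle --- really the only step beyond bookkeeping --- is the compatibility used in the second paragraph: that the MHS on $B_\lambda$ obtained from the isotypical decomposition of $\bE$ via the Theorem of the Fixed Part coincides with the one obtained as $R$-invariants inside $V_{\lambda,x}^\ast\otimes E_x$. Once that is granted (it is exactly what the setup of Section~\ref{sec:tech} is arranged to supply), the identification of the two bifiltered complexes is forced summand by summand, and nothing further is needed.
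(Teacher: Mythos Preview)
Your argument is correct and is exactly the natural elaboration of what the paper leaves implicit: the paper states the lemma with a \qed and gives no proof, relying on the isotypical decomposition from Lemma~\ref{lem:isotypic} and the definition of $K^\dot(C,D;\OO_x)$ in Section~\ref{sec:OO}, which is precisely what you unpack. There is nothing to add.
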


The relevance of the preceding discussion is that iterated integrals of
elements of
$$
E^\dot(C',\V_\lambda)\otimes V_{\lambda,x}^\ast
$$
that occur in Section~\ref{sec:concrete} are iterated integrals of elements of
$E^\fin(C',\OO_x)$ considered in \cite{hain:malcev}. This implies that the
iterated Shimura integrals considered by Manin \cite{manin1,manin2} are examples
of the iterated integrals constructed in \cite{hain:malcev}.

\subsection{The MHS on the relative completion of $\pi_1(C',x)$}
\label{sec:mhs}

Denote the fiber of $\bu$ over $x\in C'$ by $\u_x$. Denote the corresponding
prounipotent group by $\U_x$. The Hodge and weight bundles of $\bu$ restrict to
Hodge and weight filtrations on $\u_x$. Set $\G = \pi_1(C',x)$ and write $C'$ as
the quotient $\G\bs X$ of a simply connected Riemann surface by $\G$. Implicit
here is that we have chosen a point $\xtilde\in X$ that lies over $x\in C'$. 

Trivialize the pullback of each local system $\bu_n$ to $X$ so that the flat
sections are constant.\footnote{It is also natural to trivialize $\bu_1$ (and
hence all $\bu_n$) so that the Hodge bundles are trivialized. In this case, we
are in the setup of Section~\ref{sec:variant}. The monodromy representation will
be the same as it is with this ``constant trivialization". Trivializing the
Hodge bundles is better when computing the MHS on completed path torsors. For
example, in Section~\ref{sec:H}, we can trivialize $\cH:=\H\otimes\cO_\h$ over
the upper half plane $\h$ using the flat sections $\a,\b$, as we do here, or we
can use the sections $\a,\bw$ that are adapted to the Hodge filtration. The
second trivialization is better suited to studying asymptotic properties of
VMHS.} This determines a trivialization
$$
\xymatrix{
X\times \u_x \ar[r]\ar[d] & \bu \ar[d] \cr
X \ar[r] & C'
}
$$
of the pullback of $\bu$ to $X$ as the product of the pullbacks of the $\bu_n$.
The pullback of the connection $\nabla = \nabla_0 + \Omega$ on $\bu$ is
$\G$-invariant with respect to the diagonal $\G$-action on $X\times\u_x$ and is
of the form $d + \Omegatilde$, where
$$
\Omegatilde \in (E^1(X)\comptensor \u_x)^\G.
$$
Proposition~\ref{prop:char} and the fact that $\Omega_1$  represents the product
of the identity maps $H^1(C',\V_\lambda) \to H^1(C',\V_\lambda)$  for all
$\lambda \in \Rdual$ imply that the transport of $\Omegatilde$ induces an
isomorphism
$$
\Theta_x : \cG_x \to R_x \ltimes \U_x.
$$
The MHS on $\cO(\cG_x)$ is constructed by pulling back the natural Hodge
and weight filtrations on $\cO(R_x)\otimes\cO(\U_x)$, which we now recall.

To describe $\cO(\U_x)$ we recall some basic facts about prounipotent groups.
Suppose that $\cN$ is a prounipotent group over $F$ with Lie algebra $\n$. The
enveloping algebra $U\n$ of $\n$ is a Hopf algebra. The exponential mapping
$\exp : \n \to \cN$ is a bijection, so we can identify $\cN$ with the subspace
$\n$ of $U\n$. The Poincar\'e-Birkhoff-Witt Theorem implies that the restriction
mapping induces a Hopf algebra isomorphism
$$
\cO(\cN) \cong \Hom^\cts(U\n,F) \cong
\Sym^\cts(\n) := \varinjlim_\alpha \Sym \n_\alpha.
$$
In particular, $\cO(\U_x) \cong \Hom^\cts(U\u_x,\C)$. The construction of
the Hodge and weight filtrations of $\u_x$ and (\ref{eqn:coaction}) imply
that the coaction
\begin{equation}
\label{eqn:coact_u}
U\u_x \to \cO(R_x)\otimes U\u_x
\end{equation}
that defines the semi direct product $R_x\ltimes \U_x$ preserves the Hodge and
weight filtrations.\footnote{In fact, this is a morphism of MHS --- and thus
strict with respect with respect to the Hodge and weight filtrations --- if we
give $\u_x$ MHS via the identification $\u_x \cong \prod_n (\Gr^\LCS_n \u)_x$.
The canonical MHS on $\u_x$ has the same underlying complex vector space, the
same Hodge and weight filtrations, but its $\Q$-structure is deformed using the
deformed connection $\nabla_0+\Omega$.}

The Hodge and weight filtrations of $\u_x$ induce Hodge and weight filtrations
on $\Hom^\cts(U\u_x,\C)$, and thus on $\cO(\U_x)$. Define Hodge and weight
filtrations on the coordinate ring of $R_x\ltimes \U_x$ via the canonical
isomorphism
$$
\cO(R_x\ltimes \U_x) \cong \cO(R_x)\otimes \cO(\U_x).
$$
These pullback to Hodge and weight filtrations on $\cO(\cG_x)$ along the
isomorphism $\Theta_x$. Equation (\ref{eqn:coact_u}) implies that these are
compatible with the product and coproduct of $\cO(R_x\times \U_x)$.

A filtration $W_\dot$ of a Lie algebra $\u$ is said to be {\em compatible with
the bracket} if $[W_i\u,W_j\u]\subseteq W_{i+j}\u$ for all $i,j$. Similarly, a
filtration $W_\dot$ of a Hopf algebra $A$ is {\em compatible with its
multiplication $\mu$ and comultiplication $\Delta$} if $\mu(W_iA\otimes W_jA)
\subseteq W_{i+j}A$ and $\Delta(W_m A) \subset \sum_{i+j} W_iA\otimes W_j A$.
The Hodge and weight filtrations on $\cO(R_x\ltimes \U_x)$ defined above are
compatible with its product and coproduct.

Denote the maximal ideal of $\cO(\cG_x)$ of functions that vanish at the
identity by $\m_e$. The Lie algebra $\g_x$ of $\cG_x$ is isomorphic to
$\Hom(\m_e/\m_e^2,F)$ and its bracket is induced by the Lie cobracket of
$\cO(\cG_x)$. The Hodge and weight filtrations of $\cO(\cG_x)$ thus induce Hodge
and weight filtrations on $\g_x$ that are compatible with its bracket.

\begin{theorem}
\label{thm:mhs}
These Hodge and weight filtrations define a MHS on $\cO(\cG_x)$ for which the
multiplication and comultiplication are morphisms. This MHS agrees with the one
constructed in \cite{hain:malcev}.
\end{theorem}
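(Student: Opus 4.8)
The plan is to establish the two assertions of the theorem in turn: first, that the displayed Hodge and weight filtrations make $\cO(\cG_x)$ a MHS for which $\mu$ and $\Delta$ are morphisms, and second, that this MHS is the one of \cite{hain:malcev}.

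First I would check the MHS axioms directly. Since $\cO(\cG_x)\cong\cO(R_x)\otimes\cO(\U_x)$ as algebras and $\cO(R_x)$ is pure of weight $0$ and compatible with its algebra structure (Theorem of the Fixed Part; cf.\ Section~\ref{sec:OO}), it is enough to show that $\u_x$, with the $\Q$-structure of Proposition~\ref{prop:Qstr} and the Hodge and weight filtrations restricted from $\bu$, is a pro-MHS: then $U\u_x$ is an Ind-MHS Hopf algebra, $\cO(\U_x)\cong\Hom^\cts(U\u_x,\C)$ inherits a MHS as its restricted dual, and $\cO(\cG_x)$ becomes an Ind-MHS. The compatibility of $\mu$, $\Delta$ and the coaction (\ref{eqn:coact_u}) with the two filtrations was already observed above, so — filtered maps between MHS being automatically strict — they are then morphisms of MHS. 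To see that $\u_x$ is a pro-MHS I would verify that $\Gr^W_n\u_x$, with its induced Hodge filtration and $\Q$-structure, is pure of weight $n$; here the key input is Lemma~\ref{lem:wt_bdles}, which says the weight sub-bundles $W_m\bu$ are flat for $\nabla=\nabla_0+\Omega$ and that the identity is an isomorphism of PVHS $(\Gr^W_m\bu,\nabla)\cong(\Gr^W_m\bu,\nabla_0)$. Since $\Omega\in W_{-1}\bu$ acts trivially on $\Gr^W_\dot\bu$, the deformed $\Q$-structure on $\Gr^W_n\u_x$ coincides with the $\Q$-structure of the PVHS $(\Gr^W_n\bu,\nabla_0)$ at $x$; and because each $\bu_n$ is a sum of tensor products of a constant MHS with a PVHS (Section~\ref{sec:tech}), its weight-graded pieces are PVHS whose fibers at $x$ are pure of the correct weight. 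This yields the first assertion.

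Next I would prove the comparison. The MHS of \cite{hain:malcev} on $\cO(\cG_x)$ is produced by the reduced bar construction on a mixed Hodge complex computing $H^\dot(C',\OO_x)$ — for us $K^\dot(C,D;\OO_x)$ (Section~\ref{sec:OO}) — whose degree-zero cohomology is an Ind-MHS Hopf algebra identified with $\cO(\cG_x)$, the Hodge and weight filtrations being induced by the bar filtration from those of the MHC. The transport function $T$ of (\ref{eqn:transport}) for the connection $d+\Omegatilde$ realizes precisely this identification: pulling an element of $\cO(R_x\ltimes\U_x)\cong\cO(R_x)\otimes\cO(\U_x)$ back along $\Theta_x$ gives, by Chen's formula, an iterated integral whose entries are components of $\Omega$ (for the $\cO(\U_x)$-factor) together with the weight-$0$ classes coming from $\cO(R_x)$. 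Since $\Omega\in F^0W_{-1}K^1(C,D;\bu)$ by (\ref{eqn:filts}) — which is exactly why the corrections $\Xi_n$ were taken in $F^0W_{-1}K^1(C,D;\bu_n)$ — and $\cO(R_x)$ lies in $F^0W_0$ of the coefficients, the bar-filtration bidegree of such a class matches exactly the bidegree our construction assigns to the corresponding element of $\cO(R_x)\otimes\cO(\U_x)$. As the two $\Q$-structures also agree (both are the intrinsic $\Q$-structure of $\cO(\cG_x)$, equivalently that of Proposition~\ref{prop:Qstr}), the two MHS coincide; in particular this re-proves the first assertion.

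The \emph{hard part} will be the bookkeeping in the comparison: verifying that iterated integration against $\Omega$ carries the bar filtration of $B\bigl(K^\dot(C,D;\OO_x)\bigr)$ onto precisely the Hodge and weight filtrations defined via $\bu$, keeping track of the filtration levels of $\Omega_1$, of the corrections $\Xi_n$ and of $\cO(R_x)$, and that this is compatible with the Hopf-algebra structures on both sides. By contrast, the direct verification that the filtrations form a MHS is essentially formal once Lemma~\ref{lem:wt_bdles} is in hand.
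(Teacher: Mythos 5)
Your second paragraph is, in substance, the paper's own proof: the comparison is made by observing that $\Omega \in F^0W_{-1}K^1(C,D;\bu) \cong F^0W_{-1}\big(K^1(C,D;\OO_x)\comptensor\u_x\big)$ (via Lemma~\ref{lem:isom}), so that the formal transport $T=1+[\Omegatilde]+[\Omegatilde|\Omegatilde]+\cdots$ lies in $F^0W_0$ of the bar construction tensored with $U\u_x$; this makes the induced Hopf algebra map $f:\cO(\U_x)\to\cO(\cG_x)$ filtration-preserving, and together with $\pi^\ast:\cO(R_x)\to\cO(\cG_x)$ being a morphism of MHS one gets that $\Theta_x^\ast$ preserves both filtrations and the $\Q$-structures. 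Two caveats, one of which is a genuine gap.

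First, your opening paragraph's ``direct verification'' is circular at the decisive point. To speak of the $\Q$-structure induced on $\Gr^W_n\u_x$ by the deformed $\Q$-structure of Proposition~\ref{prop:Qstr}, you must already know that $W_\dot$ is defined over that $\Q$-structure; Lemma~\ref{lem:wt_bdles} controls the complex weight bundles and the $\nabla_0$-rational structure, but says nothing about the $\Q$-Lie algebra spanned by the $\log\Theta_{x_o}(\gamma)$, and the asserted coincidence of that induced $\Q$-structure with the PVHS $\Q$-structure on $(\Gr^W_n\bu,\nabla_0)_x$ is essentially equivalent to what has to be proved. In the paper the $\Q$-rationality of the weight filtration on $\cO(R_x\ltimes\U_x)$ is a \emph{conclusion} of the comparison with the canonical MHS of \cite{hain:malcev}, not an independent input; so the first assertion of the theorem cannot be dispatched ``formally once Lemma~\ref{lem:wt_bdles} is in hand'' --- it comes out of the comparison, as you note parenthetically at the end. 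Second, within the comparison, knowing that $\pi^\ast\otimes f$ preserves the filtrations is not enough: one needs $\Theta_x^\ast$ to be a \emph{bifiltered isomorphism}, i.e.\ to carry the filtrations \emph{onto} those of \cite{hain:malcev}. You defer this to ``bookkeeping,'' but it is a real step; the paper reduces it to showing that $\n_x\to\u_x$ is a bifiltered isomorphism and then passes to the lower central series, where Lemma~\ref{lem:gr_lcs} identifies $\Gr^\LCS_\dot\n_x\to\Gr^\LCS_\dot\u_x$ as an isomorphism of graded MHS. Supplying that reduction (or an equivalent strictness argument) is what is missing from your sketch.
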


\begin{proof}
The natural isomorphism $\Theta_x^\ast : \cO(R_x\ltimes \U_x) \to \cO(\cG_x)$
respects $\Q$-structures (essentially by definition). To prove the result it
suffices to show that it takes the Hodge and weight filtrations on
$\cO(R_x\ltimes \U_x)$ constructed above onto the Hodge and weight filtrations
of the canonical MHS on $\cO(\cG_x)$ constructed in \cite{hain:malcev}. This
will imply that the weight filtration defined on $\cO(R_x\ltimes \U_x)$ above is
defined over $\Q$ and that $\cO(R_x\otimes\U_x)$ has a MHS and that this MHS is
isomorphic to the canonical MHS on $\cO(\cG_x)$ via $\Theta_x^\ast$.

The first point is that Saito's MHC for computing the MHS on $H^\dot(C',\V)$,
which is used in \cite{hain:malcev}, is a generalization of Zucker's MHC used
here and agrees with it in the curve case. The iterated integrals of elements of
$K^\dot(C,D;\V)\otimes V_x^\ast$ in this paper are a special case of the
iterated integrals defined in \cite[\S5]{hain:malcev} by Lemma~\ref{lem:isom}.

The next point is that, by Equation~(\ref{eqn:filts}) and Lemma~\ref{lem:isom},
$$
\Omega \in F^0 W_{-1} K^1(C,D;\bu)
\cong F^0 W_{-1}\big(K^1(C,D;\OO_x)\comptensor \u_x\big).
$$
This implies, and this is the key point, that --- with the Hodge and weight
filtrations on the bar construction defined in \cite[\S13]{hain:malcev} and the
Hodge and weight filtrations on $\u_x$ defined above --- the $U\u_x$-valued {\em
formal transport}
$$
T := 1 + [\Omegatilde] +[\Omegatilde|\Omegatilde] +
[\Omegatilde|\Omegatilde|\Omegatilde] + \cdots 
\in B\big(\C,K^\dot_\fin(C,D;\OO_x),\C\big)\comptensor U\u_x
$$
of $\Omegatilde$, which takes values in the completed enveloping algebra of
$\u_x$, satisfies
$$
T \in
F^0W_0H^0\Big(B\big(\C,K^\dot_\fin(C,D;\OO_x),\C\big)\comptensor U\u_x\Big).
$$
This implies that the induced Hopf algebra homomorphism
$$
f : \cO(\U_x) \cong \Hom^\cts(U\u_x,\C) \to
H^0\Big(B\big(\C,K^\dot_\fin(C,D;\OO_x),\cO(R_x)\big)\Big) \cong \cO(\cG_x)
$$
that corresponds to the function
$$
\xymatrix{
\cG_x \ar[r]^(.4){\Theta_x} & R_x\ltimes \U_x \ar[r]^(.55){\text{projn}} & \U_x
}
$$
preserves the Hodge and weight filtrations.

The constructions in \cite{hain:malcev} imply that the homomorphism $\pi^\ast
:\cO(R_x) \to \cO(\cG_x)$ induced by the projection $\pi:\cG_x \to R_x$ is a
morphism of MHS. Since the Hodge and weight filtrations of $\cO(\cG_x)$ are
compatible with multiplication and since both $f$ and $\pi^\ast$ preserve
the Hodge and weight filtrations filtrations, the homomorphism
$$
\xymatrix{
\cO(R_x)\otimes\cO(\U_x) \ar[r]^(.6){\pi^\ast\otimes f} & \cO(\cG_x)
} 
$$
does too. This homomorphism is $\Theta_x^\ast$ when $\cO(R_x)\otimes \cO(\U_x)$
is identified with $\cO(R_x\ltimes\U_x)$.

It remains to prove that this isomorphism is an isomorphism of bifiltered
vector spaces. Since $\pi^\ast$ is a morphism of MHS, it suffices to show
that the isomorphism $j^\ast\circ f$
$$
\xymatrix{
\cO(\U_x) \ar[r]^f & \cO(\cG_x) \ar[r]^{j^\ast} & \cO(\cN_x),
}
$$
where $j:\cN_x\to \cG_x$ is the inclusion of the prounipotent radical of
$\cG_x$, is a bifiltered isomorphism. To prove this, it suffices to show that
$\n_x \to \u_x$ is a bifiltered isomorphism, where $\n_x$ denotes the Lie
algebra of $\cN_x$. But this follows from Lemma~\ref{lem:gr_lcs}, which implies
that $H_1(\u_x)$ has a MHS and that the induced homomorphism
$$
\Gr^\LCS_\dot\n_x \to \Gr^\LCS_\dot\u_x
$$
is an isomorphism of graded MHS.
\end{proof}

The result also gives an explicit description of the MHS on $\u_x$.

\begin{corollary}
The Hodge and weight filtrations of the natural MHS on $\u_x$ are those induced
on it from $\bu$; its $\Q$-structure is the one described in
Proposition~\ref{prop:Qstr}. \qed
\end{corollary}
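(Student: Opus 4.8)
The plan is to read this corollary off from the proof of Theorem~\ref{thm:mhs}, transporting everything across the isomorphism $\Theta_x:\cG_x\xrightarrow{\sim}R_x\ltimes\U_x$ and treating the filtration data and the $\Q$-structure separately.

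For the Hodge and weight filtrations: by construction the MHS on $\g_x$ is the one induced from $\cO(\cG_x)$ via the identification $\g_x\cong\Hom(\m_e/\m_e^2,F)$, with $\u_x=W_{-1}\g_x$ its sub-MHS, while on the other side the Hodge and weight filtrations on $\cO(R_x\ltimes\U_x)\cong\cO(R_x)\otimes\cO(\U_x)$ were \emph{defined} by starting from the $\bu$-induced filtrations on $\u_x$ and passing through the Poincar\'e--Birkhoff--Witt identification $\cO(\U_x)\cong\Hom^\cts(U\u_x,\C)$. So dualizing the Lie-coalgebra structure of $\m_e/\m_e^2$ returns exactly those filtrations on the Lie algebra of $R_x\ltimes\U_x$, hence on its prounipotent radical's Lie algebra $\u_x$. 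I would then invoke Theorem~\ref{thm:mhs}, which says $\Theta_x^\ast$ carries these filtrations onto those of the canonical MHS on $\cO(\cG_x)$, to conclude that the induced Lie algebra isomorphism matches the $\bu$-filtrations on $\u_x$ with the canonical ones. In fact this is precisely the reduction carried out at the end of the proof of Theorem~\ref{thm:mhs}: ``$j^\ast\circ f$ is a bifiltered isomorphism'' was deduced from ``$\n_x\to\u_x$ is a bifiltered isomorphism'' with $\u_x$ filtered by $\bu$, using Lemma~\ref{lem:gr_lcs}, so strictly nothing new is needed for this half.

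For the $\Q$-structure: the $\Q$-structure of the canonical MHS on $\u_x$ is the $\Q$-form $\n_x$ of the Lie algebra of the prounipotent radical of the completion of $\pi_1(C',x)$ over $\Q$, carried into the complex Lie algebra $\u_x$ by $\Theta_x$. I would identify this with the object computed in Proposition~\ref{prop:Qstr}, applied with $\G=\pi_1(C',x)$, $R=R_x$, $\rho=\rho_x$ and with $x$ in place of $x_o$: that yields the $\Q$-Lie subalgebra of $\u_x$ generated by $\{\log\Theta_x(\gamma):\gamma\in\G\}$, which is exactly the $\Q$-structure referred to. Since the complex vector space and the Hodge and weight filtrations are untouched --- only the $\Q$-structure is deformed, by replacing $\nabla_0$ with $\nabla_0+\Omega$ --- the two assertions together pin down the canonical MHS.

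The one point that genuinely needs care --- and it is settled inside the proof of Theorem~\ref{thm:mhs} rather than here --- is the compatibility of the two bookkeeping conventions: that passing from the Hopf algebra $\cO(\cG_x)$ back to its Lie coalgebra $\m_e/\m_e^2$ and dualizing returns exactly the $\bu$-filtrations used to build $\cO(R_x\ltimes\U_x)$. This rests on the strictness of the differentials in Zucker's MHC (Lemma~\ref{lem:exact}) and on Lemma~\ref{lem:gr_lcs}. Granting that, the corollary is formal.
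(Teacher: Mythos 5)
Your argument is correct and is essentially the paper's own: the corollary is stated there with no separate proof precisely because, as you observe, the filtration statement is the content of the final step in the proof of Theorem~\ref{thm:mhs} (that $\n_x \to \u_x$ is a bifiltered isomorphism, via Lemma~\ref{lem:gr_lcs}), while the $\Q$-structure statement is the identification of the rational form under $\Theta_x$ given by Proposition~\ref{prop:Qstr}. Nothing further is needed.
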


To complete the story, we show that the $\bu$ is a pro-admissible variation of
MHS.

\begin{theorem}
\label{thm:admissible}
With the Hodge and weight filtrations and $\Q$-structure defined above,
$(\bu,\nabla)$ is a pro-object of the category of admissible variation of MHS
over $C'$. Its lower central series is a filtration of $\bu$ by pro-admissible
variations of MHS. The natural isomorphism
$$
(\Gr^n_\LCS\bu,\nabla_0) \cong (\Gr^n_\LCS\bu,\nabla)
$$
is an isomorphism of admissible variations of MHS. In particular, there are
natural MHS isomorphisms
$$
H_1(\u_x) \cong
\prod_{\lambda \in \Rdual} H^1(C',\V_\lambda)^\ast\otimes V_{\lambda,x}
$$
for all base points $x$ of $C'$.\footnote{This statement holds, even when
$x$ is a tangential base point.} Finally, for all PVHS $\V$ over $C'$ whose
monodromy representation factors through $\rho$, the natural homomorphism
$$
H^\dot(\cG_x,V_x) = [H^\dot(\u_x)\otimes V_x]^R \to H^\dot(C',\V)
$$
is an isomorphism of MHS.
\end{theorem}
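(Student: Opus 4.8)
The plan is to reduce parts (1)--(3) to the case of the ``split'' connection $\nabla_0$, and then to read off (4) and (5). For $\nabla_0$ the statement is essentially formal: $\bu_1$ is by construction the pro-direct sum $\varprojlim_n\bigoplus_{\lambda\in\Rdual}H^1(C',\V_\lambda)^\ast\otimes\V_\lambda$, each summand is the tensor product of a constant MHS with a PVHS and hence an admissible VMHS over $C'$, and since $\bu_n=\L_n(\bu_1)$ is obtained from $\bu_1$ by a Schur functor, $(\bu,\nabla_0)$ is a pro-admissible VMHS whose lower central series filters it by pro-admissible subvariations; in particular the relative monodromy weight filtrations $M(N_P^{(0)},W_\dot)$ of its local monodromy logarithms $N_P^{(0)}$ exist (summand by summand, using Schmid's theorem for each PVHS factor). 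To pass to $\nabla=\nabla_0+\Omega$ I would observe that the underlying $C^\infty$ bundle and the Hodge and weight sub-bundles are unchanged, and assemble the structural lemmas already established: by Lemma~\ref{lem:hodge_bdles} and Proposition~\ref{prop:canon_extn} the Hodge bundles $F^p\bubar$ remain holomorphic sub-bundles of Deligne's canonical extension $(\bubar,D'')$ and $\nabla$ is Griffiths transverse; by Lemma~\ref{lem:wt_bdles} the weight sub-bundles remain flat for $\nabla$ and $(\Gr^W_m\bubar,\nabla)\cong(\Gr^W_m\bubar,\nabla_0)$ as PVHS; and, because $\Omega$ takes values in $\bu=W_{-1}\bu$ and acts by inner derivations, the deformed $\Q$-structure agrees with the undeformed one on each $\Gr^n_\LCS\bu$ and on each $\Gr^W_m\bu$ (Lemma~\ref{lem:gr_lcs} and the corollary to Theorem~\ref{thm:mhs}). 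This already gives the asserted isomorphism $(\Gr^n_\LCS\bu,\nabla)\cong(\Gr^n_\LCS\bu,\nabla_0)$ of admissible VMHS.

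The step I expect to be the main obstacle is producing, at each cusp $P\in D$, the relative monodromy weight filtration $M(N_P,W_\dot)$ of the residue $N_P$ of $\nabla$ --- the one genuinely asymptotic input not settled by the preceding lemmas. Here $N_P=N_P^{(0)}+\ad e_P$ with $e_P=\Res_P\Omega\in W_{-1}\u_P$; since $\ad e_P$ strictly raises lower-central-series degree and strictly lowers weight, $N_P$ induces the same operator as $N_P^{(0)}$ on each $\Gr^n_\LCS\u_P$ and on each $\Gr^W_m\u_P$. I would prove that the filtration $M(N_P^{(0)},W_\dot)$, which exists by the admissibility of $(\bu,\nabla_0)$, is also a relative monodromy weight filtration for $N_P$, verifying its defining conditions from the coincidence of $N_P$ and $N_P^{(0)}$ on $\Gr^W_\dot\u_P$ together with the nilpotence and degree properties of $\ad e_P$. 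Alternatively, admissibility of $(\bu,\nabla)$ may simply be inherited from that of the variation $x\mapsto\cO(\cG_x)$ built from mixed Hodge modules in \cite{hain:malcev}, which by Theorem~\ref{thm:mhs} carries fibrewise the same MHS as the one constructed here, and from which $\bu$ is recovered as the subvariation $x\mapsto\u_x\subset\g_x$. Part (4) then drops out: for a pronilpotent Lie algebra $H_1(\u_x)=\Gr^1_\LCS\u_x$, and by (3) this is the fibre at $x$ of $(\Gr^1_\LCS\bu,\nabla_0)\cong\bu_1$, namely $\prod_{\lambda\in\Rdual}H^1(C',\V_\lambda)^\ast\otimes V_{\lambda,x}$ with Zucker's MHS on $H^1(C',\V_\lambda)^\ast$ and the limit MHS on $V_{\lambda,x}$; this also handles tangential base points, since the fibre of $\bubar$ over a cusp carries exactly the limit MHS.

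For (5) I would first note that it follows directly from Corollary~\ref{cor:hodge_la} and Theorem~\ref{thm:mhs}: the latter identifies the MHS that $\bu$ induces on $\u_x$, hence on $H^\dot(\u_x)$, with the canonical one of \cite{hain:malcev}, for which Corollary~\ref{cor:hodge_la} already asserts that $H^\dot(\cG_x,V_x)=[H^\dot(\u_x)\otimes V_x]^R\to H^\dot(C',\V)$ is a morphism of MHS and, in the curve case, an isomorphism. A self-contained argument would instead realise the map at the level of complexes: viewing $\Omega$ through Lemma~\ref{lem:isom} as an element of $F^0W_{-1}\big(K^1(C,D;\OO_x)\comptensor\u_x\big)$, the construction of Section~\ref{sec:charn} produces a homomorphism $\cC^\dot(\u_x)\to K^\dot(C,D;\OO_x)$ which is a chain map by integrability of $\Omega$ and compatible with $F^\dot$ and $W_\dot$ because $\Omega\in F^0W_{-1}$; tensoring with $V_x$, taking $R$-invariants (exact, as $R$ is reductive), and using Lemma~\ref{lem:isom} to identify the target with $K^\dot(C,D;\V)$, one obtains a $\Q$-rational, bifiltered chain map inducing the homomorphism of the statement (cf.\ Proposition~\ref{prop:char}); hence it is a morphism of MHS on cohomology, and an isomorphism by Corollary~\ref{cor:hodge_la}.
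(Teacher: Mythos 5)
Most of your outline coincides with the paper's argument: assembling Proposition~\ref{prop:canon_extn} and Lemmas~\ref{lem:hodge_bdles}, \ref{lem:wt_bdles}, \ref{lem:gr_lcs} with Theorem~\ref{thm:mhs} for the $\Q$-structure and fibrewise MHS, reading off (4) from $\Gr^1_\LCS\bu\cong\bu_1$, and proving (5) either by citing Corollary~\ref{cor:hodge_la} or by checking that $\theta_\Omega$ is bifiltered (for the latter, note the weight filtration only maps $W_m$ into $W_{m-j}$ in degree $j$, i.e.\ one lands in $\Dec_W K^\dot(C,D;\V)$, as the paper records). The genuine gap is at the step you yourself single out as the main obstacle. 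Knowing only $e_P=\Res_P\Omega\in W_{-1}\u_P$ is not enough to verify the defining conditions of the relative weight filtration. Those conditions are: (a) the candidate $M_\dot$ induces on each $\Gr^W_m\u_P$ the monodromy filtration of the induced nilpotent centered at $m$ --- this does follow from the coincidence of $N_P$ and $N_P^{(0)}$ on $\Gr^W_\dot$; and (b) $N_P M_k\subseteq M_{k-2}$. Your ``nilpotence and degree properties of $\ad e_P$'' give no control on (b): an element of $W_{-1}\u_P$ can have large relative weight --- for instance a component along a cuspidal generator $\e_f'$, which by Figure~\ref{fig:hodge_types} has $M$-weight $m-1\ge 0$ --- so $\ad e_P$ need not lower the candidate filtration by two, and (b) can fail, in which case $M(N_P^{(0)},W_\dot)$ is simply not a relative weight filtration for $N_P$. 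The paper's proof turns on a sharper bound that your argument never extracts: because $\Omega\in W_{-1}K^1(C,D;\bu)$ and $dt/t$ carries weight $+1$ in Zucker's complex (the stalk of the weight filtration in degree one at a cusp is shifted by the residue condition), the residue satisfies $N_\u\in W_{-2}\u_P$, and it is this extra unit of weight coming from the logarithmic pole, combined with the Steenbrink--Zucker characterization, that makes the product of the relative filtrations of the $\bu_n$ a relative weight filtration for the deformed residue. This is the one non-formal, asymptotic input of the proof, and it is missing from your verification.

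Your fallback --- inheriting admissibility from the variation $x\mapsto\cO(\cG_x)$ of \cite{hain:malcev} --- is not available: that paper constructs the MHS fibre by fibre, and Theorem~\ref{thm:mhs} only identifies the fibrewise structures; the assertion that these fibres assemble into an admissible variation over $C'$ is precisely what Theorem~\ref{thm:admissible} (and the remark following it, and the last clause of Theorem~\ref{thm:hodge_torsor}, which the paper explicitly notes is the one part not contained in \cite{hain:malcev}) is establishing. Using it here would be circular. So you need to prove the $W_{-2}$ bound on the residue of $\Omega$ at each cusp; with that in hand, the rest of your proposal goes through along the same lines as the paper.
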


With a little more work, one can show that the local system with fiber
$\cO(\cG_x)$ over $x\in C'$ is an admissible VMHS.

\begin{proof}
Proposition~\ref{prop:canon_extn} and Lemma~\ref{lem:hodge_bdles} imply that the
Hodge bundles are holomorphic sub-bundles of $\bu$ and extend to holomorphic
sub-bundles of $\bubar$.  Lemma~\ref{lem:wt_bdles} implies that the weight
bundle's are flat and extend to $\bubar$. Theorem~\ref{thm:mhs} implies that
$\bu$ has a natural $\Q$-form and that, with respect to these structures, each
fiber of $\bu$ has a natural MHS.

To complete the proof, we need to show that at each $P\in D$ there is a relative
weight filtration of the fiber $\u_P$ of $\bubar$ over $P$. First an easily
verified fact. Suppose that $\V$ is a PVHS over $C'$  of weight $m$. Let
$M_\dot$ be the monodromy weight filtration of its fiber $V_P$ over $P\in D$
shifted so that it is centered at $m$. Then if $A$ is a constant MHS, then the
filtration
$$
M_r (A\otimes V_P) := \sum_{i+j=r} W_i A\otimes M_j V_P
$$
defines a relative weight filtration of the fiber over $P\in D$ of the
admissible variation of MHS $A\otimes \V$. From this it follows that the fiber
over $P\in D$ of each
$$
H^1(C',\V_\lambda)^\ast\otimes \V_\lambda
$$
has a relative weight filtration. Adding these implies that the fiber over $P\in
D$ of each of the pro-variations $\bu_n$ has a relative weight filtration. Write
the residue at $P\in D$ of $\nabla$ as the sum
$$
N_P = N_0 + N_\u
$$
of the residues of $\nabla_0$ and $\Omega$. The discussion above implies that
the product of the weight filtrations on the fibers over $P$ of the $\bu_n$
defines a relative weight filtration for $N_0$ on the fiber $\u_P$ of $\bubar$
over $P$. We have to show that this is also a relative weight filtration
for $N$. To prove this, it suffices to show that $N_\u \in W_{-2}\u_P$. (See the
definition of the relative weight filtration in \cite{steenbrink-zucker}.)

Let $t$ be a local holomorphic coordinate on $C$ centered at $P$. Then, near
$P$, we can write
$$
\Omega = N_\u\otimes\frac{dt}{t}
+ \text{ a smooth $1$-form with values in }\bubar 
$$
Since $\Omega^1\in W_{-1}K^1(C,D;\bu)$, and since $dt/t$ has weight $1$, we see
that $N_\u \in W_{-2}\u_P$, as required.

The last statement is a direct consequence of (\ref{sec:modular}), the de~Rham
construction of the homomorphism in Section~\ref{sec:charn}, and the fact that
$\Omega \in F^0W_{-1}K^1(C,D;\bu)$, which implies that $\theta_\Omega :
H^0\big(C',\Hom(\Lambda^\dot\bu,\V)\big) \to K^\dot(C,D,\V)$ preserves the Hodge
filtration and satisfies
$$
\theta_\Omega\Big(W_m H^0\big(C',\Hom(\Lambda^j\bu,\V)\big)\Big)
\subseteq W_{m-j}K^j(C,D,\V)
$$
in degree $j$.\footnote{This is equivalent to the statement that $\theta_\Omega$
is a filtration preserving dga homomorphism to $\Dec_W K^j(C,D,\V)$, where
$\Dec_W$ is Deligne's shifting functor (with respect to the weight filtration),
\cite{deligne:hodge2}.} The last ingredient is the Theorem of the Fixed Part,
which implies that for each $x\in C'$, the restriction mapping
$$
H^0\big(C',\Hom(\Lambda^\dot\bu,\V)\big) \to \cC^\dot(\u_x,V_x)^R
$$
is an isomorphism of MHS for all $x\in C'$.
\end{proof}

\subsection{A MHS on completed path torsors}

Here we give a brief description of how to extend the methods of the previous
section to construct the canonical MHS on the coordinate ring $\cO(\cG_{x,y})$
of the relative completion of the path torsor $\Pi(C';x,y)$ of $C'$ with respect
to a polarized VHS $\V$. Here $C'$ may be an orbifold of the form $\G\bs X$.

As in Section~\ref{sec:mhs}, write $C'$ as the quotient of a simply connected
Riemann surface $X$ by a discrete group $\G$ isomorphic to $\pi_1(C',x)$.
Trivialize the pullback of each $\bu_n$ to $X$ using the flat sections and use
this to trivialize the pullback of $\bu$ to $X$. Here, unlike in the previous
section, it is useful to denote the common fiber of the trivialization by $\u$
so that the pullback of $\bu$ to $X$ is $X\times \u$. The fiber over $t\in X$
will be regarded as $\u$ with a Hodge filtration $F^\dot_t$, which depends on
$t$, and a weight filtration $W_\dot$, which does not. We can therefore identify
$\Hom^\cts(\u_s,\u_t)$ with $\End^\cts \u$. The Hodge and weight filtrations of
$\u_s$ and $\u_t$ induce Hodge and weight filtrations on $\Hom^\cts(\u_s,\u_t)$.

For $s,t \in X$, set $\U_{s,t}$ be the subscheme of $\Isom^\cts(\u_s,\u_t)$ that
corresponds to the subgroup $\U := \exp\u$ of $\Aut \u$. It is a proalgebraic
variety. Denote the subspace of $\Hom^\cts(\u_s,\u_t)$ that corresponds to the
image of
$$
U\u\in \End(\u), \quad u \mapsto \{v \mapsto uv\}
$$
by $U\u_{s,t}$. The coordinate ring of $\U_{s,t}$ is $\cO(\U_{s,t}) \cong
\Hom^\cts(U\u_{s,t},\C)$. It has natural Hodge and weight filtrations induced
from those on $\Hom^\cts(\u_s,\u_t)$.

The pullback connection is $d + \Omegatilde$, where $\Omegatilde \in
E^1(X)\comptensor \u$. Since the structure group of this connection is $\U$, the
parallel transport map $T_{s,t}:\u_s \to \u_t$ lies in $\U_{s,t}$.

As in Section~\ref{sec:path_torsor}, we choose a fundamental domain $D$ of the
action  of $\G$ on $X$. Denote the unique lift of $z\in C'$ to $D$ by $\ztilde$.
For each homotopy class of paths in $C'$ from $x$ to $y$ one has $\gamma \in \G$
and a homotopy class $c_\gamma$ of paths from $\xtilde$ to $\gamma \ytilde$.
Parallel transport defines a function
$$
\Theta_{x,y} : \Pi(C';x,y) \to \U_{x,y}\times R_{x,y} \cong \cG_{x,y}
$$
by
$(\gamma,c_\gamma) \mapsto \big(T(c_\gamma)^{-1},\rho_{x,y}(\gamma)\big)$.
This is the relative completion of $\Pi(C';x,y)$.

The isomorphism
$$
\cO(\cG_{x,y}) \cong \cO(\U_{x,y})\otimes \cO(R_{x,y})
$$
induces Hodge and weight filtrations on the coordinate ring of $\cG_{x,y}$. It
also has a natural $\Q$-structure as relative completion is defined over $\Q$
and behaves well under base change from $\Q$ to $\C$.

The following theorem generalizes Theorems~\ref{thm:mhs} and
\ref{thm:admissible}. It is proved using a similar arguments. A more general
version of all but the last statement is proved in \cite[\S12--13]{hain:malcev}.

\begin{theorem}
\label{thm:hodge_torsor}
These Hodge and weight filtrations define a MHS on $\cO(\cG_{x,y})$, making it a
ring in the category of ind-mixed Hodge structures. This MHS coincides with the
one constructed in \cite{hain:malcev}. It has the property that if $x,y,z\in X$,
then the maps $\cG_{x,y} \to \cG_{y,x}$ and $\cG_{x,y}\times \cG_{y,z} \to
\cG_{x,z}$ induced by inverse and path multiplication, respectively, induce
morphisms of MHS
$$
\cO(\cG_{y,x}) \to \cO(\cG_{x,y}) \text{ and }
\cO(\cG_{x,z}) \to \cO(\cG_{x,y})\otimes \cO(\cG_{y,z}).
$$
In addition the local system $\bcG_{x,\underline{\phantom{y}}}$ over $C'$ whose
fiber over $x,y$ is $\cO(\cG_{x,y})$ is an Ind object of the category of
admissible variations of MHS over $C'$.
\end{theorem}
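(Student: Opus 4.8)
The plan is to mimic the proofs of Theorems~\ref{thm:mhs} and~\ref{thm:admissible}, systematically replacing the group $\cG_x = \cG_{x,x}$ by the path torsor $\cG_{x,y}$, and then separately establishing the new statement about the variation in the endpoint $y$.

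First I would fix the de~Rham model. Pulling back each $\bu_n$ to the universal cover $X$ and trivializing by flat sections, the connection $\nabla = \nabla_0 + \Omega$ of Section~\ref{sec:omega} becomes $d + \Omegatilde$ with $\Omegatilde \in E^1(X)\comptensor\u$. For $s,t\in X$ and a path $c$ from $s$ to $t$, the parallel transport $T(c)\colon \u_s \to \u_t$ lies in $\U_{s,t}$ and its inverse is the Chen series \eqref{eqn:transport} in $\Omegatilde$. The map $\Theta_{x,y}\colon \Pi(C';x,y)\to \U_{x,y}\times R_{x,y}\cong\cG_{x,y}$ sending $(\gamma,c_\gamma)$ to $(T(c_\gamma)^{-1},\rho_{x,y}(\gamma))$ induces an isomorphism of affine schemes by the torsor version of Proposition~\ref{prop:char} recorded in Section~\ref{sec:path_torsor}. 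Under $\cO(\cG_{x,y})\cong\cO(\U_{x,y})\otimes\cO(R_{x,y})$ the Hodge and weight filtrations are pulled back from: the filtrations on $\cO(\U_{x,y})\cong\Hom^\cts(U\u_{x,y},\C)$ induced by the bifiltered vector space $\Hom^\cts(\u_x,\u_y)$, whose Hodge and weight bundles and their canonical extensions to $C$ were produced in Sections~\ref{sec:holo}--\ref{sec:mhs}; and the weight-$0$ ind-Hodge structure on $\cO(R_{x,y})$ coming from the Theorem of the Fixed Part (Section~\ref{sec:OO}).

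For the claims that these filtrations give a MHS agreeing with~\cite{hain:malcev} and that path multiplication and inversion are morphisms, I would run the argument of Theorem~\ref{thm:mhs} essentially verbatim, with the enveloping algebra $U\u_x$ replaced by $U\u_{x,y}$ and $\OO_x$ unchanged. The crucial input is again that $\Omega\in F^0W_{-1}K^1(C,D;\bu)\cong F^0W_{-1}\big(K^1(C,D;\OO_x)\comptensor\u\big)$ by Lemma~\ref{lem:isom}, so that the formal transport $T = 1 + [\Omegatilde] + [\Omegatilde|\Omegatilde] + \cdots$ lies in $F^0W_0H^0$ of the relevant bar construction $B\big(\C,K^\dot_\fin(C,D;\OO_x),\C\big)\comptensor U\u_{x,y}$; this says exactly that the iterated-integral map $\cO(\U_{x,y})\to\cO(\cG_{x,y})$ preserves Hodge and weight filtrations. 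Tensoring with $\cO(R_{x,y})\to\cO(\cG_{x,y})$ --- a morphism of the structures of~\cite{hain:malcev}, since $R_{x,y}$ is the monodromy torsor --- and using compatibility of the filtrations with products shows $\Theta_{x,y}^\ast$ is bifiltered; that it is a bifiltered \emph{isomorphism} follows from Lemma~\ref{lem:gr_lcs} on $\Gr^\LCS_\dot$, exactly as in Theorem~\ref{thm:mhs}. Compatibility of the maps $\cO(\cG_{y,x})\to\cO(\cG_{x,y})$ and $\cO(\cG_{x,z})\to\cO(\cG_{x,y})\otimes\cO(\cG_{y,z})$ with the MHSs is then the assertion that the antipode and the concatenation coproduct on these bar constructions --- which encode $T(\alpha^{-1}) = T(\alpha)^{-1}$ and $T(\alpha\ast\beta) = T(\beta)T(\alpha)$ together with the $\G$-invariance of $\Omegatilde$ --- preserve the Hodge and weight filtrations; this is contained in \cite[\S12--13]{hain:malcev}, using that Zucker's MHC agrees with Saito's in the curve case, and I would just quote it.

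The genuinely new statement, and the main obstacle, is that the local system $\bcG_{x,\underline{\phantom{y}}}$ with fiber $\cO(\cG_{x,y})$ over $y$ is an Ind-object of admissible VMHS over $C'$. Its de~Rham model is, fiberwise over $y$, $\cO(\U_{x,y})\otimes\cO(R_{x,y})$: the first factor is the coordinate ring of the scheme $\U_{x,y}$ built from the bundle $\Hom^\cts(\u_x,\bu)$ --- with the deformed connection $\nabla_0+\Omega$ on the $\bu$-slot --- which, since $\u_x$ is a fixed MHS, inherits from $\bu$ the same (constant MHS)$\otimes$(PVHS) structure, and the second factor is a sum of the PVHS treated in Section~\ref{sec:OO}. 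Consequently the Hodge bundles are holomorphic and extend over $C$ (Lemma~\ref{lem:hodge_bdles}), the weight bundles are flat and extend (Lemma~\ref{lem:wt_bdles}), and each fiber carries a MHS by the first part of the theorem. What remains is the existence of a relative weight filtration at each $P\in D$, which I would obtain by the computation at the end of the proof of Theorem~\ref{thm:admissible}: write the residue of $\nabla_0+\Omega$ at $P$ as $N_P = N_0 + N_\u$, observe that the product of the shifted monodromy weight filtrations on the fibers over $P$ of the graded pieces is a relative weight filtration for $N_0$, and use $\Omega\in W_{-1}K^1(C,D;\bu)$ and the weight $1$ of $dt/t$ to conclude $N_\u\in W_{-2}$, so that the same filtration is relative for $N_P$ by the criterion of~\cite{steenbrink-zucker}. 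The delicate point is purely bookkeeping: $\Hom^\cts(\u_x,\bu)$ is a $\Hom$-type rather than a Lie-type variation, so one must check that the relative-weight-filtration argument and the lower-central-series indexing transport correctly through the $\Hom$-construction; modulo this, the argument is identical to the $x=y$ case.
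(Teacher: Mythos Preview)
Your proposal is correct and follows exactly the approach the paper indicates: the paper does not give a detailed proof of this theorem, stating only that it ``is proved using a similar arguments'' to Theorems~\ref{thm:mhs} and~\ref{thm:admissible} and that ``a more general version of all but the last statement is proved in \cite[\S12--13]{hain:malcev}.'' Your sketch is a faithful and reasonable expansion of precisely that outline --- the formal transport in $F^0W_0$ of the bar construction, the citation of \cite{hain:malcev} for compatibility with antipode and concatenation, and the reuse of the residue computation $N_\u\in W_{-2}$ from Theorem~\ref{thm:admissible} for the relative weight filtration --- so there is nothing to correct.
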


\subsection{Tangential base points and limit MHSs}

Theorem~\ref{thm:admissible} implies that for each choice of a non-zero tangent
vector $\vv$ of $C$ at $P\in D$, there is a limit MHS on the fiber $\u_P$ of
$\bubar$ at $P$. We will denote this MHS by $\u_\vv$. It is natural to think of
it as a MHS on the unipotent radical of the relative completion of the
fundamental group $\pi_1(C',\vv)$ of $C'$ with (tangential) base point $\vv$.

More generally, we consider path torsors between tangential base points,. We
first recall the definition from \cite{deligne:p1} of the torsor of paths
$\Pi(C';\vv,\vw)$. Here $P,Q \in D$ and $\vv \in T_P C$, $\vw \in T_Q C$ are
non-zero tangent vectors. Elements of $\Pi(C;\vv,\vw)$ are homotopy classes
of piecewise smooth paths $\gamma : I \to C$ satisfying
\begin{enumerate}

\item $\gamma(0)=P$, $\gamma(1)=Q$,

\item $\gamma(t) \in C'$, when $0<t<1$,

\item $\gamma'(0)=\vv$ and $\gamma'(1)=-\vw$.

\end{enumerate}
This definition can be modified to define $\Pi(C;\vv,x)$ and $\Pi(C;x,\vw)$ when
$x\in C'$. One defines $\pi_1(C',\vv) = \Pi(C',\vv,\vv)$.
\begin{figure}[!ht]
\epsfig{file=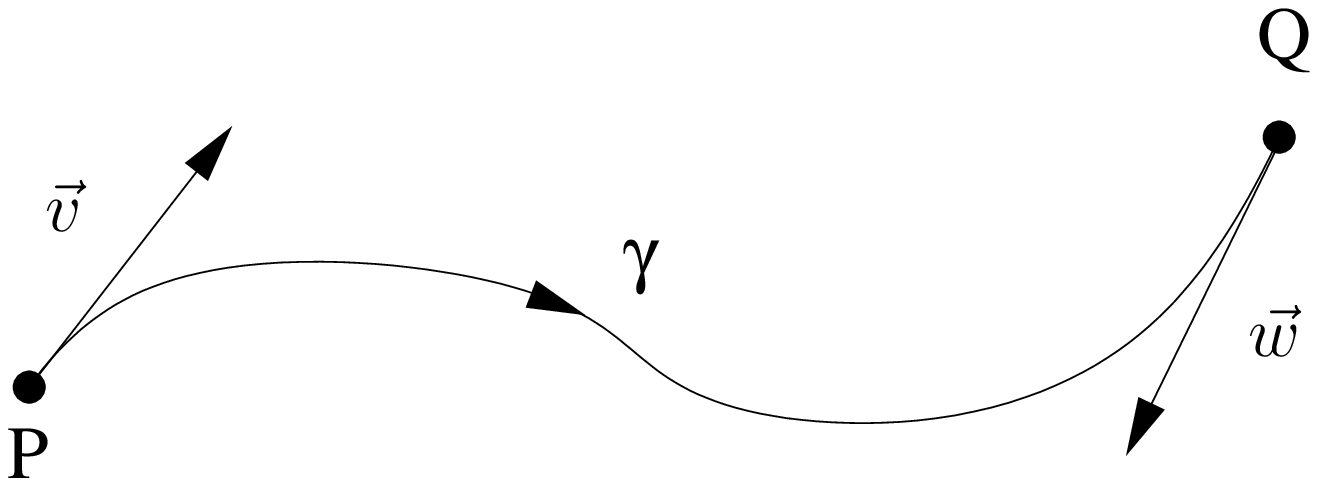, width=2in}
\caption{A path from $\vv$ to $\vw$ in $C'$}
\label{fig:path}
\end{figure}
For any 3 base points (tangential or regular) $b$, $b'$, $b''$, there are
composition maps
$$
\Pi(C';b,b') \times \Pi(C';b',b'') \to \Pi(C';b,b'').
$$

\subsubsection{The MHS on $\u_\vv$}
Suppose that $P\in D$ and that $\vv$ is a non-zero tangent vector of $C$ at $P$.
The complex vector space underlying the limit MHS on $\u_\vv$ is the fiber
$\u_P$ of $\bubar$ over $P$. Its Hodge and weight filtrations $F^\dot$ and
$W_\dot$ are the restrictions to $\u_P$ of the Hodge and weight filtrations of
$\bubar$ that were constructed above. There is also the relative weight
filtration $M_\dot$ of $\u_P$, which was constructed in the proof of
Theorem~\ref{thm:admissible}. These data depend only on $P$ and not on $\vv$.

To construct the $\Q$-structure, choose a local holomorphic coordinate $t:\D \to
\C$ on $C$ centered at $P$ with the property that $\vv = \partial/\partial t$.
Then there is a unique trivialization $\bubar|_\D \cong \D\times \u_P$ of
$\bubar$ over $\D$ such that (1) the trivialization is the identity on the fiber
$\u_P$ over $P$, (2) $\D\cap D = \{P\}$, and (3) $\nabla = d + N_P dt/t$, where
$N_P$ is the residue of $\nabla$ at $P$. This trivialization allows us to
identify fibers of $\bu$ over points near $P$ with $\u_P$. Note that this
identification depends on the choice of the local coordinate $t$.

For $t\in \D$, the $\Q$-structure on $\u_P$ corresponding to $t\vv$ is simply
the $\Q$ structure on $\u_P$ obtained by identifying $\u_P$ with the fiber
$\u_t$ of $\bu$ over $t\in \D$ and taking the $\Q$-structure to be that of
$\u_t$. The $\Q$-structure corresponding to $\vv$ is
$$
\u_{\vv,\Q} := t^{N_P} \u_{t\vv,\Q}.
$$
That is, it is the unique $\Q$-structure on $\u_P$ such that
$$
u_{t\vv,\Q} := t^{-N_P} \u_{\vv,\Q}.
$$
for all $t\in \C^\ast$. Although the trivialization above depends on the choice
of the local coordinate $t$, the $\Q$-structure on $\u_P$ corresponding to $\vv$
depends only on $\vv$ the tangent vector $\vv$.

With a little more effort, one can construct the limit MHS on $\cO(\cG_\vv)$.
Full details will appear in \cite{hain-pearlstein}. As in the case of
cohomology, where periods of limit MHSs can be computed by regularizing
integrals, the periods of the limit MHS on $\u_\vv$ are regularized iterated
integrals.

\subsubsection{Limit MHS on completed path torsors} Similarly, one can construct
the limit MHS on $\cO(\cG_{x,\vw})$ and $\cO(\cG_{\vv,\vw})$, etc. Full details
will appear in \cite{hain-pearlstein}.

\section{Completed Path Torsors and Admissible Variations of MHS}
\label{sec:avmhs}

Here we state two results that relate the Hodge theory of relative completion of
fundamental groups and path torsors to admissible variations of MHS. These are
special cases of results in \cite{hain-pearlstein}.

Let $C$, $D$ and $C'=C-D$ be as above. Let $\V$ be a PVHS over $C'$ and $R_x$
the Zariski closure of the monodromy representation $\pi_1(C',x) \to \Aut V_x$.
Let $\rho_x$ be the homomorphism $\pi_1(X,x)\to R_x$. Let $\cG_x$ be the
completion of $\pi_1(C',x)$ with respect to $\rho_x$. For base points (regular
or tangential) $b,b'$ of $C'$, let $\cG_{x,y}$ denote the relative completion of
$\Pi(C';b,b')$.

Denote by $\MHS(C',\V)$ the category of admissible VMHS $\A$ over $C'$ with
the property that the monodromy representation of $\Gr^W_\dot \A$ factors
through $\rho_x$. This condition implies that the monodromy representation
$$
\pi_1(C',x) \to \Aut(A_x)
$$
factors through $\pi_1(C',x) \to \cG_x$.

\begin{theorem}
For all variations $\A$ in $\MHS(C',\V)$ and all base points $b,b'$ (possibly
tangential) of $C'$ the parallel transport mapping
$$
A_b \to A_{b'}\otimes \cO(\cG(\cG_{b,b'}))
$$
induced by $A_b \times \Pi(C';b,b') \to A_{b'}$ is a morphism of MHS. When $b$
or $b'$ is tangential, then the monodromy preserves both the weight filtration
$W_\dot$ and the relative weight filtration $M_\dot$.
\end{theorem}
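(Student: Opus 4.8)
The plan is to handle first the case $b=b'=x$, where the assertion is that every $\A\in\MHS(C',\V)$ is a \emph{Hodge representation} of $\cG_x$, i.e.\ that the coaction $A_x\to A_x\otimes\cO(\cG_x)$ is a morphism of MHS, and then to read off the general case, and the behaviour at tangential base points, from the same de~Rham computation. Since $\A$ is admissible and $\Gr^W_\dot\A$ has monodromy through $R$, Lemma~\ref{lem:isotypic} writes each $\Gr^W_m\cA$ as a sum of Tate twists of the $\V_\lambda$, with flat connection $\nabla_0^\A$ whose monodromy factors through $R$; a choice of $W_\dot$-splitting of $\cA$ then presents the flat connection as $\nabla_0^\A+\omega_\A$ with $\omega_\A$ valued in $W_{-1}\cHom(\A,\A)$. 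The first step is to produce $\omega_\A$ inside Zucker's MHC by exactly the procedure of Section~\ref{sec:omega}: the extension data of $\cA$ over $\bigoplus_m\Gr^W_m\cA$ assemble into cohomology classes of type $F^0W_0$, so Lemma~\ref{lem:exact} represents them by $1$-forms in $F^0W_{-1}K^1(C,D;\cHom(\A,\A))$, and --- the weight filtration of $\cA$ having only finitely many steps --- finitely many correction terms $\Xi_n$ as in Section~\ref{sec:omega} suffice to make $\nabla=\nabla_0^\A+\omega_\A$ flat while keeping $\omega_\A\in F^0W_{-1}K^1(C,D;\cHom(\A,\A))$.

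Granting this, I would compute the coaction exactly as in the proof of Theorem~\ref{thm:mhs}. Parallel transport of $\nabla_0^\A+\omega_\A$ is given by Chen's iterated integrals of $\omega_\A$ (formula~(\ref{eqn:transport})), and the bar-complex model of $\cO(\cG_x)$ of \cite{hain:malcev}, together with the Hodge and weight filtrations on $U\u_x$ fixed in Section~\ref{sec:mhs}, shows that the resulting map $A_x\to A_x\otimes\cO(\cG_x)$ preserves $F^\dot$ and $W_\dot$. By construction of $\cG_x$ the induced map $\cG_x\to\Aut A_x$ is a morphism of $\Q$-groups, so this coaction is $\Q$-rational; being $\Q$-rational and filtration-preserving, it is a morphism of MHS. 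For general base points $b,b'$ the very same form $\omega_\A$ and the very same computation --- now applied to iterated integrals along a path from $b$ to $b'$ rather than a loop, and using the MHS on $\cO(\cG_{b,b'})$ furnished by Theorem~\ref{thm:hodge_torsor} --- show that $A_b\to A_{b'}\otimes\cO(\cG_{b,b'})$ preserves $F^\dot$ and $W_\dot$; it is $\Q$-rational for the same reason, hence a morphism of MHS.

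For the statement at a tangential base point, say $b=\vv\in T_PC$ with $P\in D$ (the case of $b'$ tangential being symmetric), the fiber $A_\vv$ carries its limit MHS, with weight filtration $W_\dot$ and --- by the construction in the proof of Theorem~\ref{thm:admissible}, applied to $\cHom(\A,\A)$ and hence to $\A$ --- a relative weight filtration $M_\dot$ for the residue $N_P$ of $\nabla$. The image of $\pi_1(C',\vv)$ in $\Aut A_\vv$ preserves $W_\dot$ because the weight bundles of $\A$ are flat; it preserves $M_\dot$ because $N_P$ acts as a morphism of type $(-1,-1)$, so $N_P(M_k)\subseteq M_{k-2}$, which places the unipotent part $\exp(-2\pi iN_P)$ of the local monodromy at $P$ in the parabolic subgroup stabilizing $M_\dot$; then, using the compatibility of the coaction with $N_P$ established above, so is the image of all of $\pi_1(C',\vv)$. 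Consequently $A_\vv\to A_\vw\otimes\cO(\cG_{\vv,\vw})$ respects $W_\dot$ and $M_\dot$ on both sides.

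The hard part is the claim used in the first paragraph: that the extension data of $\cA$ over its weight-graded quotients is of Hodge-weight type $F^0W_0$, and that $\omega_\A$ can moreover be chosen compatibly with the relative weight filtrations at the cusps --- equivalently, that the Lie algebra action $\u_x\to W_{-1}\cHom(A_x,A_x)$ underlies a morphism $\bu\to\cHom(\A,\A)$ of pro-admissible variations of MHS. This is precisely where the admissibility of $\A$ enters, since it is admissibility that controls the behaviour of $\omega_\A$ near $D$; the argument is the analogue, for a general variation, of the construction of $\Omega\in F^0W_{-1}K^1(C,D;\bu)$ in Section~\ref{sec:omega}. The fully general statement, with all the limit-MHS refinements at tangential base points, is carried out in \cite{hain-pearlstein}.
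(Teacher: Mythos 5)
A preliminary remark: the paper itself gives no proof of this theorem --- it is stated, together with Theorem~\ref{thm:equivalence}, as a special case of results in \cite{hain-pearlstein} --- so there is no argument of the paper's to compare yours against, and your outline does follow the route that the paper's own constructions suggest (Section~\ref{sec:omega}, Theorems~\ref{thm:mhs}, \ref{thm:admissible} and \ref{thm:hodge_torsor}, and the description of Eisenstein variations in Section~\ref{sec:eisen_quot}). But as a proof it has a genuine gap, and it is the one you flag yourself. The entire content of the theorem is the claim in your first paragraph: that for a \emph{given} admissible variation $\A$ in $\MHS(C',\V)$ there is a $C^\infty$ splitting of $W_\dot$ in which the flat connection takes the form $\nabla_0^\A+\omega_\A$ with $\omega_\A\in F^0W_{-1}K^1\big(C,D;\cHom(\Gr^W_\dot\A,\Gr^W_\dot\A)\big)$, whose transport reproduces the given $\Q$-rational monodromy; equivalently, that $\u_x\to W_{-1}\End(A_x)$ underlies a morphism of (pro-)admissible variations. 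The analogy with Section~\ref{sec:omega} does not supply this: there $\Omega$ is being \emph{constructed}, and one is free to choose the representatives $\w_\lambda$ and the correction terms $\Xi_n$; here the flat connection of $\A$ is given, there is nothing to ``make flat'', and what must be produced is a gauge --- a splitting with the correct asymptotic compatibility with the Hodge and relative weight filtrations at the cusps --- in which the given connection form lies in $F^0W_{-1}K^1$. The extension data of $\A$ form a Maurer--Cartan solution, not a list of closed classes of type $F^0W_0$, and Lemma~\ref{lem:exact} alone does not produce the required gauge. This is exactly where admissibility in the sense of \cite{steenbrink-zucker} (or Saito's theory) has to do real work, and your text does not do that work: it defers it to \cite{hain-pearlstein}, which makes the argument circular as a proof of the statement.

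The tangential clause has a secondary problem of the same kind. That $N_P$ is of type $(-1,-1)$ gives $N_P M_k\subseteq M_{k-2}$, hence the \emph{local} monodromy at $P$ preserves $M_\dot$; but ``compatibility of the coaction with $N_P$'' does not extend this to the image of all of $\pi_1(C',\vv)$, let alone to the transport map $A_\vv\to A_{\vw}\otimes\cO(\cG_{\vv,\vw})$. The correct mechanism is that $M_\dot$ is the weight filtration of the limit MHS on $A_\vv$, so its preservation is part of the assertion that the transport is a morphism of limit MHSs; that in turn requires the limit MHS on $\cO(\cG_{\vv,\vw})$ together with its relative weight filtration (constructed as in the proof of Theorem~\ref{thm:admissible}, and again only announced in the paper with details referred to \cite{hain-pearlstein}). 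So both clauses of the theorem ultimately rest on the unproved main point rather than following from it; granting that point, the reduction you describe --- run the bar-construction argument of Theorems~\ref{thm:mhs} and \ref{thm:hodge_torsor} to get preservation of $F^\dot$ and $W_\dot$, then invoke $\Q$-rationality --- is indeed the right skeleton.
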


\begin{definition}
\label{def:hrep}
Suppose that $\cG$ is a proalgebraic group whose coordinate ring $\cO(\cG)$
is a Hopf algebra in the category of ind mixed Hodge structures.
A {\em Hodge representation} of $\cG$ on a MHS $A$ is a
homomorphism for which the action
$$
A \to A\otimes \cO(\cG)
$$
is a morphism of MHS. The category of Hodge representations of $\cG$ will
be denoted by $\HRep(\cG)$.
\end{definition}

The previous result implies that taking the fiber at $b$ defines a functor from
$\MHS(C',\V)$ to $\HRep(\cG_b)$. The next theorem follows from
Theorem~\ref{thm:mhs} by a tannakian argument. Full details will be given in
\cite{hain-pearlstein}.

\begin{theorem}
\label{thm:equivalence}
For all base points $b$ of $C'$, the ``fiber at $b$'' functor
$\MHS(C',\V) \to \HRep(\cG_b)$ is an equivalence of categories.
\end{theorem}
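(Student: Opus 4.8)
The strategy is to verify directly that the fiber-at-$b$ functor $F\colon\MHS(C',\V)\to\HRep(\cG_b)$, $\A\mapsto A_b$, is faithful, full and essentially surjective; that $F$ lands in $\HRep(\cG_b)$ at all --- that the monodromy coaction on $A_b$ is a morphism of MHS --- is the content of the theorem preceding Definition~\ref{def:hrep}, and may be used. Two general facts are needed throughout. First, $\cG_b$ is by construction the tannakian fundamental group of the category $\cL(\pi_1(C',b),R)$, so that the underlying $\Q$-local system (with its filtration) of an object of $\MHS(C',\V)$, and $\pi_1(C',b)$-equivariant maps between such, correspond to $\cG_b$-representations and $\cG_b$-equivariant maps. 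Second, the internal $\cHom$ of two admissible VMHS over a curve is again an admissible VMHS, whose space of global flat sections $H^0$ carries a MHS governed by the Theorem of the Fixed Part (Section~\ref{sec:tech}).

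Faithfulness is immediate: a morphism in $\MHS(C',\V)$ is in particular a morphism of the underlying local systems, hence is determined by its fiber at $b$. For fullness, let $\A,\B\in\MHS(C',\V)$ and let $f\colon A_b\to B_b$ be a morphism of Hodge representations of $\cG_b$. Being $\cG_b$-equivariant, $f$ is the fiber at $b$ of a unique morphism of the underlying local systems $\A\to\B$, that is, of a global flat section $\sigma\in H^0(C',\cHom(\A,\B))$. By the Theorem of the Fixed Part the evaluation $H^0(C',\cHom(\A,\B))\to\cHom(A_b,B_b)$ is an injective morphism of MHS, hence strict for $F^\dot$ and for $W_\dot$. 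Since $f$ is a morphism of MHS it lies in $F^0W_0\cHom(A_b,B_b)$, so $\sigma$ lies in $F^0W_0H^0(C',\cHom(\A,\B))$; this says precisely that $\sigma$ is a morphism of admissible VMHS lifting $f$.

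The substance is essential surjectivity. Given a Hodge representation $V$ of $\cG_b$, forgetting the mixed Hodge structure produces an object of $\cL(\pi_1(C',b),R)$, hence a local system $\V$ over $C'$ with $V_b=V$ and with graded quotients whose monodromy factors through $\rho$. To promote $\V$ to an admissible VMHS one spreads the Hodge structure out along path torsors: for $y\in C'$ set $V_y:=\big(\cO(\cG_{b,y})\otimes V\big)^{\cG_b}$, the fiber at $y$ of the local system associated to the $\cG_b$-torsors $\cG_{b,y}$ and the representation $V$. By Theorem~\ref{thm:hodge_torsor} the local system $\bcG_{b,\underline{\phantom{y}}}$ with fiber $\cO(\cG_{b,y})$ over $y$ is an Ind object of the category of admissible variations of MHS over $C'$, and path composition together with the $\cG_b$-coaction are morphisms of Ind-MHS; since the coaction $V\to V\otimes\cO(\cG_b)$ exhibiting $V$ as a Hodge representation is also a morphism of MHS, the $V_y$ assemble into an admissible VMHS $\A$ whose fiber at $b$ is $V$ with its given MHS (using $\cO(\cG_{b,b})=\cO(\cG_b)$). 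Finally, $\Gr^W_\dot\A$ has monodromy through $\rho$: the induced $\g_b$-action $\g_b\otimes V\to V$ is a morphism of MHS, and since $\u_b=W_{-1}\g_b$ has negative weights (Corollary~\ref{cor:hodge_la}) it acts by zero on $\Gr^W_\dot V$; hence $\Gr^W_\dot\A$ has monodromy factoring through $R$, so $\A\in\MHS(C',\V)$ and $F(\A)\cong V$.

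The main obstacle is this essential-surjectivity step, and within it the verification that the spread-out variation $\A$ is admissible at the cusps $P\in D$. Concretely, one must show that transporting $V$ to a tangential base point $\vv$ at $P$ produces a limit mixed Hodge structure --- built from the limit MHS on $\cO(\cG_{b,\vv})$ and the MHS on $V$ --- that admits a relative weight filtration, which comes down to the compatibility of the $\cG_b$-coaction with the monodromy logarithm $N_P$ at $P$. This is exactly where one uses that $\bcG_{b,\underline{\phantom{y}}}$ (equivalently $\bu$) is pro-admissible, established in Theorem~\ref{thm:admissible} and the subsequent discussion of tangential base points; everything else is formal tannakian bookkeeping, and the complete argument is deferred to \cite{hain-pearlstein}.
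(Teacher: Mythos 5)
Your proposal is correct in outline and is essentially the argument the paper has in mind: the paper gives no proof beyond the remark that the theorem ``follows from Theorem~\ref{thm:mhs} by a tannakian argument'' with details deferred to \cite{hain-pearlstein}, and your verification --- faithfulness trivially, fullness via the Theorem of the Fixed Part applied to $\cHom(\A,\B)$, and essential surjectivity by twisting a Hodge representation along the completed path torsors using Theorem~\ref{thm:hodge_torsor} --- is exactly that tannakian argument made explicit. Like the paper, you defer the one genuinely delicate point, admissibility (the relative weight filtration) of the spread-out variation at the cusps, to Theorem~\ref{thm:admissible} and \cite{hain-pearlstein}, which is consistent with the paper's own treatment.
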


\part{Completed Path Torsors of Modular Curves}
\label{part:mod_gps}

In this part, we apply the general constructions of the first part to explore
the relative completions of modular groups, mainly in the case of the full
modular group $\SL_2(\Z)$. Throughout we use the following notation.

Suppose that $\G$ is a finite index subgroup of $\SL_2(\Z)$. The associated
curve $X_\G$ is the quotient $\G\bs \h$ of the upper half plane by $\G$. It is a
smooth affine curve when $\G$ is torsion free. When $\G$ has torsion, it will be
regarded as an orbifold as follows: Choose a finite index, torsion free normal
subgroup $\G'$ of $\G$. Set $G=\G/\G'$. Then $X_\G$ is the orbifold quotient of
$X_{\G'}$ by $G$. To work on the orbifold $X_\G$, one can work either
$G$-equivariantly on $X_{\G'}$ or $\G$-equivariantly on $\h$.

The (orbi) curve $X_\G$ can be completed to a smooth (orbi) curve by adding the
finite set $D := \G\bs \P^1(\Q)$ of ``cusps''. Denote the compactified curve by
$\Xbar_\G$. When $\G=\SL_2(\Z)$, the modular curve $X_\G$ is the moduli space
$\M_{1,1}$ of elliptic curves and $\Xbar_\G$ is $\Mbar_{1,1}$, its the
Deligne-Mumford compactification, which is obtained by adding a single cusp.

If $P\in D$ is in the orbit of $\infty \in \P^1(\Q)$, then
$$
\G \cap \begin{pmatrix} 1 & \Z \cr 0 & 1 \end{pmatrix} =
\begin{pmatrix} 1 & n\Z \cr 0 & 1 \end{pmatrix}.
$$
for some $n\ge 1$. A punctured neighbourhood of $P$ in $X_\G$ is  the quotient
of $\{\tau \in \C : \Im(\tau) > 1\}$ by this group. This is a punctured disk
with coordinate $e^{2\pi in\tau}$. In particular, when $\G=\SL_2(\Z)$, the
coordinate about the cusp is $q := e^{2\pi i\tau}$.

\section{The Variation of Hodge Structure $\H$}
\label{sec:moduli}

\subsection{The Local System $\H$}
\label{sec:H}

The universal elliptic curve $f : \E \to X_\G$ over $X_\G$ is the quotient of
$\C\times\h$ by $\G\ltimes \Z^2$, which acts via
$$
(m,n) : (z,\tau)
\mapsto \bigg(z + \begin{pmatrix} m & n \end{pmatrix}
\begin{pmatrix} \tau \cr 1 \end{pmatrix}, \tau \bigg)
$$
and $\gamma : (z,\tau) \mapsto \big((c\tau+d)^{-1}z,\gamma \tau\big)$, where 
$$
\gamma = \begin{pmatrix} a & b \cr c & d \end{pmatrix} \in \G.
$$
When $\G$ is not torsion free, it should be regarded as an orbifold family of
elliptic curves.

The local system $\H$ over $X_\G$ is $R^1 f_\ast \Q$. When $\G$ is torsion 
free, this is the local system over $X_\G$ with fiber $H^1(f^{-1}(x),\Q)$ over
$x$. It is a polarized variation of HS of weight 1. Since Poincar\'e duality
induces an isomorphism
$$
H_1(E) \cong H^1(E)(1)
$$
for all elliptic curves $E$, the polarized variation $\H(1)$ of weight $-1$ is
the local system over $X_\G$ whose fiber over $x$ is $H_1(f^{-1}(x),\Q)$. The
polarization is the intersection pairing. Denote the corresponding holomorphic
vector bundle $\H\otimes\cO_{X_\G}$ by $\cH$. Its Hodge filtration satisfies
$$
\cH = F^0 \cH \supset F^1 \cH \supset F^2 \cH = 0.
$$
The only interesting part is $F^1\cH$.

In general we will work with the pullback $\H_\h$ of $\H$ to $\h$. Its fiber
over $\tau\in \h$ is $H^1(E_\tau,\Z)$, where
$$
E_\tau := \C/\Lambda_\tau \text{ and } \Lambda_\tau := \Z \oplus \Z\tau.
$$
Denote the basis of $H_1(E_\tau,\Z)$ corresponding to the elements $1$ and
$\tau$ of $\Lambda_\tau$ by $\a,\b$. Denote the dual basis of $H^1(E_\tau)$ by
$\adual,\bdual$. These trivialize $\H_\h$.

If we identify $H^1(E_\tau)$ with $H_1(E_\tau)$ via Poincar\'e duality, then
$$
\adual = -\b \text{ and } \bdual = \a.
$$
We regard these as sections of $\H_\h$. 

For each $\tau \in \h$, let $\w_\tau \in H^0(E_\tau,\Omega^1)$ be the unique
holomorphic differential that takes the value 1 on $\a$. It spans $F^1
H^1(E_\tau)$. In terms of the framing, it is given by
\begin{equation}
\label{eqn:wtau}
\w_\tau = \adual + \tau\bdual  = \tau \a - \b =
\begin{pmatrix} \a & -\b \end{pmatrix}\begin{pmatrix}\tau \cr 1 \end{pmatrix}.
\end{equation}
The map $\w : \tau \mapsto \w_\tau$ is thus a holomorphic section of $\cH_\h :=
\H_\h\otimes \cO_\h$ whose image spans $F^1\cH_\h$.

Since $\h$ is contractible, $H_1(\E,\Z) \cong \Z\a\oplus \Z\b$. The left action
of $\SL_2(\Z)$ on $\E$ induces a {\em left} action on $H_1(\E)$, and therefore a
{\em right} action on frames. The following result gives a formula for this
action on frames.

\begin{lemma}
\label{lem:invariance}
For all $\gamma\in \G$,
\begin{enumerate}

\item
$\gamma : \begin{pmatrix} \a & -\b \end{pmatrix}
\mapsto \begin{pmatrix} \a & -\b \end{pmatrix}\gamma$,

\item
the section $\w$ of $\cH_\h$ satisfies
$(1\otimes \gamma) \w = (c\tau+d)(\gamma^\ast\otimes 1)\w$.
\end{enumerate}
\end{lemma}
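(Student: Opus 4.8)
The plan is to read both identities off the uniformization of $\E$ as $(\C\times\h)/(\G\ltimes\Z^2)$. In this model $\gamma\in\G$ maps the fibre $E_\tau=\C/\Lambda_\tau$, with $\Lambda_\tau=\Z\oplus\Z\tau$, to $E_{\gamma\tau}$ via $z\mapsto(c\tau+d)^{-1}z$. Since $\h$ is contractible, the local system $H_1(\E,\Z)$ over $\h$ is constant with fibre $\Z\a\oplus\Z\b$, and I would take this family of isomorphisms as defining the $\G$-equivariant structure $\gamma_\ast$ on it; via the Poincar\'e duality identifications $\adual=-\b$, $\bdual=\a$ this is precisely the action of $\gamma$ on $\H_\h$, and in this trivialization the fibres over $\tau$ and over $\gamma\tau$ are canonically identified.

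For (i), I would trace the lattice generators through multiplication by $(c\tau+d)^{-1}$. Under $\Lambda_\tau\cong H_1(E_\tau,\Z)$ one has $1\leftrightarrow\a$ and $\tau\leftrightarrow\b$, so $c\tau+d$ and $a\tau+b$ represent the classes $d\a+c\b$ and $b\a+a\b$; and the relation $\gamma\begin{pmatrix}\tau\\1\end{pmatrix}=(c\tau+d)\begin{pmatrix}\gamma\tau\\1\end{pmatrix}$ shows that $z\mapsto(c\tau+d)^{-1}z$ carries $c\tau+d$ to $1$ and $a\tau+b$ to $\gamma\tau$. Hence $\gamma_\ast(d\a+c\b)=\a$ and $\gamma_\ast(b\a+a\b)=\b$ in the fixed basis. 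Solving this $2\times 2$ linear system and using $ad-bc=1$ gives $\gamma_\ast\a=a\a-c\b$ and $\gamma_\ast\b=-b\a+d\b$, which is exactly the assertion $\gamma : \begin{pmatrix}\a & -\b\end{pmatrix}\mapsto\begin{pmatrix}\a & -\b\end{pmatrix}\gamma$.

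For (ii), I would combine (i) with the definition $\w_\tau=\adual+\tau\bdual=\begin{pmatrix}\a & -\b\end{pmatrix}\begin{pmatrix}\tau\\1\end{pmatrix}$ from (\ref{eqn:wtau}) and the same automorphy relation. The operator $1\otimes\gamma$ affects only the coefficient row $\begin{pmatrix}\a & -\b\end{pmatrix}$, so by (i), evaluating at $\tau$,
\[
(1\otimes\gamma)\w=\begin{pmatrix}\a & -\b\end{pmatrix}\gamma\begin{pmatrix}\tau\\1\end{pmatrix}=(c\tau+d)\begin{pmatrix}\a & -\b\end{pmatrix}\begin{pmatrix}\gamma\tau\\1\end{pmatrix}=(c\tau+d)\,\w_{\gamma\tau}.
\]
On the other hand $(\gamma^\ast\otimes 1)\w$ is the pullback of the section $\w$ under $\tau\mapsto\gamma\tau$, which in the constant trivialization is simply $\w_{\gamma\tau}$. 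Comparing the two expressions yields $(1\otimes\gamma)\w=(c\tau+d)(\gamma^\ast\otimes 1)\w$.

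The computations here are routine; the only point needing care is the bookkeeping of variances — the left $\G$-action on $\E$ and hence on $H_1$ versus the induced right action on frames, together with the Poincar\'e duality identifications $\adual=-\b$, $\bdual=\a$ and the choice of a single trivialization of $\H_\h$ identifying the fibres at $\tau$ and $\gamma\tau$. To pin these conventions down with certainty I would check the claim directly for the two generating matrices $\begin{pmatrix}1&1\\0&1\end{pmatrix}$ and $\begin{pmatrix}0&-1\\1&0\end{pmatrix}$ and confirm that all signs come out as stated.
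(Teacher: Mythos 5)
Your proof is correct and takes essentially the same route as the paper: tracing the lattice generators $1,\tau$ of $\Lambda_\tau$ through the map $z\mapsto(c\tau+d)^{-1}z$ (this is exactly what the paper's Figure~\ref{fig:bases} records) gives $\gamma_\ast(d\a+c\b)=\a$, $\gamma_\ast(b\a+a\b)=\b$, i.e.\ $\begin{pmatrix}\a'&-\b'\end{pmatrix}=\begin{pmatrix}\a&-\b\end{pmatrix}\gamma^{-1}$, and your solving of the $2\times2$ system is just the inversion the paper performs. Part (ii) is then derived, as in the text, from (i) together with the identity $\gamma\begin{pmatrix}\tau\\1\end{pmatrix}=(c\tau+d)\begin{pmatrix}\gamma\tau\\1\end{pmatrix}$ and the identification of $(\gamma^\ast\otimes1)\w$ with $\w_{\gamma\tau}$ in the constant trivialization.
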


\begin{proof}
Let $\gamma = \begin{pmatrix} a & b \cr c & d \end{pmatrix}$. Regard $\a$ and
$\b$ as sections of $\H_\h$. Denote the values of $\a$ and $\b$ at $\tau$
by $\a,\b$ and at $\gamma(\tau)$ by $\a',\b'$.
\begin{figure}[!ht]
$$
\def\latticebody{\drop{\cdot}}
\xy
*\xybox{0;<4pc,0pc>:<-.7pc,3pc>:: 
,0
,{\xylattice{-1}3{0}3}
,(0,0)="id"*{\bullet}*+!UL{0}
;(1,0)*{\bullet}*+!UL{1} **@{-}
,(.5,0)*+!DL{\a}
%
,"id"
,(0,1)*{\bullet}*+!CR{\tau} **@{-}
,(0,.5)*+!CR{\b}
,"id"
;(1,2)*{\bullet} **@{-}
,(1.2,2.1)*+!DR{a\tau + b}
,(.7,1.4)*+!DR{\b'=a\b+b\a}
,"id"
;(2,3)*{\bullet}*+!CL{c\tau+d} **@{-}
,(1.2,1.7)*+!CL{\a'=c\b+d\a}
}="L"
,{"L"+L \ar "L"+R*+!L{\R}}
\endxy
$$
\caption{The $\SL_2(\Z)$ action on frames}
\label{fig:bases}
\end{figure}
Then (cf.\ Figure~\ref{fig:bases})
$$
\begin{pmatrix} \a' & -\b' \end{pmatrix}
= \begin{pmatrix} \a & -\b \end{pmatrix}
\begin{pmatrix} d & -b \cr -c & a \end{pmatrix}
= \begin{pmatrix} \a & -\b \end{pmatrix}\gamma^{-1}.
$$
Thus
$
\gamma_\ast\begin{pmatrix} \a(\tau) & -\b(\tau) \end{pmatrix}
= \begin{pmatrix} \a(\gamma\tau) & -\b(\gamma\tau) \end{pmatrix}\gamma
$,
from which the first assertion follows.

The second assertion now follows:
$$
 (1\otimes\gamma)\w
 =
\begin{pmatrix} \a & -\b \end{pmatrix} \gamma
\begin{pmatrix}\tau \cr 1 \end{pmatrix}
= 
(c\tau + d)
\begin{pmatrix} \a & -\b \end{pmatrix}
\begin{pmatrix}\gamma \tau \cr 1 \end{pmatrix}
=
(c\tau + d)(\gamma^\ast\otimes 1)\w.
$$
\end{proof}

Define $\bw$ to be the section
\begin{equation}
\label{eqn:def_w}
\bw : \tau \mapsto 2\pi i \w_\tau = 2\pi i(\tau \a -\b).
\end{equation}
of $\cH_\h$. Since
$\langle \bw,\a \rangle = 2\pi i \langle \tau\a - \b, \a \rangle = 2\pi i$.
$\bw,\a$ is a framing of $\cH_\h$. The sections $\a$ and $\bw$ trivialize
$\cH_\h$ over $\h$. As we shall see below, this trivialization is better suited
to computing limit MHSs. The following computation is immediate. The proof of an
equivalent formula can be found in \cite[Ex.~3.4]{hain:kzb}.

\begin{corollary}
\label{cor:automorphy}
The factor of automorphy associated to the trivialization
$$
\cH_\h \cong (\C\a \oplus \C\bw)\times \h
$$
of the pullback of $\cH$ to $\h$ is
$$
M_\gamma(\tau) : \begin{pmatrix} \a & \bw \end{pmatrix}
= \begin{pmatrix} \a & \bw \end{pmatrix}
\begin{pmatrix}
(c\tau+d)^{-1} & 0 \cr c/2\pi i & c\tau + d
\end{pmatrix}.
$$
That is, the action of $\SL_2(\Z)$ on $(\C\a \oplus \C\bw)\times \h$ is
$$
\gamma : (\a,\tau)\mapsto
\big((c\tau+d)^{-1}\a +(c/2\pi i)\bw,\gamma\tau\big) \text{ and }
\gamma : (\bw,\tau) \mapsto \big((c\tau+d)\bw,\gamma\tau\big). \qed
$$
\end{corollary}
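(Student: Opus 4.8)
The plan is to prove this by a straightforward change of framing, using Lemma~\ref{lem:invariance}. Recall that $\a$ and $-\b$ furnish a \emph{flat} framing of $\cH_\h$, and that Lemma~\ref{lem:invariance}(1) says the $\SL_2(\Z)$-action on this framing is the constant cocycle $\gamma$; concretely, writing $\gamma\cdot$ for the bundle isomorphism $\cH_{\h,\tau}\to\cH_{\h,\gamma\tau}$, we have $\gamma\cdot\a(\tau)=a\,\a(\gamma\tau)-c\,\b(\gamma\tau)$ and $\gamma\cdot\b(\tau)=-b\,\a(\gamma\tau)+d\,\b(\gamma\tau)$ for $\gamma=\begin{pmatrix}a&b\cr c&d\end{pmatrix}$. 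The framing $(\a,\bw)$ differs from $(\a,-\b)$ only by the $\tau$-dependent change of basis $\bw=2\pi i(\tau\a-\b)$, equivalently $\b=\tau\a-\bw/2\pi i$, so it remains only to substitute and collect terms.

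First I would read off the $\bw$-column directly from Lemma~\ref{lem:invariance}(2): the identity $(1\otimes\gamma)\w=(c\tau+d)(\gamma^\ast\otimes1)\w$ says precisely that $\gamma\cdot\w_\tau=(c\tau+d)\,\w_{\gamma\tau}$, hence $\gamma\cdot\bw(\tau)=(c\tau+d)\,\bw(\gamma\tau)$, with no $\a(\gamma\tau)$-term. For the $\a$-column I would start from $\gamma\cdot\a(\tau)=a\,\a(\gamma\tau)-c\,\b(\gamma\tau)$ and eliminate $\b(\gamma\tau)$ using $\b(\gamma\tau)=(\gamma\tau)\,\a(\gamma\tau)-\bw(\gamma\tau)/2\pi i$ together with $\gamma\tau=(a\tau+b)/(c\tau+d)$. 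The coefficient of $\a(\gamma\tau)$ then becomes $a-c(a\tau+b)/(c\tau+d)=(ad-bc)/(c\tau+d)$, which equals $(c\tau+d)^{-1}$ precisely because $ad-bc=1$, while the coefficient of $\bw(\gamma\tau)$ is $c/2\pi i$. Assembling,
$$
\gamma\cdot\begin{pmatrix}\a&\bw\end{pmatrix}(\tau)
=\begin{pmatrix}\a&\bw\end{pmatrix}(\gamma\tau)
\begin{pmatrix}(c\tau+d)^{-1}&0\cr c/2\pi i&c\tau+d\end{pmatrix},
$$
which is the asserted formula for $M_\gamma(\tau)$; reading the two columns separately yields the displayed action on $(\C\a\oplus\C\bw)\times\h$.

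There is no real obstacle here beyond bookkeeping. The two points that need care are: (i) the conventions for left versus right actions --- the $\SL_2(\Z)$-action on the homology of the fibers of $\E$ is a left action, hence a right action on framings, which is why $M_\gamma(\tau)$ multiplies the row of sections on the right; and (ii) the change of framing from $(\a,-\b)$ to $(\a,\bw)$ is genuinely $\tau$-dependent, so conjugating the constant cocycle $\gamma$ of Lemma~\ref{lem:invariance}(1) by it produces a non-constant factor of automorphy --- indeed this is the whole point of choosing this trivialization, since it is the one adapted to the Hodge filtration ($\bw$ spans $F^1$). When $\G$ has torsion the identical computation is carried out $\G$-equivariantly on $\h$, so nothing changes.
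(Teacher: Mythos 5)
Your computation is correct and is exactly the "immediate computation" the paper has in mind: it deduces the factor of automorphy from Lemma~\ref{lem:invariance} by substituting $\b=\tau\a-\bw/2\pi i$ into the flat-frame action and using $ad-bc=1$, which is the intended route (the paper simply cites an equivalent formula in \cite{hain:kzb} rather than writing it out). Your remarks on left/right conventions and on the $\tau$-dependence of the change of frame are consistent with the paper's conventions, so nothing further is needed.
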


The sections $\a$ and $\bw$ of $\cH_\h$ are both invariant under $\tau \mapsto
\tau+1$. They therefore descend to sections of the quotient $\cH_{\bD^\ast} \to
\bD^\ast$ of $\cH_\h \to \h$ by $\begin{pmatrix} 1 & \Z \cr 0 & 1
\end{pmatrix}$. They trivialize $\cH_{\bD^\ast}$. The vector bundle
$$
\Hbar_\bD := \cO_{\bD}\a \oplus \cO_{\bD}\bw
$$
is thus an extension of $\cH_{\bD^\ast}$ to the $q$-disk. The Hodge bundle
$F^1\cH_{\bD^\ast}$ extends to the sub-bundle $\cO_{\bD}\bw$ of $\Hbar_\bD$.

Denote the natural flat connection on $\cH$ by $\nabla_0$. Equation
(\ref{eqn:wtau}) implies that
$$
\nabla_0 \begin{pmatrix} \a & \bw \end{pmatrix}
=
\begin{pmatrix} \a & \bw \end{pmatrix}
\begin{pmatrix} 0 & 2\pi i \cr 0 & 0 \end{pmatrix} d\tau
=
\begin{pmatrix} \a & \bw \end{pmatrix}
\begin{pmatrix} 0 & 1 \cr 0 & 0 \end{pmatrix} \frac{dq}{q}
$$
from which it follows that the connection on $\Hbar_{\bD}$ is
\begin{equation}
\label{eqn:nabla0}
\nabla_0 = d + \a\frac{\partial}{\partial\bw}\otimes\frac{dq}{q}.
\end{equation}
Since this connection is meromorphic with a simple pole at $q=0$ and nilpotent
residue, the bundle $\Hbar_{\bD}$ is the canonical extension of $\cH_{\bD^\ast}$
to $\bD$. It is the pullback to the $q$-disk of the canonical extension of
$\Hbar \to \Mbar_{1,1}$ of $\cH\to \M_{1,1}$. Denote the fiber of $\Hbar$ over
$q=0$ by $H$. It has basis $\a,\bw$, where $\bw$ spans $F^1H$.

We now compute the limit MHSs $H_\vv$ on $\H$ at $q=0$ associated to the
non-zero tangent vector $\vv$ of $q=0$. It will have integral lattice $H_\Z$,
complexification $H$ and $F^0 H$ defined by
$$
H_\Z = \Z\a \oplus \Z\b,\ H = \C\a \oplus \C\bw \text{ and }
F^0 H = \C\bw \subset H.
$$
To specify the MHS, we give an isomorphism $(H_\Z)\otimes\C \cong H$, which will
depend on the tangent vector $\vv$.\footnote{For the more arithmetically
inclined, $\a,\b$ is a basis of the Betti component $H^B$ of the MHS $H$, and
$\a,\bw$ is a basis of the $\Q$-de Rham component $H^\DR$ of $H$.}

\begin{proposition}
\label{prop:limitMHS}
The $\Z$-MHS $H_\vv(1)$ on $H$ that corresponds to the non-zero tangent
vector $\vv = z\partial/\partial q$ is the MHS determined by the linear
isomorphism $(H_\Z)\otimes\C \cong H_\C$ given by
$$
\begin{pmatrix} \a & \bw \end{pmatrix}
=
\begin{pmatrix} \a & -\b \end{pmatrix}
\begin{pmatrix} 1 & \log z \cr 0 & 2\pi i \end{pmatrix}.
$$
It is the extension of $\Z(-1)$ by $\Z(0)$ that corresponds to $z\in \C^\ast$
under the standard isomorphism $\Ext^1_{\MHS}\big(\Z(-1),\Z(0)\big)\cong
\C^\ast$. It splits if and only if $\vv = \partial/\partial q$.
\end{proposition}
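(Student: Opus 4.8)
The plan is to make the general construction of the limit mixed Hodge structure at a cusp (recalled in Section~\ref{sec:mhc}) completely explicit in the coordinates fixed above, and then to read off the extension class by the standard recipe. The key first observation is that the canonical extension $\Hbar_\bD = \cO_\bD\a \oplus \cO_\bD\bw$ together with the formula (\ref{eqn:nabla0}) already exhibits, in the frame $(\a,\bw)$, a trivialization of $\Hbar$ near $q=0$ in which $\nabla_0 = d + N\otimes\frac{dq}{q}$ with $N = \a\,\partial/\partial\bw$ the nilpotent residue; this is precisely a trivialization of the type used to build the limit MHS, with monodromy logarithm $N$. By definition the $\Z$-structure on the central fibre $H$ attached to $\vv = z\,\partial/\partial q$ is obtained by transporting, along this frame, the flat (Betti) lattice on the fibre of $\cH$ over the point $Q$ with $q(Q)=z$; and that flat lattice is $\Z\a\oplus\Z\b$, since $\a$ and $\b$ (viewed in $\cH$ via Poincar\'e duality) are flat sections.

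First I would compute this comparison isomorphism. At $Q$ one has $e^{2\pi i\tau}=z$, hence $2\pi i\tau = \log z$ for a choice of branch, and (\ref{eqn:wtau}) together with (\ref{eqn:def_w}) gives $\bw = 2\pi i(\tau\a - \b) = (\log z)\a - 2\pi i\b$, while $\a = \a$. Writing these two relations in matrix form is exactly the displayed identity of the proposition. Changing the branch of $\log z$ replaces the lattice by its image under the monodromy operator $e^{2\pi i N}$, which preserves $\Z\a\oplus\Z\b$, so the resulting $\Z$-MHS is well defined; its Hodge and weight filtrations are those inherited from $\Hbar$ as in Section~\ref{sec:H}.

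For the second assertion I would identify the weight filtration with the monodromy weight filtration of $N$: this gives $W_0 H = \ker N = \C\a$, so $\Gr^W_0 = \Z\a$ is of type $(0,0)$, i.e.\ $\Gr^W_0 \cong \Z(0)$, while $\Gr^W_2 = H/\C\a$ lies in $F^1$ and, since $N$ induces an isomorphism $\Gr^W_2 \to \Gr^W_0$ of type $(-1,-1)$, we get $\Gr^W_2 \cong \Z(-1)$; thus $H_\vv(1)$ is an extension of $\Z(-1)$ by $\Z(0)$. To pin down its class under the standard isomorphism $\Ext^1_\MHS(\Z(-1),\Z(0)) \cong \C^\ast$, twist once more by $\Z(1)$ so that the extension takes the shape $0 \to \Z(1) \to H_\vv(2) \to \Z(0) \to 0$, and apply the standard (Carlson) recipe: the unique lift in $F^0 = \C\bw$ of a $\Z$-generator of the quotient $\Z(0)$ is $-\bw$, a lift of the same generator in the Betti lattice is $(\log z)\a - \bw$, and their difference $(\log z)\a$ lies in $\Z(1)_\C = \C\a$ modulo $\Z(1)_\Z = 2\pi i\Z\a$; under the exponential isomorphism $\C/2\pi i\Z \cong \C^\ast$ it corresponds to $z$. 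Since an extension splits if and only if its class is $1$, this happens exactly when $\log z \in 2\pi i\Z$, i.e.\ when $z = 1$, i.e.\ when $\vv = \partial/\partial q$.

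The matrix computation is routine; the steps that need care are matching the concrete frame $(\a,\bw)$ against the abstract trivialization in the limit-MHS construction — which is essentially provided by (\ref{eqn:nabla0}) — and, above all, keeping the Tate twists and the normalization of $\Ext^1_\MHS(\Z(-1),\Z(0)) \cong \C^\ast$ straight, so that the class comes out as $z$ rather than $z^{\pm 1}$ up to a constant. Corollary~\ref{cor:automorphy} and (\ref{eqn:nabla0}) fix all the relevant signs, so this is bookkeeping rather than a genuine difficulty.
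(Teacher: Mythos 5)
Your proposal is correct and follows essentially the same route as the paper: the comparison matrix comes from evaluating $\bw = 2\pi i(\tau\a-\b)$ at $2\pi i\tau = \log z$ in the frame $(\a,\bw)$ adapted to $\nabla_0 = d + \a\frac{\partial}{\partial\bw}\otimes\frac{dq}{q}$, and the weight filtration is the monodromy filtration of $N=\a\,\partial/\partial\bw$ shifted by $1$. The only difference is that you spell out the Carlson-style computation of the class in $\Ext^1_\MHS$ (which the paper leaves implicit under ``the standard isomorphism''), and your bookkeeping there comes out right.
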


\begin{proof}
Most of this is proved above. The integral lattice of $H_{z\partial/\partial q}$
is computed using the standard prescription and the fact that the value of $\bw$
at $z\in \bD^\ast$ is
$$
\bw(z) = \log z\, \a + 2\pi i (-\b),
$$
which follows from (\ref{eqn:wtau}). The weight filtration $M_\dot$ of the
limit MHS is the monodromy
filtration of the nilpotent endomorphism $\a\partial/\partial\bw$ shifted
by the weight $1$ of $H$:
\begin{equation}
\label{eqn:monod_wt}
M_{-1} H = 0,\ M_0 H = M_1 H = \Q\a,\ M_2 H = H.
\end{equation}
\end{proof}

This result can also be stated by saying that the limit MHS on $H(1)$ of the
variation $\H(1)$ associated to $z\partial/\partial q$ is the extension of
$\Z(0)$ by $\Z(1)$ corresponding to $z\in \C^\ast \cong
\Ext^1_\MHS(\Z(0),\Z(1))$.

\begin{remark}
\label{rem:slH}
Note that if $\vv \in \Q^\times\partial/\partial q$, then $H_\vv$ splits as an
extension of $\Q(-1)$ by $\Q(0)$ if and only if $\vv = \pm \partial/\partial q$.
These are also the only two tangent vectors of the cusp $q=0$ of $\Mbar_{1,1}$
that are defined over $\Z$ and remain non-zero at every prime $p$. For this
reason it is natural to identify the fiber $H$ of $\Hbar$ over $q=0$ with
$H_{\partial/\partial q}$. In particular, $\bw = -2\pi i \b$.

These considerations suggest a natural choice of Cartan subalgebra of $\sl(H)$
and of the positive root vectors. Namely, the Cartan is the one that acts
diagonally with respect to the isomorphism
$$
H_{\pm\partial/\partial q} = \Q(0) \oplus \Q(-1) = \Q\a \oplus \Q\b.
$$
The natural choice of a positive weight vector is the one with positive Hodge
theoretic weight. Since $\b$ has Hodge weight 2 and $\a$ has weight $0$
with respect to the weight filtration (\ref{eqn:monod_wt}), the
natural choice of positive root vectors in $\sl(H)$ is $\Q\b\partial/\partial
\a$. This choice determines a Borel subalgebra of $\sl(H)$.

With this choice of Cartan subalgebra, there are two natural choices of
symplectic bases of $H_\Z$. Namely $\a,\b$ and $-\b,\a$. Because of formula (i)
in Lemma~\ref{lem:invariance}, which is dictated by the standard formula for the
action of $\SL_2(\Z)$ on $\h$, we will use the basis $-\b,\a$. This choice
determines corresponding isomorphisms $\sl(H) \cong \sl_2$ and $\SL(H)\cong
\SL_2$. With respect to the above choice of Cartan subalgebra, $\b$ has
$\sl_2$-weight $1$ and $\a$ has $\sl_2$-weight $-1$.
\end{remark}

\section{Representation Theory of $\SL_2$}
\label{sec:sl2}

This is a quick review of the representation theory of $\sl_2$ and $\SL_2$. Much
of the time, $\SL_2$ will be $\SL(H)$, where $H=H_{\partial/ \partial q}$ is the
fiber of $\H$ over $\partial/\partial q$. As pointed out above, this has a
natural basis, which leads us to a natural choice of Cartan and Borel
subalgebras, which we make explicit below.

Let $F$ denote $\Q$, $\R$ or $\C$. Let $V$ be a two dimensional vector space
over $F$ endowed with a symplectic form (i.e., a non-degenerate, skew symmetric
bilinear form)
$$
\langle \blank,\blank \rangle : V\otimes V \to F.
$$
The choice of a symplectic basis $\v_1,\v_2$ of $V$ determines actions of
$\SL_2(F)$ and $\sl_2(F)$ on $V$ via the formula
\begin{equation}
\label{eqn:formula}
\begin{pmatrix}
a & b \cr c & d
\end{pmatrix}
:
\begin{pmatrix}
\v_2 & \v_1
\end{pmatrix}
\begin{pmatrix}
x_1 \cr x_2
\end{pmatrix}
\mapsto
\begin{pmatrix}
\v_2 & \v_1
\end{pmatrix}
\begin{pmatrix}
a & b \cr c & d
\end{pmatrix}
\begin{pmatrix}
x_1 \cr x_2
\end{pmatrix}
\end{equation}
where $x_1,x_2 \in F$, and isomorphisms $\SL_2 \cong \SL(V)$ and
$\sl_2 \cong \sl(H)$.

We will fix the choice of Cartan subalgebra of $\sl_2$ to be the diagonal
matrices. This fixes a choice of Cartan subalgebra of $\sl(V)$. We will take
$\v_1$ to have $\sl_2$ weight $-1$ and $\v_2$ to have $\sl_2$-weight $+1$. The
element
$$
\begin{pmatrix} 0 & 1 \cr 0 & 0 \end{pmatrix}
$$
of $\sl_2$ corresponds to $\v_2\partial/\partial \v_1 \in \sl(V)$ and has weight
$-2$. Denote it by $\e_0$.

Isomorphism classes of irreducible representations of $\SL(V)$ correspond to
non-negative integers. The integer $n \in \N$ corresponds to the $n$th symmetric
power $S^n V$ of the defining representation $V$. To distinguish distinct
but isomorphic representations of $\SL_2$, we use the notation
$$
S^n(\e) := \{\e_0^j\cdot\e : \e_0^{n+1}\cdot \e = 0\}
$$
to denote the $\SL_2$-module with highest weight vector $\e$ of weigh $n$.

For example, motivated by the discussion in Remark~\ref{rem:slH}, we will
typically work in the following situation:
\begin{enumerate}

\item $V = H$ and $\langle\blank,\blank\rangle$ is the intersection pairing,

\item $\v_1 = -\b$ and $\v_2 = \a$, so that $\b$ has $\sl_2$ weight $+1$ and
$\a$ has $\sl_2$ weight $-1$,

\item $S^n H = S^n(\b^n)$,

\item $\e_0 = - \a\frac{\partial}{\partial \b}$, which has $\sl_2$ weight $-2$.
It is also $-2\pi i$ times the residue at $q=0$ of the connection $\nabla_0$ on
$\Hbar$.

\end{enumerate}

Note that the weight filtration
$$
0 = M_{-1} S^n H \subset M_{0}S^n H \subset \cdots
\subset M_{2n-1}S^n H \subset M_{2n} S^n H = S^n H
$$
associated to the nilpotent endomorphism $\e_0$ of $S^n H$, shifted by the
weight $n$ of $S^n H$, is the filtration obtained by giving $\b$ weight 2 and
$\a$ weight 0:
$$
M_m S^n H = \Span \{\a^{n-j}\b^j : 2j \le m\},
$$
Observe that
\begin{equation}
\label{eqn:hw}
\Gr^M_{2n} S^n H = S^n H/\im \e_0 \cong \Q(-n).
\end{equation}
It is generated by the highest weight vector $\b^n$.

\section{Modular Forms and Eichler-Shimura}

Suppose that $\G$ is a finite index subgroup of $\SL_2(\Z)$. Recall that a
holomorphic function $f : \h \to \C$ is a {\em modular form of weight $w$} for
$\G$ if
\begin{enumerate}

\item $f(\gamma \tau) = (c\tau + d)^w f(\tau)$ for all $\gamma \in \G$. This
implies that $f$ has a Fourier expansion $\sum_{k=-\infty}^\infty a_k q^{k/n}$
for some $n\ge 1$.

\item $f$ is holomorphic at each cusp $P\in \G\bs \P^1(\Q)$. If, for example, $P
= \infty$, this means that the coefficients $a_k$ of the Fourier expansion of
$f$ at $P$ vanish when $k<0$.

\end{enumerate}
A modular form $f$ is a {\em cusp form} if it vanishes at each cusp --- that is,
its Fourier coefficients $a_k$ vanish for all $k\le 0$.

Assume the notation of Section~\ref{sec:sl2}. For an indeterminate $\e$ we have
the $\SL_2$-module $S^m(\e)$ that is isomorphic to $S^m H$. Denote the
corresponding local system over $X_\G$ by $\bS^m(\e)$. As a local system, it is
isomorphic to $S^m\H$. Lemma~\ref{lem:isotypic} implies that if $\bS^m(\e)$ has
the structure of a PVHS, then it is isomorphic to $S^m\H(r)$ for some $r\in \Z$.

For the time being, we will suppose that the PVHS $\bS^m(\e)$ over $X_\G$ has
weight $m$, so that it is an isomorphic copy of $S^m\H$. Define a function $v :
\h \to V$ by
$$
v(\tau,\e) := \exp(\tau \e_0)\e.
$$
The discussion preceding Lemma~\ref{lem:invariance} implies that $v(\tau,\e)$
is a trivializing section of $F^m(\bS^m(\e)\otimes\cO_{\h})$.

\begin{lemma}
For all $\gamma\in
\SL_2(\Z)$ we have $(c\tau+d)^m \gamma^\ast v = \gamma_\ast v$.
\end{lemma}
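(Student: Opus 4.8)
The plan is to reduce the statement to its $m=1$ case --- which is exactly Lemma~\ref{lem:invariance}(ii), whose proof is purely formal and so applies to all of $\SL_2(\Z)$, not just $\G$ --- and then to pass to general $m$ by raising that identity to the $m$th power inside the symmetric algebra. The point that makes this legitimate is that $\e_0$ acts by \emph{derivations} of the polynomial algebra $\Sym^\dot H=\C[\a,\b]$ --- indeed $\e_0=-\a\,\partial/\partial\b$ --- so that $\exp(\tau\e_0)$ is an \emph{algebra} automorphism of $\Sym^\dot H$, and dually that the $\SL_2$-action on $S^m H$ is the $m$th symmetric power of its action on $H$.

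Concretely, I would first identify $S^m(\e)$ with the submodule $S^m H\subset\Sym^\dot H$ by sending the highest weight vector $\e$ to $\b^m$; by naturality of all the constructions involved it suffices to prove the identity for this copy. Write $u(\tau):=v(\tau,\b)=\exp(\tau\e_0)\,\b\in\cH_\h$. A one-line computation ($\e_0\b=-\a$, $\e_0^2\b=0$) gives $u(\tau)=\b-\tau\a=-\w_\tau$ by (\ref{eqn:wtau}), so the $m=1$ case of the Lemma is the assertion $(c\tau+d)\,\gamma^\ast u=\gamma_\ast u$, which is Lemma~\ref{lem:invariance}(ii). Since $\exp(\tau\e_0)$ is an algebra map, $v(\tau,\e)=\exp(\tau\e_0)(\b^m)=u(\tau)^m$. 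Both operations in play --- the substitution $\gamma^\ast=\gamma^\ast\otimes 1$ (which replaces $\tau$ by $\gamma\tau$ in the $\tau$-dependence while holding the frame $\a,\b$ fixed) and $\gamma_\ast=1\otimes\gamma$ (the fibrewise $\SL_2$-action) --- are multiplicative on $\bigoplus_k\Sym^k\cH_\h$: for $\gamma_\ast$ this is the symmetric-power description of the action, and for $\gamma^\ast$ it is immediate. Raising the $m=1$ identity to the $m$th power then yields
$$
(c\tau+d)^m\,\gamma^\ast v=(c\tau+d)^m\,\gamma^\ast(u^m)=\big((c\tau+d)\,\gamma^\ast u\big)^m=(\gamma_\ast u)^m=\gamma_\ast(u^m)=\gamma_\ast v,
$$
which is the claim.

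I do not expect any genuine obstacle here: the entire content is already contained in the weight-one computation of Lemma~\ref{lem:invariance}, and the only thing requiring care is the bookkeeping --- keeping the two actions $\gamma^\ast$ and $\gamma_\ast$ distinct, and using that $\exp(\tau\e_0)$ and the fibrewise $\SL_2$-action are both ring homomorphisms on the symmetric algebra, so that the passage from $m=1$ to general $m$ goes through.
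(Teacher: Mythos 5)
Your proof is correct and follows essentially the same route as the paper: identify $\e$ with $\b^m$, use that $\exp(\tau\e_0)(\b^m)=\big(\exp(\tau\e_0)\b\big)^m$ together with multiplicativity of $\gamma^\ast$ and $\gamma_\ast$ to reduce to $m=1$, and then settle the weight-one case by the computation of Lemma~\ref{lem:invariance}(ii) (which the paper simply redoes in matrix form, and which, as you note, holds for all of $\SL_2(\Z)$ since the argument is formal). No gaps.
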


\begin{proof}
Write $\e = \b^m$, where $\b \in H$. Then $\exp(\tau\e_0)\e =
(\exp(\tau\e_0)\b\big)^m$. So it suffices to consider the case $m=1$. In this
case $v(\tau,\b) = (-\a,\b)(\tau,1)^T$ and
$$
(\gamma_\ast v)(\tau,\b) =
-
\begin{pmatrix}
\a & -\b
\end{pmatrix}
\begin{pmatrix}
a & b \cr c & d
\end{pmatrix}
\begin{pmatrix}
\tau \cr 1
\end{pmatrix}
= - (c\tau + d)
\begin{pmatrix}
\a & -\b
\end{pmatrix}
\begin{pmatrix}
\gamma \tau \cr 1
\end{pmatrix}
= \big(\gamma^\ast v\big)(\tau,\b).
$$
\end{proof}

For a modular form $f$ of $\G$ weight $w=m+2$ and an indeterminate $\e$,
define\footnote{The particular scaling by $2\pi i$ is chosen so that, when
$\G=\SL_2(\Z)$, if $v(\tau,\e)$ corresponds (locally) to a $\Q$-rational section
of the canonical extension $S^m\cH(r)$ of $S^m\H$ to $\Mbar_{1,1}$, then $\w_f$
is $\Q$-rational in the sense that $\w_f(\e) \in H^1_\DR(\M_{1,1/\Q},S^m\cH)$.
See, e.g., \cite[\S21]{hain:kzb}.}
$$
\w_f(\e) = 2\pi i\,f(\tau) v(\tau,\e) d\tau \in E^1(\h)\otimes S^m(\e).
$$
A routine calculation shows that $\w_f(\e)$ is $\G$-invariant in the sense
that
$$
(\gamma^\ast \otimes 1) \w_f(\e) = (1\otimes \gamma) \w_f(\e).
$$
It follows that
$$
\w_f(\e) \in (E^1(\h)\otimes S^m(\e))^\G \cong E^1(X_\G,\bS^m(\e)).
$$
Since $\w_f(\e)$ is closed, it determines a class in $H^1(X_\G,\bS^m(\e))$. Its
complex conjugate
$$
\overline{\w}_f(\e) = \overline{f(\tau)} v(\taubar,\e)^m d\taubar
$$
also defines a class in $H^1(X_\G,\bS^m(\e))$.

Recall that $X_\G = \Xbar_\G - D$, where $D=\G\bs \P^1(\Q)$. And recall from
Section~\ref{sec:mhc} that $K^\dot(\Xbar,D;\bS^n(\e))$ is Zucker's mixed Hodge
complex that computes the MHS on $H^\dot(X_\G,\bS^n(\e))$. It is straightforward
to check that:

\begin{proposition}[Zucker \cite{zucker}]
If $f$ is a modular form of $\G$ of weight $w=m+2$, then
$$
\w_f(\e) \in F^{m+1}K^1(\Xbar_\G,D;\bS^m(\e)).
$$
If $f$ is cusp form, then $\w_f(\e) \in F^{m+1}W_m K^1(\Xbar_\G,D;\bS^m(\e))$.
\end{proposition}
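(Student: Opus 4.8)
The plan is to reduce both assertions to local statements at the cusps $P\in D$ and then verify them by a direct computation with the canonical‑extension frame constructed in Section~\ref{sec:H}. First, unwind the filtrations. By the description of the Hodge filtration of Zucker's complex (Section~\ref{sec:zucker}) and the fact that $\overline{\bS^m(\e)}$ has Hodge filtration concentrated in degrees $\le m$, one has $F^{m+1}\bK^1_\C(\bS^m(\e)) = \Omega^1_{\Xbar_\G}(\log D)\otimes F^m\overline{\bS^m(\e)}$ and $F^{m+1}\bK^0_\C(\bS^m(\e))=0$. Passing to the smooth resolution, a global holomorphic $(1,0)$-form with values in $F^m\bS^m(\e)\otimes\cO_{X_\G}$ lies in $F^{m+1}K^1(\Xbar_\G,D;\bS^m(\e))$ iff it extends, across $D$, to a section of $\Omega^1_{\Xbar_\G}(\log D)\otimes F^m\overline{\bS^m(\e)}$. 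Likewise $W_m\bK^1_\C(\bS^m(\e))$ has stalk $\bK^1_\C(\bS^m(\e))_P$ for $P\notin D$ and stalk $\Res_P^{-1}(\im N_P)$ for $P\in D$ (take $r=0$, so $\ker N_P^0=0$, in the definition of Section~\ref{sec:zucker}). So it suffices to show: (i) $\w_f(\e)$ extends to a section of $\Omega^1_{\Xbar_\G}(\log D)\otimes F^m\overline{\bS^m(\e)}$; and (ii) if $f$ is a cusp form, $\Res_P\w_f(\e)\in\im N_P$ for every $P\in D$ --- in fact it will be $0$.

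Next, the local computation at a cusp. Take $\G=\SL_2(\Z)$ and the cusp $q=0$, with $q=e^{2\pi i\tau}$, so $d\tau=\tfrac{1}{2\pi i}\tfrac{dq}{q}$. Since $\e_0$ acts by derivations and $\exp(\tau\e_0)\b=\b-\tau\a=-\tfrac{1}{2\pi i}\bw$ by (\ref{eqn:def_w}), writing $\e=\b^m$ gives
$$
v(\tau,\e)=\big(\exp(\tau\e_0)\b\big)^m=(-1)^m(2\pi i)^{-m}\,\bw^m ,
$$
which is constant in $q$; as $\a,\bw$ frame the canonical extension $\Hbar_\bD$ with $F^1=\cO_\bD\bw$ (Section~\ref{sec:H}), $\bw^m$ is a nowhere‑vanishing frame of $F^m\overline{\bS^m(\e)}$ over the $q$-disk. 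Hence $\w_f(\e)=(-1)^m(2\pi i)^{-m}f(\tau)\,\bw^m\,\tfrac{dq}{q}$. Since $f$ is holomorphic at the cusp, $f(\tau)=\sum_{k\ge 0}a_kq^k$, so $f(\tau)\tfrac{dq}{q}=\sum_{k\ge 0}a_kq^{k-1}\,dq\in\Omega^1_\bD(\log 0)$; this proves (i) at $q=0$, and on $X_\G$ itself $\w_f(\e)$ is holomorphic with $F^m$-coefficients, so nothing is needed away from $D$.

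Then comes the residue, which is where the cusp‑form hypothesis enters. From the formula just obtained, $\Res_{q=0}\w_f(\e)=(-1)^m(2\pi i)^{-m}a_0\,\bw^m$, a multiple of the highest weight vector of the fiber $H$. Now $F^mH=\C\bw^m$ maps isomorphically onto $\Gr^M_{2m}S^mH\cong\Q(-m)$ by (\ref{eqn:hw}), while the residue $N_P$ of $\nabla_0$ agrees up to a nonzero scalar with the nilpotent $\e_0$ of Section~\ref{sec:sl2}, whose image on $S^mH$ is exactly the complement of the highest weight line (equivalently, $\b^m\notin\im\e_0$). Hence $\bw^m\notin\im N_P$, so $\Res_P\w_f(\e)\in\im N_P$ iff $a_0=0$, i.e.\ iff $f$ vanishes at $P$. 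For a cusp form this holds at every $P$, giving (ii); combining (i) and (ii) with the first paragraph yields both statements for $\G=\SL_2(\Z)$.

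Finally, the general finite‑index $\G$ and the orbifold case. Each cusp $P$ is carried to $\infty$ by a scaling matrix $\sigma\in\SL_2(\Z)$ and has a width $n\ge 1$; using the invariance property of $v$ established just before the Proposition together with Corollary~\ref{cor:automorphy}, the same computation shows $v(\tau,\e)$ extends to a nowhere‑vanishing frame of $F^m\overline{\bS^m(\e)}$ near $P$ with value at $P$ spanning $\Gr^M_{2m}$, and the Fourier expansion of $f$ at $P$ makes the previous two paragraphs go through with $a_0$ replaced by the constant term of $f$ at $P$. When $\G$ has torsion, pass to a torsion‑free normal subgroup $\G'$ with quotient $G$ and run the argument $G$‑equivariantly, using $\bK(\Xbar_\G,D;\bS^m(\e))=\bK(\Xbar_{\G'},D';\bS^m(\e))^G$ (Section~\ref{rem:orbi-case}). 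The only real work is this cusp bookkeeping for non‑full $\G$ --- tracking cusp widths and the $(c\tau+d)$‑factors so that $v(\tau,\e)$ is seen to land in the canonical‑extension frame; the conceptual content is simply that $F^m$ of the limit fiber is the highest weight line $\Gr^M_{2m}S^mH$, which is complementary to $\im N_P$, so the log pole of $\w_f(\e)$ is governed by the constant term of $f$.
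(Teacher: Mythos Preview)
Your proof is correct. The paper does not actually give a proof of this proposition --- it is preceded by ``It is straightforward to check that:'' and attributed to Zucker, with no argument supplied. Your write-up carries out precisely the intended straightforward check: rewriting $\w_f(\e)$ in the canonical-extension frame $\a,\bw$ near a cusp to see that it is a section of $\Omega^1_{\Xbar_\G}(\log D)\otimes F^m\overline{\bS^m(\e)}$, and computing its residue to be the constant term of $f$ times (a scalar multiple of) $\bw^m$, which lies outside $\im N_P$ unless it vanishes --- hence the $W_m$ condition is exactly the cusp-form condition.
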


When $m>0$ the exact sequence (\ref{eqn:exact_seqce}) from
Section~\ref{sec:exact_seqce} becomes
\begin{equation}
\label{eqn:exact_seqce:mod}
0 \to W_{m+1}H^1(X_\G,\bS^m(\e)) \to H^1(X_\G,\bS^m(\e))
\to \bigoplus_{P\in D} \big((S^m V_P)/\im N_P\big)(-1) \to 0,
\end{equation}
where $V_P$ denotes the fiber of the canonical extension of $\bS^m(\e)\otimes
\cO_{X_\G}$ to $C$ and $N_P$ the associated local monodromy operator. (Cf.\
Section~\ref{sec:zucker}.) It is an exact sequence of MHS. Note that each $S^m
V_P/\im N_P$ is one dimensional and is isomorphic to $\Q(-m)$ by (\ref{eqn:hw}).

The following result is equivalent to Eichler-Shimura combined with the
observations that
$$
K^1(\Xbar_\G,D;\bS^m(\e)) = W_{2m+1}K^1(\Xbar_\G,D;\bS^m(\e)) \text{ and }
F^{m+2}K^1(\Xbar_\G,D;\bS^m(\e)) = 0.
$$
This version was proved by Zucker in \cite{zucker}.

Denote the space of modular forms of $\G$ of weight $w$ by $\fM_w(\G)$ and the
subspace of cusp forms by $\fM_w^o(\G)$. When $\G = \SL_2(\Z)$, we will omit
$\G$ and simply write $\fM_w$ and $\fM_w^o$.

\begin{theorem}[Shimura, Zucker]
\label{thm:eichler_shimura}
If $\G$ is a finite index subgroup of $\SL_2(\Z)$, then $H^1(X_\G,\bS^m(\e))$ is
spanned by the classes of modular forms of weight $w=m+2$ and their complex
conjugates. The only non-vanishing Hodge numbers $h^{p,q}$ occur when $(p,q)=
(m+1,0)$, $(0,m+1)$ and $(m+1,m+1)$. The weight $m+1$ part is spanned by the
classes of cusp forms and their complex conjugates. Moreover, the function that
takes a modular form $f$ to the class of $\w_f(\e)$ induces an isomorphism
$$
\fM_{m+2}(\G) \cong F^{m+1}H^1(X_\G,\bS^m(\e)).
$$
The map that takes a cusp form $f$ to the class of its complex conjugate
$\wbar_f(\e)$ induces a conjugate linear isomorphism of $\fM_{k+2}^o(\G)$ with
the $(0,m+1)$ part of the Hodge structure $W_{m+1}H^1(X_\G,\bS^m(\e))$.
\end{theorem}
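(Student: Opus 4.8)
The plan is to read everything off Zucker's mixed Hodge complex $\bK_\C(\bS^m(\e)) = \Omega^\dot_{\Xbar_\G}(\log D)\otimes\overline{\bS^m(\e)}$ (whose hypercohomology is $H^\dot(X_\G,\bS^m(\e))$), using its two structural features recorded just above the theorem — $F^{m+2}K^1 = 0$ and $K^1 = W_{2m+1}K^1$ — together with the $E_1$-degeneration and strictness properties of a MHC (Lemma~\ref{lem:exact}). I take $m\ge 1$ throughout; the case $m=0$ recovers the classical Hodge structure on $H^1$ of an open modular curve and is handled by the same argument (vacuously in the second step). The crux will be the vanishing of the intermediate Hodge numbers.

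First I would compute $F^{m+1}H^1(X_\G,\bS^m(\e))$. Regarding $\bS^m(\e)$ as a variation of weight $m$, its canonical extension has Hodge bundles $F^0\supset\cdots\supset F^m\supset F^{m+1}=0$ with $F^m$ a line bundle; since $\Omega^{\ge 2}_{\Xbar_\G}=0$ on a curve, $F^{m+1}\bK_\C(\bS^m(\e))$ is concentrated in degree $1$ and equals $\Omega^1_{\Xbar_\G}(\log D)\otimes F^m\overline{\bS^m(\e)}$, while $F^{m+2}\bK_\C=0$. By strictness of the Hodge filtration, $H^1(\Xbar_\G,F^{m+1}\bK_\C)=H^0\big(\Xbar_\G,\Omega^1_{\Xbar_\G}(\log D)\otimes F^m\overline{\bS^m(\e)}\big)$ injects into $H^1(X_\G,\bS^m(\e))$ with image $F^{m+1}H^1$, and $F^{m+2}H^1=0$. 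It then remains to identify this space of sections: pulled back to $\h$, $F^m$ is the line spanned by $v(\tau,\e)=\exp(\tau\e_0)\e$, so such a section is $g(\tau)v(\tau,\e)\,d\tau$ with $g$ holomorphic on $\h$; the $\G$-invariance forced by $(c\tau+d)^m\gamma^\ast v=\gamma_\ast v$ (the Lemma in this section) together with $\gamma^\ast d\tau=(c\tau+d)^{-2}d\tau$ says exactly $g(\gamma\tau)=(c\tau+d)^{m+2}g(\tau)$, and the condition that $g\,v\,d\tau$ extend to a section of the canonical extension at each cusp is exactly holomorphy of $g$ there. Hence $f\mapsto\w_f(\e)=2\pi i\,f(\tau)v(\tau,\e)\,d\tau$ gives the asserted isomorphism $\fM_{m+2}(\G)\xrightarrow{\sim}F^{m+1}H^1(X_\G,\bS^m(\e))$. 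Running the same computation with the truncation $W_m\bK_\C$ — whose stalk at a cusp $P$ imposes that the residue lie in $\im N_P$, i.e.\ that $g$ have vanishing constant term there, since $\Res_P\w_f(\e)$ is a nonzero multiple of $a_0(f,P)$ times the highest weight vector $\b^m$ of $S^m V_P/\im N_P$ — identifies $F^{m+1}I\!H^1(\Xbar_\G,\bS^m(\e))$ with $\fM^o_{m+2}(\G)$ (cf.\ the Proposition above).

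Next I would pin down the Hodge numbers. By $E_1$-degeneration, $\Gr_F^p H^1(X_\G,\bS^m(\e))$ is the hypercohomology of the two-term complex $\big[\Gr_F^p\overline{\bS^m(\e)}\xrightarrow{\bar\nabla}\Omega^1_{\Xbar_\G}(\log D)\otimes\Gr_F^{p-1}\overline{\bS^m(\e)}\big]$, and here the special shape $\bS^m(\e)\cong S^m\H$ enters: for $1\le p\le m$ the $\cO$-linear map $\bar\nabla$ is, up to a nonzero scalar, the $p$-fold power of the Kodaira–Spencer isomorphism of the universal elliptic curve (equivalently, of $\Omega^1_{\Xbar_\G}(\log D)\cong(F^1\overline{\cH})^{\otimes 2}$), hence an isomorphism of line bundles, so this complex is acyclic and $\Gr_F^p H^1=0$ for $1\le p\le m$. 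With $F^{m+2}H^1=0$ from the first step, this forces $\Gr_F^p H^1=0$ unless $p\in\{0,m+1\}$; by the symmetry $h^{p,q}=h^{q,p}$ of the $\Q$-Hodge structure, $h^{p,q}=0$ unless $p,q\in\{0,m+1\}$, and since $H^1(X_\G,\bS^m(\e))$ has weights $\ge m+1>0$ the bidegree $(0,0)$ is excluded, leaving exactly $(m+1,0)$, $(0,m+1)$, $(m+1,m+1)$. The weight filtration is then determined: $W_{m+1}H^1$ is pure of weight $m+1$ (by definition of $W_{m+1}$ and the lower weight bound), so has only Hodge types $(m+1,0)$ and $(0,m+1)$; and by the residue sequence~\eqref{eqn:exact_seqce:mod} (whose map to $I\!H^2$ vanishes for $m\ge 1$ for weight reasons) $H^1/W_{m+1}H^1\cong\bigoplus_{P\in D}\Q(-m-1)$ is Tate of type $(m+1,m+1)$, which accounts for the remaining bidegree.

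Finally I would treat the conjugates and the spanning claims. Complex conjugation $c$ on the $\Q$-Hodge structure $H^1(X_\G,\bS^m(\e))$ is a conjugate-linear involution with $c(H^{p,q})=H^{q,p}$ that carries the class of $\w_f(\e)$ to that of $\overline{\w}_f(\e)$. Since $W_{m+1}H^1$ is pure of weight $m+1$ with only types $(m+1,0),(0,m+1)$, strictness of $F$ and $W$ gives $F^{m+1}(W_{m+1}H^1)=F^{m+1}H^1\cap W_{m+1}H^1=H^{m+1,0}(W_{m+1}H^1)$, and this equals $\{[\w_f(\e)]:f\in\fM^o_{m+2}(\G)\}\cong\fM^o_{m+2}(\G)$ (a modular form's class lies in $W_{m+1}H^1=I\!H^1$ iff its cusp constant terms all vanish, by~\eqref{eqn:exact_seqce:mod}); applying $c$ yields the conjugate-linear isomorphism $\fM^o_{m+2}(\G)\xrightarrow{\sim}H^{0,m+1}(W_{m+1}H^1)$, $f\mapsto[\overline{\w}_f(\e)]$, and $W_{m+1}H^1=H^{m+1,0}\oplus H^{0,m+1}$ is spanned by cusp classes and their conjugates. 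Strictness of $F$ on the surjection $H^1\to H^1/W_{m+1}H^1$ (pure of type $(m+1,m+1)$, hence its own $F^{m+1}$) shows $F^{m+1}H^1$ maps onto the quotient, so $H^1=W_{m+1}H^1+F^{m+1}H^1$ and therefore $H^1$ is spanned by the $[\w_f(\e)]$ together with the $[\overline{\w}_f(\e)]$, $f\in\fM_{m+2}(\G)$. The main obstacle is the vanishing $\Gr_F^p H^1=0$ for $1\le p\le m$ in the second step: this is precisely where one must use the maximality of the Higgs field of the universal elliptic curve (the Kodaira–Spencer isomorphism), or else simply invoke Zucker's explicit Hodge-number computation in~\cite{zucker}; everything else is bookkeeping with the strictness of Zucker's MHC.
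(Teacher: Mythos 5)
Your argument is sound, and it fills in exactly what the paper delegates: the paper offers no proof, citing Zucker after observing $F^{m+2}K^1(\Xbar_\G,D;\bS^m(\e))=0$ and $K^1=W_{2m+1}K^1$, while you reconstruct that computation inside the same mixed Hodge complex, correctly reducing the one nontrivial point --- the vanishing of $\Gr_F^p H^1(X_\G,\bS^m(\e))$ for $1\le p\le m$ --- to the Kodaira--Spencer isomorphism (equivalently, Zucker's Hodge-number computation), which is indeed where all the content lies. The surrounding bookkeeping is also right: strictness/$E_1$-degeneration of the $F$-spectral sequence for a MHC, the identification of $H^0(\Xbar_\G,\Omega^1_{\Xbar_\G}(\log D)\otimes F^m)$ with $\fM_{m+2}(\G)$ carried out $\G$-equivariantly on $\h$ (which takes care of the orbifold/torsion case), and the residue sequence argument, where your ``weight reasons'' for the vanishing of the map to $I\!H^2$ is equivalent to the paper's use of $H^0(X_\G,S^m\H)=0$ for $m>0$.
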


When $\G=\SL_2(\Z)$, there is only one cusp. It is routine to show that the
cohomology with coefficients in $\bS^m(\e)$ vanishes when $m$ is odd. So for
each $n>0$ we have the exact sequence
$$
0 \to W_{2n+1}H^1(\M_{1,1},\bS^{2n}(\e)) \to H^1(\M_{1,1},\bS^{2n}(\e))
\to  \Q(-2n-1) \to 0
$$
The class of the 1-form $\w_f(\e)$ associated to the Eisenstein series
$f=G_{2n+2}$ of weight $2n+2$ projects to a generator of $\Q(-2n-1)$. The MHS
on $H^1(X_\G,\bS^{2n}(\e))$ can be described in terms of modular symbols. We
will return to this in Section~\ref{sec:hodge}.

\subsection{Cohomology of congruence subgroups}

Recall that a congruence subgroup of $\SL_2(\Z)$ is one that contains a
principal congruence subgroup
$$
\SL_2(\Z)[N] :=
\{\gamma \in \SL_2(\Z) : \gamma \equiv \text{ identity }\bmod N\}.
$$
When $\G$ is a congruence subgroup, one has Hecke operators
$$
T_p \in \End_\MHS H^1(X_\G,S^m\H)
$$
for each prime number $p$. Since the Hecke algebra (the algebra generated by the
$T_p$) is semi-simple, $H^1(X_\G,S^m\H_\Q)$ decomposes into simple factors. Each
is a MHS.

Let $\B_w(\G)$ be the set of normalized Hecke eigen cusp forms of $\G$ of weight
$w\ge 2$. This is a basis of $\fM_w^o(\G)$, the weight $w$ cusp forms. For each
$f\in \B_w(\G)$, let $K_f$ be the subfield of $\C$ generated by its Fourier
coefficients.  Since the restriction of $T_p$ to the cusp forms is self adjoint
with respect to the Petersson inner product, its eigenvalues are real. Since this
holds for all $p$, it implies that $K_f \subset \R$. Consequently, the smallest
subspace $V_f$ of $H^1(X_\G,S^m\H_{K_f})$ whose complexification contains
$\w_f(\e)$ is a $K_f$-sub HS of the MHS $H^1(X_\G,S^m\H)$.

Denote by $M_f$ the smallest $\Q$-Hodge sub-structure of $H^1(X_\G,S^m\H_\Q)$
with the property that $M_f\otimes K_f$ contains $V_f$. It is a sum of the Hodge
structures $V_h$ of the eigenforms conjugate to $f$. Call two eigenforms $f$ and
$h$ {\em equivalent} if $M_f = M_h$.

When $f$ is a normalized Eisenstein series the smallest sub-MHS of
$H^1(X_\G,S^m\H_\Q)$ that contains the corresponding cohomology class is one
dimensional and spans a Tate MHS $\Q(-m-1)$.

\begin{theorem}
\label{thm:manin-drinfeld}
If $\G$ is a congruence subgroup of $\SL_2(\Z)$, then the MHS Hodge structure on
$H^1(X_\G,S^m\H)$ splits. In particular, when $m>0$, there is a canonical
isomorphism
$$
H^1(X_\G,S^m\H) \cong
\bigoplus_{f} M_f \oplus \bigoplus_{P\in D} \Q(-m-1),
$$
where $f$ ranges over the equivalence classes of eigen cusp forms of weight
$m+2$. As a real mixed Hodge structure
$$
H^1(X_\G,S^m\H) \cong
\bigoplus_{f\in \B_{m+2}(\G)} V_f \oplus \bigoplus_{P\in D} \R(-m-1).
$$
\end{theorem}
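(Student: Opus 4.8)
The plan is to combine the Eichler--Shimura isomorphism (Theorem~\ref{thm:eichler_shimura}) with the exact sequence (\ref{eqn:exact_seqce:mod}) to realize $H^1(X_\G,S^m\H)$, as a $\Q$-MHS, as an extension with very simple pieces, and then to split that extension canonically using the Hecke action. First I would pin down the weight filtration. Since $\bS^m(\e)\cong S^m\H$ is a PVHS of weight $m$, the weights on $H^1(X_\G,\bS^m(\e))$ are $\ge m+1$ (Section~\ref{sec:zucker}), and Theorem~\ref{thm:eichler_shimura} shows the only nonzero Hodge numbers are $h^{m+1,0}$, $h^{0,m+1}$ (occurring on the weight $m+1$ part) and $h^{m+1,m+1}$. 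Hence $\Gr^W_j H^1(X_\G,S^m\H)$ vanishes unless $j\in\{m+1,2m+2\}$: the subspace $W_{m+1}H^1(X_\G,S^m\H)$ is a polarized, hence semisimple, pure HS of weight $m+1$, and by (\ref{eqn:exact_seqce:mod}) together with $(S^mV_P)/\im N_P\cong\Q(-m)$ (see (\ref{eqn:hw})) the quotient is $\bigoplus_{P\in D}\Q(-m-1)$, pure Tate of weight $2m+2$. So $H^1(X_\G,S^m\H)$ is a $\Q$-MHS extension
$$
0 \to W_{m+1}H^1(X_\G,S^m\H) \to H^1(X_\G,S^m\H) \to \textstyle\bigoplus_{P\in D}\Q(-m-1)\to 0,
$$
whose class need not vanish a priori, since after the appropriate Tate twist the sub has negative weight.

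The heart of the argument is splitting this extension using the Hecke operators $T_p\in\End_{\MHS}H^1(X_\G,S^m\H)$, which are available because $\G$ is a congruence subgroup. I would decompose $H^1(X_\G,S^m\H_\Q)$ into its Hecke-isotypic components; since the Hecke algebra is semisimple and the $T_p$ --- hence the isotypic projectors --- are MHS morphisms, these components are $\Q$-sub-MHS. By Eichler--Shimura, $W_{m+1}H^1(X_\G,S^m\H)\otimes\C$ is spanned by the Hecke eigenvectors $\w_f(\e)$ and $\wbar_f(\e)$ attached to eigen cusp forms $f$ of weight $m+2$, so the Hecke eigenvalue systems occurring in $W_{m+1}H^1(X_\G,S^m\H)$ are exactly the cuspidal ones, while those occurring on the quotient $\bigoplus_{P\in D}\Q(-m-1)$ are the Eisenstein eigenvalue systems. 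These two families are disjoint --- e.g.\ by Deligne's bound $|a_p(f)|\le 2p^{(m+1)/2}$ for eigen cusp forms (Eichler--Shimura--Weil when $m=0$) against the $\sim p^{m+1}$ growth of the Eisenstein eigenvalues, or via the pole of an Eisenstein $L$-function. Therefore no Hecke-isotypic component can meet $W_{m+1}H^1(X_\G,S^m\H)$ while also mapping nontrivially to the quotient, so the sum of the components not contained in $W_{m+1}H^1(X_\G,S^m\H)$ is a Hecke-stable sub-MHS mapping isomorphically onto $\bigoplus_{P\in D}\Q(-m-1)$. This gives the canonical splitting
$$
H^1(X_\G,S^m\H)\ \cong\ W_{m+1}H^1(X_\G,S^m\H)\ \oplus\ \textstyle\bigoplus_{P\in D}\Q(-m-1)
$$
of $\Q$-MHS. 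The case $m=0$ is entirely analogous, using (\ref{eqn:exact_seqce}), whose quotient is then $\Q(-1)^{\oplus(\#D-1)}$, and the classical Hasse--Weil bound.

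It remains to decompose the cuspidal part and to deduce the real statement. Being polarizable and pure, $W_{m+1}H^1(X_\G,S^m\H)$ is semisimple, and it is Hecke-stable, so it is the direct sum of its Hecke-isotypic components; by construction these are precisely the Hodge structures $M_f$, one for each equivalence class of normalized Hecke eigen cusp forms $f$ of weight $m+2$, each a pure sub-HS of weight $m+1$. This establishes the canonical isomorphism for $m>0$. For the real statement I would extend scalars to $\R$: since $K_f\subset\R$ is totally real and $M_f$ is defined over $\Q$ with $M_f\otimes_\Q K_f$ containing the $2$-dimensional $K_f$-sub-HS $V_f$ spanned by the classes of $\w_f(\e)$ and $\wbar_f(\e)$, Galois descent gives
$$
M_f\otimes_\Q\R\ \cong\ \bigoplus_{\sigma\colon K_f\hookrightarrow\R} V_f\otimes_{K_f,\sigma}\R\ \cong\ \bigoplus_{h} V_h,
$$
where $h$ runs over the eigen cusp forms equivalent to $f$; summing over equivalence classes converts $\bigoplus_f M_f\otimes\R$ into $\bigoplus_{f\in\B_{m+2}(\G)}V_f$ as real Hodge structures, while $\Q(-m-1)\otimes\R=\R(-m-1)$. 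This gives the asserted real decomposition.

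I expect the main obstacle to be the splitting of the extension by the Hecke action: the Hodge-theoretic bookkeeping is essentially forced by Theorem~\ref{thm:eichler_shimura} and the exact sequences already established, whereas the splitting is the genuinely arithmetic Manin--Drinfeld input, and it is there --- in the disjointness of cuspidal and Eisenstein Hecke eigensystems --- that the congruence-subgroup hypothesis, and ultimately a Ramanujan-type estimate or the analytic behaviour of $L$-functions, is indispensable.
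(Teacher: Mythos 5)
Your proposal is correct and follows essentially the same route as the paper's sketch: use that the Hecke operators $T_p$ are morphisms of MHS and that their eigenvalue systems on $\Gr^W_{m+1}$ (cuspidal, small by a Ramanujan/Hasse-type bound) and on $\Gr^W_{2m+2}$ (Eisenstein, of size $\sim p^{m+1}$) are disjoint, so the Hecke-isotypic decomposition splits the weight filtration, with the $V_f$ the common eigenspaces and the $M_f$ their Galois orbits. The only cosmetic difference is that you invoke Deligne's bound where the paper gets by with the elementary estimate $Cp^{1+m/2}$ from Serre/Lang (supplemented, as you note, by the Hasse--Weil bound when $m=0$).
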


\begin{proof}[Sketch of Proof] The splitting of the MHS on $H^1(X_\G,S^m\H)$
follows from the fact that each Hecke operator $T_p$ is a morphism of MHS. The
weight filtration splits because $T_p$ acts on $\Gr^W_{m+1}$ with eigenvalues of
modulus bounded by $Cp^{1+m/2}$ and on $\Gr^W_{2m+2}$ with eigenvalues of size
$O(p^{m+1})$. (For $\SL_2(\Z)$ this is proved in \cite{serre}. See p.~94 and
p.~106. For general $\G$ see \cite[Lem.~2, p.~13]{lang}.) The $V_f$ are the
common eigenspaces of the $T_p$ acting on $W_{m+1}H^1(X_\G,S^m\H)$. The $M_f$
are their Galois orbits and are $\Q$-HS.
\end{proof}

\section{Hodge Theory of the Relative Completion of Modular Groups}

Here we make the construction of the mixed Hodge structure on the unipotent
radical of the completion of a modular group with respect to its inclusion into
$\SL_2(\Q)$ explicit.

We retain the notation of previous sections: $\G$ is a finite index subgroup of
$\SL_2(\Z)$, $X_\G = \G\bs\h$ is the associated curve, $D=\G\bs\P^1(\Q)$ is the
set of cusps, and $\Xbar_\G = X_\G \cup D$ is its smooth completion. As in
Section~\ref{sec:moduli}, $H$ denotes the fiber over the unique cusp $q=0$ of
$\Mbar_{1,1}$ of the canonical extension $\Hbar$ of the local system $\H$. The
pullback of $\Hbar$ along the quotient morphism $\Xbar_\G \to \Mbar_{1,1}$ is
the canonical extension of $\H\otimes\cO_{X_\G}$ to $\Xbar_\G$, so that the
fiber of $\Hbar_{\Xbar_\G}$ over each $P\in D$ is naturally isomorphic to $H$.

Fix a base point $x_o$ of $X_\G$. We allow $x_o$ to be a non-zero tangent vector
at a cusp $P\in D$. Denote the completion of $\pi_1(X_\G,x_o)$ with respect to
its inclusion into $\SL_2(\Q)$ by $\cG_{x_o}$ and its prounipotent radical by
$\U_{x_o}$. Their Lie algebras (and coordinate rings) have natural mixed Hodge
structures. Recall that the polarized variation $\H$ over $X_\G$ has weight 1.
Denote its fiber over $x_o$ by $H_o$.

We also fix a lift $\tau_o$ of $x_o$ to $\h$. This determines an isomorphism
$\pi_1(X_\G,x_o) \cong \G$ and isomorphisms of $\cG_{x_o}$ and $\U_{x_o}$ with
the completion $\cG$ of $\G$ with respect to the inclusion $\G \hookrightarrow
\SL_2(\Q)$, and $\U$, its prounipotent radical.

\subsection{General considerations}

As pointed out in Section~\ref{sec:modular}, $\u$ is free. So, up to a
non-canonical isomorphism, it is determined by its abelianization $H_1(\u)$.
Theorem~\ref{thm:mhs}, implies that $\u$ has negative weights, so there is an
exact sequence
$$
0 \to W_{-2}H_1(\u)_\eis \to H_1(\u) \to \Gr^W_{-1}\u \to 0.
$$
of pro-MHS with $\SL_2$ action. Eichler-Shimura (Thm.~\ref{thm:eichler_shimura})
and the computation (\ref{eqn:h1u}) imply that the weight $-1$ quotient comes
from cusp forms:
$$
\Gr^W_{-1}\u
= \prod_{m\ge 0} (W_{m+1}H^1(X_\G,S^m\H))^\ast\otimes S^m H_o
= \prod_{m\ge 0} I\!H^1(X_\G,S^m\H)^\ast\otimes S^m H_o.
$$
The exact sequence (\ref{eqn:exact_seqce}) implies that the weight $<-1$ part
$$
W_{-2}H_1(\u) =
\widetilde{H}_0(D;\Q(1)) \oplus \bigoplus_{P\in D} \prod_{m\ge 0} S^m H_o(m+1)
$$
is a direct product of Hodge structures. Note that when $x_o$ is a finite base
point (i.e., $x_o\in X_\G$), then $S^mH_o(m+1)$ has weight $-m-1$. If $x_o$ is a
tangent vector at a cusp, then $S^m H_o(m+1)$ has weight graded quotients
$\Q(1),\ \Q(2),\dots,\Q(m+1)$. In this case, $W_{-2}H_1(\u)$ is mixed Tate.

The Manin-Drinfeld Theorem (Thm.~\ref{thm:manin-drinfeld}) implies:

\begin{proposition}
If $\G$ is a congruence subgroup of $\SL_2(\Z)$, then $H_1(\u)$ is the
product
$$
H_1(\u) \cong \prod_{r<0} \Gr^W_r H_1(\u)
$$
of its weight graded quotients in the category of pro-MHS with $\SL_2$ action.
\qed
\end{proposition}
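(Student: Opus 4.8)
The plan is to deduce the Proposition from the explicit description of $H_1(\u)$ as a pro-MHS given by Theorem~\ref{thm:admissible}, combined with the Manin--Drinfeld Theorem (Theorem~\ref{thm:manin-drinfeld}), followed by a rearrangement of an iterated product. The only genuine input is Manin--Drinfeld, which is already available; everything else is bookkeeping. First I would apply Theorem~\ref{thm:admissible} with $C'=X_\G$ and $\V=\H$, so that $R=\SL_2$ and $\Rdual=\N$ with $m\in\N$ corresponding to $S^mH$. This gives a natural isomorphism of pro-MHS with $\SL_2$-action
$$
H_1(\u)\ \cong\ \prod_{m\ge 0} H^1(X_\G,S^m\H)^\ast\otimes S^mH_o ,
$$
in which $\SL_2$ acts through the factor $S^mH_o$ and trivially on the constant factor $H^1(X_\G,S^m\H)^\ast$, and each summand is finite dimensional. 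Since $x_o\in X_\G$ is a finite base point, $H_o$ is pure of weight $1$, so $S^mH_o$ is pure of weight $m$; this is also why $\u=W_{-1}\u$ holds here.

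Next I would split each factor. Because $\G$ is a congruence subgroup, the Hecke operators are endomorphisms of the MHS $H^1(X_\G,S^m\H)$ and split its weight filtration (Theorem~\ref{thm:manin-drinfeld}), so $H^1(X_\G,S^m\H)$ is the direct sum of its pure weight-graded quotients. Dualizing and tensoring with the pure Hodge structure $S^mH_o$, each factor $H^1(X_\G,S^m\H)^\ast\otimes S^mH_o$ becomes canonically the direct sum of its own weight-graded quotients, as a MHS with $\SL_2$-action. By Eichler--Shimura (Theorem~\ref{thm:eichler_shimura}) and the exact sequence (\ref{eqn:exact_seqce:mod}), this decomposition has exactly two pieces: a weight $-1$ cuspidal part $C_m$, a sum of terms $\Mdual_f\otimes S^mH_o$, and an Eisenstein part $E_m$, pure of weight $-m-2$ and isomorphic to $\bigoplus_{P\in D}S^mH_o(m+1)$.

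Finally I would reorganize the product. Since $\Gr^W_r$ is exact and commutes with products of MHS, one has $\Gr^W_rH_1(\u)\cong\prod_m\Gr^W_r\big(H^1(X_\G,S^m\H)^\ast\otimes S^mH_o\big)$, which equals $\prod_mC_m$ when $r=-1$, equals $E_{-r-2}$ when $r\le -2$ (only the single index $m=-r-2$ contributing), and vanishes otherwise. Because each factor of $H_1(\u)$ has only finitely many nonzero weight-graded pieces and products commute with products, the composite
$$
H_1(\u)\ \cong\ \prod_{m\ge 0}\big(C_m\oplus E_m\big)\ \cong\ \Big(\prod_mC_m\Big)\times\Big(\prod_mE_m\Big)\ \cong\ \prod_{r<0}\Gr^W_rH_1(\u)
$$
is an isomorphism of pro-MHS with $\SL_2$-action, which is the assertion.

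The remaining points are routine. The $\SL_2$-equivariance is automatic, since the group acts only through the $S^mH_o$ tensor factors while the Manin--Drinfeld splitting takes place on the constant cohomology factors; and the inverse-limit topologies are respected because the entire argument is a rearrangement of a product of finite dimensional pieces. I expect the one place that genuinely needs care is precisely this last rearrangement: it is legitimate only because for each weight $r\le -2$ a \emph{unique} symmetric power $S^mH$ contributes (namely $m=-r-2$), so that collecting those contributions involves no infinite product, while the weight $-1$ strand — where infinitely many $m$ contribute — is gathered separately as $\prod_mC_m$. Without the observation that the cuspidal part sits entirely in weight $-1$, the interchange of $\prod_m$ and $\bigoplus_r$ would not be valid.
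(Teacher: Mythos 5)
Your proof is correct and is essentially the paper's (tersely indicated) argument: the paper simply invokes the Manin--Drinfeld Theorem to split each factor of $H_1(\u)\cong\prod_{m\ge 0}H^1(X_\G,S^m\H)^\ast\otimes S^m H_o$ into its pure weight pieces, exactly as you do, with the product then regrouped by weight. One minor remark: the final interchange of product and direct sum is already justified by the fact that each factor has only finitely many nonzero weight-graded quotients (and that you work with a finite base point $x_o\in X_\G$, as you note), so the observation that a unique symmetric power contributes in each weight $r\le -2$ is a convenience rather than a logical necessity.
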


\subsection{Hodge theory of congruence subgroups}

Now assume that $\G$ is a congruence subgroup of $\SL_2(\Z)$. The first step in
writing down a formal connection $\Omega \in K^1(\Xbar_\G,D;\bu)$ is to write
down, for each $m > 0$, a form
$$
\Omega_{1,m} \in
F^0W_{-1} \big(K^1(\Xbar_\G,D;S^{m}\H)\otimes H^1(X_\G,S^{m}\H)^\ast\big)
$$
that represents the identity $H^1(X_\G,S^{m}\H)\to H^1(X_\G,S^{m}\H)$.

Since $\G$ is a congruence subgroup, the Hecke algebra acts on the modular forms
$\fM_w(\G)$ of $\G$ of weight $w$. When $w>2$, for each cusp $P\in D$, there is
a normalized Eisenstein series $E_{w,P}(\tau)$ that vanishes at the other cusps.
When $w=2$, Eisenstein series give elements of $H^1(X_\G,\C)$ with non-zero
residues at least two cusps. Fix a cusp $P_o\in D$. For each $P\in D':=
D-\{P_o\}$, choose an Eisenstein series $E_{2,P}$ that is non-zero at $P$ and
vanishes at all other points of $D'$.

Now suppose that $m\ge 0$. Identify $S^m\H$ with $\bS^m(\b^m)$. For each $f\in
\B_{m+2}(\G)$ we have the 1-forms $\w_f(\b^m) \text{ and } \wbar_f(\b^m)$. When
$m>0$ (resp.\ $m=0$) and $P\in D$ (resp.\ $P\in D'$) set
$$
\psi_{m,P}(\b^m) := \w_{E_{m,P}}(\b^m).
$$
This will be viewed as an element of $K^1(\Xbar_\G,D;S^m\H)$ and of
$E^1(\h,S^nH)^\G$. Then
\begin{multline}
\label{eqn:basis}
\{\w_f(\b^m),\ \wbar_f(\b^m) : f\in \B_{m+2}(\G)\}
\cup
\cr
\{\psi_{m+2,P}(\b^m): (P\in D
\text{ and } m>0) \text{ or } (P\in D' \text{ and } m=0)\}
\end{multline}
is a subset of $K^1(\Xbar_\G,D;S^m\H)$ that represents a basis of
$H^1(X_\G,S^m\H)$. Let
$$
\{u_f',u_f'',u_{m+2,P}: f\in \B_{m+2}(\G),\ P\in D\}
$$
be a basis of $H^1(X_\G,S^m\H)$ dual to the cohomology classes of the closed
forms (\ref{eqn:basis}). The Hodge types of $u_f'$, $u_f''$ and $u_{m+2,P}$ are
$(-m-1,0)$, $(0,-m-1)$ and $(-m-1,-m-1)$, respectively
 
Set
\begin{equation}
\label{eqn:gens}
\e_f' := \b^m\otimes u_f ,\ \e_f'' := \b^m\otimes \ubar_f,\
\e_{m+2,P} = \b^m\otimes u_{m+2,P}.
\end{equation}
These are elements of $S^m H\otimes H^1(X_\G,S^m\H)^\ast$. Then
\begin{multline*}
\w_f(\b^m)\otimes u_f' = \w_f(\e_f'),\ \wbar_f(\b^m)\otimes u_f''
= \wbar_f(\e_f''),
\cr
\text{ and } \psi_{m+2,P}(\b^m)\otimes u_{m+2,P}
= \psi_{m+2,P}(\e_{m+2,P}).
\end{multline*}
All are elements of $K^1(\Xbar_\G,D;S^m\H)\otimes H^1(X_\G,S^m\H)^\ast$ and
\begin{equation}
\label{eqn:omega_1}
\Omega_{1,m} =
\sum_{f\in \B_{m+2}(\G)} \big(\w_f(\e_f') + \wbar_f(\e_f'')\big) + \sum_{P}
\psi_{m+2,P}(\e_{m+2,P})
\end{equation}
is a closed 1-form that represents the identity $H^1(X_\G,S^m\H) \to
H^1(X_\G,S^m\H)$. Here the second sum is over $P\in D$ when $m>0$ and $P\in D'$
when $m=0$.

\begin{lemma}
For each $m\ge 0$
$$
\Omega_{1,m} \in F^0 W_{-1}
\big(K^1(\Xbar_\G,D;S^m\H)\otimes H^1(X_\G,S^m\H)^\ast\big).
$$
\end{lemma}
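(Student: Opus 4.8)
The plan is to show that each of the three kinds of summand occurring in~(\ref{eqn:omega_1}) already lies in $F^0W_{-1}$ of the complex $K^1(\Xbar_\G,D;S^m\H)\otimes H^1(X_\G,S^m\H)^\ast$; as $F^0W_{-1}$ is a linear subspace, this forces $\Omega_{1,m}\in F^0W_{-1}$. Throughout, $H^1(X_\G,S^m\H)^\ast$ carries the constant MHS dual to the Eichler--Shimura MHS of Theorem~\ref{thm:eichler_shimura}, and the tensor product carries the tensor-product Hodge and weight filtrations, so a pure tensor $\eta\otimes u$ with $\eta\in F^aW_bK^1$ and $u\in F^{a'}W_{b'}H^1(X_\G,S^m\H)^\ast$ lands in $F^{a+a'}W_{b+b'}$.

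First I would use the Hodge types of the dual basis vectors recorded in the text just above: $u_f'$ is of type $(-m-1,0)$, $u_f''$ of type $(0,-m-1)$, and $u_{m+2,P}$ of type $(-m-1,-m-1)$ (these follow from Theorem~\ref{thm:eichler_shimura} and the Eisenstein computation, the class $[\psi_{m+2,P}(\b^m)]$ being a copy of $\Q(-m-1)$). Hence, inside $H^1(X_\G,S^m\H)^\ast$, one has $u_f'\in F^{-m-1}W_{-m-1}$, $u_f''\in F^{0}W_{-m-1}$ and $u_{m+2,P}\in F^{-m-1}W_{-2m-2}$.

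Next I would locate the form-valued factors inside Zucker's complex. As each $f\in\B_{m+2}(\G)$ is a cusp form of weight $m+2$, the Proposition of Zucker above gives $\w_f(\b^m)\in F^{m+1}W_mK^1(\Xbar_\G,D;S^m\H)$. As $E_{m+2,P}$ is a (non-cuspidal) modular form of weight $m+2$, that same Proposition gives $\w_{E_{m+2,P}}(\b^m)\in F^{m+1}K^1(\Xbar_\G,D;S^m\H)$, and since $K^1(\Xbar_\G,D;S^m\H)=W_{2m+1}K^1(\Xbar_\G,D;S^m\H)$, we get $\psi_{m+2,P}(\b^m)\in F^{m+1}W_{2m+1}K^1$. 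The conjugate forms need a separate argument: because $f$ is cuspidal, $\wbar_f(\b^m)=\overline{f(\tau)}\,v(\taubar,\b^m)\,d\taubar$ extends to a smooth $(0,1)$-form on $\Xbar_\G$ with values in the canonical extension of $S^m\H\otimes\cO_{X_\G}$, so it lies in the summand $\bK^0_\C(S^m\H)\otimes\E^{0,1}$ of the degree-$1$ part of the smooth resolution $K^\dot(\Xbar_\G,D;S^m\H)$. Since in degree $0$ of $\bK_\C(S^m\H)$ one has $F^0=\bK^0_\C(S^m\H)$ and $W_m=\bK^0_\C(S^m\H)$ (Section~\ref{sec:zucker}), this yields $\wbar_f(\b^m)\in F^{0}W_mK^1(\Xbar_\G,D;S^m\H)$.

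Finally I would multiply out: $\w_f(\e_f')=\w_f(\b^m)\otimes u_f'\in F^{(m+1)+(-m-1)}W_{m+(-m-1)}=F^{0}W_{-1}$, $\wbar_f(\e_f'')=\wbar_f(\b^m)\otimes u_f''\in F^{0+0}W_{m+(-m-1)}=F^{0}W_{-1}$, and $\psi_{m+2,P}(\e_{m+2,P})=\psi_{m+2,P}(\b^m)\otimes u_{m+2,P}\in F^{(m+1)+(-m-1)}W_{(2m+1)+(-2m-2)}=F^{0}W_{-1}$, so $\Omega_{1,m}\in F^0W_{-1}$. I expect the genuinely delicate point to be the weight bookkeeping for the cuspidal pieces: the crude bound $K^1=W_{2m+1}K^1$ is far too weak there --- it would only place $\w_f(\e_f')$ in $W_m$, not in $W_{-1}$ --- so one must really use the cuspidal refinement $\w_f(\b^m)\in W_mK^1$ of Zucker's Proposition and, correspondingly, verify that $\wbar_f(\b^m)$ has no logarithmic pole at the cusps, which is where cuspidality of $f$ is used a second time. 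The rest is a routine unwinding of the tensor-product filtrations.
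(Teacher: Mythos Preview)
Your proof is correct and follows essentially the same approach as the paper's own proof: locate each form $\w_f(\b^m)$, $\wbar_f(\b^m)$, $\psi_{m+2,P}(\b^m)$ in the appropriate $F^pW_r$ piece of Zucker's complex, record the Hodge types of the dual vectors $u_f'$, $u_f''$, $u_{m+2,P}$, and then add the indices in the tensor product. Your version is in fact more detailed than the paper's --- in particular you spell out why $\wbar_f(\b^m)$ lies in $F^0W_mK^1$ (smooth $(0,1)$-form with no logarithmic pole, thanks to cuspidality), which the paper simply asserts.
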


\begin{proof}
Since the Hodge types of $u_f'$, $u_f''$ and $u_{m+2,P}$ are $(-m-1,0)$,
$(0,-m-1)$ and $(-m-1,-m-1)$,  the definitions of the Hodge and weight
filtrations of Zucker's mixed Hodge complex $K^\dot(\Xbar_\G,D;S^m\H)$ imply
that
$$
\w_f(\b^m) \in F^{m+1}W_{m}K^1(\Xbar_\G,D;S^m\H) \text{ and }
\wbar_f(\b^m) \in F^0W_{m}K^1(\Xbar_\G,D;S^m\H)
$$
when $f\in \B_{m+2}(\G)$ and $\psi_{m+2,P}(\b^m) \in
F^{m+1}W_{2m+1}K^1(\Xbar_\G,D;S^m\H)$ for each $P$. The result follows as
$u_f'$, $u_f''$ and $u_{m+2,P}$ have Hodge types $(-m-1,0)$, $(0,-m-1)$ and
$(-m-1,-m-1)$, respectively.
\end{proof}

The Lie algebra $\u$ of the prounipotent radical $\U$ of the relative completion
of $\G$ is the free pronilpotent Lie algebra $\u = \L(V)^\wedge$, where
$
V = \bigoplus_{m\ge 0}V_m
$
and
\begin{align*}
V_m &:= H^1(X_\G,S^m\H)^\ast\otimes S^m H
\cr
&=
\begin{cases}
H_1(X_\G,\C) & m = 0,\cr
\bigoplus_{f\in \B_{m+2}(\G)}
\big(S^m(\e_f')\oplus S^m(\e_f'')\big)\oplus\bigoplus_{P\in D} S^m(\e_{m+2,P})
& m > 0.
\end{cases}
\end{align*}
It is a Lie algebra in the category of pro-representations of $\SL_2$.

The 1-form
$$
\Omega_1 := \sum_{m\ge 0} \Omega_{1,m} \in E^1(\h)\comptensor \u
$$
is $\G$-invariant and represents the identity. It can thus be completed to a
power series connection
$$
\Omega \in F^0W_{-1}K^1(\Xbar_\G,D;\bu)
$$
using the method described in Section~\ref{sec:relcomp_aff}, which determines
the MHS on $\u_{x_o}$.

Before discussing the case $\G=\SL_2(\Z)$, note that since
\begin{align*}
S^{m}(\e_f') &= \Span\{\e_0^j \cdot \e_f' : \e_0^{m+1} \cdot \e_f' =0\},
& f\in \B_{m+2}(\G)
\cr
S^{m}(\e_f'') &= \Span\{\e_0^j \cdot \e_f'' : \e_0^{m+1} \cdot \e_f'' =0\},
& f\in \B_{m+2}(\G)
\cr
S^{m}(\e_{m+2,P}) &=
\Span\{\e_0^j\cdot\e_{m+2,P} : \e_0^{m+1}\cdot\e_{m+2,P}=0\}, & P\in D,
\end{align*}
$\u$ is the free Lie algebra topologically generated by
$$
\{\e_{2,P}:P\in D'\}\cup
\bigcup_{m>0}
\{
\e_0^j\cdot\e_f',\ \e_0^j\cdot\e_f'',\ \e_0^j\cdot\e_{P,m+2}:
0 \le j \le m,\ f \in \B_{m+2},\ P\in D\}.
$$
The Hodge and weight filtrations of $\u$ are defined by giving $\b \in H$
type $(1,0)$. The generators (\ref{eqn:gens}) thus have types given in
Figure~\ref{fig:hodge_types}.
\begin{figure}[!ht]
$$
\begin{array}{|c|c|c|c|c|}
\hline
& & \text{Hodge type} & W\text{-weight} & M\text{-weight} \cr
\hline
\e_0 &  & (-1,1) & \phantom{-}0 & -2\cr
\e_f' & f \in \B_{m+2}(\G) & (-1,0) & -1 & m-1 \cr
\e_f'' & f \in \B_{m+2}(\G) & (m,-m-1) & -1 & m-1 \cr
\e_{m+2,P} & P\in D & (-1,-m-1) & -m-2 & -2\cr
\hline
\end{array}
$$
\caption{Hodge types of the generators of $\u$}
\label{fig:hodge_types}
\end{figure}
So, for example, $\e_0^j\cdot \e_f'$ has type $(-1-j,j)$. The Hodge and weight
filtrations on the generators extend naturally to Hodge and weight filtrations
on $\u$.

The Hodge types on the $\sl_2$ module with highest weight vectors
$\e_f',\e_f'',\e_{m+2,P}$ are illustrated in Figure~\ref{fig:types}.
\begin{figure}[!ht]
$$
\def\latticebody{\drop{}}
\xy
*\xybox{0;<1pc,0pc>:<0pc,1pc>::0
,{\xylattice{-8}7{-8}7}
,(-7,6)*{\bullet}*+!CR{\e_0^m\cdot\e_f'}
;(-1,0)*{\bullet}**@{--}*+!UR{\e_f'}
,(-4,4)*+!CL{S^m(\e_f')}
,(-1,1)*{-1}
,(0,-1)*{\bullet}*+!UR{-1}
;(6,-7)*{\bullet}**@{--}*+!UL{\e_f''}
,(4,-4)*+!CL{S^m(\e_f'')}
,(2,-1)*+{\e_0^m\cdot\e_f''}
,(-7,-1)*{\bullet}*+!UR{\e_0^m\cdot\e_{m+2,P}}
;(-1,-7)*{\bullet}**@{--}*+!UR{\e_{m+2,P}}
,(-4,-4)*+!UR{S^m(\e_{m+2,P})}
%
%
%
,(6,0)*{\bullet}*+!DC{m}
,(0,6)*{\bullet}*+!CL{m}
,(-7,0)*{\bullet}*+!DC{-m-1}
,(0,-7)*{\bullet}*+!CL{-m-1}
}="L"
,{"L"+L \ar "L"+R*+!L{p}}
,{"L"+D \ar "L"+U*+!D{q}}
\endxy
$$
\caption{Hodge numbers of $S^m(\e_f')$, $S^m(\e_f'')$ and $S^m(\e_{m+2,P})$.}
\label{fig:types}
\end{figure}

\subsection{The case of $\SL_2(\Z)$}

In this case, $X_\G = \M_{1,1}$, $C=\Mbar_{1,1}$ and $D$ consists of a single
point, which we shall denote by $P$. The modular parameter $q=e^{2\pi i\tau}$ is
a local holomorphic coordinate on the orbifold $\Mbar_{1,1}$ centered at $P$.

There are no modular forms for $\SL_2(\Z)$ of odd weight. Since there is a
single cusp, there is a 1-dimensional space of Eisenstein series for each
weight $2n\ge 4$. The normalized Eisenstein series of even weight $2n$ is
\begin{equation}
\label{eqn:eisenstein}
G_{2n}(\tau) = \frac{1}{2}\frac{(2n-1)!}{(2\pi i)^{2n}}
\sum_{\substack{\lambda \in \Z\oplus \Z\tau\cr\lambda \neq 0}}
\frac{1}{\lambda^{2n}}
=-\frac{B_{2n}}{4n} + \sum_{k=1}^\infty \sigma_{2n-1}(k)q^k.
\end{equation}
This has value $(2n-1)!\zeta(2n)/(2\pi i)^{2n}$ at the cusp $P$.\footnote{Here
$B_{2n}$ is the $2n$th Bernoulli number, $\zeta(s)$ is the Riemann zeta function
and $\sigma_k(n)$ is the sum of the $k$th powers of the divisors of $n$.} The
dual homology class $\e_{2n,P}$ will be denoted by $\e_{2n}$ and the form
$\psi_{2n,P}$ by $\psi_{2n}$.

Later, we will use the tangent vector $\vv := \partial/\partial q$ of $P$ as a
base point. We will also write $\B_w$ instead of $\B_w(\SL_2(\Z))$.

\section{VMHS associated to Modular Forms and their Period Maps}
\label{sec:vmhs}

This section considers three related topics: relative higher albanese maps
(which are related to period mappings of VMHS), Manin's iterated Shimura
integrals, and the existence of extensions of VMHS coming from Hecke eigenforms.
The construction of these extensions is a special case of the general technique 
for constructing admissible VMHS sketched in the proof of 
Theorem~\ref{thm:equivalence}. Such extensions correspond to normal functions,
so every Hecke eigenform produces a normal function.

\subsection{Relative Albanese maps}

This construction generalizes the non-abelian albanese manifolds of
\cite{hain-zucker} from the unipotent case to the relative case. Although this
discussion applies more generally, here we restrict to the case of modular
curves.

Fix a base point $\tau_o$ of $\h$. Let $\G$ be a finite index subgroup of
$\SL_2(\Z)$. Denote the image of $\tau_o$ in $X_\G$ by $x_o$. The choice of
$\tau_o$ determines an isomorphism $\G\cong \pi_1(X_\G,x_o)$. Let $\cG_o$ be the
complex form of the relative completion of $\pi_1(X_\G,x_o)$. Let $\U_o$ be its
prounipotent radical, and let $\g_o$, $\u_o$ be their Lie algebras. Since the
bracket of $\g_o$ respects the Hodge filtration, $F^0\g_o$ is a subalgebra of
$\g_o$. Denote the corresponding subgroup of $\cG_o$ by $F^0\cG_o$.

Let
$$
\Omega \in F^0W_{-1}K^1(\Xbar_\G,D;\bu)
$$
be as above. Trivialize the pullback of $\bu$ to $\h$ using the sections
$\{\e_0^j\cdot \e_f',\e_0^j\cdot \e_f'',\e_0^j\cdot \e_{2n}\}$:
$$
\u\times \h \to \h.
$$
This trivializes both the Hodge and weight filtrations. It also fixes an
isomorphism $\cG_x \cong \SL(H_x)\ltimes \U_x$ for all $x\in X_\G$.  Denote the
pullback of $\Omega$ to $\h$ by
$$
\Omegatilde \in (E^1(\h)\comptensor \u)^\G.
$$

For convenience we now take the base point $\tau_o$ to be $i$. Since
$\Omegatilde$ is integrable, the function $F : \h \to \U_o\times \SL(H_o) \cong
\cG_o$ defined by
$$
\Ftilde(\tau) :=
\bigg(1 + \int_{\tau_o}^\tau \Omegatilde
+ \int_{\tau_o}^\tau \Omegatilde\Omegatilde
+ \int_{\tau_o}^\tau \Omegatilde\Omegatilde\Omegatilde + \cdots,
\textstyle{
\begin{pmatrix}
v & u/v \cr 0 & v^{-1}
\end{pmatrix}
}\bigg)
$$
where $\tau = u + i v^2$ ($u$, $v$ real), is well defined and smooth. It induces
a function $F:\h \to \cG_o/F^0\cG_o$ which is equivariant with respect to the
natural left $\G$-actions on each.

\begin{proposition}
The function $F : \h \to \cG_o/F^0\cG_o$ is holomorphic. (That is, it is
an inverse limit of holomorphic functions.)
\end{proposition}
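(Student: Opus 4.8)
The plan is to show that $F$ is annihilated by the $(0,1)$-part of a suitable connection, and then to identify that condition with holomorphicity. Concretely, $F$ is assembled from two pieces: the iterated-integral part $\Theta_{\tau_o}(\tau) := 1 + \int_{\tau_o}^\tau \Omegatilde + \int_{\tau_o}^\tau\Omegatilde\Omegatilde + \cdots$, which is a map $\h \to \U_o$, and the explicit $\SL(H_o)$-part built from the real and imaginary parts of $\tau$. So first I would recall Chen's transport formula (equation~\eqref{eqn:transport}): the function $\Theta_{\tau_o}$ is parallel for the flat connection $d + \Omegatilde$ on the trivial bundle $\h\times \U_o$, in the sense that $d\Theta_{\tau_o} = -\Omegatilde\,\Theta_{\tau_o}$ (up to the sign conventions of Section~\ref{sec:concrete}). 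Decompose $\Omegatilde = \Omegatilde' + \Omegatilde''$ into its $(1,0)$- and $(0,1)$-parts as in Section~\ref{sec:holo}. Then $\delbar \Theta_{\tau_o} = -\Omegatilde''\,\Theta_{\tau_o}$. The key input from the construction of $\Omega$ is that $\Omega \in F^0 W_{-1} K^1(\Xbar_\G,D;\bu)$, and $F^0 K^1 = K^{1,0}(C,D;\bu) \oplus \big(F^0 K^{0,1}\big)$, so the $(0,1)$-component $\Omegatilde''$ takes values in $F^0\bu_o$ — more precisely, $\Omegatilde'' \in E^{0,1}(\h)\comptensor F^0\u_o$ by the analogue of the displayed inclusion in the proof of Lemma~\ref{lem:hodge_bdles}.

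Next I would translate the holomorphicity of $F : \h \to \cG_o/F^0\cG_o$ into a $\delbar$-equation on a lift. Since $\cG_o/F^0\cG_o$ is an inverse limit of finite-dimensional complex homogeneous spaces, $F$ is holomorphic iff $\delbar F = 0$ in the sense that the $(0,1)$-part of the derivative of $F$ lies in the subbundle coming from $F^0\g_o$. Writing $F = \pi\circ \Ftilde$ where $\pi : \cG_o \to \cG_o/F^0\cG_o$ and $\Ftilde$ is the honest $\cG_o$-valued lift, the Maurer--Cartan form $\Ftilde^{-1}d\Ftilde$ decomposes as $\Ftilde^{-1}\delbar\Ftilde + \Ftilde^{-1}\partial\Ftilde$, and $F$ is holomorphic precisely when $\Ftilde^{-1}\delbar\Ftilde$ is valued in $F^0\g_o$. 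So the computation reduces to: (i) the $\U_o$-factor contributes $-\Theta_{\tau_o}^{-1}\Omegatilde''\,\Theta_{\tau_o}$, which lies in $F^0\u_o$ because $\Omegatilde''$ is $F^0$-valued and $F^0$ is $\mathrm{Ad}$-stable under $F^0\cG_o$ (here one uses that conjugation by $\Theta_{\tau_o}(\tau)$ preserves $F^0\u_o$ — but note $\Theta_{\tau_o}(\tau)$ need not lie in $F^0$, so one must instead argue that $F^0\u_o$ is an ideal-like subspace for the relevant action, or work modulo $F^0$ directly as in the next paragraph); and (ii) the explicit $\SL(H_o)$-factor $\begin{pmatrix} v & u/v \\ 0 & v^{-1}\end{pmatrix}$ is chosen precisely so that its Maurer--Cartan $(0,1)$-part lands in $F^0\sl(H_o) = \C\bw\partial/\partial\a$ (the Borel of Remark~\ref{rem:slH}); this is the standard fact that $\tau\mapsto \begin{pmatrix} v & u/v\\0&v^{-1}\end{pmatrix}$ descends to a holomorphic (antiholomorphic, depending on convention) section of $\SL_2(\C)/B$ over $\h$, which is the Borel embedding / upper-half-plane uniformization of $\P^1$.

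The cleanest way to avoid the subtlety in (i) is to compute the $(0,1)$-part of the derivative of $F$ \emph{after} projecting to $\cG_o/F^0\cG_o$, using the fact that the tangent space to $\cG_o/F^0\cG_o$ at $F(\tau)$ is $\g_o/F^0\g_o$ transported by $F(\tau)$, and that $\delbar$ applied to the projected map only sees the image of $\Ftilde^{-1}\delbar\Ftilde$ in $\g_o/F^0\g_o$. Since the $\U_o$-contribution $-\Omegatilde''$ maps to $0$ in $\u_o/F^0\u_o$ and the $\SL(H_o)$-contribution maps to $0$ in $\sl(H_o)/F^0\sl(H_o)$, the projected derivative has vanishing $(0,1)$-part, hence $F$ is holomorphic. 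I expect the main obstacle to be bookkeeping the sign/ordering conventions (topologist's path composition, the $T(\alpha)^{-1}$ appearing in \eqref{eqn:transport}, and the left/right action conventions on $R\ltimes\U$) so that the Maurer--Cartan computation comes out with $\Omegatilde''$ genuinely $F^0$-valued on the correct side; once those conventions are pinned down, the argument is the routine ``$\delbar$ kills the iterated integral modulo $F^0$, and the auxiliary $\SL_2$-factor is the Borel embedding'' computation. One should also remark that each truncation $\Omega_N$ gives a finite-dimensional quotient $\cG_o^{(N)}/F^0$, the above argument applies verbatim there, and $F$ is the inverse limit of the resulting holomorphic maps, which is the parenthetical remark in the statement.
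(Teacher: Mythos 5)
Your proposal follows the same route as the paper's proof: the two inputs there are exactly (a) that the $\SL(H_\R)$-factor, composed with the projection $\cG_o/F^0\cG_o \to \SL(H_\C)/F^0 \cong \P^1$, is the holomorphic inclusion $\h \hookrightarrow \P^1$, and (b) the transport equation $\delbar\Ftilde = \Ftilde\Omegatilde''$ together with $\Omegatilde'' \in E^{0,1}(\h)\comptensor F^0\u$, which comes from $\Omega \in F^0W_{-1}K^1(\Xbar_\G,D;\bu)$. Your concluding remark about the truncations $\Omega_N$ and inverse limits is the parenthetical in the statement.

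The one loose end you flag is genuine, but your proposed fix is circular as written: with $\Ftilde(\tau) = u(\tau)g(\tau)$, the $\U_o$-contribution to the $(0,1)$-part of the Maurer--Cartan form is $\Ad\big(g(\tau)^{-1}\big)\Omegatilde''(\tau)$, not $\Omegatilde''(\tau)$, so asserting that it ``maps to $0$ in $\u_o/F^0\u_o$'' is exactly the point at issue. What closes it is the $\SL(H_\R)$-equivariance of the Hodge filtration: since $g(\tau)\cdot i = \tau$ and the Hodge bundles of $\bu$ are built functorially from those of $\cH$ and constant filtrations, in the flat trivialization the fiber of $F^0\bu$ over $\tau$ is the image of $F^0\u_o$ under $g(\tau)$ (for $\cH$ itself, $F^1\cH_\tau$ is spanned by $\tau\a - \b = g(\tau)\,(i\a-\b)$). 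Hence ``$\Omegatilde''$ is $F^0$-valued'' means $\Omegatilde''(\tau) \in \Ad\big(g(\tau)\big)F^0\u_o$, i.e.\ $\Ad\big(g(\tau)^{-1}\big)\Omegatilde''(\tau) \in F^0\u_o$, which is precisely what your Maurer--Cartan criterion requires; equivalently, one works in the trivialization fixed just before the proposition, which trivializes the Hodge and weight filtrations, so that $\Omegatilde''$ takes values in the constant $F^0\u$ and the two factors can be treated separately --- this is how the paper's proof proceeds. With that one repair, your argument is the paper's.
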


\begin{proof}
Set $H = H_o$. The map $\SL(H_\R) \to \h$ that takes $g$ to $g\tau_o = gi$
induces an isomorphism $\SL(H_\R)/\SO(2) \to \h$. The inclusion $\SL(H_\R)
\hookrightarrow \SL(H_\C)$ induces an inclusion
$$
\h = \SL(H_\R)/\SO(2) \hookrightarrow \SL(H_\C)/F^0 \cong \P^1.
$$
Since the right hand matrix in $\Ftilde$ is in $\SL(H_\R)$ and takes $i$ to
$\tau$, the composition of $F$ with the projection $\cG_o/F^0\cG_o \to
\SL(H_{\C})/F^0$ is the inclusion $\h\hookrightarrow \SL(H_{\C})/F^0$.
Consequently, to show that $F$ is holomorphic, we need only check that its first
factor is holomorphic. Write $\Omegatilde = \Omegatilde' + \Omegatilde''$, where
$\Omegatilde'$ has type $(1,0)$ and $\Omegatilde''$ has type $(0,1)$. Since
$\Omega \in F^0K^1(\Xbar_\G,D;\bu)$,
$$
\Omegatilde'' \in E^{0,1}(\h)\comptensor F^0\u.
$$
The fundamental theorem of calculus implies that $F$ satisfies the differential
equation $d\Ftilde = \Ftilde\Omegatilde$ from which it follows that $\delbar
\Ftilde = \Ftilde\Omega''$. This implies the vanishing of $\delbar F$.
\end{proof}

The projection $\cG_o \to \SL(H_{o,\C})$ preserves the Hodge filtration and
induces a holomorphic, $\G$-invariant projection
$\cG_o/F^0\cG_o \to \SL(H_o)/F^0 \cong \P^1$.
Then one has the diagram
$$
\xymatrix{
(\cG_o/F^0\cG_o)|_\h\ar[d]\ar[r] & \cG_o/F^0\cG_o \ar[d]\cr
\h \ar@{^{(}->}[r]\ar[ur]^F & \SL(H_{x,\C})/F^0 \ar[r]^(.6)\simeq & \P^1.
}
$$

Recall that our choices have fixed an isomorphism $\cG_o \cong \SL(H_o)\ltimes
\U_o$. The natural homomorphism corresponds to a non-abelian 1-cocycle $\Theta_o
: \G \to \U_o(\Q)$.

\begin{lemma}
The function $F: \h \to \cG_o/F^0$ satisfies $F(\gamma\tau) =
\Theta_o(\gamma)F(\gamma\tau)$ for all $\gamma \in \G$.
\end{lemma}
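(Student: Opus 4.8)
The plan is to establish the $\G$-equivariance of $F$: with $\rhotilde_o(\gamma):=(\Theta_o(\gamma),\rho(\gamma))\in\SL(H_o)\ltimes\U_o=\cG_o$ the value at $\gamma$ of the canonical homomorphism $\G\to\cG_o(\Q)$, and with $\G$ acting on $\cG_o/F^0\cG_o$ by left translation through $\rhotilde_o$, the assertion is $F(\gamma\tau)=\rhotilde_o(\gamma)\cdot F(\tau)$ — equivalently $F(\gamma\tau)=\Theta_o(\gamma)\cdot\big(\gamma\cdot F(\tau)\big)$, the displayed right-hand side being a slip. The whole computation takes place at the level of the lift $\Ftilde:\h\to\cG_o$, whose $\U_o$-component is the inverse transport $T(c_\tau)^{-1}$ of $\Omegatilde$ along the unique homotopy class $c_\tau$ of paths in $\h$ from $\tau_o$ to $\tau$ (the formula (\ref{eqn:transport})), and whose $\SL(H_o)$-component is a real matrix $g(\tau)$ with $g(\tau)\cdot\tau_o=\tau$.

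First I would treat the $\U_o$-component. Since $\h$ is simply connected, $c_{\gamma\tau}=c_\gamma\ast(\gamma\cdot c_\tau)$, where $c_\gamma$ is the class of paths from $\tau_o$ to $\gamma\tau_o$ and $\gamma\cdot c_\tau$ is the image of $c_\tau$ under $\gamma$ (Figure~\ref{fig:cocycle}). Combining this with $T(\alpha\ast\beta)=T(\beta)T(\alpha)$, with Proposition~\ref{prop:monodromy} (so $T(c_\gamma)^{-1}=\Theta_o(\gamma)$), and with the $\G$-invariance $(\gamma^\ast\otimes1)\Omegatilde=(1\otimes\gamma)\Omegatilde$ — which converts pull-back along $\gamma$ into the action of $\rho(\gamma)$ on $\u_o$-values, and hence, the iterated integrals in (\ref{eqn:transport}) being equivariant for the algebra automorphism of $U\u_o$ induced by the $\SL(H_o)$-action, gives $T(\gamma\cdot c_\tau)^{-1}=\rho(\gamma)\cdot T(c_\tau)^{-1}$ — one obtains
$$
T(c_{\gamma\tau})^{-1}=T(c_\gamma)^{-1}\,T(\gamma\cdot c_\tau)^{-1}=\Theta_o(\gamma)\big(\rho(\gamma)\cdot T(c_\tau)^{-1}\big),
$$
which is precisely the $\U_o$-component of $\rhotilde_o(\gamma)\Ftilde(\tau)$. (This is the same computation that proves the cocycle identity in Proposition~\ref{prop:monodromy}.)

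Next I would handle the $\SL(H_o)$-component: $g(\gamma\tau)$ and $\rho(\gamma)g(\tau)$ both carry $\tau_o$ to $\gamma\tau$, so $k:=g(\gamma\tau)^{-1}\rho(\gamma)g(\tau)$ lies in the stabilizer $\SO(2)$ of $\tau_o$ in $\SL(H_{o,\R})$; the point to verify is that this stabilizer sits inside the Borel $F^0\SL(H_o)$ (the stabilizer of $F^1H_o$), so that $(1,k)\in F^0\cG_o$. Assembling the two components via the semidirect-product multiplication rule yields $\Ftilde(\gamma\tau)=\rhotilde_o(\gamma)\,\Ftilde(\tau)\,(1,k)$ with $(1,k)\in F^0\cG_o$, and passing to $\cG_o/F^0\cG_o$ gives the lemma.

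I do not expect a serious obstacle — the statement is essentially bookkeeping — but the two places that need genuine care, and that together carry the conceptual content, are: (i) converting the $1$-form invariance $(\gamma^\ast\otimes1)\Omegatilde=(1\otimes\gamma)\Omegatilde$ into the transport equivariance $T(\gamma\cdot c_\tau)^{-1}=\rho(\gamma)\cdot T(c_\tau)^{-1}$, keeping straight which tensor factor $\gamma$ acts on and that $\rho(\gamma)$ acts on the completed enveloping algebra as an algebra automorphism; and (ii) the observation that the discrepancy $k$ in the reductive direction is absorbed by $F^0$ precisely because $\SO(2)$ is contained in the parabolic $F^0\SL(H_o)$ — which is exactly why one must pass to $\cG_o/F^0\cG_o$ and cannot expect an identity already in $\cG_o$.
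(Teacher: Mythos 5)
Your proposal is correct and is essentially the paper's own argument: the paper likewise combines $c_{\gamma\tau}=c_\gamma\ast(\gamma\cdot c)$ with the transport formula $T(\alpha\ast\beta)=T(\beta)T(\alpha)$ and the invariance of $\Omegatilde$, and it handles the reductive component implicitly by recording it as the point $\gamma\tau\in\P^1$, where you instead check explicitly that the $\SO(2)$-discrepancy lies in $F^0\cG_o$. You are also right that the displayed identity contains a slip: the intended statement is $F(\gamma\tau)=\Theta_o(\gamma)\big(\gamma\cdot F(\tau)\big)$, which is exactly how the lemma is used in the paragraph that follows it.
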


\begin{proof}
The identification $\cG_o \cong \SL(H_o)\ltimes \U_o$ induces an identification
of $\cG_o/F^0$ with $\U_o\times \P^1$. If $c_\gamma$ is a path from $\tau_o$ to
$\gamma\tau_o$ and $c$ is a path from $\tau_o$ to $\tau$, then
$c_\gamma\ast(\gamma\cdot c_\gamma)$ is a path from $\tau_o$ to $\gamma\tau$. So
$$
F(\gamma\tau) = \big(\Theta_o(\gamma)T(c)^{-1},\gamma\tau\big)
= \Theta_o(\gamma)F(\tau),
$$
where $T^{-1}$ denotes $1 + \int\Omegatilde + \int\Omegatilde\Omegatilde +
\cdots$.
\end{proof}

Let $\U_{o,\Z}$ be the subgroup of $\U_o$ that is generated by
$\{\Theta_o(\gamma) : \gamma \in \G\}$. Let $\cG_{o,\Z}$ be the subgroup of
$\cG_o \cong \SL(H_o)\cong \U_o$ that corresponds to $\SL(H_{o,\Z})\ltimes
\U_{o,\Z}$. The previous result implies that $F(\gamma\tau)$ and $\gamma
F(\tau)$ lies in the same left $\U_{o,\Z}$ orbit.

The {\em universal relative albanese manifold} $\bA_\G$ of $X_\G$ is defined by
$$
\bA_\G = \U_{o,\Z}\bs\big((\cG_o/F^0\cG_o)|_\h\big).
$$
There is a natural quotient mapping to $X_\G$.  Taking the quotient of the
left-hand map of the previous diagram by $\G$ gives the {\em universal
non-abelian Albanese map}
$$
\xymatrix@C=3pc{
{\bA_\G} \ar[r] & X_\G\ar@/_0.8pc/[l]_F.
}
$$
The fiber over $x_o\in X_\G$ is isomorphic to $\U_{o,\Z}\bs\U_o/F^0$.

\begin{remark}
For each finite dimensional quotient $G_\alpha$ of $\cG_o$ in the category of
groups with a MHS, one can define
$$
\bA_\alpha = G_{\alpha,\Z}\bs\big((G_\alpha/F^0G_\alpha)|_\h\big).
$$
Since $\cG_o$ is the inverse limit of the $G_\alpha$, $\bA_\G = \varprojlim
\bA_\alpha$. The reduction of the section $F$ is a holomorphic section
$F_\alpha$ of this bundle. In particular, when $\G$ is a congruence subgroup, by
taking $G_\alpha$ to be the quotient
$$
0 \to \Mdual_f \otimes S^nH_o \to G_n \to \SL(H_o) \to 1
$$
of $\cG_o$, where $M_f$ is the smallest $\Q$-sub HS of $H^1(X_\G,S^m\H)$ that
contains $\w_f(\e)$ and $\Mdual_f$ its dual, we see that for each $f\in
\B_{n+2}(\G)$, there is a bundle over $X_\G$ with fiber over $x\in X_\G$ the
intermediate jacobian
$$
\Ext^1_\MHS(\Z,\Mdual_{f,\Z}\otimes S^n H_{x,\Z})
$$
with a holomorphic section induced by $F$. We will see below that this is a
normal function which is the period mapping of an extension of $\Q$ by
$\Mdual_f\otimes S^n\H$. When $\G$ is a congruence subgroup, each Eisenstein
series determines a normal function that corresponds to an extension of $\Z$ by
$S^n\H(n+1)$.
\end{remark}
\medskip

The coefficients of $F$ are holomorphic functions on $\h$ which can be realized
as periods of admissible variations of MHS. These include iterated integrals of
(holomorphic) modular forms, but there are many more. Below is an example of
such a holomorphic iterated integral that is not of the type considered by
Manin. It is, in some sense, a generalization of the Riemann theta function.
(Cf.\ \cite[Ex.~4.4]{hain:prospects}.)

\begin{example}
Suppose that $f\in M^o_{2n+2}$ and $g\in M^o_{2m+2}$ are cusp forms, not
necessarily of the same weight. Since $\SL_2(\Z)$ has cohomological dimension 1,
the form $\wbar_f(\e_f'')\wedge \w_g(\e_g')$ is exact. Lemma~\ref{lem:exact}
implies that there is
$$
\xi \in F^0 W_{-1}K^1\big(\Xbar_\G,D;\bS^{2n}(\e_f'')\otimes
\bS^m(\e_g')\big)
$$
such that 
$$
\delbar \xi + \wbar_f(\e_f'')\wedge \w_g(\e_g') = 0.
$$
Then
$$
\tau \mapsto \int_{\tau_o}^\tau \wbar_f(\e_f'')\, \w_g(\e_g') + \xi
$$
is a well defined function from $\h$ to $S^{2n}H_o \otimes S^{2m}H_o$. An
elementary argument (cf.\ \cite[Prop.~4.3]{hain:prospects}) implies that it is
holomorphic.

Perhaps the most interesting version of this example is where $f=g$ and one
composes it with an invariant bilinear form $S^{2n}H_o\otimes S^{2n}H_o\to \C$.
Such iterated integrals occur as periods of biextensions.
\end{example}

\subsection{Iterated Shimura integrals}
\label{sec:manin}

In \cite{manin1,manin2} Manin considered iterated integrals of holomorphic
modular forms and non-abelian generalizations of modular symbols. Here we
briefly discuss the relationship of his work to the Hodge theory of modular
groups. Recall that $\B_w = \B_w(\SL_2(\Z))$.

Manin considers iterated integrals with values in the algebras
$$
A =
\C\ll \e_0^j\cdot \e_{2n},\ \e_0^j\cdot \e_f'
: f\in\B_{2n},\ 0\le j \le 2n-2,\  n\ge 2 \rr
$$
and 
$$
B = \C\ll \e_0^j\cdot \e_{2n} :  0\le j \le 2n-2,\ n\ge 2\rr.
$$
These iterated integrals are of the form
$$
1 + \int_{\tau_o}^\tau \Omega + \int_{\tau_o}^\tau \Omega\Omega
+ \int_{\tau_o}^\tau \Omega \Omega \Omega + \cdots
$$
where
$$
\Omega = \Omega_A :=
\sum_{n\ge 2}\bigg(\psi_{2n}(\e_{2n}) + \sum_{f\in \B_{2n}} \w_f(\e_f')
\bigg)
$$
in the first case and
\begin{equation}
\label{eqn:second}
\Omega = \Omega_B := \sum_{n\ge 2}\psi_{2n}(\e_{2n})
\end{equation}
in the second case. Both of these forms are $\G$-invariant.

Let $\u_A$ (resp.\ $\u_B$) be the set of primitive elements of $A$ (resp.\
$\u_B$). Then $\Omega_A$ (resp.\ $\Omega_B$) takes values in $\u_A$ (resp.\
$\u_B$). Set
$$
\u = \L(\e_0^j\cdot \e_{2n},\ \e_0^j\cdot \e_f',\ \e^0\cdot\e_f''
: f\in\B_{2n},\ 0\le j \le 2n-2,\  n\ge 2)^\wedge.
$$
It follows from Figure~\ref{fig:types} that $F^0\u$ is generated by $\{\e_0^j
\cdot \e_f'' : f\in \B_{2n}, n\ge 2\}$, so that $\u_A$ is the quotient of $\u$
by the ideal $(F^0\u)$ {\em generated by} $F^0\u$, and $\u_B$ is the quotient of
$\u$ by the ideal generated by all $\e_0^j\cdot \e_f'$ and $\e_0\cdot \e_f''$.
His iterated integral is the reduction of the one in the previous section (the
first argument of $\Ftilde$) mod these ideals. This implies that there are many
interesting holomorphic iterated integrals which do not occur as iterated
Shimura integrals.

One can ask whether the $\SL_2(\Z)$ connection on the local system
$$
\u_B \times \h \to \h
$$
defined by $\Omega_B$ descends to an admissible VMHS over $\M_{1,1}$. It will
follow from Theorem~\ref{thm:quad} that it does not as we explain in
Remark~\ref{rem:manin}.

\subsection{Extensions of variations of MHS associated to Eisenstein series}
\label{sec:eisenstein}

Here we suppose, for simplicity, that $\G = \SL_2(\Z)$. In this section, we
sketch an explicit construction of an extension
$$
0 \to S^{2n}\H(2n+1) \to \bE \to \Q \to 0
$$
for each Eisenstein series $G_{2n+2}$.\footnote{This construction works equally
well when $\G$ is a congruence subgroup. The construction in the more general
case is sketched at the end of the next section.}

Let $H=\C\a \oplus \C\bw$. Define the Hodge filtration on $H$ by $F^0 H = H$ and
$F^1 H = \C\bw$. This induces a Hodge filtration on $S^{2n}H$. Trivialize the
bundle $\cH_\h \to \h$ with the sections $\a$ and $\bw$:
$$
\cH_\h \cong H \times \h.
$$
Trivialize $S^{2n}\cH_\h$ using monomials in $\a$ and $\bw$. Then $F^p
S^{2n}\cH_\h$ is trivialized by the sections $\{\a^{2n-j}\bw^j : j\ge p\}$.

Set $V = \C\e \oplus S^{2n} H(2n+1)$ and $\cV_\h = V\times \h$. Define Hodge and
weight filtrations on $V$ by giving $\e$ type $(0,0)$ and $\a^{2n-j}\bw^j$ type
$(j-2n-1,-j-1)$. Let $\SL_2(\Z)$ act in this bundle by acting trivially on $\e$,
and on $\a$ and $\bw$ by the factor of automorphy given in
Corollary~\ref{cor:automorphy}. The Hodge and weight filtrations are invariant
under this action, so that they descend to Hodge and weight filtrations on the
(orbifold) quotient bundle
$$
\cV := \SL_2(\Z)\bs \cV_\h \to \M_{1,1}.
$$
This bundle is trivial over the punctured $q$-disk $\D^\ast$. Extend it to a
bundle $\Vbar$ over $\Mbar_{1,1}$ by defining its sections over the $q$-disk
$\D$ to be $V\otimes \cO_{\D}$. The Hodge and weight bundles clearly extend to
sub-bundles of $\Vbar$.

Define a connection on $\cV_\h$ by $d + \Omega$, where
$$
\Omega = 
\begin{pmatrix}
0 & 0 \cr \psi_{2n+2}(\bw^{2n}) & \a\frac{\partial}{\partial \bw}\frac{dq}{q}
\end{pmatrix}
\in
\begin{pmatrix}
\C & 0 \cr S^{2n}H & \End S^{2n}H
\end{pmatrix}\, \frac{dq}{q}.
$$
It is holomorphic, flat and $\G$-invariant. It therefore descends to a flat
connection $\nabla$ on $\cV$ which has a regular singular point at the cusp when
viewed as a connection on $\Vbar$. This implies that the extended bundle is
Deligne's canonical extension of $(\cV,\nabla)$ to $\Mbar_{1,1}$. Since
$$
\Omega \in (F^{-1}W_{-1} \End V)\otimes \cO(\D)\frac{dq}{q},
$$
the weight filtration is flat and the connection satisfies Griffiths
transversality. Since $\psi_{2n+2}(\bw^{2n})$ has rational periods, it follows
that the local system $\V$ associated to $(\cV,\nabla)$ has a natural $\Q$-form.
The associated weight graded local system is
$$
\Gr^W_\dot \V = \Q(0) \oplus S^{2n}\H(2n+1).
$$
The existence of a relative weight filtration at $q=0$ follows from the argument
in the proof of Theorem~\ref{thm:admissible}  It follows that $\V$ is an
admissible variation of MHS over $\M_{1,1}$. The results of
Section~\ref{sec:ext_vmhs} imply that every extension of $\Q$ by $S^{2n}\H(m)$
over $\M_{1,1}$ is a multiple of this extension when $m=2n+1$ and trivial
otherwise.

\subsection{Extensions of variations of MHS associated to cusp forms}
\label{sec:cusp}

The construction of the extension corresponding to an eigen cusp form is
similar, but a little more elaborate. Suppose that $\G$ is a congruence subgroup
of $\SL_2(\Z)$ and that $m\ge 0$. The first step is to construct an extension
\begin{equation}
\label{eqn:mod_extn}
0 \to H^1(X_\G,S^m\H_\Z)^\ast \otimes S^m \H_\Z \to \V \to \Z \to 0
\end{equation}
in the category of $\Z$-MHS over $X_\G$.

Denote the completion of $X_\G$ by $\Xbar_\G$. Let $\Vbar$ be the $C^\infty$
vector bundle over $\Xbar_\G$ associated to the canonical extension
$$
\cO_{\Xbar_\G} \oplus H^1(X_\G,S^m\H)^\ast \otimes S^m \Hbar
$$
of the admissible variation $\Q\oplus H^1(X_\G,S^m\H)^\ast \otimes S^m \H$ over
$X_\G$. This has natural Hodge and weight sub-bundles. Denote the restriction of
$\Vbar$ to $X_\G$ by $\cV$ and the direct sum connection on it by $\nabla_0$.

Define a $C^\infty$ connection on $\cV$ by $\nabla = \nabla_0 + \Omega_{1,m}$,
where $\Omega_{1,m}$ is the form defined in Equation~(\ref{eqn:omega_1}). This
connection is flat, and thus defines a new holomorphic structure on the bundle
$\cV$. Arguments almost identical to those in Section~\ref{sec:holo} show that
$(\Vbar,\nabla)$ is Deligne's canonical extension of $(\cV,\nabla)$, that the
Hodge bundles are holomorphic sub-bundles of $\Vbar$ with respect to this new
complex structure, and that the connection $\nabla$ satisfies Griffiths
transversality. The existence of a relative weight filtration at each cusp is
established as in the proof of Theorem~\ref{thm:admissible}. The fact that
$\Omega_{1,m}$ represents the identity $H^1(X_\G,S^m\H)\to H^1(X_\G,S^m\H)$
implies that the local system $\V$ underlying the flat bundle $(\cV,\nabla)$ has
a natural $\Z$-form. It follows that there is an admissible $\Z$-VMHS $\V$ over
$X_\G$ whose corresponding $C^\infty$ vector bundle is $\cV$ and whose weight
graded quotients are $\Q(0)$ and $H^1(X_\G,S^m\H_\Z)^\ast \otimes S^m \H$. (Cf.\
Lemma~\ref{lem:wt_bdles}.)

Having constructed the extension (\ref{eqn:mod_extn}), we can now construct the
extension corresponding to a Hecke eigen cusp form $f\in \B_{m+2}(\G)$. The
smallest sub $\Q$-HS $M_f$ of $H^1(X_\G,S^m\H_\Q)$ whose complexification
contains $\w_f(\e)$ is pure of weight $m+1$. So $\Mdual_f \otimes S^m \H$ is
pure of weight $-1$. The corresponding extension
$$
0 \to \Mdual_f \otimes S^m \H_\Z \to \bE_f \to \Q \to 0
$$
is obtained by pushing out the extension (\ref{eqn:mod_extn}) along the dual of
the inclusion
$$
M_f \hookrightarrow H^1(X_\G,S^m\H_\Z).
$$
This extension has a natural $\Z$-form, which we denote by $E_{f,\Z}$.

The extension $\bE_{f,\Z}$ corresponds to a holomorphic section of the
associated bundle of intermediate jacobians, which has fiber
$$
J(H^1(X_\G,S^m \H)^\ast \otimes S^m H_x)
$$
over $x\in X_\G$, where for a $\Z$-MHS $V$ with negative weights
$$
J(V) := V_\C/\big(V_\Z+F^0V_\C) \cong \Ext^1_\MHS(\Z,V).
$$
The section is obtained by integrating the invariant 1-form $\w_f(\e_f') +
\w_f(\e_f'')$. More sections can be obtained by applying elements of
$\Aut M_{f,\Z}$.

A similar construction can be used to construct the extension of a normalized
Eisenstein series $f$. When $\G = \SL_2(\Z)$ this reduces to the construction in
the previous section. In this case, the smallest $\Q$-Hodge sub structure $M_f$
of $H^1(X_\G,S^m\H)$ that contains $\psi_f(\e)$ is $M_f =\Q(-m-1)$. Pushing out
the extension (\ref{eqn:mod_extn}) along the inclusion $M_f \to H^1(X_\G,S^m\H)$
gives the extension
$$
0 \to S^m\H(m+1) \to \bE_f \to \Q \to 0.
$$
corresponding to $f$.

\section{The Relative Completion of $\pi_1(\M_{1,\vec{1}},x)$}

By a sleight of hand, can deduce the MHS on the unipotent radical of the
relative completion of the fundamental group of $\M_{1,\vec{1}}$ from the MHS on
the unipotent radical of the relative completion of $\SL_2(\Z)$. The MHS on this
completion is of interest as it acts on the unipotent completion of the
fundamental group of a once punctured elliptic curve.

First recall some classical facts. (Detailed proofs can be found, for example,
in \cite{hain:china}.) The moduli space $\M_{1,\vec{1}}$ of elliptic curves with
a non-zero tangent vector at the identity is the complement of the discriminant
locus $u^3 - 27v^2=0$ in $\C^2$. For us, it is more useful to write it as the
quotient of $\C^\ast \times \h$ by the action
$$
\gamma : (\xi,\tau) \mapsto \big((c\tau+d)^{-1}\xi, \gamma\tau\big),
$$
where $\gamma = \begin{pmatrix}a & b \cr c & d \end{pmatrix}$. This action is
fixed point free, so that $\M_{1,\vec{1}}$ is an analytic variety. The
projection $\C\times \h \to \h$ induces a projection $\pi : \M_{1,\vec{1}} \to
\M_{1,1}$ that is the $\C^\ast$ bundle associated to the orbifold line bundle
$\cL \to \M_{1,1}$ with factor of automorphy $c\tau + d$. Modular forms of
$\SL_2(\Z)$ of weight $m$ are sections of $\cL^{\otimes m}$. (Cf.\
\cite[\S4]{hain:china}.) The cusp form $\D$ of $\SL_2(\Z)$ of weight 12
trivializes $\cL^{\otimes 12}$.

The $\SL_2(\Z)$ action lifts to an action of a
central extension
\begin{equation}
\label{eqn:extn}
0 \to \Z \to \Ghat \to \SL_2(\Z) \to 1
\end{equation}
on $\C\times \h$.\footnote{This action can be understood as follows: The
quotient of $\C\times\h$ by the central $\Z$ in $\Ghat$ is $\C^\ast\times \h$.
The quotient mapping is the exponential mapping on the first factor; $\SL_2(\Z)$
acts on $\C^\ast\times\h$ with factor of automorphy $(c\tau+d)^2$.} The group
$\Ghat$ is the mapping class group of a genus $1$ surface with one boundary
component.\footnote{The group $\Ghat$ is isomorphic to the $3$-string braid
group $B_3$ and also to the inverse image of $\SL_2(\Z)$ in the universal
covering group of $\SL_2(\R)$. (Cf.\ \cite[\S8]{hain:china}.)} This extension
corresponds to the orbifold $\C^\ast$-bundle $\M_{1,\vec{1}} \to \M_{1,1}$.

Denote the completion of $\Ghat$ with respect to the homomorphism
$$
\Ghat \to \SL_2(\Z) \hookrightarrow \SL_2(\Q)
$$
by $\cGhat$ and its prounipotent radical by $\Uhat$. Denote the completion of
$\SL_2(\Z)$ with respect to its inclusion into $\SL_2(\Q)$ by $\cG$ and its
prounipotent radical by $\U$. Denote the Lie algebras of $\U$ and $\Uhat$ by
$\u$ and $\uhat$, respectively. The projection $\Ghat \to \SL_2(\Z)$ induces a
homomorphism $\cGhat \to \cG$ that commutes with the projections to $\SL_2$.

\begin{proposition}
For each choice of a base point $x\in \M_{1,1}$ and each lift $\xhat$ of
$x$ to $\M_{1,\vec{1}}$, there is a natural isomorphism
$$
\cGhat_{\xhat} \cong \cG_x \times \Ga(1),
$$
where $\Ga(1)$ denotes the copy of $\Ga$ with the MHS $\Q(1)$. This induces an
isomorphism of MHS
$$
\ghat_{\xhat} \cong \g_x \oplus \Q(1).
$$
where $\ghat_{\xhat}$ is given the natural MHS constructed in
\cite{hain:malcev}.
\end{proposition}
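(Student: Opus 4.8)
The plan is to recognize (\ref{eqn:extn}) as the homotopy exact sequence of the orbifold $\Gm$-bundle $\pi\colon\M_{1,\vec 1}\to\M_{1,1}$, so that the central $\Z$ is the fundamental group of a fiber $\C^\ast$; its unipotent completion over $\Q$ is $\Ga$, carrying the natural mixed Hodge structure whose Lie algebra is $H_1(\C^\ast,\Q)=\Q(1)$. First I would apply right exactness of relative completion (Proposition~\ref{prop:rt_exact}) to the morphism from (\ref{eqn:extn}) to the trivial extension $1\to 1\to \SL_2\to\SL_2\to 1$. This yields a right exact sequence $\Ga\to\cGhat_{\xhat}\to\cG_x\to 1$, hence a surjection $\uhat_{\xhat}\to\u_x$ and a homomorphism $\cGhat_{\xhat}\to\cG_x$ which, by functoriality of the mixed Hodge structure on relative completions \cite{hain:malcev} applied to $\pi$, is a morphism of pro-MHS. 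Since $\Z$ is central in $\Ghat$ and $\Ghat$ is Zariski dense in $\cGhat_{\xhat}$, the image of $\Z$, and hence its Zariski closure (the image of $\Ga$), is central; thus $\cGhat_{\xhat}\to\cG_x$ is a central extension by a quotient of $\Ga$, and $\uhat_{\xhat}$ is a central extension of $\u_x$ by a quotient of $\Q(1)$, with $\u_x=\uhat_{\xhat}/\hat\p$ where $\hat\p$ denotes the image of $\Q(1)$.

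Next I would compute $H_1(\uhat_{\xhat})$. By (\ref{eqn:h1u}), valid since $\Ghat$ is finitely presented, $H_1(\uhat_{\xhat})\cong\prod_\lambda H^1(\Ghat,V_\lambda)^\ast\otimes V_\lambda$ as pro-MHS; and the Leray spectral sequence of $\pi$ with coefficients in $\bS^\lambda\H$ --- a spectral sequence of MHS --- degenerates, because $H^2(\SL_2(\Z),V_\lambda)=0$ by virtual freeness (Section~\ref{sec:modular}), to short exact sequences of MHS
$$
0\to H^1(\SL_2(\Z),V_\lambda)\to H^1(\Ghat,V_\lambda)\to (V_\lambda)^{\SL_2}(-1)\to 0,
$$
using that $R^1\pi_\ast\Q=\Q(-1)$ for the $\Gm$-bundle. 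For $\lambda\neq 0$ the right-hand term vanishes; for $\lambda=0$ the left-hand term vanishes since $\SL_2(\Z)^{\mathrm{ab}}$ is finite, so $H^1(\Ghat,\Q)\cong\Q(-1)$. Dualizing and summing over $\lambda$ gives $H_1(\uhat_{\xhat})\cong\Q(1)\oplus H_1(\u_x)$ as pro-MHS, the extra summand being the image of $H_1(\Ga)=\Q(1)$. In particular $\hat\p\neq 0$ (else $\uhat_{\xhat}=\u_x$, contradicting that extra summand), so the surjection $\Q(1)\to\hat\p$ is an isomorphism; thus $\Ga\hookrightarrow\cGhat_{\xhat}$ is injective, and the Lie algebra of the central $\Ga$ is a copy of $\Q(1)$ in $\uhat_{\xhat}$ mapping isomorphically onto the $\Q(1)$ summand of $H_1(\uhat_{\xhat})$. (Injectivity can also be seen directly: a homomorphism $\Ghat\to\Z$ nontrivial on the central $\Z$, such as the winding number about a fiber --- a generator of $H_1(\M_{1,\vec 1},\Z)\cong\Z$ up to a factor --- gives a map $\Ghat\to\SL_2(\Q)\times\Ga(\Q)$ over $\SL_2$, which by the universal property of relative completion factors through a retraction $\cGhat_{\xhat}\to\Ga$.)

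We now have a central extension $0\to\Q(1)\to\ghat_{\xhat}\to\g_x\to 0$ of Lie algebras in pro-MHS with $\Q(1)$ a trivial $\g_x$-module. It is classified by $H^2(\g_x,\Q(1))=H^2(\g_x)(1)$, and $H^2(\g_x)=[H^2(\u_x)]^{\SL_2}=0$ because $\u_x$ is free (Section~\ref{sec:modular}). As the classifying group is the zero mixed Hodge structure, the extension splits in the category of Lie algebras in pro-MHS, i.e.\ $\ghat_{\xhat}\cong\g_x\oplus\Q(1)$ with $\Q(1)$ central and $\SL_2$-trivial. Passing to prounipotent groups and taking the semidirect product with $\SL_2$ gives $\Uhat_{\xhat}\cong\U_x\times\Ga$ and $\cGhat_{\xhat}\cong(\SL_2\ltimes\U_x)\times\Ga=\cG_x\times\Ga$, where the $\Ga$-factor has coordinate ring $\Sym^\cts(\Q(-1))=\bigoplus_{k\ge 0}\Q(-k)$, so that it is $\Ga(1)$; hence $\ghat_{\xhat}\cong\g_x\oplus\Q(1)$ as asserted.

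The step I expect to be the main obstacle is the Hodge-theoretic bookkeeping in the last two paragraphs. One must check that the Leray spectral sequence is one of MHS in the orbifold setting (via Saito's mixed Hodge modules, or by passing to a torsion-free finite-index subgroup), that the $\Q(1)$-summand of $\uhat_{\xhat}$ extracted abstractly from $H_1$ is the ambient sub-MHS of the asserted Hodge type, and --- most delicately --- that the vanishing $H^2(\g_x)=0$ produces a splitting that is a morphism of MHS, i.e.\ that the obstruction vanishes in the category of MHS rather than merely as a vector space. The group-theoretic parts (right exactness, centrality, the product decomposition) and the vector-space computation of $H^1(\Ghat,V_\lambda)$ are routine.
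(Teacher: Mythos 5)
Your route is genuinely different from the paper's, and most of it is sound: the right-exact sequence $\Ga\to\cGhat_{\xhat}\to\cG_x\to 1$, the centrality of the image of $\Ga$, and the computation $H_1(\uhat_{\xhat})\cong H_1(\u_x)\oplus\Q(1)$ forcing left exactness all go through (and the orbifold worry can be sidestepped: the only Hodge-theoretic input needed at the trivial isotypic piece is $H^1(\M_{1,\vec{1}},\Q)\cong\Q(-1)$, which is immediate because $\M_{1,\vec{1}}$ is the complement of the irreducible discriminant hypersurface in $\C^2$). The genuine gap is exactly the step you flag at the end: central extensions of $\g_x$ by the trivial module $\Q(1)$ \emph{in the category of Lie algebras in pro-MHS} are not classified by $H^2(\g_x)\otimes\Q(1)$. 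Since $\MHS$ is not semisimple, the splitting guaranteed by $H^2(\g_x)=0$ is a priori only a splitting of the underlying ($\SL_2$-equivariant) Lie algebra; choosing it compatibly with the Hodge and weight filtrations and the $\Q$-structure is an extra condition, and the relevant obstruction group involves $\Ext^1_\MHS$-contributions rather than just $\Hom_\MHS$ into Lie algebra cohomology. So, as written, the second (Hodge-theoretic) assertion of the proposition is not proved.

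The gap can be closed in two ways. Within your setup: because $H^1(\SL_2(\Z),\Q)=0$, the $\SL_2$-isotypically trivial part of $H_1(\u_x)$ vanishes, so your computation gives $H_1(\ghat_{\xhat})\cong\Q(1)$ and the composite $\ker(\ghat_{\xhat}\to\g_x)\hookrightarrow\ghat_{\xhat}\to H_1(\ghat_{\xhat})$ is an isomorphism; since $\ker(\ghat_{\xhat}\to\g_x)$ is a sub-pro-MHS (kernel of a morphism of pro-MHS) and the abelianization map is a morphism of pro-MHS, inverting this isomorphism produces a retraction $\ghat_{\xhat}\to\Q(1)$ that is simultaneously a Lie algebra map and a morphism of MHS, whose kernel is the desired ideal complement --- no obstruction theory needed. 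Alternatively, and this is the paper's argument, use that $\D$ trivializes $\cL^{\otimes 12}$ and $\M_{1,\vec{1}}\cong\cL^\ast$, so there is a 12-fold covering $\M_{1,\vec{1}}\to\M_{1,1}\times\C^\ast$ over $\M_{1,1}$; it embeds the extension (\ref{eqn:extn}) into the split extension $0\to\Z\to\SL_2(\Z)\times\Z\to\SL_2(\Z)\to 1$ with multiplication by $12$ on the centers, and right exactness gives a map of exact rows whose outer vertical arrows are isomorphisms over $\Q$, whence $\cGhat_{\xhat}\cong\cG_x\times\Ga$; the Hodge statements then follow at once from functoriality, because the isomorphism is induced by an algebraic morphism. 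Note that your parenthetical ``winding number'' retraction is precisely the second component of this covering (the central $\Z$ maps to $12\Z$ in $H_1(\M_{1,\vec{1}},\Z)\cong\Z$), so your injectivity argument was already one step away from the paper's direct construction.
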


\begin{proof}
Since the weight 12 cusp form $\D$ trivializes $\cL^{\otimes 12}$ and since
$\M_{1,\vec{1}}$ is $\cL^\ast$, there is a 12-fold covering
$$
\M_{1,\vec{1}} \to \M_{1,1}\times \C^\ast
$$
that commutes with the projections to $\M_{1,1}$. It induces an inclusion
$$
\xymatrix{
0 \ar[r] & \Z \ar[r]\ar[d]^{\times 12} & \Ghat \ar[r]\ar[d]^\phi &
\SL_2(\Z)\ar@{=}[d] \ar[r] & 1 \cr
0 \ar[r] & \Z \ar[r] & \SL_2(\Z)\times \Z \ar[r] & \SL_2(\Z) \ar[r] & 1
}
$$
of extensions. The completion of $\SL_2(\Z)\times\Z$ with respect to the obvious
homomorphism to $\SL_2(\Q)$ is $\cG\times\Ga$. This and the right exactness
(Prop.~\ref{prop:rt_exact}) of relative completion imply that the commutative
diagram
$$
\xymatrix{
 & \Ga \ar[r]\ar[d]^{\times 12} & \cGhat \ar[r]\ar[d]^{\phi_\ast} &
\cG \ar@{=}[d] \ar[r] & 1 \cr
0 \ar[r] & \Ga \ar[r] & \cG\times \Ga \ar[r] & \cG \ar[r] & 1
}
$$
has exact rows. It follows that $\phi_\ast : \cGhat \to \cG\times\Ga$ is an
isomorphism. The Hodge theoretic statements follow from the functoriality of the
MHS on relative completion.
\end{proof}

There is therefore an isomorphism $\uhat \cong \u \oplus \C\e_2$, in the
category of pronilpotent Lie algebras with an $\SL_2$ action. The new generator
$\e_2$ spans a copy of the trivial representation of $\SL_2$ and commutes with
the remaining generators
$$
\bigcup_{n>0}
\{\e_0^j\cdot \e_f',\ \e_0^j\cdot \e_f'',\ \e_0^j \cdot \e_{2n}:
0\le j \le 2n-2,\ f\in \B_{2n}\}.
$$
The Hodge type of $\e_2$ is $(-1,-1)$, which is consistent with the Hodge
types of the other $\e_{2m+2,P}$ given in Figure~\ref{fig:hodge_types}.

\begin{remark}
One can lift the power series connection $\Omega$ whose monodromy representation
$\SL_2(\Z) \to \SL_2\ltimes \U$ is the relative completion of $\SL_2(\Z)$ to a
power series connection $\Omegahat$ whose monodromy homomorphism $\Ghat \to
\SL_2\ltimes \Uhat$ is the relative completion of $\Ghat$.

The normalized Eisenstein series $G_2(\tau)$ is also defined by the series
(\ref{eqn:eisenstein}), suitably summed. Although $G_2$ is not a modular form,
it satisfies (cf.\ \cite[pp.~95-96]{serre})
$$
G_2(\gamma\tau) = (c\tau+d)^2 G_2(\tau) + ic(c\tau+d)/4\pi.
$$
This implies that
$$
\psi_2 := 2\pi i\,
G_2(\tau)\, d\tau - \frac{1}{2}\frac{d\xi}{\xi} \in E^1(\C^\ast\times \h)
$$
is $\SL_2(\Z)$-invariant, and thus a closed 1-form on
$\M_{1,\vec{1}}$.\footnote{It is useful to note that
$\psi_2=-\frac{1}{24}\frac{dD}{D}$, where, where $D=u^3-27v^2$ denotes the
discriminant function on $\M_{1,\vec{1}}$. This is because there is a unique
logarithmic 1-form on $\M_{1,\vec{1}}$ with given residue along the divisor of
nodal cubics. Cf.\ \cite[Eqn.~19.1]{hain:kzb}, where $D$ is denoted $\D$.}

If $\Omega \in K^1(\Mbar_{1,1},P;\bu)$ is a power series connection (as
constructed above), then
$$
\Omegahat = \Omega + \psi_2 \e_2
$$
is an integrable $\SL_2(\Z)$-invariant power series connection with values in
$\uhat := \u \oplus \C\e_2$. For each choice of a lift $\xhat \in \C\times\h$ of
a base point $x\in \M_{1,\vec{1}}$, the monodromy representation
$$
\Ghat \to \SL_2(\C)\ltimes \Uhat
$$
induces isomorphisms $\cGhat_x \cong \SL_2\ltimes \Uhat \cong (\SL_2\ltimes
\U)\times \Ga$.
\end{remark}

\section{The Monodromy Representation}
\label{sec:monodromy}

Let $E$ be an elliptic curve with identity $0$. Set $E'=E-\{0\}$ and let $\vv\in
T_0 E$ be a non-zero tangent vector. Denote the Lie algebra of the unipotent
completion of $\pi_1(E',\vv)$ by $\p(E,\vv)$. Recall from
Section~\ref{sec:unipt_fund} that this is a completed free Lie algebra with
abelianization $H_1(E)$.

Denote by $\bp$ the local system over $\M_{1,\vec{1}}$ whose fiber over
$[E,\vv]$ by $\p(E,\vv)$. Fix a base point $x_o=[E_o,\vv_o]$ of
$\M_{1,\vec{1}}$. Set $H_o = H_1(E_o)$ and $\p_o=\p(E_o,\vv_o)$. Denote the
completion of $\pi_1(\M_{1,\vec{1}},x_o)$ relative to the standard homomorphism
to $\SL(H_o)$ by $\cGhat_o$ and its prounipotent radical by $\Uhat_o$. Denote
their Lie algebras by $\ghat_o$ and $\uhat_o$, respectively.

The monodromy action $\pi_1(\M_{1,\vec{1}},x_o) \to \Aut \p_o$ respects the
lower central series of $\p_o$ and acts on each graded quotient through an
action of $\SL(H_o)$. The universal mapping property of relative completion
implies that the monodromy representation above induces a homomorphism
$$
\cGhat_o \to \Aut\p_o.
$$
This induces a homomorphism $\ghat_o \to \Der\p_o$ that we shall call
the {\em infinitesimal monodromy action}.

\begin{proposition}
The infinitesimal monodromy action $\ghat_o \to \Der\p_o$ is a morphism of MHS.
\end{proposition}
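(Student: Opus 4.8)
The plan is to realise the infinitesimal monodromy action geometrically, exactly as in Example~\ref{ex:monod} but over $\M_{1,\vec 1}$ in place of $\M_{1,1}$. The key point is that $\M_{1,\vec 1}$ carries the tautological non-zero tangent vector $\vv$ along the identity section of the universal elliptic curve $\mathcal{E}\to\M_{1,\vec 1}$, so the family $\mathcal{E}' := \mathcal{E}-\{0\}\to\M_{1,\vec 1}$ of once-punctured elliptic curves has a tangential section $\sigma$ determined by $\vv$. The homotopy exact sequence
$$
1\to\pi_1(E'_o,\vv_o)\to\pi_1(\mathcal{E}',\vv_o)\to\pi_1(\M_{1,\vec 1},x_o)\to 1
$$
is therefore split by $\sigma$, and the monodromy representation $\pi_1(\M_{1,\vec 1},x_o)\to\Aut\p_o$ is by construction the composite of $\sigma$ with the conjugation action of $\pi_1(\mathcal{E}')$ on its normal subgroup $\pi_1(E'_o,\vv_o)$.

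First I would apply relative completion to this split sequence. Let $\tilde{\ghat}_o$ be the Lie algebra of the completion of $\pi_1(\mathcal{E}',\vv_o)$ relative to its (still Zariski-dense) monodromy action on $H_o$. Right exactness (Prop~\ref{prop:rt_exact}) gives a surjection $\tilde{\ghat}_o\to\ghat_o$ whose kernel is the image of $\p_o$; and exactly as in Example~\ref{ex:monod} the composite $\p_o\to\tilde{\ghat}_o\to\Der\p_o$ is the adjoint action, which is faithful because $\p_o$ is free of rank $2$ and hence centreless, so $\p_o\to\tilde{\ghat}_o$ is injective. By the Hodge theory of relative completion (Cor~\ref{cor:hodge_la}) applied to the smooth quasi-projective variety $\mathcal{E}'$ and the polarized variation $\H$, together with functoriality of that mixed Hodge structure, the resulting sequence
$$
0\to\p_o\to\tilde{\ghat}_o\to\ghat_o\to 0
$$
is exact in the category of pro-mixed Hodge structures, and the tangential section $\sigma$ induces a splitting $\sigma_\ast:\ghat_o\to\tilde{\ghat}_o$ which is a morphism of MHS.

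Next I would observe that the adjoint action $\tilde{\ghat}_o\to\Der\p_o$ is itself a morphism of MHS: since $\p_o$ is an ideal of the Lie algebra $\tilde{\ghat}_o$ in the category of pro-MHS, the bracket restricts to a morphism $\tilde{\ghat}_o\comptensor\p_o\to\p_o$ of pro-MHS, and by adjunction this is a morphism $\tilde{\ghat}_o\to\End\p_o$ into the continuous endomorphisms whose image lies in the sub-MHS $\Der\p_o\subseteq\End\p_o$ cut out by the Leibniz identity. The infinitesimal monodromy action is then the composite
$$
\ghat_o\xrightarrow{\ \sigma_\ast\ }\tilde{\ghat}_o\longrightarrow\Der\p_o
$$
of two morphisms of MHS, hence a morphism of MHS.

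The step I expect to be the main obstacle is the claim that the tangential section $\sigma$ induces a morphism of mixed Hodge structures $\sigma_\ast:\ghat_o\to\tilde{\ghat}_o$; this rests on the compatibility of the MHS on relative completion with tangential base points and tangential sections, which is only sketched in this paper (cf.\ the discussion at the end of Section~\ref{sec:relcomp_aff}) and treated in full in \cite{hain-pearlstein}. An alternative route bypassing $\mathcal{E}'$ is to note that the local system $\bp$ over $\M_{1,\vec 1}$ with fibre $\p(E,\vv)$ is a pro-admissible variation of MHS whose weight-graded monodromy factors through $\SL(H_o)$, so that by the surface analogue of Theorem~\ref{thm:equivalence} its fibre $\p_o$ is a Hodge representation of $\cGhat_o$; unwinding the definition of a Hodge representation on the level of Lie algebras then yields directly that $\ghat_o\to\End\p_o$ is a morphism of MHS with image in $\Der\p_o$. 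In either approach the essential input is the Hodge-theoretic behaviour of the tangential base point $\vv_o$.
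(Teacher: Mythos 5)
Your proposal is correct and follows essentially the same route as the paper: you pass to the universal punctured elliptic curve $\E'\to\M_{1,\vec 1}$, use the exact sequence of relative completions $1\to\cP_o\to\cG_{\E,o}\to\cGhat_o\to 1$ (with left exactness from the triviality of the center of $\p_o$), note that the conjugation action gives a morphism of MHS $\g_{\E,o}\to\Der\p_o$ as in Example~\ref{ex:monod}, and compose with the Hodge-compatible splitting induced by the tautological tangent vectors. You also correctly flag the same point the paper treats only in passing, namely that the tangential section is compatible with the MHS on relative completion, which is deferred to \cite{hain-pearlstein}.
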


\begin{proof}[Sketch of Proof]
The universal punctured elliptic curve $\E' \to \M_{1,\vec{1}}$ has fiber $E'$
over $[E,\vv]\in \M_{1,\vec{1}}$. The tangent vector $\vv_o$ of $E$ at $0$ can
be regarded as a tangential base point of $\E'$. The diagram
$$
\xymatrix{
1 \ar[r] & \pi_1(E_o',\vv_o) \ar[r]\ar[d] & \pi_1(\E',\vv_o) \ar[r]\ar[d] &
\pi_1(\M_{1,\vec{1}},x_o) \ar[r]\ar[d] & 1
\cr
& 1 \ar[r] & \SL(H_o) \ar@{=}[r] & \SL(H_o) \ar[r] & 1
}
$$
gives rise to an exact sequence
$$
1 \to \cP_o \to \cG_{\E,o} \to \cGhat_o \to 1. $$ of completions that is
compatible with mixed Hodge structures. Here $\cG_{\E,o}$ denotes the completion
of $\pi_1(\E',\vv_o)$ with respect to the natural homomorphism to $\SL(H_o)$ and
$\cP_o$ the unipotent completion of $\pi_1(E_o',\vv_o)$. One has exactness on
the left as $\cP_o$ has trivial center. The conjugation action of $\cG_{\E,o}$
on $\cP_o$ induces a homomorphism $\g_{\E,o} \to \Der\p_o$ of their Lie algebras
(cf.\ \ref{ex:monod}) that is a morphism
of MHS.

The tangent vectors $\vv$ induce a section of $\pi_1(\E',\vv_o) \to
\pi_1(\M_{1,\vec{1}},x_o)$. It induces a section of $\cGhat_o \to \cG_{\E,o}$
that is compatible with mixed Hodge structures. The natural action of $\ghat_o$
on $\p_o$ is the composite $\ghat_o \to \g_{\E,o} \to \Der\p_o$ and is therefore
a morphism of MHS.
\end{proof}

Since $L^m\p_o = W_{-m}\p_o$, there is a canonical isomorphism (cf.\
(\ref{eqn:free}))
$$
\Gr^W_\dot \p_o \cong \L(H_o).
$$
of graded Lie algebras in the category of $\SL(H_o)$ modules. The map on each
graded quotient is an isomorphism of mixed Hodge structures.

The element $\sigma$ of $\pi_1(E_o',\vv_o)$ obtained by rotating the tangent
vector once around the identity is trivial in homology and thus lives in the
commutator subgroup. The image of its logarithm in
$$
\Gr^W_{-2}\p_o \cong \Lambda^2 H_o
$$
is $[\a,\b]$, where $\a,\b$ is any symplectic basis of $H_1(E)$. It spans a copy
of the trivial representation. Let
$$
\Der^0 \L(H_o) = \{\delta \in \Der\L(H_o) : \delta([\a,\b]) = 0\}.
$$
Since the natural action of $\pi_1(\M_{1,\vec{1}},x_o)$ on $\pi_1(E_o',\vv_o)$
fixes $\sigma$, we have:

\begin{corollary}
The image of the infinitesimal monodromy representation
\begin{equation}
\label{eqn:gr_monod}
\Gr^W_\dot \uhat_o \to \Der \L(H_o)
\end{equation}
lies in $\Der^0 \L(H_o)$. \qed
\end{corollary}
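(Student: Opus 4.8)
The plan is to deduce the corollary directly from the geometric fact recalled just above --- that the monodromy action of $\pi_1(\M_{1,\vec{1}},x_o)$ on $\pi_1(E_o',\vv_o)$ fixes the loop $\sigma$ encircling the identity --- together with the previous proposition, according to which the infinitesimal monodromy $\ghat_o \to \Der\p_o$ is a morphism of MHS and, in particular, respects the weight filtrations.

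First I would promote the fixed-point statement to the completed setting. Since $\pi_1(E_o',\vv_o)$ is Zariski dense in its unipotent completion and $\pi_1(\M_{1,\vec{1}},x_o)$ fixes $\sigma$, the induced action of $\cGhat_o$ on this completion fixes $\sigma$; passing to Lie algebras, the action of $\cGhat_o$ on $\p_o$ fixes $\log\sigma \in W_{-2}\p_o$. Differentiating, every $X \in \ghat_o$ acts on $\p_o$ by a derivation with $X(\log\sigma)=0$.

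Next I would pass to associated graded Lie algebras. Because $L^m\p_o = W_{-m}\p_o$, there is the canonical isomorphism $\Gr^W_\dot\p_o \cong \L(H_o)$, under which the image of $\log\sigma$ in $\Gr^W_{-2}\p_o \cong \Lambda^2 H_o$ is $[\a,\b]$. Since the infinitesimal monodromy preserves $W_\dot$, it induces the map $\Gr^W_\dot\ghat_o \to \Der\L(H_o)$ of (\ref{eqn:gr_monod}). For a homogeneous element $X \in \Gr^W_{-k}\uhat_o$ with $k\ge 1$ and any lift $\widetilde X \in W_{-k}\ghat_o$, the associated derivation $\delta_X$ of $\L(H_o)$ satisfies, by the very definition of the induced action on graded pieces, $\delta_X([\a,\b]) = \overline{\widetilde X(\log\sigma)}$ in $\Gr^W_{-2-k}\p_o$; this vanishes by the previous step, so $\delta_X \in \Der^0\L(H_o)$. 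Since $\Gr^W_\dot\uhat_o$ is spanned by such homogeneous elements, the image of (\ref{eqn:gr_monod}) lies in $\Der^0\L(H_o)$, as claimed.

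I do not expect a genuine obstacle. The only steps warranting care are the Zariski-density argument promoting ``$\sigma$ is fixed'' from the discrete group to the prounipotent completion, and bookkeeping the identification $\Gr^W_\dot\p_o \cong \L(H_o)$ while computing the induced derivation action; both are routine given the machinery already in place. (That $[\a,\b]$ spans the trivial $\SL(H_o)$-submodule of $\Lambda^2 H_o$ ensures the statement is $\SL(H_o)$-equivariantly meaningful, but it is not otherwise needed.)
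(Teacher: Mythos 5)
Your argument is correct and is exactly the paper's (implicit) proof: the paper's \qed rests on the same observations, namely that $\pi_1(\M_{1,\vec{1}},x_o)$ fixes $\sigma$, hence (by Zariski density) so does $\cGhat_o$, and since $\log\sigma\in W_{-2}\p_o$ maps to $[\a,\b]$ in $\Gr^W_{-2}\p_o\cong\Lambda^2 H_o$, the induced graded derivations annihilate $[\a,\b]$. Your extra care with lifts and the identification $\Gr^W_\dot\p_o\cong\L(H_o)$ just makes explicit what the paper leaves to the reader.
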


The Lie algebra $\Gr^W_\dot \uhat_o$ is freely generated by the image of any
$\SL(H_o)$-invariant Hodge section of $\Gr^W_\dot \uhat_o \to \Gr^W_\dot
H_1(\uhat_o)$. Since this projection is an isomorphism in weight $-1$, each
cuspidal generator $\e_f'$ and $\e_f''$ has a canonical lift to
$\Gr^W_{-1}\uhat_o$. Fix a lift $\etilde_{2n}$ of each Eisenstein generator
$\e_{2n}$ to $\Gr^W_\dot\uhat_o$.

\begin{theorem}
The image of the graded monodromy representation (\ref{eqn:gr_monod}) is
generated as an $\SL(H_o)$-module by the images of the $\etilde_{2n}$, $n\ge 1$.
\end{theorem}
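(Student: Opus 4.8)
The plan is to combine three ingredients, two of which are already in place: (a) the graded monodromy $\phi$ of (\ref{eqn:gr_monod}) is a \emph{weight--preserving}, $\SL(H_o)$--equivariant homomorphism of graded Lie algebras with image contained in $\Der^0\L(H_o)$ — this is $\Gr^W_\dot$ of the morphism of MHS $\uhat_o\to\Der\p_o$ of the proposition above, together with the Corollary; (b) $\Gr^W_\dot\uhat_o$ is generated, as a Lie algebra in the category of $\SL(H_o)$--modules, by the image of an $\SL(H_o)$--invariant Hodge section of $\Gr^W_\dot\uhat_o\to\Gr^W_\dot H_1(\uhat_o)$; and (c) a short computation showing that $\Der^0\L(H_o)$ has no part of weight $-1$. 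Granting these, $\im\phi$ is generated, as a Lie algebra in $\SL(H_o)$--modules, by the $\phi$--images of a set of generators of $\Gr^W_\dot\uhat_o$, so the whole point is to see which generators are killed by $\phi$.

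First I would record the structure of $\Gr^W_\dot H_1(\uhat_o)$. Write $\uhat_o=\u_o\oplus\C\e_2$ as in the preceding discussion. By Theorem~\ref{thm:mhs} and the computation (\ref{eqn:h1u}), $\Gr^W_{-1}H_1(\u_o)=\prod_n I\!H^1(\M_{1,1},S^{2n}\H)^\ast\otimes S^{2n}H_o$, which by Eichler--Shimura (Theorem~\ref{thm:eichler_shimura}) is spanned by the cuspidal classes $\e_f',\e_f''$, $f\in\B_{2n+2}$; the residue sequence (\ref{eqn:exact_seqce:mod}) identifies $\Gr^W_{<-1}H_1(\u_o)$ with the Eisenstein classes, one copy for each $G_{2n}$ with $n\ge2$. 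Since $H^1(\M_{1,1},\Q)=0$ there is no further contribution in weight $-1$, and adjoining $\e_2$ — which has weight $-2$ and corresponds to the weight--$2$ Eisenstein series $G_2$ — adds exactly the remaining Eisenstein generator. Thus a Hodge, $\SL(H_o)$--equivariant section of $\Gr^W_\dot\uhat_o\to\Gr^W_\dot H_1(\uhat_o)$ may be chosen whose image is spanned, in weight $-1$, by the $\e_f'$ and $\e_f''$, and in weights $\le-2$ by the Eisenstein generators $\etilde_{2n}$, $n\ge1$ (with $\etilde_2=\e_2$). Note moreover that $[\uhat_o,\uhat_o]\subseteq W_{-2}\uhat_o$, so $\Gr^W_{-1}\uhat_o=\Gr^W_{-1}H_1(\uhat_o)$ is precisely the span of the cuspidal classes.

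The main step is the weight--$(-1)$ vanishing. A derivation of $\L(H_o)$ of weight $-1$ has degree $1$, hence is determined by a linear map $H_o\to\Lambda^2 H_o=\Q[\a,\b]$; writing $\delta\a=c_1[\a,\b]$, $\delta\b=c_2[\a,\b]$ one gets $\delta([\a,\b])=c_1[[\a,\b],\b]+c_2[\a,[\a,\b]]$, and since $[[\a,\b],\a]$ and $[[\a,\b],\b]$ are linearly independent in $\L_3(H_o)$ the condition $\delta\in\Der^0\L(H_o)$ forces $c_1=c_2=0$. So $\Gr^W_{-1}\Der^0\L(H_o)=0$. As $\phi$ is weight--preserving and $\Gr^W_{-1}\uhat_o$ is exactly the span of the cuspidal classes, $\phi$ annihilates every $\e_f'$ and $\e_f''$. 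Combining this with the previous paragraph, $\im\phi$ is generated, as a Lie algebra in the category of $\SL(H_o)$--modules, by the images $\phi(\etilde_{2n})$, $n\ge1$ — equivalently by $\{\ad$-translates $\e_0^j\cdot\phi(\etilde_{2n})\}$ — which is the assertion.

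I do not expect a serious obstacle here: the only care needed is the pro--object bookkeeping (the statement about $\im\phi$ is in the topological/pro sense) and being explicit that ``generated as an $\SL(H_o)$--module'' means generated \emph{as a Lie algebra} by the $\SL(H_o)$--subrepresentations through the $\phi(\etilde_{2n})$, not as a vector space. (A slightly more conceptual variant: $\Gr^W_\dot\Der\p_o$ has all weight--graded quotients sums of Tate twists of $S^m H_o$, so the morphism of MHS $\uhat_o\to\Der\p_o$ factors through the Eisenstein quotient $\uhat_o^{\eis}$, whose abelianization is spanned by the $\etilde_{2n}$; but the direct argument above avoids invoking $\uhat^{\eis}$.) This theorem is really the ``generation'' half of the picture; the genuinely hard input — the existence of relations among the $\phi(\etilde_{2n})$, i.e.\ Pollack's quadratic relations — is the content of Theorem~\ref{thm:quad} and is handled separately.
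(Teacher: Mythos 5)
Your proposal is correct and follows essentially the same route as the paper: the key step in both is the vanishing $\Gr^W_{-1}\Der^0\L(H_o)=0$ (you verify it by a direct coefficient computation on degree-one derivations, the paper by noting such derivations are the inner ones $\ad_u$ and $\ad_u([\a,\b])\neq 0$), after which the weight-$(-1)$ cuspidal generators $\e_0^j\cdot\e_f'$, $\e_0^j\cdot\e_f''$ are killed and the image is generated by the images of the Eisenstein generators.
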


\begin{proof}
First observe that $\Gr^W_{-1}\Der^0 \L(H_o) = 0$. This is because
$$
\Gr^W_{-1} \L(H_o) = H_o \text{ and } \Gr^W_{-2}\L(H_o) = \Q[\a,\b].
$$
The element $u \in H_o$ corresponds to the derivation $\ad_u$. Since
$\ad_u([\a,\b]) = [u,[\a,\b]]\neq 0$ for all non-zero $u\in H_o$,
$\Gr^W_{-1}\Der^0\L(H)=0$.

Since each $\e_f'$ and $\e_f''$ in $\Gr^W_{-1}\uhat_o$ has weight $-1$, this
vanishing implies that $\e_0^j\cdot \e_f'$ and $\e_0^j\cdot \e_f''$ are in the
kernel of the graded monodromy representation. It follows that the image is
generated by the images of the remaining generators --- the Eisenstein
generators $\e_0^j \cdot \etilde_{2n}$.
\end{proof}

The next task is to identify the images of the $\etilde_{2n}$ in
$\Der^0\L(H_o)$. For each $n\ge 0$ a basis $\v_1,\v_2$ of $H$ define derivations
$\epsilon_{2n}(\v_1,\v_2)$ by
\begin{equation}
\epsilon_{2n}(\v_1,\v_2) :=
\begin{cases}
-\v_2\frac{\partial}{\partial \v_1} & n = 0; \cr
\ad_{\v_1}^{2n-1}(\v_2)-
\sum_{\substack{j+k=2n-1\cr j>k > 0}}
(-1)^j[\ad_{\v_1}^j(\v_2),\ad_{\v_1}^k(\v_2)]
\frac{\partial}{\partial \v_2} & n > 0.
\end{cases}
\end{equation}
Here we are identifying $\L(H)$ with its image in $\Der\L(H)$ under the
inclusion $\ad : \L(H) \hookrightarrow \Der \L(H)$.

The following result implies that the image of $\etilde_{2n}$ in $\Der^0\L(H)$
depends only on $\e_{2n}$ and not on the choice of the lift $\etilde_{2n}$.

\begin{proposition}[Hain-Matsumoto]
For each $n\ge 1$ there is a unique copy of $S^{2n}H(2n+1)$ in
$\Gr^W_{-2n-2}\Der^0\L(H)$. It has highest weight vector the derivation
$\epsilon_{2n}(\v_1,\v_2)$, where $\v_1,\v_2 \in H$ are non-zero vectors of
$\sl_2$-weight $1$ and $-1$, respectively. \qed
\end{proposition}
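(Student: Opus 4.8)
The plan is to split the statement into its $\SL(H)=\SL_2$ representation–theoretic content and its Hodge-theoretic decoration. Because a Tate twist does not change the underlying $\SL_2$-module, ``a unique copy of $S^{2n}H(2n+1)$'' amounts to: (a) the $S^{2n}H$-isotypical part of $\Gr^W_{-2n-2}\Der^0\L(H)$ is a single irreducible $\SL_2$-module, and (b) its highest weight vector is the explicit derivation $\epsilon_{2n}(\v_1,\v_2)$. The Hodge-theoretic part --- that this copy is precisely the twist $S^{2n}H(2n+1)$ --- then follows from the Hodge types of the generators $\e_0,\e_f',\e_f'',\e_{m+2,P}$ recorded in Figure~\ref{fig:hodge_types} together with the fact, established earlier, that the infinitesimal monodromy map $\Gr^W_\dot\uhat_o\to\Der\L(H)$ is a morphism of MHS; so one only has to pin down the Tate twist on the single highest weight line spanned by $\epsilon_{2n}$, which is read off from the type of the corresponding Eisenstein generator.

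For (b) I would work with the word-length grading $\L(H)=\bigoplus_{m\ge1}\L_m(H)$, under which a derivation raising word length by $d$ is the same as a linear map $H\to\L_{d+1}(H)$, so that $\Der_d\L(H)\cong H^\ast\otimes\L_{d+1}(H)\cong H\otimes\L_{d+1}(H)$ by the symplectic self-duality of $H$. After checking that $\epsilon_{2n}(\v_1,\v_2)$ is well defined and homogeneous of the right degree, the two properties to verify are that it annihilates the symplectic element $\omega=[\v_1,\v_2]$ --- hence lies in $\Der^0\L(H)$ --- and that it is killed by the raising operator of $\sl(H)$ acting on $\Der\L(H)$ --- hence is a highest weight vector. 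The first is the heart of the matter: one expands
\[
\epsilon_{2n}(\omega)=[\epsilon_{2n}(\v_1),\v_2]+[\v_1,\epsilon_{2n}(\v_2)]
\]
using the explicit formula and collapses the resulting alternating sum of iterated brackets by repeated use of the Jacobi identity; the signs $(-1)^j$ and the range $j>k>0$ in the definition are exactly engineered so that this telescopes to $0$. I expect this identity, together with the analogous check that $\epsilon_{2n}$ is a highest weight vector, to be the main obstacle: it is a purely combinatorial Lie-algebra computation, routine in spirit but delicate in bookkeeping.

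For (a), uniqueness, I would compute the multiplicity of $S^{2n}H$ in $\Der^0\L(H)$ in the relevant degree. The map $\delta\mapsto\delta(\omega)$ fits into an $\SL_2$-equivariant exact sequence
\[
0\to\Der^0_d\L(H)\to\Der_d\L(H)\to\L_{d+2}(H)\to0,
\]
the surjectivity holding because $\L_{d+2}(H)=[\L_1(H),\L_{d+1}(H)]$. By semisimplicity, in the representation ring $[\Der^0_d\L(H)]=[H\otimes\L_{d+1}(H)]-[\L_{d+2}(H)]$, and it remains to insert the classical $\SL_2$-equivariant description of the graded pieces of the free Lie algebra on the standard module --- Klyachko's theorem, or equivalently the necklace generating function $\sum_m\operatorname{ch}\L_m(H)\,t^m=\sum_{k\ge1}\frac{\mu(k)}{k}\log\frac{1}{1-(q^k+q^{-k})t^k}$ with $q$ tracking the $\sl_2$-weight --- and to extract that the coefficient of $\operatorname{ch}S^{2n}H$ equals exactly $1$. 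Part (b) exhibits one such copy, so this completes the argument. Alternatively, the multiplicity bound can be deduced from the already-established fact that the graded monodromy image is generated as an $\SL(H)$-module by the $\etilde_{2n}$, but the free-Lie-algebra character count is cleaner and self-contained.
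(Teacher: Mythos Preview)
The paper does not actually prove this proposition: it is stated with a \qed\ and attributed to Hain--Matsumoto, with the proof deferred to \cite{hain-matsumoto:mem}. So there is no ``paper's own proof'' to compare against; your outline is the only argument on the table.

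Your plan is sound and is essentially the standard route. The exact sequence
\[
0\longrightarrow \Der^0_d\L(H)\longrightarrow \Der_d\L(H)\xrightarrow{\ \delta\mapsto\delta(\omega)\ }\L_{d+2}(H)\longrightarrow 0
\]
is correct (surjectivity holds because $\L_{d+2}=[\L_1,\L_{d+1}]$, so every element is $\ad_{\v_1}(x)+\ad_{\v_2}(y)=[\delta(\v_2),\v_1]-[\delta(\v_1),\v_2]$ for a suitable $\delta$), and feeding in the $\SL_2$-character of $\L_m(H)$ via the necklace/Witt generating series does yield multiplicity one in the relevant degree. The direct verification that $\epsilon_{2n}$ kills $\omega$ really is the combinatorial core; your description of it as a Jacobi-identity telescoping sum is accurate.

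Two small points. First, your method of pinning down the Tate twist --- reading it off the Eisenstein generator via the monodromy map --- implicitly uses Theorem~\ref{thm:images}, which in the paper's logical order comes \emph{after} this proposition; indeed, the proposition is invoked to make sense of the statement of that theorem. The non-circular way is to compute the MHS on $\Der_d\L(H)\cong H^\ast\otimes\L_{d+1}(H)$ directly from the MHS on $H$ (using $H_1(E)\cong H^1(E)(1)$), which immediately gives the twist. Second, your proposed alternative --- deducing the multiplicity bound from the fact that the graded monodromy image is generated by the $\etilde_{2n}$ --- does not work: that fact constrains the image of $\u$ in $\Der^0\L(H)$, not the $\SL_2$-decomposition of $\Der^0\L(H)$ itself, so it cannot bound multiplicities there. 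Stick with the character computation.
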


It follows that the image of $\etilde_{2n}$ in $\Der\L(H_o)$ is a multiple
(possibly zero) of $\epsilon_{2n}(\b,\a)$. We compute this multiple using the
universal elliptic KZB-connection \cite{cee,levin-racinet,hain:kzb}, which
provides an explicit formula for the connection on the bundle $\bp$ over
$\M_{1,\vec{1}}$.

\begin{theorem}
\label{thm:images}
For all choices of the lift $\etilde_{2n}$, the image of $\etilde_{2n}^B = 2\pi
i\,\etilde_{2n}$ under the graded monodromy representation
(\ref{eqn:gr_monod}) is $2\epsilon_{2n}(\b,\a)/(2n-2)!$ when $n>0$ and
$\epsilon_0$ when $n=0$.
\end{theorem}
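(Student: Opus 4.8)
The plan is to compare the abstract power series connection built in Section~\ref{sec:vmhs} with the universal elliptic KZB connection on the bundle $\bp$ over $\M_{1,\vec{1}}$, for which \cite{cee,levin-racinet,hain:kzb} supply an explicit formula. Trivializing $\bp$ together with its Hodge and weight bundles over $\C^\ast\times\h$ as in Section~\ref{sec:monodromy}, the KZB connection becomes an integrable, $\Ghat$-invariant $1$-form $\mathbf{w}_{\mathrm{KZB}}\in\big(E^1(\C^\ast\times\h)\comptensor\Der\p_o\big)^{\Ghat}$ whose monodromy homomorphism $\Ghat\to\SL(H_o)\ltimes\exp\big(\Der^0\L(H_o)\big)$ factors through $\cGhat_o$ and realizes the infinitesimal monodromy action $\cGhat_o\to\Aut\p_o$. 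The part of $\mathbf{w}_{\mathrm{KZB}}$ pulled back from $\M_{1,\vec{1}}$ has, modulo inner derivations and the contribution $d\xi/\xi$, the shape $\epsilon_0\,\tfrac{dq}{q}+\sum_{n\ge1}c_n\,G_{2n+2}(\tau)\,\epsilon_{2n}(\b,\a)\,(2\pi i\,d\tau)$ for explicit constants $c_n$; the $\epsilon_0\,dq/q$ term is forced by \eqref{eqn:nabla0} and the normalization of the residue of $\nabla_0$ recorded in Section~\ref{sec:sl2}.

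First I would note that passing to $\Gr^W_\dot$ turns this into a comparison of graded objects: $\mathbf{w}_{\mathrm{KZB}}$ takes values in $W_{-1}$ of the outer--derivation algebra, inner derivations act trivially on $\Gr^W_\dot\p_o\cong\L(H_o)$ (cf.\ \eqref{eqn:free}), so the induced graded monodromy of $\mathbf{w}_{\mathrm{KZB}}$ is exactly the representation \eqref{eqn:gr_monod}. By the preceding Hain--Matsumoto proposition the image of $\etilde_{2n}$ in $\Gr^W_{-2n-2}\Der^0\L(H_o)$ is a well-defined scalar multiple of $\epsilon_{2n}(\b,\a)$, independent of the lift, so it suffices to pin that scalar down. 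For this I would match the $\epsilon_{2n}(\b,\a)$--coefficient of $\mathbf{w}_{\mathrm{KZB}}$ against the degree--one Eisenstein component $\psi_{2n+2}(\e_{2n})=\w_{G_{2n+2}}(\e_{2n})=2\pi i\,G_{2n+2}(\tau)\,v(\tau,\e_{2n})\,d\tau$ of the connection of Section~\ref{sec:vmhs} (which represents the identity on $H^1(\M_{1,1},S^{2n}\H)$), using that $v(\tau,\e_{2n})=\exp(\tau\e_0)\e_{2n}$ is the flat section carrying $\e_{2n}$ to the relevant highest weight vector.

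The remaining work is the bookkeeping of powers of $2\pi i$ and of factorials. Because the de~Rham normalization of $\w_{G_{2n+2}}$ is rational, it is the Betti generator $\etilde_{2n}^B=2\pi i\,\etilde_{2n}$ that appears with a rational coefficient on the monodromy side; feeding in the explicit expansion $G_{2n+2}(\tau)=\tfrac12\tfrac{(2n+1)!}{(2\pi i)^{2n+2}}\sum_{\lambda\neq0}\lambda^{-2n-2}=-\tfrac{B_{2n+2}}{4n+4}+\sum_{k\ge1}\sigma_{2n+1}(k)q^k$ from \eqref{eqn:eisenstein} together with the KZB constant $c_n$ should collapse everything to the asserted value $2\epsilon_{2n}(\b,\a)/(2n-2)!$. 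The $n=0$ case is immediate: $\e_0=-\a\,\partial/\partial\b$ acts on $\p_o$ through the $\SL_2$--action as $\epsilon_0$ by the very definitions of the $\SL_2$--module structure and of $\epsilon_0(\b,\a)$, and this is precisely the residue term of $\nabla_0$. The main obstacle will be reconciling conventions: confirming that the outer--derivation part of the KZB connection of \cite{hain:kzb} is literally $\epsilon_{2n}(\b,\a)$ in the basis and isomorphisms $\SL(H)\cong\SL_2$ fixed here ($\v_1=-\b$, $\v_2=\a$), and then tracking every sign and factor of $2$ through the normalizations of $\w_f$, of $v(\tau,\e)$ and of $G_{2n+2}$ set in Sections~\ref{sec:sl2}--\ref{sec:vmhs}, since any discrepancy there propagates directly into the final constant $2/(2n-2)!$.
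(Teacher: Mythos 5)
Your strategy is the paper's: compare the power series connection of Section~\ref{sec:vmhs} with the explicit elliptic KZB connection on $\bp$, use the Hain--Matsumoto uniqueness of $S^{2n}H(2n+1)$ in $\Der^0\L(H)$ to reduce the claim to a single scalar, and fix that scalar by matching the coefficient of the Eisenstein $1$-form. But as written there is a genuine gap: the scalar is never actually computed, and the step you defer to ``bookkeeping'' is precisely where the content of the theorem lies. You assert that the KZB form, modulo inner derivations, already has the shape $\epsilon_0\,dq/q+\sum_n c_n\,G_{2n+2}(\tau)\,\epsilon_{2n}(\b,\a)\,(2\pi i\,d\tau)$ in the frame fixed here. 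It does not: the formula of \cite[\S13--14]{hain:kzb} is written in the frame $T=\exp(\tau\e_0)(-\b)$, $A=(2\pi i)^{-1}\a$, namely $\w'=-2\pi i\big(d\tau\otimes\epsilon_0(T,A)+\sum_{n\ge1}\tfrac{2}{(2n-2)!}G_{2n}(\tau)\,d\tau\otimes\epsilon_{2n}(T,A)\big)$, and the passage to $\epsilon_{2n}(\b,\a)$ is the decisive computation. It uses the homogeneity $\epsilon_{2n}(c_1\v_1,c_2\v_2)=c_1^{2n-1}c_2\,\epsilon_{2n}(\v_1,\v_2)$, the equivariance $\epsilon_{2n}(g\v_1,g\v_2)=g\cdot\epsilon_{2n}(\v_1,\v_2)$ and $\e_0\cdot\a=0$ to get $2\pi i\,\epsilon_{2n}(T,A)=\epsilon_{2n}(T,\a)=-\exp(\tau\e_0)\cdot\epsilon_{2n}(\b,\a)$, whence $2\pi i\,G_{2n}(\tau)\,d\tau\otimes\epsilon_{2n}(T,A)=-\psi_{2n}\big(\epsilon_{2n}(\b,\a)\big)/2\pi i$. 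Only after this frame change can you match against the $\psi_{2n}(\e_{2n})$ term of $\Omega$ and read off that the de~Rham generator maps to $2\epsilon_{2n}(\b,\a)/2\pi i(2n-2)!$, hence $\etilde_{2n}^B$ to $2\epsilon_{2n}(\b,\a)/(2n-2)!$. This identity is what produces the factor of $2\pi i$ and preserves the constant $2/(2n-2)!$; your ``explicit constants $c_n$'' are exactly what must be exhibited, not assumed.

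A smaller misdirection: the Fourier expansion (\ref{eqn:eisenstein}) of $G_{2n+2}$ plays no role. The comparison is between $\u$- (resp.\ derivation-) valued $1$-forms --- the KZB term and $\psi_{2n}\big(\epsilon_{2n}(\b,\a)\big)=2\pi i\,G_{2n}(\tau)\exp(\tau\e_0)\cdot\epsilon_{2n}(\b,\a)\,d\tau$ --- not between $q$-expansions or residues, so feeding in Bernoulli numbers and divisor sums cannot ``collapse'' anything; what is needed is the frame-change identity above together with the normalizations of $\w_f$, $v(\tau,\e)$ and $\psi_{2n}$ already fixed in Sections~\ref{sec:sl2}--\ref{sec:vmhs}. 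Your treatment of $n=0$ and your use of exactness of $\Gr^W_\dot$ and triviality of inner derivations on $\Gr^W_\dot\p\cong\L(H)$ are fine and agree with the paper.
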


\begin{proof}
Trivialize the pullback of $\cH$ to $\C^\ast\times \h$ by the sections
$$
T := \tau\a-\b = \exp(\tau\e_0)(-\b) \text{ and } A := (2\pi i)^{-1}\a.
$$
In \cite[\S13--14]{hain:kzb} it is shown that the pullback of $\bp$ to
$\C^\ast\times \h$ may be identified with the trivial bundle
$$
\L(T,A)^\wedge \times \C^\ast \times \h \to \C^\ast \times \h
$$
with the connection $\nabla = d + \w'$, where
$$
\w' = - 2\pi i\Big(d\tau\otimes \epsilon_0(T,A) + \sum_{m\ge 1}
\frac{2}{(2n-2)!}G_{2n}(\tau)d\tau \otimes \epsilon_{2n}(T,A)\Big).
$$

To prove the result, we need to rewrite this in terms of the frame $-\b,\a$ of
$\cH$. First note that $\epsilon_{2n}(c_1\v_1,c_2\v_2) =
c_1^{2n-1}c_2\epsilon_{2n}(\v_1,\v_2)$ and that if $g\in \SL(H)$, then
$\epsilon_{2n}(g\v_1,g\v_2) = g\cdot\epsilon_{2n}(\v_1,\v_2)$, where $g\in
\SL(H)$ acts on a derivation $\delta$ by $g\cdot \delta := g\delta g^{-1}$.
Since $\e_0\cdot \a = 0$, these imply that
$$
2\pi i\, \epsilon_{2n}(T,A) =
\epsilon_{2n}(T,\a) =
\epsilon_{2n}\big(\exp(\tau\e_0)(-\b),\exp(\tau\e_0)\a\big)
= -\exp(\tau\e_0)\cdot \epsilon_{2n}(\b,\a).
$$
It follows that $2\pi i\, G_{2n}(\tau) d\tau\otimes \epsilon_{2n}(T,A) = -
\psi_{2n}\big(\epsilon_{2n}(\b,\a)\big)/2\pi i$.

Since the natural connection $\nabla_0$ on $\cH$ is given by
$$
\nabla_0 = d - 2\pi i\, \epsilon_0(T,A)\otimes d\tau,
$$
the pullback connection may be written
\begin{align*}
\nabla &= \nabla_0 - 2\pi i\sum_{m\ge 1}
\frac{2}{(2n-2)!}G_{2n}(\tau)d\tau \otimes \epsilon_{2n}(T,A) \cr
& = \nabla_0 + \sum_{m\ge 1}
\frac{2}{(2n-2)!}\psi_{2n}\big(\epsilon_{2m}(\b,\a)/2\pi i\big).
\end{align*}
It follows that, regardless of the choice of the lifts of the $\etilde_{2n}$, 
the de~Rham generator $\etilde_{2n}$ goes to $2\epsilon_{2n}(\b,\a)/2\pi
i(2n-2)!$ under the graded monodromy representation. Since $\e_{2n}$ spans a
copy of $\Q(1)$, the Betti generator is $\e_{2n}^B = \e_{2n}^\DR/2\pi i$.
\end{proof}


\begin{remark}
Since $\ghat = \g \oplus \Q(1)$, there are natural representations
$$
\g \to \Der\p \text{ and } \Gr^W_\dot \g \to \Der^0\L(H).
$$
There are also the outer actions
$$
\g \to \OutDer\p \text{ and } \Gr^W_\dot \g \to \OutDer\L(H).
$$
The representations $\Gr^W_\dot \g \to \Der\L(H)$ and $\Gr^W_\dot \g\to
\OutDer\L(H)$ have the same kernel as $\e_2 \notin \g$ and as
$$
\InnDer \L(H) \cap \Der^0\L(H) = \Q\epsilon_2(\b,\a).
$$
Exactness of $\Gr^W_\dot$ implies that $\g \to \Der\p$ and $\g\to \OutDer\p$
have the same kernel. Since it is generally easier to work with derivations than
with outer derivations, we will work with $\g\to \Der\p$.
\end{remark}

\section{The Eisenstein Quotient of a Completed Modular Group}
\label{sec:eisen_quot}

The results of the previous section imply that for all $x\in \M_{1,1}$, each
weight graded quotient of the image of $\g_x$ in $\Der\p_x$ is a sum of Tate
twists $S^m H_x(r)$ of symmetric powers of $H_x$. Any such Lie algebra quotient
of $\g_x$ has the property that its weight associated graded is generated by the
images of the Eisenstein generators $\e_{2n}$.

Suppose that $\G$ is a finite index subgroup of $\SL_2(\Z)$. Denote the
completion of $\pi_1(X_\G,x)$ with respect to the inclusion $\G \to \SL_2(H_x)$
by $\cG_x$ and its pronilpotent radical by $\U_x$. Denote their Lie algebras by
$\g_x$ and $\u_x$, respectively.

\begin{proposition}
\label{prop:eis}
For each $x\in X_\G$ there is a unique maximal quotient $\g^\eis_x$ of $\g_x$ in
the category of Lie algebras with a mixed Hodge structure with the property that
each weight graded quotient of $\g_x^\eis$ is a sum of Tate twists of symmetric
powers of $H_x$. Moreover, the Lie algebra isomorphism $\g_x \to \g_y$
corresponding to a path from $x$ to $y$ in $X_\G$ induces a Lie algebra (but not
a Hodge) isomorphism $\g_x^\eis \to \g_y^\eis$. The corresponding local system
$\bg^\eis := (\g_x^\eis)_{x\in X_\G}$ underlies an admissible VMHS.
\end{proposition}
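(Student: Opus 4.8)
Here $\g_x$, $\u_x$ and $\bg$, $\bu$ denote the Lie algebra of the relative completion, its radical, and the associated pro-local systems, as in Section~\ref{sec:mhs}. The plan is to construct $\g_x^\eis$ fibrewise by an intersection argument, then to recognise the resulting family of ideals as a sub-variation of $\bg$, from which the path-invariance and the variational statements both follow.

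For the fibrewise construction, let $\mathcal{T}_x$ denote the class of (pro-)objects of the category of $\SL_2$-equivariant mixed Hodge structures that are isomorphic to sums (and products) of Tate twists $S^mH_x(r)$ of symmetric powers of $H_x$. Since $S^mH_x$ is an irreducible $\SL_2$-module and a sub-MHS of a sum of Tate structures is again such a sum, $\mathcal{T}_x$ is closed under sub-objects, quotients and products. I would then set
$$
\mathfrak{a}_x^\eis := \bigcap_{\mathfrak{a}} \mathfrak{a},
$$
the intersection ranging over all closed Hodge ideals $\mathfrak{a}\subseteq\g_x$ with $\Gr^W_\dot(\g_x/\mathfrak{a})\in\mathcal{T}_x$; this family is non-empty, since $\u_x = W_{-1}\g_x$ is such an ideal ($\Gr^W_0\g_x\cong\sl_2\cong S^2H_x(1)$ lies in $\mathcal{T}_x$ by Corollary~\ref{cor:hodge_la}). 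An intersection of sub-MHS is a sub-MHS, so $\mathfrak{a}_x^\eis$ is a closed Hodge ideal; and since $\Gr^W_\dot$ is exact, $\Gr^W_\dot(\g_x/\mathfrak{a}_x^\eis)$ embeds into $\prod_{\mathfrak{a}}\Gr^W_\dot(\g_x/\mathfrak{a})\in\mathcal{T}_x$ and hence lies in $\mathcal{T}_x$. Thus $\g_x^\eis := \g_x/\mathfrak{a}_x^\eis$ is a Lie algebra with MHS whose weight-graded quotients are sums of Tate twists of symmetric powers of $H_x$, and any quotient of $\g_x$ in the category of Lie algebras with MHS having this property factors through $\g_x^\eis$; this gives existence, uniqueness and maximality.

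Next I would repeat the construction in the abelian category of pro-objects of $\MHS(X_\G)$: intersecting all ideal sub-variations $\mathcal{B}\subseteq\bg$ for which $\Gr^W_\dot(\bg/\mathcal{B})$ is a sum of variations $S^m\H(r)$ (the family is non-empty, containing $\bu$, and closed under intersection by semi-simplicity of polarized VHS together with exactness of $\Gr^W_\dot$) produces an ideal sub-variation $\mathfrak{a}^\eis\subseteq\bg$, whence an admissible variation $\bg^\eis := \bg/\mathfrak{a}^\eis$. Here $\bg$ is a pro-object of $\MHS(X_\G)$ by Theorem~\ref{thm:admissible} applied to the radical $\bu$, extended by the constant variation $\underline{\sl(H_x)}$ in weight $0$. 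The decisive point is that the fibre $(\mathfrak{a}^\eis)_x$ equals $\mathfrak{a}_x^\eis$: the inclusion $\mathfrak{a}_x^\eis\subseteq(\mathfrak{a}^\eis)_x$ is immediate, and the reverse inclusion follows from the elementary fact that, for sub-MHS $A,B$ of a fixed MHS, $\Gr^W_\dot A\subseteq\Gr^W_\dot B$ forces $A\subseteq B$ (strictness of the weight filtration), once one knows that $\Gr^W_\dot\mathfrak{a}_x^\eis$ is the smallest graded Lie ideal $\mathfrak{b}_x\subseteq\Gr^W_\dot\g_x$ whose quotient belongs to $\mathcal{T}_x$. Granting $(\mathfrak{a}^\eis)_x=\mathfrak{a}_x^\eis$, the family $(\g_x^\eis)_x$ is $\bg^\eis$, which underlies an admissible VMHS; in particular parallel transport along a path from $x$ to $y$ preserves $\mathfrak{a}^\eis$ and so descends to a Lie-algebra isomorphism $\g_x^\eis\to\g_y^\eis$, which need not respect the Hodge filtrations since parallel transport is only $\Q$-linear and weight-filtered (Lemma~\ref{lem:wt_bdles}).

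The step I expect to be the main obstacle is exactly the identity $\Gr^W_\dot\mathfrak{a}_x^\eis = \mathfrak{b}_x$ --- equivalently, that passing from the associated graded to the mixed Hodge structure creates no relations in $\g_x^\eis$ beyond those already visible on $\Gr^W_\dot\g_x$. Concretely, $\mathfrak{b}_x$ is the ideal generated by the non-Tate $\SL_2$-isotypic parts of $\Gr^W_\dot\g_x$, which on $H_1(\u_x)$ are precisely the contributions of cusp forms; the content is that this ideal lifts to a Hodge ideal of $\g_x$ with the same associated graded, and I would prove this from the strictness of the morphisms of Zucker's mixed Hodge complex used in Section~\ref{sec:mhs}, applied to the explicit generators $\e_f'$, $\e_f''$, $\e_{m+2,P}$ whose Hodge types are recorded in Figure~\ref{fig:hodge_types}. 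Everything else is then formal: $\mathfrak{b}_x$ is manifestly parallel-transport invariant, because parallel transport acts on $\Gr^W_\dot\bg$ through $\SL_2$ (Lemma~\ref{lem:wt_bdles}) and fixes the constant multiplicity spaces $H^1(X_\G,S^m\H)^\ast$, and the admissibility of $\bg^\eis$ is inherited from that of $\bg$ in the abelian category $\MHS(X_\G)$.
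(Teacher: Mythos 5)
Your first paragraph is fine, and is essentially the paper's own argument in dual form: the paper organizes the Eisenstein quotients into an inverse system (the image of $\g_x\to\h_1\oplus\h_2$ dominates both $\h_1$ and $\h_2$), while you intersect the corresponding Hodge ideals; these are equivalent. The problem is the step you yourself flag as the main obstacle, and it is not merely hard --- it is false. You need $\Gr^W_\dot\mathfrak{a}_x^\eis=\mathfrak{b}_x$, i.e.\ that $\Gr^W_\dot\u_x^\eis$ is exactly the quotient of the free Lie algebra $\Gr^W_\dot\u_x$ by the ideal generated by the cuspidal part of $H_1$, so that ``no relations appear beyond those visible on the associated graded.'' But the central phenomenon of Part~2 of the paper is precisely that extra relations do appear: by Theorem~\ref{thm:quad} (via Theorem~\ref{thm:cup} and Proposition~\ref{prop:dual_pollack}), Pollack's quadratic relations among the Eisenstein generators hold in $\Gr^W_\dot\u^\eis$, so $\Gr^W_\dot\u^\eis$ is a proper quotient of the free Lie algebra on the $\e_0^j\cdot\e_{2n}$ and $\Gr^W_\dot\mathfrak{a}_x^\eis\supsetneq\mathfrak{b}_x$. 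These relations are forced by the Hodge-theoretic extension data (periods of cusp forms), not by anything visible on $\Gr^W_\dot\g_x$; no strictness argument with Zucker's complex can produce a Hodge ideal with associated graded $\mathfrak{b}_x$. Since your identification of the fibre of the variation-level ideal with $\mathfrak{a}_x^\eis$ rests on this identity (in both directions: you need $\Gr^W$ of both ideals to equal $\mathfrak{b}_x$ before applying the strictness lemma), the parallel-transport invariance and the admissibility statement are left unproved.

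The conclusion you want is true, but it is obtained without ever computing $\Gr^W_\dot\mathfrak{a}_x^\eis$. The paper transports the kernel $K_x=\ker(\g_x\to\g_x^\eis)$ along a path to get a rational ideal $K\subseteq\g_y$, and checks directly that $K$ is a sub-MHS: the weight filtration is flat (Lemma~\ref{lem:wt_bdles}), and each $\Gr^W_mK_x$ is an $\sl(H_x)$-stable sub-Hodge structure of $\Gr^W_m\g_x$, hence --- because $\Gr^W_m\bg$ is a sum of constant Hodge structures tensored with PVHS of the form $S^n\H(r)$ (Lemma~\ref{lem:isotypic}, Theorem~\ref{thm:admissible}) --- it spans a flat sub-PVHS, so its transport to $y$ is again a sub-HS and the induced Hodge filtrations on the $\Gr^W_m(\g_y/K)$ are Hodge structures. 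A general observation (a rational subspace $K$ of a MHS $V$ is a sub-MHS as soon as the induced filtrations on $\Gr^W_\dot(V/K)$ define Hodge structures) then shows $\g_y/K$ is a quotient MHS with Tate-symmetric-power weight graded quotients; maximality at $y$ follows by transporting back, and admissibility of $\bg^\eis$ follows because a quotient local system of an admissible VMHS whose fibres are quotient MHS is admissible. Note that the transported ideal is \emph{not} characterized by its associated graded, which is exactly why your reduction to $\mathfrak{b}_x$ cannot work.
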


The quotient $\g_x^\eis$ will be called the {\em Eisenstein quotient} of $\g_x$.
The corresponding quotient of $\cG_x$ will be denoted by $\cG_x^\eis$.

\begin{remark}
Note that if we instead use a tangential base point $\vv$, then $H_\vv$ is an
extension of $\Q$ by $\Q(1)$, so that $\g_\vv^\eis$ is a mixed Hodge-Tate
structure. (That is, all of its $M_\dot$ weight graded quotients are of type
$(p,p)$.) In this case, $\g^\eis_\vv$ is the ``maximal Tate quotient'' of
$\g_\vv$ in the category pro-Lie algebras with MHS.
\end{remark}

\begin{corollary}
When $\G=\SL_2(\Z)$, the monodromy homomorphism $\g_x \to \Der\p_x$ factors
through $\g_x^\eis$:
$$
\xymatrix{
\g_x \ar[r]\ar@/^1pc/[rr] & \g^\eis_x \ar[r] & \Der\p_x
}
$$
\qed
\end{corollary}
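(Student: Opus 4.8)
The plan is to realize the image of $\g_x \to \Der\p_x$ as a quotient of $\g_x$, in the category of Lie algebras with a mixed Hodge structure, of precisely the kind controlled by Proposition~\ref{prop:eis}, and then to invoke the maximality of the Eisenstein quotient. First I would recall that, for $\G = \SL_2(\Z)$ and any $x \in \M_{1,1}$, the results of Section~\ref{sec:monodromy} (the proposition identifying the infinitesimal monodromy as a morphism of MHS, together with the remark $\ghat = \g \oplus \Q(1)$ at the end of that section) show that $\g_x \to \Der\p_x$ is at once a homomorphism of Lie algebras and a morphism of MHS. Consequently its image $I_x$ inherits a mixed Hodge structure, the surjection $\g_x \to I_x$ is a morphism in the category of Lie algebras with MHS (here one uses strictness of morphisms of MHS), and $I_x \hookrightarrow \Der\p_x$ is a morphism of MHS.

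The substantive point is to check that every weight graded quotient of $I_x$ is a sum of Tate twists of symmetric powers of $H_x$, and for this I would pass to $\Gr^W_\dot$. Since $L^m\p_x = W_{-m}\p_x$, the canonical isomorphism $\Gr^W_\dot\p_x \cong \L(H_x)$ of Section~\ref{sec:monodromy} is an isomorphism of graded Lie algebras in the category of MHS, with $\Gr^W_{-m}\p_x \cong \L_m(H_x)$ carrying its honest Hodge structure built from the weight $-1$ structure on $H_x = H_1(E_x)$. A derivation of $\p_x$ lowering weight by $k$ induces on the associated graded a collection of maps $\L_m(H_x) \to \L_{m+k}(H_x)$, so $\Gr^W_{-k}\Der\p_x$ is a sub-MHS of $\prod_m \Hom\big(\L_m(H_x),\L_{m+k}(H_x)\big)$ (equivalently, of $\Hom\big(H_x,\L_{k+1}(H_x)\big)$, using that $\p_x$ is free). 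Each such $\Hom$ is a sum of Tate twists of symmetric powers of $H_x$: as an $\SL(H_x)$-module it decomposes into symmetric powers by the Clebsch--Gordan rules, and each isotypic component of $S^j H_x$-type is forced to be a Tate twist of $S^j H_x$ by its weight, since $\L_n(H_x)$ is genuinely a sum of such Tate twists. Hence $\Gr^W_\dot \Der\p_x$, and therefore $\Gr^W_\dot I_x$ as a sub-object (using exactness of $\Gr^W$ and the fact that the category of sums of Tate twists of symmetric powers of $H_x$ is closed under sub-objects, by semisimplicity of $\SL_2$-representations), is again a sum of Tate twists of symmetric powers of $H_x$.

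Given this, the conclusion is immediate: $I_x$ is a quotient of $\g_x$ in the category of Lie algebras with MHS whose weight graded quotients are sums of Tate twists of symmetric powers of $H_x$, so by the maximality property of $\g_x^\eis$ in Proposition~\ref{prop:eis} the surjection $\g_x \to I_x$ factors as $\g_x \to \g_x^\eis \to I_x$; composing with $I_x \hookrightarrow \Der\p_x$ yields the desired factorization. The only real work is the bookkeeping in the middle paragraph --- writing $\Gr^W_\dot\Der\p_x$ in terms of $\Hom$'s of homogeneous components of the free Lie algebra $\L(H_x)$ and confirming that the Hodge-theoretic weight constraint forces every constituent to be a Tate twist of a symmetric power --- and I expect that, rather than any conceptual difficulty, to be the main thing to get right.
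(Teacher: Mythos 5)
Your argument is correct and is essentially the paper's own: the image of $\g_x$ in $\Der\p_x$ is a quotient in the category of Lie algebras with MHS whose weight graded quotients are Tate twists of symmetric powers of $H_x$ (exactly the observation opening Section~\ref{sec:eisen_quot}, resting on Section~\ref{sec:monodromy} and $\Gr^W_\dot\p_x\cong\L(H_x)$), so the factorization follows from the maximality in Proposition~\ref{prop:eis}. Your middle paragraph just makes explicit the bookkeeping the paper leaves implicit.
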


Note that if $\G$ has finite index in $\SL_2(\Z)$, then the Eisenstein quotient
of its relative completion surjects onto the Eisenstein quotient of the relative
completion of $\SL_2(\Z)$.

\begin{proof}[Proof of Proposition~\ref{prop:eis}]
If $\h_1$ and $\h_2$ are quotients of $\g_x^\eis$ whose weight graded quotients
are sums of Tate twists of symmetric powers of $H_x$, then the image $\h$ of
$\g_x^\eis \to \h_1\oplus \h_2$ is a quotient of $\g_x^\eis$ whose weight graded
quotients are sums of twists of symmetric powers of $H_x$. It also surjects onto
$\h_1$ and $\h_2$. This implies that the ``Eisenstein quotients'' of $\g_x$ form
an inverse system from which it follows that the Eisenstein quotient is unique.
Note that since $\sl(H_x) \cong S^2 H_x$, $\g_x$ surjects onto $\sl(H_x)$.

We begin the proof of the second part with an observation. Suppose that $V$ is a
MHS and that $K$ is a subspace of $V$ that is defined over $\Q$. Give it the
induced weight filtration. Then there is a natural isomorphism
$$
\Gr^W_m (V/K) \cong (\Gr^W_m V)/(\Gr^W_m K).
$$
This isomorphism respects the Hodge filtration on each induced from the Hodge
filtration $F^\dot\cap (W_m V)$ of $W_m V$. These are defined as the images of
the maps
\begin{multline*}
F^p W_m V \to (W_mV)/(W_mK) \to \Gr^W_m (V/K) \cr \text{ and }
F^p W_m V \to \Gr^W_m V \to (\Gr^W_m V)/(\Gr^W_m K).
\end{multline*}
The observation is that if this Hodge filtration defines a Hodge structure on
each $\Gr^W_m (V/K)$, then $K$ is a sub MHS of $V$ and $V/K$ is a quotient MHS
of $V$.

Now apply this with $V = \g_y$ and $K$ the ideal of $\g_y$ that corresponds to
the kernel of $\g_x \to \g_x^\eis$ under the isomorphism $\g_x \cong \g_y$ given
by parallel transport. This implies that $\g_y/K$ is a quotient of $\g_y$ in the
category of MHS whose weight graded quotients are sums of twists of symmetric
powers of $H_y$. It is  therefore an Eisenstein quotient of $\g_y$.

The last statement follows from the fact that if $\W$ is a quotient of the local
system underlying an admissible variation of MHS $\V$ with the property that
each fiber of $W_x$ has a MHS that is the quotient of the MHS on $V_x$, then
$\W$ is an admissible variation of MHS.
\end{proof}

The significance of $\g^\eis$ lies in the following result, which follows
directly from Theorem~\ref{thm:equivalence}.

\begin{corollary}
For all base points (finite, tangential), the category $\HRep(\g_x^\eis)$ of
Hodge representations (Def.~\ref{def:hrep}) of $\g_x^\eis$ is equivalent to the
category of admissible VMHS over $X_\G$ whose weight graded quotients are sums
of Tate twists $S^n\H(r)$ of symmetric powers of $\H$. \qed
\end{corollary}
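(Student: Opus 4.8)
The plan is to derive the statement from Theorem~\ref{thm:equivalence} by restricting the equivalence there to matching full subcategories on the two sides. First I would record the set-up. For a base point $x$ of $X_\G$ (regular or tangential), with $R=\SL(H_x)\cong\SL_2$, Theorem~\ref{thm:equivalence} gives an equivalence $\MHS(X_\G,\H)\to\HRep(\cG_x)$, the fibre-at-$x$ functor. By Proposition~\ref{prop:eis} the Eisenstein quotient $\cG_x\twoheadrightarrow\cG_x^\eis$ is a quotient of proalgebraic groups, and (as part of Proposition~\ref{prop:eis} and the admissibility of $\bg^\eis$) its coordinate ring inherits the structure of a Hopf algebra in ind-MHS for which $\cO(\cG_x^\eis)\hookrightarrow\cO(\cG_x)$ is a morphism; in particular $\cG_x^\eis$ satisfies the hypotheses of Definition~\ref{def:hrep}, so $\HRep(\cG_x^\eis)$ is defined, and since $\cG_x\to\cG_x^\eis$ is faithfully flat the inflation functor $\HRep(\cG_x^\eis)\to\HRep(\cG_x)$ is fully faithful, with essential image the full subcategory $\cD$ of Hodge representations $A$ for which $\cG_x\to\Aut A$ factors through $\cG_x^\eis$.

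The core of the argument is then to show that, under the equivalence of Theorem~\ref{thm:equivalence}, the subcategory $\cD\subseteq\HRep(\cG_x)$ corresponds exactly to the subcategory $\cC\subseteq\MHS(X_\G,\H)$ of variations whose weight graded quotients are sums of Tate twists $S^n\H(r)$ of symmetric powers of $\H$. One inclusion: if $\A\in\cC$ then each $\Gr^W_m A_x$ is a sum of modules $S^n H_x(r)$, so $\Gr^W\End(A_x)=\End(\Gr^W A_x)$ is again a sum of Tate twists of symmetric powers of $H_x$ (Clebsch--Gordan, together with the fact that the Tate Hodge structures are closed under tensor and dual); hence the image $\mathfrak q$ of the action map $\g_x\to\End(A_x)$ --- a quotient Lie algebra of $\g_x$ in the category of MHS and a sub-MHS of $\End(A_x)$ --- has all of its weight graded quotients of this form, so by the maximality of $\g_x^\eis$ among such quotients (Proposition~\ref{prop:eis}) the surjection $\g_x\to\mathfrak q$ factors through $\g_x^\eis$; since $\ker(\g_x\to\g_x^\eis)$ is a prounipotent ideal of $\g_x$, this shows $\cG_x\to\Aut A_x$ factors through $\cG_x^\eis$, i.e.\ $A\in\cD$. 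For the reverse inclusion one uses that, for any $A\in\HRep(\cG_x)$, $\u_x=W_{-1}\g_x$ acts on $\Gr^W A_x$ by zero (it strictly lowers the weight), so $\Gr^W A_x$ is a polarizable variation of Hodge structure over $X_\G$ whose monodromy factors through $\SL(H_x)$; the Theorem of the Fixed Part (Lemma~\ref{lem:isotypic}) then decomposes it isotypically, and combining this with the hypothesis that the action factors through $\cG_x^\eis$ --- whose weight graded Lie algebra is, by construction, built from Tate twists of symmetric powers --- forces the $\SL(H_x)$-isotypical multiplicities in $\Gr^W_m A_x$ to be Hodge--Tate, i.e.\ $\A\in\cC$. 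Thus the equivalence of Theorem~\ref{thm:equivalence} restricts to an equivalence $\cC\simeq\cD\simeq\HRep(\cG_x^\eis)$.

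Finally, independence of the base point is automatic: $\cC$ is intrinsic to $X_\G$, and Proposition~\ref{prop:eis} already supplies canonical Lie algebra isomorphisms $\g_x^\eis\cong\g_y^\eis$ --- hence $\cG_x^\eis\cong\cG_y^\eis$ --- induced by paths, so the equivalences obtained at different base points are identified; the tangential case is handled in the same way, using the tangential-base-point form of Theorem~\ref{thm:equivalence}. The step I expect to be the main obstacle is the reverse inclusion $\cD\subseteq\cC$ above: granting that a Hodge representation factors through the Eisenstein quotient, one must rule out non-Tate multiplicity Hodge structures appearing in the weight graded pieces, and this is precisely where the defining property of $\g_x^\eis$ (rather than merely that of $\cG_x$) has to be used, together with the Theorem of the Fixed Part.
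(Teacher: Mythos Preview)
The paper does not supply a proof: the corollary carries only a \qed, with the preceding sentence saying it ``follows directly from Theorem~\ref{thm:equivalence}.'' Your plan --- restrict the equivalence of Theorem~\ref{thm:equivalence} to matching full subcategories --- is the natural way to fill this in, and your forward inclusion $\cC\subseteq\cD$ is correct: if every $\Gr^W_m A_x$ is a sum of copies of $S^n H_x(r)$, then the image of $\g_x\to\End(A_x)$ is a quotient of $\g_x$ in the category of MHS all of whose weight-graded pieces are again of that form (Clebsch--Gordan), and by the maximality in Proposition~\ref{prop:eis} the action factors through $\g_x^\eis$.

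The gap is exactly where you place it, and your proposed resolution does not close it. Your claim that factoring through $\cG_x^\eis$ ``forces the $\SL(H_x)$-isotypical multiplicities in $\Gr^W_m A_x$ to be Hodge--Tate'' is not justified: those multiplicity Hodge structures $\Hom_{\SL(H_x)}(S^n H_x,\Gr^W_m A_x)$ see only the reductive quotient $R=\SL(H_x)$, and nothing about the passage from $\cG_x$ to $\cG_x^\eis$ constrains them. Concretely, take any pure $\Q$-Hodge structure $M$ of weight $0$ that is not a sum of copies of $\Q(0)$ (say with $h^{1,-1}=h^{-1,1}=1$), and give it the trivial $\cG_x^\eis$-action. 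The coaction $m\mapsto m\otimes 1$ is a morphism of MHS, so $M\in\HRep(\cG_x^\eis)$; under Theorem~\ref{thm:equivalence} it corresponds to the constant variation $M$, whose weight-graded piece $M\cong M\otimes S^0\H$ is not a sum of objects $S^n\H(r)$. Thus $\cD\not\subseteq\cC$, and the two categories are not equivalent as literally stated. What your forward inclusion does prove is that $\cC$ is equivalent to the full subcategory of $\HRep(\cG_x^\eis)$ on objects whose weight-graded pieces are sums of $S^nH_x(r)$ --- equivalently, the Tannakian subcategory generated by $H_x$ and $\Q(1)$. This is presumably the intended content; to get an equivalence with all of $\HRep(\cG_x^\eis)$ one would have to enlarge $\cC$ to allow arbitrary constant Hodge structures as coefficients.
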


\begin{remark}
We will call such variations over a modular curve {\em Eisenstein variations of
MHS}. Like Tate VMHS in the unipotent case (cf.\ \cite{hain-zucker2}),
Eisenstein variations over a modular curve can be written down reasonably
explicitly. This follows from the constructions of Section~\ref{sec:avmhs}. The
explanation we give below is somewhat technical. Suppose that $\A$ is an
Eisenstein variation over $X_\G$. Set
$$
A_{m,n}= H^0\big(X_\G,\Hom(S^n\H,\Gr^W_m \A)\big).
$$
This is a Tate Hodge structure of weight $m-n$. There is a natural isomorphism
$$
\Gr^W_m \A \cong \bigoplus_n A_{m,n}\otimes S^n\H
$$
of MHS. For each $r$ satisfying $0\le r \le \min(n,\ell)$, fix a highest weight
vector $\bh_{n,\ell}^{(r)}$ of $\sl_2$-weight $n+\ell-2r$ in $\Hom(S^n H,S^\ell
H)$, so that
$$
\Hom(S^n\H,S^\ell \H) \cong
\bigoplus_{r=0}^{\min(n,\ell)} \bS^{n+\ell-2r}(\bh_{n,\ell}^{(r)}).
$$
Implicit here is that $\bh_{n,\ell}^{(r)}$ has Hodge weight $\ell-n$. Set $\cA =
\A\otimes \cO_{X_\G}$. Denote the natural connection on it by $\nabla$. Set
$$
\cA_m = (\Gr^W_m \A)\otimes \cO_{X_\G} \cong \bigoplus_n A_{m,n}\otimes S^n\cH.
$$
The standard connection on $\cH$ induces a connection on each of these that we
denote by $\nabla_0$. The construction of Section~\ref{sec:avmhs} implies that
for each cusp $P\in D$ of $X_\G$, there are linear maps
\begin{equation}
\label{eqn:condn}
\varphi_{k,\ell,P}^{m,n} \in
F^{\ell-r+1}W_{2\ell-2r+2}\Hom_\C(A_{m,n},A_{k,\ell})
\end{equation}
such that
\begin{equation}
\label{eqn:A-isom}
(\cA,\nabla) \cong \big(\bigoplus_m \cA_m,\nabla_0 + \Omega\big)
\end{equation}
where
$$
\Omega =
\sum_{P\in D}\sum_{n,\ell\ge 0}\sum_{r=0}^{\min(n,\ell)}\sum_{m,k}
\varphi^{m,n}_{k,\ell,P} \otimes \psi_{n+\ell-2r+2,P}(\bh_{n,\ell}^{(r)}).
$$
Implicit in this statement is that the canonical extension $\overline \cA$ of
$(\cA,\nabla)$ is isomorphic to
$$
\bigoplus_{m,n} A_{m,n}\otimes (S^n\Hbar,\nabla_0).
$$
The isomorphism (\ref{eqn:A-isom}) is bifiltered with respect to the Hodge and
weight filtrations. The condition (\ref{eqn:condn}) implies that
$
\Omega\in
F^0W_{-1} K^1\big(\Xbar_\G,D;\End(\Gr^W_\dot\A)\big).
$

Caution: not every such 1-form $\Omega$ defines the structure of an admissible
variation of MHS over $X_\G$. The issue is that one needs the monodromy of
$(\cA,\nabla_0 + \Omega)$ to be defined over $\Q$. Determining which $\Omega$
give rise to Eisenstein variations is closely related to the problem of
determining the relations in $\u^\eis$.

\end{remark}

\section{Modular Symbols and Pollack's Quadratic Relations}
\label{sec:pollack_relns}

Motivic arguments (cf.\ \cite{hain-matsumoto:mem}) suggested that $\u_x^\eis$ may
not be freely generated by the $\e_0^j\cdot \e_{2m}$ and predict that the
relations that hold between the $\e_0^j\cdot \e_m$ arise from cusp forms. In
other words, cusp forms go from being generators of $\u_x$ to relations in
$\u_x^\eis$. The goal of the rest of this paper is to sketch a Hodge theoretic
explanation for these relations. For this we will need to recall some basic
facts about modular symbols, which record the periods of cusp forms and
determine the Hodge structure on $H^1_\cusp(\M_{1,1}, S^{2n}\H)$. In this and
subsequent sections, $\B_w = \B_w(\SL_2(\Z))$, the set of normalized Hecke eigen
cusp forms of $\SL_2(\Z)$.

\subsection{Modular Symbols}

Modular symbols are homogeneous polynomials attached to cusp forms of
$\SL_2(\Z)$. They play two roles: they give a concrete representation of the
cohomology class associated to a cusp form; secondly, modular symbols of degree
$m$ record the periods of the MHS on $H^1(\M_{1,1},S^m\H) \cong
H^1(\SL_2(\Z),S^m H)$. A standard reference is \cite[Ch.~IV]{lang}.

Recall that $\SL_2(\Z)$ has presentation
$$
\SL_2(\Z) = \langle S, U : S^2 = U^3,\ S^4 = U^6 = I\rangle.
$$
where
$$
S = \begin{pmatrix} 0 & -1 \cr 1 & 0 \end{pmatrix},\quad
T = \begin{pmatrix} 1 & 1 \cr 0 & 1 \end{pmatrix}, \quad
U := ST = \begin{pmatrix} 0 & -1 \cr 1 & 1 \end{pmatrix}
$$
The action of $\SL_2(\Z)$ on $\h$ factors through
$$
\PSL_2(\Z) := \SL_2(\Z)/(\pm I) = \langle S, U : S^2 = U^3 = I\rangle.
$$

\subsubsection{Group cohomology}

Suppose that $\G$ is a group and that $V$ is a left $\G$-module. Then one has
the complex
$$
\xymatrix{
0 \ar[r] & C^0(\G,V) \ar[r]^\delta & C^1(\G,V) \ar[r]^\delta &
C^2(\G,V) \ar[r]^(.6)\delta & \cdots
}
$$
of standard cochains, where $C^j(\G,V) = \{\text{functions } \phi : \G^j \to
V\}$. The differential takes $v\in V = C^0(\G,V)$ to the function $\delta v :
\gamma \mapsto (\gamma-1)v$ and $\delta : C^0(\G,V) \to C^1(\G,V)$ takes $\phi :
\G \to V$ to the function
$$
(\delta \phi)(\gamma_1,\gamma_2) \mapsto
\phi(\gamma_1) - \phi(\gamma_1\gamma_2) + \gamma_1\cdot \phi(\gamma_2).
$$
So $\phi$ is a 1-cocycle if and only if $\phi(\gamma_1\gamma_2) =
\phi(\gamma_1)+\gamma_1\cdot \phi(\gamma_2)$. The cohomology $H^\dot(\G,V)$ of
$\G$ with coefficients in $V$ is defined to be the homology of this complex.

Now suppose that $V$ is a real or complex vector space and that $\G$ acts on a
simply connected manifold $X$. Fix a base point $x_o\in X$. As in
Proposition~\ref{prop:monodromy}, to each $\gamma\in G$ we can associate the
unique homotopy class $c_\gamma$ of paths in $X$ from $x_o$ to $\gamma\cdot
x_o$. If $\w\in E^1(X)\otimes V$ is $\G$ invariant, then the function
$$
\phi: \gamma \mapsto \int_{c_\gamma} \w
$$
is a 1-cocycle. Changing the base point from $x_o$ to $x'$ changes $\phi$ by the
coboundary of $\int_{x_o}^{x'}\w \in V$. (Cf.\ Remark~\ref{rem:cobound}.) This
construction induces a map
$$
H^1(E^1(X\times V)^\G) \to H^1(\G,V),
$$
which is an isomorphism when $\G$ acts properly discontinuously and virtually
freely on $X$. This is the case when $\G$ is a modular group and $X$ is the
upper half plane.

Suppose that $V$ is divisible as an abelian group. When $-I$ acts trivially on
$V$, $V$ is the pullback of a $\PSL_2(\Z)$-module and
$$
H^1(\PSL_2(\Z),V) \to H^1(\SL_2(\Z),V)
$$
is an isomorphism.

\subsubsection{Cuspidal cohomology}

Suppose that $F$ is a field of characteristic zero. Set $H_F = F\a \oplus F\b$.
Define $C^\dot_\cusp(\SL_2(\Z),S^{2n}H_F)$ to be the kernel of the restriction
mapping
$$
C^\dot(\SL_2(\Z),S^{2n}H_F) \to \widetilde{C}^\dot(\langle T \rangle, S^{2n}H_F)
$$
where the right hand complex is the quotient of $C^\dot(\langle T \rangle,
S^{2n}H_F)$ by $\a^{2n}$ in degree 0. Set
$$
H^\dot_\cusp(\SL_2(\Z),S^{2n}H_F) := H^\dot(C^\dot_\cusp(\SL_2(\Z),S^{2n}H_F)).
$$
The corresponding long exact sequence gives the exact sequence
\begin{multline*}
0 \to H^1_\cusp(\SL_2(\Z),S^{2n}H_F) \to H^1(\SL_2(\Z),S^{2n}H_F) \cr
\to H^1(\langle T \rangle,S^{2n}H_F) \to H^2_\cusp(\SL_2(\Z),S^{2n}H_F) \to 0.
\end{multline*}
This is an instance of the exact sequences (\ref{eqn:exact_seqce}) and
(\ref{eqn:exact_seqce:mod}) where $C'=\M_{1,1}$.

The cuspidal cohomology group $H_\cusp^1(\SL_2(\Z),S^{2n}H_F)$ has a nice
description. Recall that $\SL_2(\Z)$ acts on $H$ via the formula
(\ref{eqn:formula}) with $\v_1=-\b$ and $\v_2 = \a$.

\begin{proposition}
Suppose that $F$ is a field of characteristic zero. For all $n\ge 1$, there is
an isomorphism
$$
H^1_\cusp(\SL_2(\Z),S^{2n}H_F)
$$
with the vector space of $\br(\a,\b) \in S^{2n}H_F$ that satisfy
\begin{equation}
\label{eqn:cocycle}
(I + S)\br(\a,\b) = 0 \text{ and } (I + U + U^2)\br(\a,\b) = 0
\end{equation}
modulo $\a^{2n}-\b^{2n}$.
\end{proposition}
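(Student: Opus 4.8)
The plan is to identify $H^1_\cusp(\SL_2(\Z),S^{2n}H_F)$ with an explicit subquotient of cocycles using the standard presentation of $\PSL_2(\Z)$, then match this with the description in terms of the relations $(I+S)\br = 0$ and $(I+U+U^2)\br = 0$ modulo $\a^{2n}-\b^{2n}$. Since $-I$ acts trivially on $S^{2n}H_F$ for $n \ge 1$ (as $S^{2n}$ of the sign representation is trivial), we may work with $\PSL_2(\Z) = \langle S, U : S^2 = U^3 = 1\rangle$. The first step is to recall that for a free product of finite cyclic groups, a $1$-cocycle $\phi$ is determined by its values $\phi(S)$ and $\phi(U)$, subject to the constraints coming from the relations $S^2 = 1$ and $U^3 = 1$: namely $(I+S)\phi(S) = 0$ and $(I + U + U^2)\phi(U) = 0$ (using that the cocycle identity gives $\phi(g^k) = (I + g + \cdots + g^{k-1})\phi(g)$ and $\phi(1) = 0$). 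So $Z^1(\PSL_2(\Z), S^{2n}H_F) \cong \ker(I+S) \oplus \ker(I+U+U^2)$.

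Next I would compute coboundaries: $B^1$ consists of $\gamma \mapsto (\gamma - I)v$ for $v \in S^{2n}H_F$, which corresponds to the pair $((S-I)v, (U-I)v)$. Thus $H^1(\PSL_2(\Z),S^{2n}H_F)$ is the quotient of $\ker(I+S)\oplus\ker(I+U+U^2)$ by this diagonal-type image. The key identification to make: parametrize a cocycle class by $\br := \phi(S) \in \ker(I+S)$, i.e., project onto the first factor. One must check that this projection $\ker(I+S)\oplus\ker(I+U+U^2) \to \ker(I+S)$ descends to an isomorphism on the relevant (cuspidal) subquotient. The cuspidal condition cuts out cocycles whose restriction to $\langle T\rangle$ is a coboundary modulo $\a^{2n}$; since $T = U^{-1}S$ (or $T = S^{-1}U$, up to convention — need to pin this down from $U = ST$), the value $\phi(T)$ is expressible in terms of $\phi(S)$ and $\phi(U)$, so the cuspidal condition becomes a linear condition relating $\br = \phi(S)$ and $\phi(U)$.

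The cleanest route is probably to show directly that the map sending a cuspidal cocycle to $\br = \phi(S)$ induces the claimed isomorphism: injectivity will follow because the cuspidal condition lets one recover $\phi(U)$ from $\phi(S)$ (the restriction to $\langle T\rangle$ being trivial forces $\phi(U)$ to be determined by $\phi(S)$ up to the ambiguity of $\a^{2n}$), and surjectivity because any $\br$ satisfying both $(I+S)\br = 0$ and $(I+U+U^2)\br = 0$ extends to a genuine cocycle (set $\phi(S) = \br$; the second relation, rewritten via $U = ST$ and the computation of $\phi(U)$ in terms of $\phi(S)$ and $\phi(T)$, becomes exactly $(I+U+U^2)\br = 0$ once one imposes $\phi(T) \in F\a^{2n}$, which is the cuspidality normalization). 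The residual ambiguity $\a^{2n} - \b^{2n}$ comes from coboundaries $(\gamma-I)v$ that happen to preserve cuspidality: one computes that $(S-I)v$ for suitable $v$ yields precisely multiples of $\a^{2n}-\b^{2n}$ in the $\br$-variable (since $S$ swaps $\a \leftrightarrow -\b$ roughly, $(S-I)(\b^{2n}) \sim \a^{2n} - \b^{2n}$), while the coboundary condition interacts with the $\langle T\rangle$-normalization to kill the rest.

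The main obstacle I anticipate is bookkeeping the conventions precisely: the exact form of the $\SL_2(\Z)$-action on $H$ (formula (\ref{eqn:formula}) with $\v_1 = -\b$, $\v_2 = \a$), the relation $U = ST$ versus $U = TS$, and correctly propagating these through the cocycle identities so that the two relations in (\ref{eqn:cocycle}) emerge with the right group elements $S$ and $U$ (rather than, say, $S$ and $T$). In particular, verifying that the cuspidal condition — trivially of restriction to $\langle T\rangle$ modulo $\a^{2n}$ — translates exactly into "$\phi(T) \in F\a^{2n}$" and hence, after eliminating $\phi(U)$, into the single relation involving $U$, requires a careful but elementary computation with $2\times 2$ matrices acting on $S^{2n}H_F$. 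Once the dictionary is set up correctly, the isomorphism is a short diagram chase; the content is entirely in getting the translation right, and I would do a sanity check in a small case such as $n = 6$ (weight $12$, the first cusp form $\Delta$) where $\dim H^1_\cusp = 1$.
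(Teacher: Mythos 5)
Your outline is, in substance, the paper's own proof: use the presentation of $\PSL_2(\Z)$ (equivalently, $\SL_2(\Z)$ together with $\phi(-I)=0$), so that a cocycle is determined by $\phi(S)$ and $\phi(U)$ subject to $(I+S)\phi(S)=0$ and $(I+U+U^2)\phi(U)=0$, use cuspidality to reduce to the single datum $\br=\phi(S)$, and account for the line spanned by $\a^{2n}-\b^{2n}$ by the cuspidal coboundaries. The one substantive point to repair is your reading of the cuspidal condition. In the definition of $C^\dot_\cusp$ the quotient by $\a^{2n}$ is taken only in degree $0$; in degree $1$ the restriction map to $C^1(\langle T\rangle,S^{2n}H_F)$ is untouched, so a cuspidal $1$-cocycle satisfies $\phi(T)=0$ on the nose --- not ``$\phi(T)\in F\a^{2n}$'' and not ``restriction is a coboundary''. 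With the correct condition your argument simplifies: $\phi(U)=\phi(ST)=\phi(S)+S\phi(T)=\phi(S)$ exactly, so there is no ``ambiguity of $\a^{2n}$'' to control in the injectivity step; and conversely, setting $\phi(S)=\phi(U)=\br$ (the two relations are exactly what is needed for $\phi$ to be well defined) gives $\phi(T)=\phi(S^{-1}U)=\phi(S^{-1})+S^{-1}\phi(U)=-S^{-1}\br+S^{-1}\br=0$, so the extension is automatically cuspidal. By contrast, the normalization you propose, $\phi(T)=c\,\a^{2n}$ with $c\neq 0$, is not actually available: it forces $\phi(U)=\br+c\,\b^{2n}$, and since $(I+U+U^2)\b^{2n}\neq 0$ (it is three times the projection of $\b^{2n}$ onto the $U$-invariants), the relation coming from $U^3$ then imposes an extra condition on $\br$; so your surjectivity step as literally written would not go through.

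Two smaller corrections. The cuspidal coboundaries are exactly the multiples of $\delta(\a^{2n})$, because degree $0$ of the cuspidal complex is $F\a^{2n}$ (the $T$-invariants); its value on $S$ is $(S-I)\a^{2n}=\b^{2n}-\a^{2n}$. Your parenthetical uses $v=\b^{2n}$, but $\delta(\b^{2n})$ is not cuspidal since $(T-I)\b^{2n}\neq 0$ --- the span is the same line, so no harm to the statement, but the cuspidality of the coboundary is precisely the point being used. Finally, in your sanity check: weight $12$ corresponds to $n=5$, and over $F$ the cuspidal group has dimension twice the number of cusp forms (a Hecke eigenform contributes both $\br^+$ and $\br^-$), so you should expect $\dim H^1_\cusp(\SL_2(\Z),S^{10}H_F)=2$, not $1$.
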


\begin{proof}
Suppose that $\phi : \SL_2(\Z) \to S^{2n}H_F$ is a cuspidal 1-cocycle. Since
$-I$ acts trivially on $S^{2n}H$, the cocycle condition  and the equation
$(-I)^2 = 1$ imply that $\phi(-I)= 0$.  Since $S$ and $U$ generate $\SL_2(\Z)$,
$\phi$ is determined by $\phi(S)$. Since $U=ST$,
$$
\phi(U) = \phi(ST) = \phi(S) + S\phi(T) = \phi(S).
$$
Thus $\phi$ is determined by $\phi(S)$. Denote this element of $S^{2n}H_F$ by
$\br_\phi(\a,\b)$. Since $S^2=U^3=I$, the cocycle condition implies that
$\br_f(\a,\b)$ satisfies the equation (\ref{eqn:cocycle}). Conversely, if
$\br(\a,\b)\in S^{2n}H_F$ satisfies these equations, it determines a
well-defined cuspidal cocycle $\phi$ by $\phi(S)=\phi(U)=\br(\a,\b)$.

The last statement follows as the only cuspidal coboundaries are scalar
multiplies of $\delta\a^{2n}$. This has value $\br(\a,\b) = \b^{2n}-\a^{2n}$ on
$S$.
\end{proof}

\begin{remark}
Since $(I+S)\sum_j c_j \a^j \b^{2n-j}$ vanishes if and only if
$c_{2n-j}=(-1)^{j+1}c_j$ for all $j$, the terms of a cocycle $r(\a,\b)$ of top
degree in $\a$ and $\b$ is a multiple of $\a^{2n}-\b^{2n}$. Since this
corresponds to the coboundary of $\a^{2n}$, we can identify
$H^1_\cusp(\SL_2(\Z),S^{2n}H_F)$ with those $\br(\a,\b)$ that satisfy the
cocycle conditions (\ref{eqn:cocycle}) and have no terms of degree $2n$ in $\a$
or $\b$.
\end{remark}

\subsubsection{Modular symbols}
\label{sec:mod_symbs}

If $f$ is a cusp form of weight $2n+2$, the $S^{2n}H$-valued 1-form
$\w_f(\b^{2n})$ is $\SL_2(\Z)$-invariant. Since $f$ is a cusp form, it is
holomorphic on the $q$-disk. We can therefore take the base point $x_o$ above to
be the cusp $q=0$. Since $T$ fixes $q=0$, the function
$$
\gamma \mapsto
\int_{x_o}^{\gamma x_o} \w_f(\bw^{2n}) = 
(2\pi i)^{2n}\int_{x_o}^{\gamma x_o} \w_f(\b^{2n}) \in S^{2n}H_\C
$$
is a well defined cuspidal 1-cocycle. The {\em modular symbol} of $f$ is its
value\footnote{We use this normalization because, if $f\in \B_{2n+2}$, then
$f(q)\bw^{2n} dq/q \in H_\DR^1(\M_{1,1/\Q},S^{2n}\cH)$.
Cf.\ \cite[\S 21]{hain:kzb}.}
$$
\br_f(\a,\b) := \int_0^1 f(q)\bw^{2n}\frac{dq}{q} =
-(2\pi i)^{2n+1}\int_0^\infty f(iy)(\b-iy\a)^{2n}d(iy) \in S^{2n}H_\C
$$
on $S$. It satisfies the cocycle condition (\ref{eqn:cocycle}) and represents
the class
$$
(2\pi i)^{2n}\w_f(\b^{2n}) \in H^1_\cusp(\M_{1,1},S^{2n}\H).
$$
It determines $f$.

\subsection{Hodge theory}
\label{sec:hodge}

The Hodge structure
$$
H^1_\cusp(\M_{1,1},S^{2n}\H) = H^{2n+1,0} \oplus H^{0,2n+1}.
$$
has a description in terms of modular symbols. The underlying $\Q$ vector space
is the set of $\br(\a,\b)\in S^{2n}H_\Q$ that satisfy (\ref{eqn:cocycle}),
modulo $\a^{2n}-\b^{2n}$; the Hodge filtration is given by
$$
H^{2n+1,0} = F^{2n+1}H^1_\cusp(\M_{1,1},S^{2n}\H)
= \{\br_f(\a,\b) : f\in M^o_{2n+2}\}/\C(\a^{2n}-\b^{2n}).
$$

There is more structure. Each element $r(\a,\b)$ of $S^{2n}H$ can be written
in the form
$$
\br(\a,\b) = \br^+(\a,\b) + \br^-(\a,\b),
$$
where $\br^+(\a,\b)$ is the sum of the terms involving only even powers of $\a$
and $\b$ and $\br^-(\a,\b)$ is the sum of the terms involving only odd powers.

If $f$ has real Fourier coefficients (e.g., $f\in \B_{2n+2}$), then
\begin{equation}
\label{eqn:pm}
\br_f(\a,\b) = \br_f^+(\a,\b) + i\br_f^-(\a,\b) 
\end{equation}
where $\br_f^\pm(\a,\b)$ are real. Since the cocycle corresponding to $\bar{f}$
is $\br_f^+(\a,\b) - i\br_f^-(\a,\b)$, $\br_f^+(\a,\b),\br_f^-(\a,\b) \in
S^{2n}H_\R$ also satisfy the cocycle condition. Since the classes
$\w_f(\b^{2n})$, $f\in \B_{2n+2}$ span the cuspidal cohomology, we deduce:

\begin{proposition}
If $\br(\a,\b) \in S^{2n}H_F$ satisfies the cocycle condition
(\ref{eqn:cocycle}), then so do $\br^+(\a,\b)$ and $\br^-(\a,\b)$. \qed
\end{proposition}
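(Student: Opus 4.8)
The plan is to realise $\br=\br^++\br^-$ as the eigenspace decomposition of an involution of $S^{2n}H_F$ which is compatible with the $\SL_2(\Z)$-action up to an outer automorphism, and then to transport cuspidal cocycles along that automorphism; this way the statement follows for every $F$ at once, with no analysis and no base change.

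First I would record the linear algebra. Write $W_F\subset S^{2n}H_F$ for the solution space of the cocycle conditions (\ref{eqn:cocycle}); it is a linear subspace. Let $\psi$ be the $F$-linear involution of $S^{2n}H_F$ induced by $\a\mapsto\a$, $\b\mapsto-\b$; on the monomial basis $\psi(\a^i\b^{2n-i})=(-1)^i\a^i\b^{2n-i}$, so that $\br^+=\tfrac12(\br+\psi\br)$ and $\br^-=\tfrac12(\br-\psi\br)$. Hence it suffices to show $\psi(W_F)\subseteq W_F$. The structural fact I would use is that $\psi$ is the operator by which $g_0:=\mathrm{diag}(1,-1)\in\GL_2(\Z)$ acts on $S^{2n}H_F$ (in the conventions of Section~\ref{sec:sl2}, where $\v_2=\a$ and $\v_1=-\b$), so that $\gamma\mapsto\gamma^\psi:=g_0\gamma g_0^{-1}$ is an involutive automorphism of $\SL_2(\Z)$ satisfying $S^\psi=S^{-1}$, $T^\psi=T^{-1}$, and $\psi\circ\gamma_\ast=(\gamma^\psi)_\ast\circ\psi$ on $S^{2n}H_F$ for every $\gamma$.

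Next I would transport cocycles. Given $\br\in W_F$, the characterisation of cuspidal cohomology proved above yields the unique cuspidal $1$-cocycle $\phi:\SL_2(\Z)\to S^{2n}H_F$ with $\phi(S)=\phi(U)=\br$, so in particular $\phi(T)=0$. Set $\phi'(\gamma):=\psi\big(\phi(\gamma^\psi)\big)$. Using $\psi\circ\gamma_\ast=(\gamma^\psi)_\ast\circ\psi$ and the involutivity of $\gamma\mapsto\gamma^\psi$ one checks directly that $\phi'$ is again a $1$-cocycle; it is cuspidal since $\phi'(T)=\psi\big(\phi(T^{-1})\big)=-\psi\big((T^{-1})_\ast\phi(T)\big)=0$; and its value at $S$ is
$$
\phi'(S)=\psi\big(\phi(S^{-1})\big)=-\psi\big((S^{-1})_\ast\br\big)=-\psi\big(S_\ast\br\big)=\psi(\br),
$$
because $-I$ acts trivially on $S^{2n}H_F$ (hence $(S^{-1})_\ast=S_\ast$) and $(I+S)\br=0$. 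By the same characterisation the value at $S$ of a cuspidal cocycle lies in $W_F$, so $\psi(\br)=\phi'(S)\in W_F$, whence $\br^\pm=\tfrac12(\br\pm\psi\br)\in W_F$, i.e.\ $\br^+$ and $\br^-$ satisfy (\ref{eqn:cocycle}).

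The only step that needs care is checking that twisting by the pair $(\psi,\ \gamma\mapsto\gamma^\psi)$ carries cuspidal cocycles to cuspidal cocycles: that $\phi'$ is a cocycle (this rests on the intertwining identity and on $g_0^2=I$) and that cuspidality survives (this rests on $T^\psi=T^{-1}$, so that the cuspidal direction $\langle T\rangle$ is preserved). Everything else is a one-line computation. As an independent check one could instead argue over $\R$: by Eichler–Shimura (Theorem~\ref{thm:eichler_shimura}) the space $W_\R$ is spanned by $\a^{2n}-\b^{2n}$ together with the real and imaginary parts of the modular symbols $\br_f$, $f\in\B_{2n+2}$, all of which are already known to satisfy (\ref{eqn:cocycle}); hence $\psi(W_\R)\subseteq W_\R$, and since $W$ and $\psi$ are defined over $\Q$ this forces $\psi(W_F)\subseteq W_F$ for every field $F$ of characteristic zero. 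The cocycle-twisting argument is preferable only in that it is self-contained.
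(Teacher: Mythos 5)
Your argument is correct, but it is not the route the paper takes. The paper's proof is Hodge-theoretic: for a Hecke eigenform $f$ (which has real Fourier coefficients) one writes $\br_f=\br_f^+ + i\br_f^-$ with both parts real, observes that the cocycle of $\bar f$ is $\br_f^+ - i\br_f^-$, so that $\br_f^\pm$ satisfy (\ref{eqn:cocycle}), and then uses Eichler--Shimura (the classes $\w_f(\b^{2n})$, $f\in\B_{2n+2}$, together with the coboundary $\a^{2n}-\b^{2n}$, span the solution space) to conclude --- exactly the argument you relegate to your ``independent check,'' including the implicit descent from $\R$ (or $\C$) to an arbitrary field $F$ of characteristic zero via $\Q$-rationality of the conditions. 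Your main argument is instead purely group-theoretic: the linear involution $\psi:\a\mapsto\a$, $\b\mapsto -\b$ is the action of $\mathrm{diag}(1,-1)$ in the conventions of Section~\ref{sec:sl2} (one checks directly from the action $\gamma\cdot\a=a\a-c\b$, $\gamma\cdot\b=-b\a+d\b$ that $\psi\circ\gamma_\ast=(\gamma^\psi)_\ast\circ\psi$ with $\gamma^\psi$ conjugation by $\mathrm{diag}(1,-1)$, so $S^\psi=S^{-1}$, $T^\psi=T^{-1}$), and twisting a cuspidal cocycle $\phi$ with $\phi(S)=\br$ by $(\psi,\gamma\mapsto\gamma^\psi)$ produces a cuspidal cocycle with value $\psi(\br)$ at $S$; since the value at $S$ of any cuspidal cocycle satisfies (\ref{eqn:cocycle}) (this is the forward direction of the proof of the characterization, using that $S^2=U^3=-I$ acts trivially on $S^{2n}H$), the solution space is $\psi$-stable and hence contains $\br^\pm=\tfrac12(\br\pm\psi\br)$. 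What your approach buys is that it is self-contained and uniform in $F$: it needs no modular forms, no spanning statement, and no base change. What the paper's approach buys is that it comes for free from the surrounding discussion (the decomposition $V_F=V_F^+\oplus V_F^-$ and its identification with the $\pm$-eigenspaces of real Frobenius in Lemma~\ref{lem:frob}), which is the structure the paper actually needs afterwards; indeed your $\psi$ is precisely the action (\ref{eqn:conjugation}) of $\Fr_\infty$ on $H$, so the two arguments are two views of the same symmetry. The only stylistic caution is that your appeal to ``the same characterisation'' should cite the step inside its proof (value at $S$ of a cuspidal cocycle satisfies (\ref{eqn:cocycle})) rather than the cohomological statement itself; with that phrasing the argument is complete.
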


This gives a decomposition $V_F = V_F^+ \oplus V_F^-$ of $V_F :=
H^1_\cusp(\M_{1,1},S^{2n}\H_F)$. Since
$$
H^{2n+1,0}_\cusp(\M_{1,1},S^{2n}\H) \cap H^1(\M_{1,1},S^{2n}\H_\R) = 0
$$
both parts $\br_f^\pm(\a,\b)$ of the modular symbol of a cusp form with  real
Fourier coefficients are non-zero. In particular, for each $f\in \B_{2n+2}$ we
can write
$$
V_{f,\R} = V_{f,\R}^+ \oplus V_{f,\R}^-
:= \R\,\br_f^+(\a,\b) \oplus \R\,\br_f^-(\a,\b).
$$
Note that $V_f^+$ and $V_f^-$ are real sub Hodge structures of $V_f$. 

\subsubsection{The action of real Frobenius}
\label{sec:frob}

Complex conjugation (aka ``real Frobenius'') $\Fr_\infty \in \Gal(\C/\R)$ acts
on $\M_{1,1}$ and on the local system $\H_\R$ as we shall explain below. It
therefore acts on $V_\R = H^1_\cusp(\M_{1,1},S^{2n}\H_\R)$. In this section we
show that its eigenspaces are $V_\R^\pm$.

The stack $\M_{1,1/\C}$ has a natural real (even $\Q$) form, viz, $\Gm\bbs
(\A^2_\R - D)$, where $D$ is the discriminant locus $u^3-27v^2=0$ and where
$t\cdot(u,v) = (t^4u,t^6v)$. The involution $\Fr_\infty: (u,v)\mapsto
(\bar{u},\bar{v})$ of $\M_{1,1}$ is covered by the involution of $\tau \mapsto
-\taubar$ of $\h$.

The universal curve $\E$ over it also has a natural real form as it is defined
over $\R$. The projection $\E \to \M_{1,1}$ is invariant under complex
conjugation. This implies that $\Fr_\infty$ acts on $\H_\R$. This action is
determined by the action of $\Fr_\infty$ on $H$, the fiber over the tangent
vector $\partial/\partial q$, which is real and therefore fixed by $\Fr_\infty$.
This induced map is easily seen to be the involution $\sigma : H \to H$ defined
by\footnote{Here $H$ is identified with $H^1(E_{\partial/\partial q})$ which is
isomorphic to $H_1(E_{\partial/\partial q})(-1)$. So the actions of $\Fr_\infty$
on $H_1(E_{\partial/\partial q})$ and $H^1(E_{\partial/\partial q})$ differ by
$-1$.}
\begin{equation}
\label{eqn:conjugation}
\b \mapsto -\b,\ \a \mapsto \a.
\end{equation}
The monodromy representation $\SL_2(\Z) \to \Aut H$ of $\Fr^\ast_\infty \H$ is
the standard representation conjugated by $\sigma$. There is therefore a natural
action
$$
\Fr_\infty : H^1_\cusp(\M_{1,1},S^{2n}\H_\R)
\to H^1_\cusp(\M_{1,1},S^{2n}\H_\R).
$$
Let $\Frbar_\infty : H^1_\cusp(\M_{1,1},S^{2n}\H_\C) \to
H^1_\cusp(\M_{1,1},S^{2n}\H_\C)$ be its composition with complex conjugation
$H^1_\cusp(\M_{1,1},S^{2n}\H_\C) \to H^1_\cusp(\M_{1,1},S^{2n}\H_\C)$. This is
the de~Rham involution.

\begin{lemma}
\label{lem:frob}
Real Frobenius $\Fr_\infty$ acts on $V_\R = H^1_\cusp(\M_{1,1},S^{2n}\H_\R)$ by
multiplication by $+1$ on $V_\R^+$ and $-1$ on $V_\R^-$.
\end{lemma}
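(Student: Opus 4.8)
The plan is to compute the action of $\Fr_\infty$ directly in the modular-symbol model of $V_\R = H^1_\cusp(\M_{1,1},S^{2n}\H_\R) \cong H^1_\cusp(\SL_2(\Z),S^{2n}H_\R)$. Recall from the preceding discussion that the involution $\Fr_\infty$ of $\M_{1,1}$ is covered by the diffeomorphism $\iota : \h \to \h$, $\iota(\tau) = -\taubar$, that $\iota(\gamma\tau) = (w\gamma w^{-1})\,\iota(\tau)$ with $w = \mathrm{diag}(1,-1) \in \GL_2(\Z)$, and that the identification $\Fr_\infty^\ast\H_\R \cong \H_\R$ of local systems is the one induced on fibres by $\sigma\colon \b\mapsto -\b,\ \a\mapsto\a$; note that $w$ acts on $H$ through formula~(\ref{eqn:formula}) precisely as $\sigma$, so $\rho(w) = \sigma$ on every $S^{2n}H$, and in particular $\sigma\,\rho(w) = \id$. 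Combining the pullback along the group automorphism $c_w\colon\gamma\mapsto w\gamma w^{-1}$ with the coefficient isomorphism $\sigma$, I would first show that on group cohomology $\Fr_\infty^\ast$ sends the class of a $1$-cocycle $\phi\colon\SL_2(\Z)\to S^{2n}H_\R$ to the class of $\phi'\colon\gamma\mapsto \sigma\,\phi(w\gamma w^{-1})$; the cocycle identity for $\phi'$ follows from that for $\phi$ together with $\sigma\,\rho(w\gamma w^{-1}) = \rho(\gamma)\,\sigma$, and one checks $(\Fr_\infty^\ast)^2 = \id$. Since $\Fr_\infty$ is a homeomorphism of the underlying space, this endomorphism of $V_\R$ is $\R$-linear (it is only the composite $\Frbar_\infty$ with complex conjugation that is conjugate-linear), so the statement to be proved makes sense.

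Next I would evaluate at $S$. A cuspidal representative $\phi$ has $\phi|_{\langle T\rangle} = 0$, hence $\phi(T)=0$ and $\phi(-I)=0$, and its class corresponds to $\br_\phi := \phi(S) = \phi(U)$ modulo $\R(\a^{2n}-\b^{2n})$, by the Proposition above. A direct matrix computation gives $wSw^{-1} = -S = (-I)S$, so $\phi(w S w^{-1}) = \phi(-I) + \rho(-I)\phi(S) = \phi(S)$, using that $-I$ acts trivially on $S^{2n}H$. Likewise $\phi'(T) = \sigma\,\phi(wTw^{-1}) = \sigma\,\phi(T^{-1}) = -\sigma\,\rho(T)^{-1}\phi(T) = 0$, so $\phi'$ is again cuspidal and $\br_{\phi'} = \phi'(S) = \sigma(\br_\phi)$. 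Thus, under the isomorphism $H^1_\cusp(\SL_2(\Z),S^{2n}H_\R) \cong \{\,\br : (I+S)\br = 0,\ (I+U+U^2)\br = 0\,\}/\R(\a^{2n}-\b^{2n})$, the operator $\Fr_\infty$ is simply induced by $\sigma$ acting on $S^{2n}H_\R$.

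It then remains to read off the eigenspaces. On a monomial $\a^i\b^j$ with $i+j=2n$ one has $\sigma(\a^i\b^j) = (-1)^j\a^i\b^j = (-1)^i\a^i\b^j$, so $\sigma$ is $+1$ on the span of monomials of even degree in $\a$ and $\b$ and $-1$ on the span of those of odd degree, and it fixes $\a^{2n}-\b^{2n}$. Since $V_\R^+$ (resp.\ $V_\R^-$) is by definition the subspace of classes represented by such even (resp.\ odd) cocycles $\br$ --- equivalently, by Eichler--Shimura and~(\ref{eqn:pm}), the span of the $\R\,\br_f^+(\a,\b)$ (resp.\ $\R\,\br_f^-(\a,\b)$) over $f\in\B_{2n+2}$ --- this yields the lemma. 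As a consistency check I would also pull back the invariant $1$-form $\w_f(\b^{2n}) = 2\pi i\, f(\tau)(\b-\tau\a)^{2n}\,d\tau$ along $\iota$ and apply $1\otimes\sigma$: for $f$ with real Fourier coefficients $\iota^\ast f = \overline{f}$ and $\iota^\ast d\tau = -d\taubar$, and one recovers $\wbar_f(\b^{2n})$, whose class has modular symbol $\overline{\br_f} = \br_f^+ - i\br_f^- = \sigma(\br_f)$, in agreement with the cocycle computation.

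I expect the only genuine subtlety to be the normalization of the identification $\Fr_\infty^\ast\H_\R \cong \H_\R$: Schur's lemma forces it to be a real scalar times $\sigma$, and one must check the scalar is $1$. This is exactly the $H^1$-versus-$H_1$ bookkeeping flagged in the footnote preceding the lemma, and it is pinned down by the behaviour of $\Fr_\infty$ on the real fibre $H = H^1(E_{\partial/\partial q})$ over the tangent vector $\partial/\partial q$, which $\Fr_\infty$ fixes. Everything else --- the presentation $\langle S,U\rangle$, the cocycle bookkeeping, and the parity count for $\sigma$ --- is routine.
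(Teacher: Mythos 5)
Your argument is correct, and it reaches the lemma by a genuinely more explicit route than the paper does. The paper's proof is a three-line appeal to (\ref{eqn:pm}), (\ref{eqn:conjugation}) and the fact that the image of $F^{2n+1}H^1_\DR(\M_{1,1/\R},S^{2n}\cH)$ in $H^1_\cusp(\M_{1,1},S^{2n}\H_\C)$ is fixed by $\Frbar_\infty$: this forces $\Fr_\infty[\w_f]=[\wbar_f]$ for $f\in\B_{2n+2}$, and then $\C$-linearity of $\Fr_\infty$ together with (\ref{eqn:pm}) identifies the $\pm1$-eigenspaces as $V_\R^\pm$. You instead work entirely at the cocycle level: using that $\tau\mapsto-\taubar$ covers $\Fr_\infty$, intertwines the $\SL_2(\Z)$-actions through conjugation by $w=\mathrm{diag}(1,-1)$, and that the coefficient identification is $\sigma$, you get the induced map $\phi\mapsto\sigma\circ\phi\circ c_w$ on cuspidal cocycles; since $wSw^{-1}=-S$ and $\phi(-I)=0$, the value at $S$ is simply hit by $\sigma$, so $\Fr_\infty$ acts on modular symbols exactly as $\sigma$, and the parity count $\sigma(\a^i\b^j)=(-1)^j\a^i\b^j$ (with $i+j=2n$ even, and $\a^{2n}-\b^{2n}$ fixed) gives the eigenspaces. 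Your computations check out with the paper's conventions ($\rho(w)=\sigma$ in the basis $-\b,\a$, $wTw^{-1}=T^{-1}$, cuspidality preserved), and your closing ``consistency check'' — pulling back $\w_f$ along $\iota$ and applying $1\otimes\sigma$ to recover $\wbar_f$ — is in substance the paper's own mechanism. What each approach buys: yours is self-contained, avoids invoking the real de~Rham structure, and yields the sharper statement that $\Fr_\infty$ is literally $\sigma$ on the modular-symbol model; the paper's is shorter and identifies the conceptual source of the symmetry (invariance of the real de~Rham $F^{2n+1}$ under the de~Rham involution), which is the form in which it is reused later (e.g.\ in Lemma~\ref{lem:1_dimnl} and Proposition~\ref{prop:ext_hodge}). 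The one normalization point you flag — that the identification $\Fr_\infty^\ast\H_\R\cong\H_\R$ is $\sigma$ on the fibre, not $-\sigma$ — is exactly what the discussion preceding the lemma (the real fibre over $\partial/\partial q$ and the $H^1$-versus-$H_1$ footnote) supplies, so there is no gap.
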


\begin{proof}
The result follows from equations (\ref{eqn:pm}) and (\ref{eqn:conjugation}) and
the fact that the image of  $F^{2n+1}H^1_\DR(\M_{1,1/\R},S^{2n}\cH)$ in
$H^1_\cusp(\M_{1,1},S^{2n}\H_\C)$ is invariant under $\Frbar_\infty$.
\end{proof}

\subsubsection{Extensions of MHS associated to cusp forms}

Since $V_{f,\C} = V_{f,\R}\oplus F^rV_f$ when $r<2n+2$ and $F^0 V_f(r) = 0$ when
$r\ge 2n+2$, we have
\begin{equation}
\label{eqn:ext_hodge}
\Ext_\MHS^1\big(\R,V_{f}(r)\big) \cong
\begin{cases}
0 &  r < 2n+2, \cr
iV_{f,\R} & r \ge 2n+2\text{ even,}\cr
V_{f,\R} & r \ge 2n+2 \text{ odd.}
\end{cases}
\end{equation}
We can now compute the extensions invariant under the de~Rham involution.

\begin{proposition}
\label{prop:ext_hodge}
If $f \in\B_{2n+2}$, then
$$
\Ext_\MHS^1\big(\R,V_{f}(r)\big)^{\overline{\Fr}_\infty} \cong
\begin{cases}
0 &  r < 2n+2, \cr
iV_{f,\R}^- & r \ge 2n+2\text{ even,}\cr
V_{f,\R}^+ & r \ge 2n+2 \text{ odd.}
\end{cases}
$$
In particular, it is one dimensional for all $r\ge 2g+2$.
\end{proposition}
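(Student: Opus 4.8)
The plan is to overlay the action of the de~Rham involution onto the computation of $\Ext^1_\MHS(\R,V_f(r))$ already recorded in~(\ref{eqn:ext_hodge}). First I would recall from Section~\ref{sec:frob} that $\overline{\Fr}_\infty$ is the composite $c\circ F_\infty$, where $F_\infty$ is the $\C$-linear extension to $V_{f,\C}$ of the real Frobenius on $V_{f,\R}$ and $c$ is complex conjugation with respect to the $\Q$-structure of $V_f(r)$, namely $(2\pi i)^rV_{f,\Q}$. Because $F_\infty$ is induced by an antiholomorphic self-map of $\M_{1,1}$ it carries $F^p$ to $\overline{F^p}$, so $\overline{\Fr}_\infty$ is a conjugate-linear involution of $V_{f,\C}$ preserving both the real structure $(2\pi i)^rV_{f,\R}$ and every Hodge subspace $F^p$. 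Hence it descends to $\Ext^1_\MHS(\R,V_f(r))=V_{f,\C}/\big((2\pi i)^rV_{f,\R}+F^0V_f(r)_\C\big)$, and the proposition reduces to identifying its fixed subspace there.

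For $r<2n+2$ the group is zero by~(\ref{eqn:ext_hodge}), so suppose $r\ge 2n+2$; then $F^0V_f(r)_\C=F^rV_{f,\C}=0$ since the Hodge types of $V_f$ are $(2n+1,0)$ and $(0,2n+1)$, and the Ext group is $V_{f,\C}/(2\pi i)^rV_{f,\R}$. I would then feed in two facts: Lemma~\ref{lem:frob}, which says $F_\infty$ acts on $V_{f,\R}$ by $+1$ on $V_{f,\R}^+=\R\,\br_f^+(\a,\b)$ and by $-1$ on $V_{f,\R}^-=\R\,\br_f^-(\a,\b)$; and the fact that $F_\infty$ acts on $\Q(r)$ by $(-1)^r$, since it negates $2\pi i$. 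Together these show $F_\infty$ is diagonal on $(2\pi i)^rV_{f,\R}$, with eigenvalue $(-1)^r$ on $(2\pi i)^r\br_f^+(\a,\b)$ and $(-1)^{r+1}$ on $(2\pi i)^r\br_f^-(\a,\b)$. The one delicate point, which I expect to be the main obstacle, is then a clean sign chase: if $T$ is a $\C$-linear involution of a complex vector space, defined over $\R$, with $Tv=\epsilon v$ for a real vector $v$, then $(cT)(iv)=c(i\epsilon v)=-\epsilon(iv)$, so the conjugate-linear involution $cT$ acts by $-\epsilon$ on the class of $iv$ modulo the real span. Applied with $v=(2\pi i)^r\br_f^\pm(\a,\b)$, this gives that $\overline{\Fr}_\infty$ acts on $\Ext^1_\MHS(\R,V_f(r))$ by $(-1)^{r+1}$ on the line spanned by the image of $i(2\pi i)^r\br_f^+(\a,\b)$ and by $(-1)^r$ on the line spanned by $i(2\pi i)^r\br_f^-(\a,\b)$.

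Finally I would take invariants. In each case $r\ge 2n+2$ exactly one of the two lines survives, so the fixed space is one-dimensional. If $r$ is odd the surviving line is the one through $i(2\pi i)^r\br_f^+(\a,\b)$; here $(2\pi i)^r\in 2\pi i\cdot\R^\times$, so $i(2\pi i)^r\in\R^\times$ and, under the identification $V_{f,\C}/(2\pi i)^rV_{f,\R}\cong V_{f,\R}$ used in~(\ref{eqn:ext_hodge}), this line is $V_{f,\R}^+$. If $r$ is even the surviving line is the one through $i(2\pi i)^r\br_f^-(\a,\b)$; here $(2\pi i)^r\in\R^\times$, so $i(2\pi i)^r$ is purely imaginary and, under the identification $V_{f,\C}/(2\pi i)^rV_{f,\R}\cong iV_{f,\R}$, this line is $iV_{f,\R}^-$. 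This matches the stated formula; the bulk of the care is in the paragraph above, tracking which powers of $2\pi i$ are real versus imaginary across the Tate twist and remembering that $\overline{\Fr}_\infty$ is conjugate-linear, after which the identifications with $V_{f,\R}^+$ and $iV_{f,\R}^-$ are immediate.
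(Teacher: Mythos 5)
Your argument is correct and follows essentially the same route as the paper: it combines the computation recorded in (\ref{eqn:ext_hodge}) with Lemma~\ref{lem:frob} and the $(-1)^r$ coming from the Tate twist, then takes $\overline{\Fr}_\infty$-invariants in $V_{f,\C}/(2\pi i)^rV_{f,\R}$. Your explicit sign chase (tracking that conjugation is taken with respect to the twisted real structure and which powers of $2\pi i$ are real versus imaginary) just spells out what the paper compresses into the statement that $\overline{\Fr}_\infty$ acts on $iV_\R^+(r)$ by $(-1)^{r+1}$ and on $iV_\R^-(r)$ by $(-1)^r$, via the equivariant isomorphism $\Ext^1_\MHS(\R,V_f(r))\cong iV_{f,\R}(r)$.
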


\begin{proof}
Since $\Fr_\infty : V_{f,\C} \to V_{f,\C}$ is $\C$-linear and since twisting by
$\R(r)$ multiplies this action by $(-1)^r$, Lemma~\ref{lem:frob} implies that
$\Frbar_\infty$ acts on $iV_\R^+(r)$ by multiplication by $(-1)^{r+1}$ and on
$iV_\R^-$ by $(-1)^{r}$. The result follows from the
$\Frbar_\infty$-equivariant isomorphism $\Ext^1_\MHS(\R,V_f(r)) \cong i
V_{f,\R}(r)$ which holds for all $r\ge 2n+2$.
\end{proof}

So each normalized Hecke eigen cusp form $f\in \B_{2n+2}$ determines an element
of $\Ext_\MHS^1\big(\R,V_{f}(r)\big)^{\overline{\Fr}_\infty}$ for each $r\ge
2n+2$. Namely, the extension corresponding to $i\br_f^+(\a,\b) \in iV_{f,\R}^+$
when $r$ is even, and $\br_f^-(\a,\b)\in V_{f,\R}^-$ when $r$ is odd.

\subsection{Pollack's quadratic relations}
\label{sec:pollack}

Motivic considerations \cite{hain-matsumoto:mem} suggest that $\u_x^\eis$ is not
free and that a minimal set of relations that hold between the
$\e_0^j\cdot\e_{2n}$ are parametrized by cusp forms. In fact, each cusp form
should determine relations between the $\e_0^j\cdot\e_{2n}$ of every degree $\ge
2$. (For this purpose, $\e_0$ is considered to have degree 1.) One way to guess
such relations is to find relations that hold between their images $\e_0^j\cdot
\epsilon_{2n}(\b,\a)$ in $\Der\Gr^W\p \cong \Der\L(H)$.\footnote{Since
$\Gr^W_\dot$ is exact, one neither gains nor loses relations in the associated
graded.} For convenience, set $\epsilon_{2n} = \epsilon_{2n}(\b,\a)$.

In his undergraduate thesis \cite{pollack}, Pollack found a complete set of
quadratic relations that hold between the $\epsilon_{2n}$, and found relations
of all degrees $>2$ that hold between the $\e_0^j\cdot \epsilon_{2n}$ modulo a
certain filtration of $\Der\L(H)$. Here we state his quadratic relations.

\begin{theorem}[{Pollack \cite[Thm.~2]{pollack}}]
\label{thm:pollack}
The relation
$$
\sum_{
\substack{j+k=n\cr j,k>0}}
c_{j} [\epsilon_{2j+2},\epsilon_{2k+2}] = 0 
$$
holds in $\Der\L(H)$ if and only if there is a cusp form $f$ of weight $2n+2$
whose modular symbol is
$$
r_f^+(\a,\b) = \sum_{\substack{j+k=n\cr j,k\ge 0}} c_j \a^{2j}\b^{2n-2j}.
$$
\end{theorem}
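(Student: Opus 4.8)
The relation in the theorem is an identity among explicit derivations of the free Lie algebra $\L(H)$ on $\{\a,\b\}$, so the first step is to organize both sides by the evident symmetries. Since $[\epsilon_{2j+2},\epsilon_{2k+2}]=-[\epsilon_{2k+2},\epsilon_{2j+2}]$, the sum $\sum_{j+k=n}c_j[\epsilon_{2j+2},\epsilon_{2k+2}]$ depends on $(c_j)$ only through the antisymmetrized coefficients $c_j-c_{n-j}$; and, using the action $\a\mapsto-\b$, $\b\mapsto\a$ of the element $S$ of $\SL_2(\Z)$ recalled in Section~\ref{sec:mod_symbs}, one checks that the polynomial $r(\a,\b)=\sum_j c_j\a^{2j}\b^{2n-2j}$ satisfies $(I+S)r=0$ if and only if $c_j=-c_{n-j}$ for all $j$. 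So I may assume $c_j=-c_{n-j}$ throughout, and the theorem reduces to the statement that such a combination vanishes in $\Der\L(H)$ if and only if $(I+U+U^2)r\equiv 0$ modulo $\a^{2n}-\b^{2n}$. By Eichler--Shimura (Theorem~\ref{thm:eichler_shimura}) and the modular-symbol description of $H^1_\cusp(\M_{1,1},S^{2n}\H)$ in Section~\ref{sec:hodge}, the polynomials $r$ with even exponents, no $\a^{2n}$ or $\b^{2n}$ term, and $(I+S)r=(I+U+U^2)r=0$ are precisely the $r_f^+$ for $f\in\fM_{2n+2}^o$, so this is exactly the asserted equivalence.

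The content is therefore the evaluation of $[\epsilon_{2j+2},\epsilon_{2k+2}]$, or rather of its image in a suitable graded quotient of $\Der^0\L(H)$. The direct route (Pollack's) is to substitute the closed formula for $\epsilon_{2n}$ and expand the bracket; the terms regroup with binomial coefficients, and after projecting away the ``inner'' part of the relevant graded piece — the part on which all the $[\epsilon_{2j+2},\epsilon_{2k+2}]$ with $j+k=n$ are automatically proportional — one finds that $\sum_j c_j[\epsilon_{2j+2},\epsilon_{2k+2}]$ equals, up to a fixed nonzero scalar, the image of $(I+U+U^2)\bigl(\sum_j c_j\a^{2j}\b^{2n-2j}\bigr)$ under an explicit injection. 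Vanishing of the relation is then equivalent to $(I+U+U^2)r\equiv 0$ modulo $\a^{2n}-\b^{2n}$. Alternatively, and in the Hodge-theoretic spirit of the paper (this is also the mechanism reused in Section~\ref{sec:relations}), one phrases the same computation via the cup product on modular cohomology: there is a natural pairing between the degree-two component of the Lie algebra generated by the $\epsilon_{2n}$ and the space of even period polynomials, induced by $H^1(\M_{1,1},S^{2j}\H)\otimes H^1(\M_{1,1},S^{2k}\H)\to H^2(\M_{1,1},S^{2j+2k-2r}\H)$, and Brown's and Terasoma's evaluation of this cup product (Theorem~\ref{thm:cup}) makes the pairing explicit, turning $\sum c_j[\epsilon_{2j+2},\epsilon_{2k+2}]=0$ into the vanishing of $r$ against all coboundaries, i.e.\ into the Eichler--Shimura cocycle conditions. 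In either approach, a dimension count via Eichler--Shimura confirms that the solution space has dimension $\dim\fM_{2n+2}^o$, matching the span of the $r_f^+$.

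The main obstacle is precisely this bracket computation (equivalently, the Brown--Terasoma cup-product formula): one must show that, modulo the trivially proportional part of $\Der^0\L(H)$, the brackets $[\epsilon_{2j+2},\epsilon_{2k+2}]$ with $j+k=n$ satisfy exactly the three-term relations governed by the $U$-action and no others — the first instance being $[\epsilon_4,\epsilon_{10}]=3[\epsilon_6,\epsilon_8]$, coming from the weight-$12$ cusp form. Once this is in hand, the remaining steps — the $(I+S)$/antisymmetry reduction, the translation into period polynomials, and the dimension count — are formal. In carrying this out I would be careful about normalizations: the status of $\a^{2n}-\b^{2n}$ as the unique cuspidal coboundary, the exclusion of the terms with $j,k\in\{0,n\}$ from the sum (which is what allows one to work with the normalized representatives having no $\a^{2n}$ or $\b^{2n}$ term), and the $\SL_2$-equivariant identifications $\sl(H)\cong\sl_2$, $\SL(H)\cong\SL_2$ and $S^nH=S^n(\b^n)$ fixed in Sections~\ref{sec:sl2} and~\ref{sec:monodromy}.
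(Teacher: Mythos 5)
Your reductions are fine as far as they go: the antisymmetrization under $j\leftrightarrow n-j$ does match the $(I+S)$-condition, the even cocycles modulo $\a^{2n}-\b^{2n}$ are indeed the $r_f^+$ by Theorem~\ref{thm:eichler_shimura}, and the dimension count is the right consistency check. But these are the formal outer layers. The actual content of the theorem --- that the kernel of the map sending $(c_j)_{0<j<n}$ to $\sum_{j+k=n}c_j[\epsilon_{2j+2},\epsilon_{2k+2}]$ in $\Der\L(H)$ is \emph{exactly} the span of the interior coefficient vectors of the $r_f^+$ --- is nowhere established in your proposal. You defer it to ``substitute the closed formula and expand,'' and you assert, without proof, that after ``projecting away the inner part'' the brackets satisfy precisely the three-term relations governed by the $U$-action and no others. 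Neither direction is verified: you do not show that the coefficients of a cuspidal even period polynomial genuinely annihilate the bracket combination (a nontrivial identity among explicit derivations), nor that no further linear relations among the $[\epsilon_{2j+2},\epsilon_{2k+2}]$ exist. That bracket computation \emph{is} the theorem; note that the paper itself supplies no proof but cites Pollack's thesis, where it is done by an explicit expansion in $\Der\L(H)$ and careful bookkeeping of the resulting binomial identities against the cocycle conditions, so you were expected to supply that computation rather than outline around it.

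The alternative route you sketch through Theorem~\ref{thm:cup} does not close this gap. Brown's and Terasoma's computation takes place in Deligne--Beilinson cohomology $H^2_\cD\big(\M_{1,1},S^{2n}\H_\R(2n+2)\big)$, i.e.\ it is a statement about extensions of mixed Hodge structures over the modular curve, not about the cup product $H^1(\n)\otimes H^1(\n)\to H^2(\n)$ for the image $\n$ of the monodromy in $\Der\L(H)$, which is what is dual (via Proposition~\ref{prop:cup}) to the quadratic relations you need. In Section~\ref{sec:relations} the logic runs the other way: Pollack's purely Lie-theoretic relations, restated as Proposition~\ref{prop:dual_pollack}, are \emph{combined} with Theorem~\ref{thm:cup} to deduce relations in $\u^\eis$ (Theorem~\ref{thm:quad}); Theorem~\ref{thm:cup} alone gives only non-vanishing of a composite map, and the precise coefficients in (\ref{eqn:poll_cup}) come from Pollack's computation, not from the cup product in Deligne cohomology. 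So invoking it here both inverts the paper's architecture and, even granting it, could at best show that certain combinations are relations, never that they exhaust all relations.
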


In view of Theorem~\ref{thm:images}, this suggests each cusp form of weight
$2n+2$ with $r_f^+(\a,\b) = \sum c_j \a^{2j}\b^{2n-2j}$ might determine a
relation
\begin{equation}
\label{eqn:ext_map}
\sum_{\substack{j+k=n\cr j,k>0}} c_{j}(2j)!(2k)!\, [\e_{2j+2},\e_{2k+2}] = 0
\end{equation}
in $\Gr^W_\dot\u_x^\eis$ and that these relations are connected with
$\Ext^1_\MHS(\R,V_f(2r))$ for appropriate $r>0$. In the remaining sections, we
show that Pollack's relations do indeed lift to relations in $\Gr^W_\dot
\u^\eis_x$ and explain the connection to extensions of MHS related to cusp
forms. In preparation for proving this, we restate Pollack's result in
cohomological terms.

We let the base point be $\vv = \partial/\partial q$, although any base point
will do. Denote the image of the monodromy homomorphism $\cG \to \Aut\p$ by
$\cD$ and the Lie algebra of its pronilpotent radical by $\n$. Since the
monodromy homomorphism is a morphism of MHS, $\cO(\cD)$ and $\n$ have natural
MHSs. This implies that $H^\dot(\cD,S^m H(r))$ has a natural MHS for each
$m\ge 0$ and $r\in \Z$ and that the natural isomorphism
$$
H^\dot(\cD,S^mH(r)) \cong \big[H^\dot(\n)\otimes S^m H(r)\big]^{\SL(H)}
$$
is an isomorphism of MHS. (Cf.\ \cite{hain:db-coho}.)

Since the monodromy representation factors through $\cG^\eis$, there is an
MHS isomorphism
$$
H_1(\n) \cong \bigoplus_{n>0} S^{2n}(\epsilon_{2n+2})
\cong \bigoplus_{n>0}S^{2n}H(2n+1).
$$
This implies that
$$
H^1\big(\cD,S^{2n}H(2n+1)\big) = \Hom_{\SL(H)}(H_1(\n),S^n H)(2n+1) \cong \Q(0).
$$
Regard $S^{2m}H = S^{2m}(\b^{2m})$. Let $\epsilondual_{2m}$ be the element of
$$
H^1(\cD,S^{2m}H) \cong \Hom_{\SL(H)}(H_1(\u),S^{2m}H)
$$
that takes the class of $\epsilon_{2m+2}$ to $\b^{2m}$.

The standard duality (Prop.~\ref{prop:cup}) between quadratic relations in $\n$
and the cup product $H^1(\n)\otimes H^1(\n)\to H^2(\n)$ gives the following dual
version of Pollack's quadratic relations, Theorem~\ref{thm:pollack}.

\begin{proposition}
\label{prop:dual_pollack}
There is a surjection
$$
H^2(\cD,S^{2n}H) \to H^1_\cusp(\M_{1,1},S^{2n}\H_\Q)^{\overline{\Fr}_\infty}
\otimes \Q(-2n-2)
$$
which is a morphism of MHS when
$H^1_\cusp(\M_{1,1},S^{2n}\H)^{\overline{\Fr}_\infty}$ is regarded as a HS of
type $(0,0)$. After tensoring with $\R$, it gives a surjection
\begin{equation}
\label{eqn:projn}
H^2(\cD,S^{2n}H_\R) \to \bigoplus_{f\in \B_{2n+2}}\R(-2n-2).
\end{equation}
The generator of $\R(-2n-2)$ corresponding to $f\in \B_{2n+2}$ will be denoted
by $z_f$. When $j+k=n$, the composition of the cup product
$$
H^1\big(\cD,S^{2j}H(2j+1)\big)\otimes H^1\big(\cD,S^{2k}H(2k+1)\big)
\to H^2\big(\cD,S^{2n}H(2n+2)\big)
$$
with the projection (\ref{eqn:projn}) is given by
\begin{equation}
\label{eqn:poll_cup}
\epsilondual_{2j} \otimes \epsilondual_{2k} \to
\sum_{f\in \B_{2n+2}} c_{j,k}(f)\, z_f 
\end{equation}
where $\br_f^+(\a,\b) = \sum_{j+k=n} c_{j,k}(f)\,\a^{2j}\b^{2k}$.
\end{proposition}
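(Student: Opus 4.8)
The plan is to prove Proposition~\ref{prop:dual_pollack} by combining the dual form of Sullivan's exact sequence (Prop.~\ref{prop:cup}) with the identification of $H^\dot(\cD,S^{2n}H)$ in terms of $H^\dot(\n)$ and the Hodge-theoretic computations of Section~\ref{sec:hodge}. First I would recall that $\n$ is the Lie algebra of the prounipotent radical of the image $\cD$ of the monodromy homomorphism $\cG \to \Aut\p$; since this homomorphism is a morphism of MHS (previous section), $\n$ carries a MHS and $H_1(\n) \cong \bigoplus_{n>0} S^{2n}H(2n+1)$, with the class of $\epsilon_{2n+2}$ generating the $n$-th summand. This gives $H^1(\cD,S^{2m}H) \cong \Hom_{\SL(H)}(H_1(\n),S^{2m}H) \cong \Q(0)$ via the dual basis element $\epsilondual_{2m}$. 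Applying Prop.~\ref{prop:cup} (in its dualized, finite-dimensional-in-each-weight form: one works weight-graded-quotient by weight-graded-quotient, where all the relevant spaces are finite dimensional), the cokernel of the bracket $\Lambda^2 H_1(\n) \to \Gr^2_\LCS\n$ dualizes to the statement that $\cone$ of the cup product $\Lambda^2 H^1(\n) \to H^2(\n)$ detects exactly the quadratic relations. So quadratic relations in $\n$ correspond to classes in $H^2(\n)$ that are hit by the cup product, and $H^2(\cD,S^{2n}H) = [H^2(\n)\otimes S^{2n}H]^{\SL(H)}$ receives the $\SL(H)$-invariant part of this cup product.

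Next I would identify $[H^2(\n)]$ in the relevant $\SL(H)$-isotypic components with cuspidal cohomology. The key input is Theorem~\ref{thm:pollack} (Pollack), which says precisely that a quadratic relation $\sum_{j+k=n} c_j[\epsilon_{2j+2},\epsilon_{2k+2}] = 0$ holds in $\Der\L(H)$ iff there is a cusp form $f$ of weight $2n+2$ with $r_f^+(\a,\b) = \sum c_j \a^{2j}\b^{2n-2j}$. Dually, this says the space of such relations (equivalently, the cokernel of the bracket landing in the $S^{2n}H$-isotypic part of $\Gr^2_\LCS\n$) is naturally identified with the space spanned by the $r_f^+$, i.e.\ with $V^+ := H^1_\cusp(\M_{1,1},S^{2n}\H_\Q)^{\overline{\Fr}_\infty}$, using Lemma~\ref{lem:frob} to identify $V^+$ with the $+1$-eigenspace of real Frobenius. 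Dualizing Pollack's theorem gives the surjection $H^2(\cD,S^{2n}H) \to V^+ \otimes \Q(-2n-2)$: the target is the span of the quadratic relations, which by Sullivan's sequence is the image of the cup product inside the $S^{2n}H$-isotypic part of $H^2$. The Tate twist $\Q(-2n-2)$ records the weight: each bracket $[\e_{2j+2},\e_{2k+2}]$ of two generators of respective weights $-2j-3$ and $-2k-3$... — more carefully, I would track weights through the identification $H^1(\cD,S^{2j}H(2j+1)) \cong \Q(0)$, so that the cup product lands in $H^2(\cD,S^{2n}H(2n+2))$, whence the untwisted $H^2(\cD,S^{2n}H)$ carries the relation space twisted by $\Q(-2n-2)$.

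Then I would verify the Hodge-morphism claim. The surjection is a morphism of MHS once $V^+$ is regarded as pure of type $(0,0)$: this follows because the identifications $H^1(\cD,S^{2m}H) \cong \Q(0)$ and the cup product are morphisms of MHS (the monodromy homomorphism being a morphism of MHS, and cup products on Lie algebra cohomology of a MHS-Lie-algebra being morphisms of MHS, cf.\ \cite{hain:db-coho}), and because the target, being the image of a morphism of MHS, inherits a MHS; identifying it with $V^+$ as a $\Q$-vector space and checking weights via Pollack's explicit formula forces the type to be $(0,0)$ after the twist by $\Q(-2n-2)$. Tensoring with $\R$ and using the Hecke eigenbasis $\B_{2n+2}$, $V^+\otimes\R = \bigoplus_{f\in\B_{2n+2}} V_{f,\R}^+$, each summand one-dimensional with basis $r_f^+(\a,\b)$ (Section~\ref{sec:hodge}), yields the projection (\ref{eqn:projn}) with generators $z_f$. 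The final formula (\ref{eqn:poll_cup}) is then just the restatement of Pollack's theorem dualized: the cup product $\epsilondual_{2j}\otimes\epsilondual_{2k}$ pairs against a relation coming from $f$ with coefficient $c_{j,k}(f)$ precisely when $r_f^+(\a,\b) = \sum c_{j,k}(f)\a^{2j}\b^{2k}$.

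The main obstacle I anticipate is not the abstract formalism but the bookkeeping of normalizations and Tate twists: reconciling Pollack's combinatorial normalization of $\epsilon_{2n}$ in $\Der\L(H)$ (Theorem~\ref{thm:images} supplies the factor $2/(2n-2)!$) with the Hodge-theoretic normalization of the generators $\e_{2n}$ and of modular symbols $r_f^+$, so that the coefficients $c_{j,k}(f)$ in (\ref{eqn:poll_cup}) come out exactly as the coefficients in the modular symbol with no stray factorials. I would handle this by working entirely with the de~Rham/Betti generators as in Theorem~\ref{thm:images} and the normalization of $r_f(\a,\b)$ fixed in Section~\ref{sec:mod_symbs} (the $f(q)\bw^{2n}dq/q$ normalization), checking that the $(2j)!(2k)!$ factors that appear when passing between $\e_{2j+2}$ and $\epsilon_{2j+2}(\b,\a)$ are exactly absorbed, consistently with the form of Pollack's relation (\ref{eqn:ext_map}) already written down in the preceding section. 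A secondary point requiring care is justifying the dualization of Prop.~\ref{prop:cup} in this setting: since $H_1(\n)$ is infinite-dimensional one must, as the Remark after Prop.~\ref{prop:cup} warns, work in each $\SL(H)$-isotypic component (where everything is finite-dimensional) before reassembling, but this is routine given that $\cD$ acts through $\SL(H)$ on all the graded pieces.
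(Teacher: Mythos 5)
Your argument is essentially the paper's own: the proposition is obtained there, just as you do, by dualizing Pollack's Theorem~\ref{thm:pollack} through Sullivan's exact sequence (Proposition~\ref{prop:cup}), working one $\SL(H)$-isotypic (hence finite-dimensional) component at a time, identifying the quadratic relation space with the span of the $\br_f^+$ (equivalently with $H^1_\cusp(\M_{1,1},S^{2n}\H_\Q)^{\overline{\Fr}_\infty}$ via Lemma~\ref{lem:frob}), and fixing normalizations via Theorem~\ref{thm:images}. The one caveat is that what Sullivan's sequence places inside $H^2$ is the \emph{dual} of the quadratic relation space rather than the relation space itself --- precisely the point the paper's remark following the proposition addresses via the Petersson isomorphism $V_f^-\cong (V_f^+)^\ast$ --- but since the $z_f$ are merely chosen generators of one-dimensional summands of type $(2n+2,2n+2)$, this does not affect the formula~(\ref{eqn:poll_cup}) or the assertion that the map is a morphism of MHS.
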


The reason that $V_f^-$ appears in $H^2$ and the coefficients of elements of
$V_f^+$ occur in the relations is explained by noting that the Petersson inner
product induces an isomorphism $V_f^-(V_f^+)^\ast$, while relations give
subspaces of $H_2$.

\begin{remark}
The occurrence of cusp forms here (without their associated Hodge structure)
should be related to, and may help explain, the appearance of modular and cusp
forms in the work of Conant, Kassabov and Vogtmann \cite{ckv1,ckv2,conant} on
the $\Sp(H)$-representation theory of the derivation algebra of a once punctured
Riemann surface $S$ of genus $g\gg 0$.
\end{remark}

\section{Deligne Cohomology and Extensions of VMHS}
\label{sec:db-coho}

The relations that hold in $\u^\eis_x$ are controlled by relations in the Yoneda
ext groups of the category $\MHS(\M_{1,1},\H)$. In this section, we sketch the
relationship between Deligne cohomology of the relative completion of the
fundamental group of an affine curve $C'=C-D$ and Yoneda ext groups in the
categories $\MHS(C',\H)$.  This generalizes the results of \cite{carlson-hain}
that hold in the unipotent case. We will work in the category of $\Q$-MHS,
although the discussion is equally valid in the category of $\R$-MHS. This
section is a summary of results from \cite{hain:db-coho} where full details can
be found.

\subsection{Deligne--Beilinson cohomology of a curve}
Let $\V$ be a PVHS over the affine (orbi) curve $C'=C-D$. The Deligne--Beilinson
cohomology $H^\dot_\cD(C',\V)$ is the cohomology of the complex
\begin{multline*}
\cone\big(F^0W_0\Dec_W K^\dot_\C(C,D;\V) \oplus W_0\Dec_W K^\dot_\Q(C,D;\V)
\cr
\to W_0\Dec_W K^\dot_\C(C,D;\V)\big)[-1].
\end{multline*}
Here $\Dec_W$ is  Deligne's {\em filtration decal\'ee} functor (with respect to
$W_\dot$), which is defined in \cite[\S1.3]{deligne:hodge2}. The DB-cohomology
fits in an exact sequence
\begin{equation}
\label{eqn:db-ses}
0 \to \Ext^1_\MHS\big(\Q,H^{j-1}(C',\V)\big) \to H_\cD^j(C',\V)
\to \Hom_\MHS\big(\Q,H^j(C',\V)\big) \to 0.
\end{equation}
Deligne--Beilinson cohomology of a higher dimensional variety $X=\Xbar-D$ with
coefficients in a PVHS $\V$ can be defined using Saito's mixed Hodge complex
that generalizes Zucker's.

The next result follows directly from the Manin-Drinfeld Theorem
(Thm.~\ref{thm:manin-drinfeld}) using the exact sequence (\ref{eqn:db-ses}).

\begin{proposition}
\label{prop:db-coho}
The DB-cohomology $H^j_\cD\big(\M_{1,1},S^m\H(r)\big)$ vanishes when $m$ is odd
and when $j>2$. When $j=1$ it vanishes except when $m=2n$ and $r=2n+1$, in which
case it is isomorphic to $\Q$. When $j=2$ 
\begin{multline*}
H^2_\cD\big(\M_{1,1},S^{2n}\H(r)\big)
= \Ext^1_\MHS\big(\Q,H^1(\M_{1,1},S^{2n}\H)(r)\big) \cr
\cong \Ext^1_\MHS\big(\Q,\Q(r-2n-1)\big)\oplus
\bigoplus_f \Ext^1_\MHS\big(\Q,M_f(r)\big)
\end{multline*}
where $f$ ranges over the equivalence classes of $f\in \B_{2n+2}$. \qed
\end{proposition}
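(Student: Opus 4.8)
The plan is to deduce everything directly from the exact sequence (\ref{eqn:db-ses}), feeding in the structure of $H^\dot(\M_{1,1},S^m\H)$ as supplied by earlier results. First I would record the three cohomological inputs that are needed: (A) since $\SL_2(\Z)$ is virtually free (Section~\ref{sec:modular}), $H^j(\M_{1,1},S^m\H)=0$ for every $j\ge 2$ and every $m$; (B) since $-I$ acts by $-1$ on $S^m H$ when $m$ is odd, $H^\dot(\M_{1,1},S^m\H)=0$ entirely in that case; and (C) for $m=2n\ge 2$ one has $H^0(\M_{1,1},S^{2n}\H)=(S^{2n}H)^{\SL_2(\Z)}=0$, because the image of $\SL_2(\Z)$ is Zariski dense in $\SL_2$ and $S^{2n}H$ is a nontrivial irreducible $\SL_2$-module, while the Manin--Drinfeld Theorem (Thm.~\ref{thm:manin-drinfeld}), applied with the single cusp of $\M_{1,1}$, gives an isomorphism of mixed Hodge structures
$$
H^1(\M_{1,1},S^{2n}\H)\cong \bigoplus_{f\in\B_{2n+2}} M_f \;\oplus\; \Q(-2n-1),
$$
with each $M_f$ pure of weight $2n+1$.

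With these in hand, the vanishing assertions are immediate: when $j>2$ (or for any $j$ when $m$ is odd) both $H^{j-1}$ and $H^j$ vanish, so (\ref{eqn:db-ses}) forces $H^j_\cD(\M_{1,1},S^m\H(r))=0$. For $j=1$ and $m=2n\ge 2$, the left-hand $\Ext^1$-term of (\ref{eqn:db-ses}) vanishes by (C), so $H^1_\cD(\M_{1,1},S^{2n}\H(r))\cong \Hom_\MHS\big(\Q,H^1(\M_{1,1},S^{2n}\H)(r)\big)$; here $\Hom_\MHS(\Q,M_f(r))=0$ because $M_f(r)$ is pure of odd weight $2n+1-2r$ and hence is never a Tate Hodge structure, while $\Hom_\MHS(\Q,\Q(r-2n-1))$ is $\Q$ when $r=2n+1$ and $0$ otherwise. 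For $j=2$ and $m=2n\ge 2$, the right-hand $\Hom$-term of (\ref{eqn:db-ses}) vanishes by (A), so $H^2_\cD(\M_{1,1},S^{2n}\H(r))\cong \Ext^1_\MHS\big(\Q,H^1(\M_{1,1},S^{2n}\H)(r)\big)$, and the stated decomposition follows from the Manin--Drinfeld splitting above together with the additivity of $\Ext^1$ in its second argument in a $\Q$-linear abelian category.

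I do not expect any genuine obstacle here: all the substantive content has already been established in Theorem~\ref{thm:manin-drinfeld} and in the virtual freeness of modular groups, so the proof is a bookkeeping exercise in (\ref{eqn:db-ses}). The two points that warrant a line of care are the purity/weight argument showing $\Hom_\MHS(\Q,M_f(r))=0$ (this is where the semisimple Hecke decomposition is really being used) and keeping track of Tate twists in $H^1(\M_{1,1},S^{2n}\H)(r)\cong\bigoplus_f M_f(r)\oplus\Q(r-2n-1)$. One should also flag that the statement is meant for $m\ge 1$: for $m=0$ one instead reads off $H^1_\cD(\M_{1,1},\Q(r))\cong\Ext^1_\MHS(\Q,\Q(r))$ from (\ref{eqn:db-ses}) and $H^1(\M_{1,1},\Q)=0$, which lies outside the dichotomy asserted for $j=1$.
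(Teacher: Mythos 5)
Your proposal is correct and is exactly the paper's argument: the paper deduces the proposition directly from the exact sequence (\ref{eqn:db-ses}) together with the Manin--Drinfeld Theorem (Thm.~\ref{thm:manin-drinfeld}), and your write-up simply makes explicit the vanishing inputs (virtual freeness, odd $m$, $H^0=0$) and the weight argument killing $\Hom_\MHS(\Q,M_f(r))$. Your remark that the $j=1$ statement should be read with $m>0$ (the case $m=0$ behaves differently because $H^0\neq 0$) is a reasonable caveat consistent with the paper's intended use.
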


As explained in Section~\ref{sec:frob}, the real Frobenius
$\Fr_\infty$ acts on $\M_{1,1}$ and on the local system $\H$. Since
$\Frbar_\infty$ preserves the Hodge filtration, it acts on the complex used to
compute $H^\dot_\cD\big(\M_{1,1},S^{2n}\H_\R(r)\big)$ and thus on the Deligne
cohomology itself.

\begin{corollary}
\label{cor:h2D}
For all $n > 0$ and $r\in \Z$, there are natural
$\overline{\Fr}_\infty$-equivariant isomorphisms
$$
H^2_\cD\big(\M_{1,1},S^{2n}\H_\R(r)\big)
\cong \Ext_\MHS^1\big(\R,\R(r-2n-1)\big) \oplus
\bigoplus_{f\in \B_{2n+2}}\Ext^1_\MHS\big(\R,V_f(r)\big).
$$
These vanish when $r<2n+2$. The first term on the right hand side corresponds to
the Eisenstein series $G_{2n+2}$. \qed
\end{corollary}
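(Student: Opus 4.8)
\textbf{Proof proposal for Corollary~\ref{cor:h2D}.}

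The plan is to take Proposition~\ref{prop:db-coho} as the starting point and pass from the $\Q$-statement to the real-coefficient statement while tracking the $\overline{\Fr}_\infty$-action. First I would record that, by Proposition~\ref{prop:db-coho}, for $n>0$ we have
$$
H^2_\cD\big(\M_{1,1},S^{2n}\H(r)\big)
\cong \Ext^1_\MHS\big(\Q,\Q(r-2n-1)\big)\oplus
\bigoplus_f \Ext^1_\MHS\big(\Q,M_f(r)\big),
$$
and that, since the complex computing DB-cohomology is built from Zucker's MHC, extension of scalars from $\Q$ to $\R$ commutes with taking cohomology of this complex; hence $H^2_\cD(\M_{1,1},S^{2n}\H_\R(r)) \cong H^2_\cD(\M_{1,1},S^{2n}\H(r))\otimes_\Q\R$. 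By the Manin-Drinfeld theorem (Thm.~\ref{thm:manin-drinfeld}) the real MHS $H^1(\M_{1,1},S^{2n}\H_\R)$ decomposes as $\bigoplus_{f\in\B_{2n+2}}V_f\oplus\R(-2n-1)$, so after tensoring, $\bigoplus_f M_f(r)\otimes\R = \bigoplus_{f\in\B_{2n+2}}V_f(r)$. This already yields the asserted isomorphism at the level of $\R$-vector spaces; the vanishing for $r<2n+2$ is immediate from $(\ref{eqn:ext_hodge})$ for the cuspidal summands and from $\Ext^1_\MHS(\R,\R(s))=0$ for $s\le 0$ for the Eisenstein summand. The identification of the first summand $\Ext^1_\MHS(\R,\R(r-2n-1))$ with the contribution of $G_{2n+2}$ follows because, in the Manin-Drinfeld splitting, the Tate summand $\R(-2n-1)$ of $H^1(\M_{1,1},S^{2n}\H)$ is precisely the span of the class $\w_{G_{2n+2}}(\b^{2n})$ of the Eisenstein series, by Theorem~\ref{thm:eichler_shimura} and the exact sequence $(\ref{eqn:exact_seqce:mod})$.

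Next I would address $\overline{\Fr}_\infty$-equivariance. Real Frobenius acts on $\M_{1,1/\R}$ and on $\H_\R$ (Section~\ref{sec:frob}), hence on Zucker's complex with $\R$-coefficients and on its decal\'ee filtration and $F^0$; since $\overline{\Fr}_\infty$ is the composite of this action with complex conjugation on the $\C$-part and preserves the Hodge filtration, it acts on the cone complex computing $H^\dot_\cD(\M_{1,1},S^{2n}\H_\R(r))$. The exact sequence $(\ref{eqn:db-ses})$ is then $\overline{\Fr}_\infty$-equivariant, and since $\Hom_\MHS(\Q,H^2(\M_{1,1},S^{2n}\H)(r))$ and the degree-$2$ part both have the same DB description as in Proposition~\ref{prop:db-coho} (with the $\Hom$ term vanishing for $n>0$), the isomorphism $H^2_\cD\cong\Ext^1_\MHS(\R,H^1(\M_{1,1},S^{2n}\H_\R)(r))$ is $\overline{\Fr}_\infty$-equivariant. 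The Manin-Drinfeld decomposition is itself $\overline{\Fr}_\infty$-stable (the Hecke operators commute with real Frobenius), so the decomposition into the $\R(r-2n-1)$ summand and the $V_f(r)$ summands is $\overline{\Fr}_\infty$-equivariant, giving the claimed equivariant isomorphism.

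The main obstacle I expect is the bookkeeping in the second paragraph: verifying carefully that the DB-cohomology complex, the decal\'ee filtration, and the cone construction all carry a well-defined $\overline{\Fr}_\infty$-action and that the isomorphism in $(\ref{eqn:db-ses})$ respects it — in particular that $\overline{\Fr}_\infty$ is the ``de~Rham involution'' on the relevant $\Ext$-group exactly as it is defined on $H^1_\cusp$ in Section~\ref{sec:frob}. This is essentially a compatibility check between the definition of $\overline{\Fr}_\infty$ on cohomology and its definition on Deligne-Beilinson cohomology, and is carried out in detail in \cite{hain:db-coho}; here I would simply invoke that reference. Everything else is a direct consequence of Proposition~\ref{prop:db-coho}, Theorem~\ref{thm:manin-drinfeld}, and the computation $(\ref{eqn:ext_hodge})$ of the relevant $\Ext$-groups.
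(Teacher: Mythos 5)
Your overall route is the intended one (the exact sequence (\ref{eqn:db-ses}), the vanishing of $H^2(\M_{1,1},S^{2n}\H)$, the Manin--Drinfeld splitting, and the action of $\Frbar_\infty$ on the defining complex), but the bridge you use to pass from $\Q$- to $\R$-coefficients is wrong. Deligne--Beilinson cohomology does \emph{not} commute with extension of scalars: in the cone defining $H^\dot_\cD$, the two complexes $F^0W_0\Dec_W K^\dot_\C$ and $W_0\Dec_W K^\dot_\C$ are already complexes of $\C$-valued forms and are unchanged when $K_\Q$ is replaced by $K_\R$, so the real DB complex is not the rational one tensored with $\R$. Concretely, tensoring the conclusion of Proposition~\ref{prop:db-coho} with $\R$ would produce $\Ext^1_\MHS\big(\Q,\Q(r-2n-1)\big)\otimes_\Q\R$, which for $r\ge 2n+2$ is infinite dimensional (since $\Ext^1_\MHS(\Q,\Q(s))\cong \C/\Q(s)$ for $s\ge 1$), whereas the group appearing in the corollary is $\Ext^1_{\MHS_\R}\big(\R,\R(r-2n-1)\big)\cong \C/\big(\R(r-2n-1)+F^0\big)\cong\R$, which is one dimensional; the same mismatch occurs for the cuspidal summands, so the identification ``$\bigoplus_f \Ext^1_\MHS(\Q,M_f(r))\otimes\R=\bigoplus_f\Ext^1_\MHS(\R,V_f(r))$'' fails.

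The repair is to avoid scalar extension altogether and run the proof of Proposition~\ref{prop:db-coho} directly in the category $\MHS_\R$: the sequence (\ref{eqn:db-ses}) holds verbatim with $K_\R$ in place of $K_\Q$, the $\Hom$ term vanishes because $H^2(\M_{1,1},S^{2n}\H_\R)=0$, so $H^2_\cD\big(\M_{1,1},S^{2n}\H_\R(r)\big)\cong \Ext^1_{\MHS_\R}\big(\R,H^1(\M_{1,1},S^{2n}\H_\R)(r)\big)$, and then the real form of the Manin--Drinfeld theorem (the last display of Theorem~\ref{thm:manin-drinfeld}) gives the splitting $H^1(\M_{1,1},S^{2n}\H_\R)\cong\bigoplus_{f\in\B_{2n+2}}V_f\oplus\R(-2n-1)$ and hence the asserted decomposition into real $\Ext$ groups. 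With that change, the rest of your argument --- the vanishing for $r<2n+2$ via (\ref{eqn:ext_hodge}) and $\Ext^1_{\MHS_\R}(\R,\R(s))=0$ for $s\le 0$, the identification of the Tate summand with the class of $G_{2n+2}$ via Theorem~\ref{thm:eichler_shimura} and (\ref{eqn:exact_seqce:mod}), and the $\Frbar_\infty$-equivariance obtained from the action of real Frobenius on Zucker's complex preserving $W_\dot$, $\Dec_W$ and $F^0$ --- goes through essentially as you wrote it and matches the paper's intended argument.
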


\subsection{Deligne--Beilinson cohomology of affine groups}

Here we recall the definition and basic properties of the Deligne--Beilinson
cohomology of an affine group from \cite{hain:db-coho}. Suppose that $G$ is an
affine $\Q$ group that is an extension of a reductive $\Q$ group $R$ by a
prounipotent group $U$. Suppose that the coordinate ring $\cO(G)$ and its sub
algebra $\cO(R)$ are Hopf algebras in the category ind-$\MHS$. Then the
coordinate ring $\cO(U)$ of $U$ is also a Hopf algebra in ind-$\MHS$. This
implies that its Lie algebra $\u$ is a pronilpotent Lie algebra in pro-$\MHS$.
Suppose that $V$ is a Hodge representation (Defn.~\ref{def:hrep}) of $G$. The
Deligne--Beilinson cohomology of $G$ is defined by
$$
H_\cD^\dot(G,V) := \Ext^\dot_{\HRep(G)}(\Q,V).
$$
This has a natural product: if $V_1,V_2$ are in $\HRep(G)$, there are natural
multiplication maps
$$
H_\cD^\dot(G,V_1)\otimes H_\cD^\dot(G,V_2) \to H_\cD^\dot(G,V_1\otimes V_2).
$$

The following result is proved in  \cite{hain:db-coho}.

\begin{proposition}
\label{prop:ses}
For all $j\ge 0$, there is a short exact sequence
$$
0 \to \Ext^1_\MHS\big(\Q,H^{j-1}(G,V)\big) \to H^j_\cD(G,V)
\to \Hom_\MHS\big(\Q,H^j(G,V)\big) \to 0. \qed
$$
\end{proposition}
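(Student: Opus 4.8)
\textbf{Proof proposal for Proposition~\ref{prop:ses}.}

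The plan is to mimic the classical argument that produces the five-term (here three-term) exact sequence comparing ``naive'' cohomology with its Deligne-enriched version, working inside the abelian category $\HRep(G)$ and using that the forgetful functor to $\Vec_\Q$ (or rather to $\MHS$) is exact with well-understood fibers. First I would identify $H^j(G,V)$ with $\Ext^j_{\Rep(G)}(\Q,V)$ computed in the category of ordinary $G$-representations (equivalently, via the Hochschild complex $C^\dot(G,V) = \Hom_R(\Lambda^\dot \u, V)$, using that $R$ is reductive so that $H^\dot(G,V) \cong [H^\dot(\u)\otimes V]^R$ as in Section~\ref{sec:coho}). The point is that this Hochschild complex is a complex \emph{in the category of ind-MHS}: the Lie algebra $\u$ is pronilpotent in pro-$\MHS$ by hypothesis, $V$ is a MHS, and the differentials are morphisms of MHS because the coproduct of $\cO(U)$ (equivalently the bracket of $\u$ and the coaction on $V$) is a morphism of MHS. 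So the Hochschild complex $C^\dot(G,V)$ is a complex of ind-MHS whose cohomology is $H^\dot(G,V)$ with its natural MHS.

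The second and main step is to compute $\Ext^\dot_{\HRep(G)}(\Q, V)$ by a ``Hodge-enriched Hochschild complex''. Concretely, I would show that $\Ext^j_{\HRep(G)}(\Q,V)$ is the cohomology of the cone complex
$$
\cone\big(F^0 W_0\, C^\dot_\C(G,V) \oplus W_0\, C^\dot_\Q(G,V) \to W_0\, C^\dot_\C(G,V)\big)[-1],
$$
exactly as in the definition of Deligne--Beilinson cohomology of a curve above; this is the content of the construction in \cite{hain:db-coho}, and I would quote the relevant lemma there identifying Yoneda Ext in a category of ``MHS-objects'' with hypercohomology of such a cone. Granting that identification, the short exact sequence is then a completely formal consequence: the cone on a map of complexes $A^\dot \oplus B^\dot \to C^\dot$ sits in a long exact sequence relating its cohomology to that of $A^\dot$, $B^\dot$, $C^\dot$, and the cohomology of the individual pieces computes $F^0 W_0 H^\dot$, $W_0 H^\dot_\Q$ and $W_0 H^\dot_\C$ of the MHS $H^\dot(G,V)$; the standard fact that for a MHS $M$ one has $\Hom_\MHS(\Q,M) = W_0 F^0 M \cap M_\Q$ and $\Ext^1_\MHS(\Q,M) = M_\C/(F^0 M + M_\Q)$ (applied to $W_0 M$, which suffices since higher weights contribute nothing to either term) turns the long exact sequence into the asserted
$$
0 \to \Ext^1_\MHS\big(\Q, H^{j-1}(G,V)\big) \to H^j_\cD(G,V) \to \Hom_\MHS\big(\Q, H^j(G,V)\big) \to 0.
$$

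The main obstacle is the identification in the second step: that the Yoneda Ext-groups in the (somewhat soft) abelian category $\HRep(G)$ really are computed by that explicit cone complex. This requires knowing that $\HRep(G)$ has enough of whatever structure is needed (injectives, or acyclic resolutions adapted to both the $W$- and $F$-filtrations) and that the comparison is functorial and compatible with the exact structure; this is precisely where I would lean on \cite{hain:db-coho} rather than reprove it. A secondary technical point, worth checking but not deep, is the strictness needed so that passing to $\Gr^W$ and $F^\dot$ commutes with taking cohomology of the Hochschild complex --- but this follows from the assumption that $\cO(G)$ and $\cO(R)$ are Hopf algebras in ind-$\MHS$ together with the usual strictness of morphisms of MHS. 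Once the cone description is in hand, everything else is the standard homological bookkeeping that produces (\ref{eqn:db-ses}) in the geometric case, applied verbatim.
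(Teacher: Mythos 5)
The paper gives no proof of this proposition: it is stated with a \qed and attributed outright to \cite{hain:db-coho} (``The following result is proved in \cite{hain:db-coho}''), so there is no internal argument to measure you against, and your sketch has to be judged on its own. Its formal skeleton is correct: if $\Ext^\dot_{\HRep(G)}(\Q,V)$ is computed by applying $R\Hom_\MHS(\Q,-)$ (equivalently your cone of $F^0W_0\oplus W_0(\cdot)_\Q \to W_0(\cdot)_\C$) to a complex of ind-MHS whose cohomology is $H^\dot(G,V)$ with its natural MHS, then the stated sequence drops out of the long exact sequence of the cone (or the hyperext spectral sequence) together with the vanishing of $\Ext^p_\MHS(\Q,-)$ for $p\ge 2$; your observation that only $W_0$ of the cohomology contributes to either term is also right, since $\Hom_\MHS(\Q,M)$ and $\Ext^1_\MHS(\Q,M)$ are unchanged by passing to $W_0M$. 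Identifying the relevant complex as the $R$-invariant Chevalley--Eilenberg/Hochschild complex of $\u$ with coefficients in $V$, viewed in ind-$\MHS$, is likewise the right starting point.

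The genuine gap, which you yourself flag, is the load-bearing identification $\Ext^j_{\HRep(G)}(\Q,V)\cong H^j$ of that explicit complex: this is precisely the substantive content of \cite{hain:db-coho}, and you quote it rather than prove it, so as a self-contained argument the proposal is circular at exactly the point where the work lies (one must produce, in the soft abelian category $\HRep(G)$, resolutions or a comparison functor adapted to both filtrations and check compatibility with Yoneda Ext). One smaller point: you say the cone is ``exactly as in the definition'' for curves, but your formula omits $\Dec_W$. That omission is in fact correct in this setting --- the Chevalley--Eilenberg complex here is a complex \emph{in} ind-$\MHS$, with strict differentials and unshifted induced filtrations on cohomology, unlike Zucker's complex, which is only a mixed Hodge complex and needs the d\'ecalage --- but the two situations should not be conflated, since the strictness you invoke is automatic in the first case and is exactly what $\Dec_W$ repairs in the second.
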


\subsection{Extensions of VMHS over curves}

Suppose that $\H$ is a PVHS over $C'$. Fix a base point $x\in C'$ and let $R_x$
be the Zariski closure of the monodromy action $\pi_1(C',x) \to \Aut H_x$.
Denote the completion of $\pi_1(X,x)$ with respect to $\pi_1(C',x) \to R_x$ by
$\cG_x$. Theorem~\ref{thm:equivalence} implies that the category $\MHS(C',\H)$
is equivalent to $\HRep(\cG_x)$, so that there is a natural isomorphism
$$
\Ext_{\MHS(C',\H)}^\dot(\Q,\V) \cong H^\dot_\cD(\cG_x,V_x).
$$
Even more is true:

\begin{theorem}[\cite{hain:db-coho}]
\label{thm:del_ext}
If $\V$ is an object of $\MHS(C',\H)$, then there are natural isomorphisms
$$
\Ext_{\MHS(C',\H)}^\dot(\Q,\V) \cong H^\dot_\cD(\cG_x,V_x)
\overset{\simeq}{\longrightarrow} H^\dot_\cD(C',\V)
$$
where $V_x$ denotes the fiber of $\V$ over the base point $x$. These
isomorphisms are compatible with the natural products.\footnote{There is a more
general result which applies when $C'$ is replaced by a smooth variety $X$ of
arbitrary dimension. In that case, there is a natural homomorphism
$H^\dot_\cD(\cG_x,V_x) \to H^\dot_\cD(X,\V)$ that is an isomorphism in degrees
$\le 1$ and injective in degree 2. This is proved in \cite{hain:db-coho}.}
\qed
\end{theorem}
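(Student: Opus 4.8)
The plan is to prove Theorem~\ref{thm:del_ext} by combining two ingredients: the abstract comparison between the cohomology of relative completion and the cohomology of $C'$ in coefficients (Corollary~\ref{cor:hodge_la}, which gives an isomorphism $H^\dot(\cG_x,V_x)\to H^\dot(C',\V)$ in all degrees when $C'$ is an orbi-curve), and the short exact sequences relating Deligne--Beilinson cohomology to ordinary cohomology in both settings --- Proposition~\ref{prop:ses} on the group side and the sequence~(\ref{eqn:db-ses}) on the geometric side. The first isomorphism $\Ext^\dot_{\MHS(C',\H)}(\Q,\V)\cong H^\dot_\cD(\cG_x,V_x)$ is, by definition of $H^\dot_\cD(\cG_x,\blank)$ as the Yoneda ext in $\HRep(\cG_x)$, just an immediate consequence of the equivalence of categories $\MHS(C',\H)\simeq \HRep(\cG_x)$ (Theorem~\ref{thm:equivalence}); so the substance of the theorem is the second isomorphism $H^\dot_\cD(\cG_x,V_x)\to H^\dot_\cD(C',\V)$.

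For the second isomorphism I would proceed as follows. First, note that both $H^\dot_\cD(\cG_x,V_x)$ and $H^\dot_\cD(C',\V)$ sit in short exact sequences with the same flanking terms: for $H^\dot_\cD(\cG_x,V_x)$ use Proposition~\ref{prop:ses}, and for $H^\dot_\cD(C',\V)$ use~(\ref{eqn:db-ses}); and by Corollary~\ref{cor:hodge_la} (together with the fact, from Theorem~\ref{thm:admissible}, that the comparison $H^\dot(\cG_x,V_x)\to H^\dot(C',\V)$ is a morphism of MHS) the terms $\Ext^1_\MHS(\Q,H^{j-1}(\cG_x,V_x))$ and $\Hom_\MHS(\Q,H^j(\cG_x,V_x))$ are identified with $\Ext^1_\MHS(\Q,H^{j-1}(C',\V))$ and $\Hom_\MHS(\Q,H^j(C',\V))$. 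Then I would construct a map of short exact sequences: the Chevalley--Eilenberg/bar-construction model for $H^\dot_\cD(\cG_x,\blank)$ maps to the Zucker-complex model $\Dec_W K^\dot(C,D;\V)$ for $H^\dot_\cD(C',\V)$ via the filtered dga homomorphism $\theta_\Omega$ constructed in Section~\ref{sec:omega} and used in the proof of Theorem~\ref{thm:admissible}, which is compatible with the Hodge and weight (after $\Dec_W$) filtrations. Taking the cone construction functorially on both sides yields a morphism of the two short exact sequences inducing the identity on the outer terms; the five lemma then gives the desired isomorphism on $H^\dot_\cD$. Finally, compatibility with products follows because $\theta_\Omega$ is a dga homomorphism and the cup products on both sides are induced by the respective multiplications on these dgas; one checks this is compatible with the product on Yoneda ext groups via the standard Yoneda--cup-product comparison.

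The main obstacle I expect is bookkeeping with Deligne's \'ecrasement functor $\Dec_W$ and the cone construction: one must verify that $\theta_\Omega$ is not merely $F$-filtered but also compatible with $W_\dot$ \emph{after} applying $\Dec_W$ (this is exactly the content of the footnote to the last displayed inequality in the proof of Theorem~\ref{thm:admissible}, namely $\theta_\Omega(W_m H^0(C',\Hom(\Lambda^j\bu,\V)))\subseteq W_{m-j}K^j$), and that the resulting map of cone complexes is a genuine quasi-isomorphism, not just a map inducing iso on the flanking terms --- the five-lemma step needs the map of short exact sequences to be honestly defined at the chain level, which requires care because the group-cohomology side is only modeled after choosing a Levi splitting. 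A secondary subtlety is the orbifold case: since $C'=\G\bs X$ may be an orbi-curve, one works $G$-equivariantly on a torsion-free cover $X_{\G'}$ as in Section~\ref{rem:orbi-case}, and one must check all constructions descend; this is routine but should be stated. The product compatibility is the least delicate point, as it is formal once the chain-level map is in hand. Full details are deferred to \cite{hain:db-coho}.
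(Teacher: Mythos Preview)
The paper does not give a proof of this theorem: it is stated with a \qed and cited from \cite{hain:db-coho}, so there is no in-paper argument to compare against. Your proposal is a reasonable reconstruction of how such a proof would go using only the tools developed in this paper, and the overall architecture is sound. The first isomorphism is indeed immediate from Theorem~\ref{thm:equivalence} together with the definition $H^\dot_\cD(\cG_x,V_x)=\Ext^\dot_{\HRep(\cG_x)}(\Q,V_x)$. For the second, your plan---construct a chain-level map from a cochain model of $H^\dot_\cD(\cG_x,\blank)$ to the cone computing $H^\dot_\cD(C',\V)$ via $\theta_\Omega$, observe that it is compatible with the short exact sequences of Proposition~\ref{prop:ses} and~(\ref{eqn:db-ses}), identify the outer terms using Corollary~\ref{cor:hodge_la} and Theorem~\ref{thm:admissible}, and apply the five lemma---is exactly the expected strategy. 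You have also correctly isolated the one genuinely nontrivial verification, namely that $\theta_\Omega$ is filtered with respect to $\Dec_W$ (which is precisely what the inclusion $\theta_\Omega(W_m H^0)\subseteq W_{m-j}K^j$ in the proof of Theorem~\ref{thm:admissible} expresses), and that the cone construction is functorial in filtered dga maps. The product compatibility and the orbifold descent are, as you say, routine once the chain-level map is in place.
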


Denote the category of admissible variations of MHS over $C'$ by $\MHS(C')$.
Since the Deligne cohomology $H_\cD^\dot(C',\V)$ does not depend on the choice
of the basic variation $\H$ with $\V \in \MHS(C',\H)$, we have the following
useful result.

\begin{corollary}[\cite{hain:db-coho}]
\label{cor:ext-mhs}
For all admissible variations of MHS over $C'$, there is a natural isomorphism
$$
\Ext^\dot_{\MHS(C')}(\Q,\V) \cong H^\dot_\cD(C',\V)
$$
that is compatible with products.
\end{corollary}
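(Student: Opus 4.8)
The strategy is to bootstrap from Theorem~\ref{thm:del_ext}, which already identifies $\Ext^\dot_{\MHS(C',\H)}(\Q,\V)$ with $H^\dot_\cD(C',\V)$ whenever $\V$ is an object of $\MHS(C',\H)$ for a suitable basic variation $\H$. I would proceed in three steps: (1) exhibit an arbitrary admissible variation $\V$ over $C'$ as an object of $\MHS(C',\H)$ for some basic variation $\H$, and note that this persists after enlarging $\H$; (2) show that $\MHS(C')$ is the directed union of the subcategories $\MHS(C',\H)$, so that $\Ext^\dot_{\MHS(C')}(\Q,\V)$ is the directed colimit of the $\Ext^\dot_{\MHS(C',\H)}(\Q,\V)$; and (3) observe that, under the isomorphisms of Theorem~\ref{thm:del_ext}, every transition map in this colimit is an isomorphism, so the colimit is $H^\dot_\cD(C',\V)$.

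For step~(1): given $\V$, put $\H := \bigoplus_m \Gr^W_m\V$. Each $\Gr^W_m\V$ is a polarized variation of Hodge structure, hence $\H$ is a polarizable variation whose monodromy has reductive Zariski closure $R_x$, and $\V$ lies in $\MHS(C',\H)$ essentially by definition, since the monodromy of $\Gr^W_\dot\V$ factors through its own Zariski closure $R_x$. (Strictly $\H$ need not be pure; this is harmless, since the Hodge-theoretic machinery of \cite{hain:malcev} used in Theorem~\ref{thm:del_ext} applies to any polarizable variation, its monodromy having reductive Zariski closure. Below, "$\H \subseteq \H'$" means that the Zariski closure of the monodromy of $\H$ is contained in that of $\H'$, as for $\H' = \H \oplus \H''$.) If $\V \in \MHS(C',\H)$ and $\H \subseteq \H'$ then $\V \in \MHS(C',\H')$, and Theorem~\ref{thm:del_ext} applies to each, giving $\Ext^\dot_{\MHS(C',\H)}(\Q,\V) \cong \Ext^\dot_{\MHS(C',\H')}(\Q,\V) \cong H^\dot_\cD(C',\V)$, compatibly with products.

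For step~(2): every object of $\MHS(C')$ lies in $\MHS(C',\H)$ for some $\H$ by step~(1), and any finite family of objects lies in a common $\MHS(C',\H)$ --- take the direct sum of their basic variations. Since a Yoneda extension of $\Q$ by $\V$, and any chain of elementary equivalences between two such, involves only finitely many objects and morphisms of $\MHS(C')$, it already lives in $\MHS(C',\H)$ for $\H$ large enough; and if such an extension becomes trivial in $\MHS(C')$, the trivialising equivalence involves finitely many objects as well, hence takes place already in some larger $\MHS(C',\H)$. Therefore $\varinjlim_{\H \supseteq \H_\V} \Ext^\dot_{\MHS(C',\H)}(\Q,\V) \to \Ext^\dot_{\MHS(C')}(\Q,\V)$ is an isomorphism, where $\H_\V$ is as in step~(1) and the colimit runs over the directed set of basic variations containing $\H_\V$.

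Combining the two steps: the colimit of step~(2) runs over a directed system in which, by step~(1) and the naturality of the comparison map $H^\dot_\cD(\cG_x,V_x) \to H^\dot_\cD(C',\V)$ of Theorem~\ref{thm:del_ext} under enlargement of the reductive group $R_x$, every term is identified with $H^\dot_\cD(C',\V)$ compatibly with the transition maps; hence every transition map is an isomorphism and the colimit equals $H^\dot_\cD(C',\V)$. This yields the asserted natural isomorphism $\Ext^\dot_{\MHS(C')}(\Q,\V) \cong H^\dot_\cD(C',\V)$; naturality in $\V$ follows by applying the argument to $\H_\V \oplus \H_{\V'}$ for a morphism $\V \to \V'$, and compatibility with products is inherited from Theorem~\ref{thm:del_ext}, since the Yoneda product of classes supported in $\MHS(C',\H)$ is computed inside $\MHS(C',\H)$. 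The step I expect to require the most care is the naturality invoked in this last paragraph --- that the isomorphisms furnished by \cite{hain:db-coho} for different basic variations fit into a commuting diagram over the comparison maps to $H^\dot_\cD(C',\V)$ --- together with the modest bookkeeping in step~(1) needed to place a genuinely mixed (not pure) admissible variation within the hypotheses under which Theorem~\ref{thm:del_ext} is proved.
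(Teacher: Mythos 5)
Your proposal is correct and follows essentially the route the paper intends: the paper deduces the corollary from Theorem~\ref{thm:del_ext} by observing that $H^\dot_\cD(C',\V)$ does not depend on the choice of basic variation $\H$ with $\V\in\MHS(C',\H)$, and your three steps (placing $\V$ in $\MHS(C',\bigoplus_m\Gr^W_m\V)$, exhausting $\MHS(C')$ by the subcategories $\MHS(C',\H)$, and checking the transition maps are compatible with the isomorphisms of Theorem~\ref{thm:del_ext}) are exactly the bookkeeping that remark suppresses. The points you flag for care (reductivity of the monodromy closure for the not-necessarily-pure $\H$, and compatibility of the comparison maps under enlarging $R_x$ via the surjection of relative completions) are the right ones and are handled as in \cite{hain:db-coho}.
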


\subsection{Extensions of variations of MHS over modular curves}
\label{sec:ext_vmhs}

Suppose that $\G$ is a congruence subgroup of $\SL_2(\Z)$. The following result
follows from Corollary~\ref{cor:ext-mhs} and the Manin-Drinfeld Theorem
(Thm.~\ref{thm:manin-drinfeld}).

\begin{proposition}
If $m>0$ and $A$ is a Hodge structure, then
$$
\Ext^1_{\MHS(X_\G)}(\Q,A\otimes S^m\H) \cong
\Hom_\MHS\big(\Q,A\otimes H^1(X_\G,S^m\H)\big).
$$
When $m=0$, $\Ext_{\MHS(X_\G)}^1(\Q,A) = \Ext_\MHS^1(\Q,A)$.
If $A$ is a simple $\Q$-HS, then
$
\Ext^1_{\MHS(X_\G)}(\Q,A\otimes S^m\H)
$
is non-zero if and only if either $A=\Q(m+1)$ or $A\cong M_f(m+1)$ for some
Hecke eigen cusp form $f\in \B_{m+2}(\G)$. \qed
\end{proposition}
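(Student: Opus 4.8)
The plan is to reinterpret the extension group as a Deligne--Beilinson cohomology group and then read the answer off the fundamental exact sequence together with the Manin--Drinfeld splitting. First observe that, $A$ being a constant mixed Hodge structure and $S^m\H$ a polarized variation, $A\otimes S^m\H$ is an admissible variation of MHS over $X_\G$. Corollary~\ref{cor:ext-mhs} (equivalently, Theorems~\ref{thm:equivalence} and \ref{thm:del_ext}) gives a natural isomorphism $\Ext^1_{\MHS(X_\G)}(\Q,A\otimes S^m\H)\cong H^1_\cD(X_\G,A\otimes S^m\H)$, and, by Proposition~\ref{prop:ses} together with Corollary~\ref{cor:hodge_la} (which identifies $H^j(\cG_x,V_x)$ with $H^j(X_\G,\V)$ when $X_\G$ is a curve), the exact sequence (\ref{eqn:db-ses}) applies with $\V=A\otimes S^m\H$:
$$
0\to\Ext^1_\MHS\bigl(\Q,H^0(X_\G,A\otimes S^m\H)\bigr)\to H^1_\cD(X_\G,A\otimes S^m\H)\to\Hom_\MHS\bigl(\Q,H^1(X_\G,A\otimes S^m\H)\bigr)\to0 .
$$
Since $A$ is a constant variation, $H^j(X_\G,A\otimes S^m\H)\cong A\otimes H^j(X_\G,S^m\H)$ as MHS (for instance because $\bK(A\otimes\V)=\bK(\V)\otimes A$).

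Next I would split into the two cases. For $m>0$ the defining representation $H$ of $\SL_2$ has $(S^mH)^{\SL_2}=0$, so $H^0(X_\G,S^m\H)=(S^mH)^{\pi_1(X_\G,x)}=0$ because the monodromy is Zariski dense in $\SL_2$. Hence the leftmost term of the sequence vanishes and
$$
\Ext^1_{\MHS(X_\G)}(\Q,A\otimes S^m\H)\cong\Hom_\MHS\bigl(\Q,A\otimes H^1(X_\G,S^m\H)\bigr),
$$
which is the first assertion. For $m=0$ one instead has $H^0(X_\G,A)=A$ (as $H^0(X_\G,\Q)=\Q(0)$), and the same sequence presents $\Ext^1_{\MHS(X_\G)}(\Q,A)$ as an extension of $\Hom_\MHS\bigl(\Q,A\otimes H^1(X_\G,\Q)\bigr)$ by $\Ext^1_\MHS(\Q,A)$; since $H^1(X_\G,\Q)$ has weights $1$ and $2$, this recovers the comparison with $\Ext^1_\MHS(\Q,A)$, the residual $\Hom$-term being exactly the one analysed below.

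For the last assertion, take $A$ simple and invoke the Manin--Drinfeld Theorem (Theorem~\ref{thm:manin-drinfeld}), which for congruence $\G$ splits the MHS as $H^1(X_\G,S^m\H)\cong\bigoplus_f M_f\oplus\bigoplus_{P\in D}\Q(-m-1)$, the sum over equivalence classes of eigen cusp forms $f\in\B_{m+2}(\G)$. Rigidity of the tensor category of MHS then gives
$$
\Hom_\MHS\bigl(\Q,A\otimes H^1(X_\G,S^m\H)\bigr)\cong\bigoplus_f\Hom_\MHS(A^\vee,M_f)\oplus\bigoplus_{P\in D}\Hom_\MHS\bigl(\Q(m+1),A\bigr).
$$
Each $M_f$ is simple as a $\Q$-Hodge structure (it is the $\Q$-span of a single Galois orbit of Hecke eigenforms), and the polarization on the cuspidal part $W_{m+1}H^1(X_\G,S^m\H)$ identifies $M_f^\vee$ with $M_f(m+1)=\Mdual_f$. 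Since $A$ is simple, $\Hom_\MHS(A^\vee,M_f)\neq0$ exactly when $A^\vee\cong M_f$, i.e.\ when $A\cong M_f(m+1)$, while $\Hom_\MHS(\Q(m+1),A)\neq0$ exactly when $A\cong\Q(m+1)$. Combining with the isomorphism of the previous paragraph (valid for $m>0$) yields the dichotomy.

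I expect the main obstacle to be organizational rather than conceptual: once Corollary~\ref{cor:ext-mhs}, the exact sequence (\ref{eqn:db-ses}) (via Proposition~\ref{prop:ses}), and Theorem~\ref{thm:manin-drinfeld} are granted, the rest is bookkeeping. The one genuinely non-formal input is the Manin--Drinfeld splitting, which is precisely what turns the abstract group $\Hom_\MHS(\Q,A\otimes H^1(X_\G,S^m\H))$ into the clean list $\{\Q(m+1)\}\cup\{M_f(m+1):f\in\B_{m+2}(\G)\}$; the few points still needing care are the semisimplicity of $M_f$ over $\Q$, the self-duality $M_f^\vee\cong M_f(m+1)$ coming from the polarization, and the mild fact that for $m=0$ the term $\Ext^1_\MHS(\Q,A)$ genuinely persists in the sequence.
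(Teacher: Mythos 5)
Your proof follows the paper's own route: the paper deduces this proposition directly from Corollary~\ref{cor:ext-mhs} (identifying $\Ext^1_{\MHS(X_\G)}(\Q,\V)$ with $H^1_\cD(X_\G,\V)$), the exact sequence (\ref{eqn:db-ses}), and the Manin--Drinfeld Theorem~\ref{thm:manin-drinfeld}, which is exactly your argument. The details you supply --- vanishing of $H^0(X_\G,S^m\H)$ for $m>0$ via Zariski density, rigidity of $\MHS$, and the identification $M_f^\vee\cong M_f(m+1)$ from the polarization on $W_{m+1}H^1(X_\G,S^m\H)$ --- are precisely the bookkeeping the paper leaves implicit.
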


This result can be interpreted as a computation of the group of normal functions
(tensored with $\Q$) over $X_\G$ associated to a PVHS of the form $\V=A\otimes
S^m\H$. These are holomorphic sections of the bundle of intermediate jacobians
associated to $\V$. The group of normal functions (essentially by definition) is
isomorphic to $\Ext^1_{\MHS(X_\G,\H)}(\Z,\V)$. The normal functions constructed
in Section~\ref{sec:cusp} generate all simple extensions and normal functions in
$\MHS(X_\G,\H)$.

\section{Cup Products and Relations in $\u^\eis$}
\label{sec:relations}

In this section we show that Pollack's quadratic relations lift to relations in
$\u^\eis$. In particular, we show that $\u^\eis$ is not free. Throughout, the
base point is $\vv = \partial/\partial q$, although most of the arguments are
valid with any base point. As before, $H$ denotes the fiber of $\Hbar$ over
$\vv$. In this setup, $\bw = 2\pi i\b$. We will omit the base point from the
notation.

Recall from Section~\ref{sec:pollack} that $\cD$ denotes the image of the
monodromy homomorphism $\cG \to \Aut \p$. As before, we take $S^{2n}H =
S^{2n}(\b^{2n})$. Let $\edual_{2n}$ be the element of
$$
H^1(\cG,S^{2n}H) \cong \Hom_{\SL(H)}\big(H_1(\u,S^{2n}H)\big)
$$
that takes the class of $\e_{2n+2}$ to $\b^{2n}$. Recall that the real Frobenius
$\overline{\Fr}_\infty$ acts on $H^\dot_\cD\big(\M_{1,1},S^{2n}\H_\R(r)\big)$.

\begin{lemma}
\label{lem:1_dimnl}
For all $m>0$, the homomorphisms $\cG \to \cG^\eis \to \cD$ induce isomorphisms
\begin{multline*}
\xymatrix@C=16pt{
H^1_\cD\big(\cD,S^{2n}\H(2n+1)\big) \ar[r]^(.47)\simeq &
H^1_\cD\big(\cG^\eis,S^{2n}\H(2n+1)\big)
}
\cr
\xymatrix@C=16pt{ & \ar[r]^(.15)\simeq &
H^1_\cD\big(\cG,S^{2n}\H(2n+1)\big) \ar[r]^(.44)\simeq &
H^1_\cD\big(\M_{1,1},S^{2n}\H(2n+1)\big)^{\overline{\Fr}_\infty}.
}
\end{multline*}
Each of these groups is a 1-dimensional $\Q$ vector space. The first is spanned
by $\epsilondual_{2n+2}$, the last by $\edual_{2n+2}/2\pi i$. The isomorphism
identifies $\epsilondual_{2n+2}$ with
$$
\frac{2}{(2n)!}\frac{\edual_{2n+2}}{2\pi i}.
$$
\end{lemma}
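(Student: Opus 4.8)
\textbf{Proof strategy for Lemma~\ref{lem:1_dimnl}.}

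The plan is to build the chain of isomorphisms from right to left, starting from the cohomological computations already in hand and then transporting them through the surjections $\cG \to \cG^\eis \to \cD$. First I would identify the rightmost group. By Proposition~\ref{prop:ses} applied to $\cG = \cG_\vv$, there is a short exact sequence
$$
0 \to \Ext^1_\MHS\big(\Q,H^0(\cG,S^{2n}H(2n+1))\big) \to
H^1_\cD\big(\cG,S^{2n}H(2n+1)\big) \to
\Hom_\MHS\big(\Q,H^1(\cG,S^{2n}H(2n+1))\big) \to 0 .
$$
Since $H^0(\cG,S^{2n}H(2n+1)) = [S^{2n}H(2n+1)]^{\SL(H)} = 0$ for $n>0$, the edge map is an isomorphism, and by Corollary~\ref{cor:hodge_la} (or rather the isomorphism $H^\dot(\cG,V)\cong H^\dot(\M_{1,1},\V)$ of MHS in the curve case) this identifies $H^1_\cD(\cG,S^{2n}H(2n+1))$ with $\Hom_\MHS(\Q,H^1(\M_{1,1},S^{2n}\H(2n+1)))$. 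Combining Theorem~\ref{thm:manin-drinfeld} with the description of the Hodge structure on $H^1(\M_{1,1},S^{2n}\H)$ (the $(2n+1,0)\oplus(0,2n+1)$ cuspidal part plus the Eisenstein line $\Q(-2n-1)$) shows that after the Tate twist by $(2n+1)$ only the Eisenstein summand $\Q(0)$ contributes a nonzero Hom, so this group is one-dimensional. The $\overline{\Fr}_\infty$-invariance is automatic here because the class comes from the Eisenstein part, which is defined over $\Q$ and $\Frbar_\infty$-stable; alternatively it follows from Corollary~\ref{cor:h2D}-style bookkeeping on the $H^1$ level. The generator is the class dual to $\e_{2n+2}$, i.e.\ $\edual_{2n+2}$, up to the normalization $2\pi i$ dictated by the fact that $\e_{2n+2}$ spans a copy of $\Q(1)$ and hence the Betti/de~Rham generators differ by $2\pi i$ (cf.\ the proof of Theorem~\ref{thm:images}).

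Next I would run the same Proposition~\ref{prop:ses} argument for $\cG^\eis$ and for $\cD$, using that $H_1(\u^\eis)$ and $H_1(\n)$ are both sums of Tate twists of symmetric powers of $H$ with the Eisenstein generators surjecting on, and that the surjections $\u \twoheadrightarrow \u^\eis \twoheadrightarrow \n$ induce isomorphisms on the weight-$(-2n-1)$, $\SL(H)$-isotypic-$S^{2n}H$ component of $H_1$ in degree $-2n-2$ — this is exactly the content that the Eisenstein generator $\e_{2n+2}$ is not killed and is not hit by any relation of this weight and isotype (its image in $\Gr^W\Der^0\L(H)$ is $\epsilon_{2n+2}(\b,\a)$, nonzero by the Hain–Matsumoto Proposition). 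Concretely: $H^1(\cG^\eis, S^{2n}H(2n+1)) = \Hom_{\SL(H)}(H_1(\u^\eis),S^{2n}H)(2n+1)$ and likewise for $\cD$, and the maps $H_1(\n)\to H_1(\u^\eis)\to H_1(\u)$ are compatible isomorphisms on the relevant component (for $\cG$ vs $\cG^\eis$ one uses that $S^{2n}H(2n+1)$ is itself an Eisenstein-type representation, so restricting Hom along $H_1(\u)\twoheadrightarrow H_1(\u^\eis)$ loses nothing in this isotype; the kernel $W_{-2}H_1(\u)_{\text{cusp}}$ consists of $M_f\otimes S^{2n}H$-type pieces which contribute nothing to $\Hom_{\SL(H)}(-,S^{2n}H)$ after twisting, since $M_f$ has weight $2n+1$ and no $\Q(0)$). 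Having matched all four $H^1$'s via the $\Ext^1_\MHS(\Q,-)$ edge maps, each is $\Hom_\MHS(\Q,\Q(0))\cong\Q$, and the maps between them are identities; the $\overline{\Fr}_\infty$-invariance statement on the $\M_{1,1}$ side then just records that the Eisenstein line is the $\Frbar_\infty$-fixed part of an otherwise two-dimensional-per-$f$-summand group (Lemma~\ref{lem:frob}), but on $H^1$ there is nothing further to fix since cusp forms already contribute $(2n+1,0)+(0,2n+1)$ with no Hom to $\Q(0)$.

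The last point — the precise constant $\tfrac{2}{(2n)!}\,\tfrac{\edual_{2n+2}}{2\pi i}$ identifying $\epsilondual_{2n+2}$ with the image of $\edual_{2n+2}$ — is a direct translation of Theorem~\ref{thm:images}: there, $\etilde_{2n+2}^B = 2\pi i\,\etilde_{2n+2}$ maps to $\tfrac{2}{(2n-2)!}\epsilon_{2n}(\b,\a)$ under the graded monodromy representation. Re-indexing ($\epsilon_{2n}$ in that section is $\epsilon_{2n}(\b,\a)$ sitting in weight $-2n-2$, so corresponds to the generator $\e_{2n+2}$ here, and the factorial shift $(2n-2)!\rightsquigarrow (2n)!$ comes from the weight convention in the present section where the generator is $\e_{2n+2}$ of $S^{2n}H$-type) and dualizing — $\edual_{2n+2}$ is defined to send the class of $\e_{2n+2}$ to $\b^{2n}$, and $\epsilondual_{2n+2}$ sends the class of $\epsilon_{2n+2}$ to $\b^{2n}$ — the monodromy map sends $[\e_{2n+2}]$ to $\tfrac{2}{(2n)!}[\epsilon_{2n+2}]$ (Betti normalization, hence the extra $1/2\pi i$), so the pullback of $\epsilondual_{2n+2}$ along $\cD$'s inclusion into $\cG^\eis$ is $\tfrac{2}{(2n)!}\tfrac{1}{2\pi i}\edual_{2n+2}$, as claimed.

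\textbf{Main obstacle.} The delicate step is controlling the map $H^1(\cG, S^{2n}H(2n+1)) \to H^1(\cG^\eis, S^{2n}H(2n+1))$, i.e.\ checking that passing to the Eisenstein quotient neither kills the Eisenstein class nor introduces a contribution from a relation of the same weight and $\SL(H)$-type. The cuspidal-generator-becoming-relation phenomenon is exactly what the rest of the paper establishes, so one must be careful that those relations sit in the $M_f\otimes(\cdots)$ isotypic pieces and not in the $\Q$-span of $\epsilon_{2n+2}$-type classes; this should follow from weight reasons ($M_f$ pure of weight $2n+1$ never equals a Tate twist of a symmetric power) together with the Hain–Matsumoto description of the $S^{2n}H(2n+1)$-isotypic part of $\Gr^W\Der^0\L(H)$ being one-dimensional, but it is the one place where the argument genuinely uses the structure of $\u^\eis$ rather than formal homological algebra.
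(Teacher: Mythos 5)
Your argument is correct and follows essentially the same route as the paper: Proposition~\ref{prop:ses} (with $H^0=0$) reduces everything to Hodge classes in ordinary cohomology, the Manin--Drinfeld/Eichler--Shimura computation gives one-dimensionality with the Eisenstein generator, and Theorem~\ref{thm:images} supplies both the isomorphism statement and the constant $\tfrac{2}{(2n)!}\tfrac{\edual_{2n+2}}{2\pi i}$. The differences are cosmetic — you pass to Deligne cohomology first while the paper first identifies the ordinary $H^1$'s and their generators and then invokes Proposition~\ref{prop:ses} — apart from a harmless wording slip in your re-indexing parenthetical (in Theorem~\ref{thm:images} the generator $\e_{2n}$ maps to a multiple of $\epsilon_{2n}$ with the same index, so $\e_{2n+2}$ maps to a multiple of $\epsilon_{2n+2}$; your final substitution $2n\mapsto 2n+2$ in the factorial is nevertheless exactly right).
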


\begin{proof}
For all $n>0$, each of the groups
$$
H^1(\cD,S^{2n}H),\ H^1(\cG^\eis,S^{2n}H),\ H^1(\cG^\eis,S^{2n}H)
$$
is isomorphic to $\Q(-2n-1)$. The left-hand group is generated by
$\epsilondual_{2n+2}$ and the right two groups by $\edual_{2n+2}$.
Theorem~\ref{thm:images} implies that the projections $\cG \to \cG^\eis \to \cD$
take $\epsilondual_{2n+2}$ to $2\edual_{2n+2}/2\pi i(2n)!$, so that the
homomorphisms induced by the projections are isomorphisms.

Proposition~\ref{prop:ses} implies that the projections $\cG \to \cG^\eis\to
\cD$ induce isomorphisms
$$
\xymatrix{
H^1_\cD\big(\cG^\eis,S^{2n}\H(2n+1)\big) \ar[r]^(0.47)\simeq &
H^1_\cD\big(\cG,S^{2n}\H(2n+1)\big) \cong \Q
}
$$
for all $n>0$. The corresponding class in Deligne cohomology is easily seen to
be $\overline{\Fr}_\infty$ invariant.
\end{proof}

\begin{lemma}
There is a natural inclusion
$$
iH^1_\cusp(\M_{1,1},S^{2m}\H_\R)^{\overline{\Fr}_\infty} \hookrightarrow
H^2_\cD\big(\cD,S^{2m}H_\R(2m+2)\big)
$$
\end{lemma}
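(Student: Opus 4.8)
The plan is to identify the claimed inclusion with a piece of the natural map on Deligne--Beilinson cohomology induced by the monodromy homomorphism $\cG \to \cD$, and then to trace through the $\overline{\Fr}_\infty$-equivariant structure. First I would use Proposition~\ref{prop:ses} for the affine group $\cD$: since $H^1(\cD,S^{2m}H)$ is a Tate Hodge structure (it is $\Q(-2m-1)$, see the proof of Lemma~\ref{lem:1_dimnl}), twisting to $S^{2m}H(2m+2)$ and applying Proposition~\ref{prop:ses} in degree $j=2$ gives a short exact sequence
$$
0 \to \Ext^1_\MHS\big(\Q,H^1(\cD,S^{2m}H(2m+2))\big) \to
H^2_\cD\big(\cD,S^{2m}H(2m+2)\big) \to
\Hom_\MHS\big(\Q,H^2(\cD,S^{2m}H(2m+2))\big) \to 0.
$$
Working over $\R$ and taking $\overline{\Fr}_\infty$-invariants (the functor $\overline{\Fr}_\infty$-invariants is exact on $\R$-vector spaces, being a direct summand projector), it suffices to produce a copy of $iH^1_\cusp(\M_{1,1},S^{2m}\H_\R)^{\overline{\Fr}_\infty}$ inside $H^2_\cD(\cD,S^{2m}H_\R(2m+2))$.

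Next I would compare $\cD$ with $\M_{1,1}$ via the factorization $\cG \to \cG^\eis \to \cD$ of the monodromy representation (Section~\ref{sec:monodromy}) together with $\cG = \cG_{\vv}$ for the modular curve. Theorem~\ref{thm:del_ext} gives $H^\dot_\cD(\cG,S^{2m}\H(2m+2)) \cong H^\dot_\cD(\M_{1,1},S^{2m}\H(2m+2))$ compatibly with products, and by Corollary~\ref{cor:h2D}, the right-hand side over $\R$ decomposes, $\overline{\Fr}_\infty$-equivariantly, as $\Ext^1_\MHS(\R,\R(1)) \oplus \bigoplus_{f\in\B_{2m+2}} \Ext^1_\MHS(\R,V_f(2m+2))$. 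By Proposition~\ref{prop:ext_hodge} the cuspidal summand $\bigoplus_f \Ext^1_\MHS(\R,V_f(2m+2))^{\overline{\Fr}_\infty}$ is, since $2m+2$ is even, canonically $\bigoplus_f iV_{f,\R}^- \cong iH^1_\cusp(\M_{1,1},S^{2m}\H_\R)^{\overline{\Fr}_\infty}$ — here I use that $H^1_\cusp = \bigoplus_f V_f$ as real HS and that $V_f^- \cong (V_f^+)^\ast$ under the Petersson pairing, while the $V_f^+$ span the $+$-part; so the $-$-part is the one appearing in $\Ext$'s of even Tate twists. Thus $H^2_\cD(\cG,S^{2m}\H_\R(2m+2))^{\overline{\Fr}_\infty}$ already contains the desired space, and the remaining point is that the map $H^2_\cD(\cD,-) \to H^2_\cD(\cG,-)$ induced by $\cG \to \cD$ is injective on this cuspidal part.

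To get that injectivity I would argue as follows. The homomorphism $\cG \to \cD$ factors through $\cG^\eis$, and by Proposition~\ref{prop:ses} applied to all three groups, the maps on $H^1(-,S^{2m}H(2m+2))$ are isomorphisms (all three are $\Q(1)$, generated by $\epsilondual_{2m+2}$ resp.\ $\edual_{2m+2}/2\pi i$, as in Lemma~\ref{lem:1_dimnl}), so the $\Ext^1$-subobjects in the short exact sequences match up. It then suffices to see that $\Hom_\MHS(\Q,H^2(\cD,S^{2m}H(2m+2))) \to \Hom_\MHS(\Q,H^2(\cG,S^{2m}H(2m+2)))$ has the property that the cuspidal classes in the source map injectively — but in fact I want the reverse: I want classes of $H^2_\cD(\cD,-)$ coming from cup products of $H^1$'s to survive. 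Concretely, by Proposition~\ref{prop:dual_pollack} the cup product $H^1(\cD,S^{2j}H(2j{+}1))\otimes H^1(\cD,S^{2k}H(2k{+}1)) \to H^2(\cD,S^{2n}H(2n{+}2))$ surjects, after the Pollack projection, onto $\bigoplus_{f\in\B_{2n+2}}\R(-2n-2)$ with the modular-symbol coefficients; lifting these cup products to Deligne cohomology via Proposition~\ref{prop:ses} and Theorem~\ref{thm:del_ext}, and comparing with the product structure on $H^\dot_\cD(\M_{1,1},-)$, realizes $iH^1_\cusp(\M_{1,1},S^{2m}\H_\R)^{\overline{\Fr}_\infty}$ as a subspace of $H^2_\cD(\cD,S^{2m}H_\R(2m+2))$ via these cup product classes. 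The main obstacle I expect is precisely this last identification: showing that the image of the cup-product map in $H^2_\cD(\cD,-)$ is exactly as large as predicted — i.e.\ that no cuspidal class is lost when passing from $H^2_\cD(\cG,-) \cong H^2_\cD(\M_{1,1},-)$ down to $H^2_\cD(\cD,-)$. This is where one genuinely uses that the monodromy representation $\cG \to \cD$ is an isomorphism on the relevant $H^1$'s (Theorem~\ref{thm:images}, Lemma~\ref{lem:1_dimnl}) together with naturality of cup products; everything else is bookkeeping with the exact sequences of Propositions~\ref{prop:ses} and the $\overline{\Fr}_\infty$-action.
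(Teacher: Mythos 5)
Your first half is, in substance, the paper's own proof --- but note that the paper's proof is carried out for $\cG$, not for the monodromy image $\cD$: since $\u$ is free, $H^2(\cG,S^{2m}H)=0$, so Proposition~\ref{prop:ses} identifies $H^2_\cD\big(\cG,S^{2m}H_\R(2m+2)\big)$ with $\Ext^1_\MHS\big(\R,H^1(\M_{1,1},S^{2m}\H_\R(2m+2))\big)$ (using Theorem~\ref{thm:admissible} to compute $H^1(\cG,\cdot)$), and the $\overline{\Fr}_\infty$-invariant cuspidal summand of that Ext group is $\bigoplus_{f\in\B_{2m+2}} iV_{f,\R}^- = iH^1_\cusp(\M_{1,1},S^{2m}\H_\R)^{\overline{\Fr}_\infty}$, exactly the identification you obtain via Theorem~\ref{thm:del_ext}, Corollary~\ref{cor:h2D} and Proposition~\ref{prop:ext_hodge}. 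So for the target $H^2_\cD(\cG,\cdot)\cong H^2_\cD(\M_{1,1},\cdot)$ your argument is complete (indeed you do not even need freeness of $\u$, only the injectivity of the $\Ext^1$-term in the exact sequence of Proposition~\ref{prop:ses} together with Manin--Drinfeld), and it is the same computation the paper makes, merely routed through $\M_{1,1}$ instead of through the vanishing of $H^2(\cG,\cdot)$.

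The second half, where you try to land in $H^2_\cD(\cD,\cdot)$ by taking the ``$\cD$'' in the display at face value, contains the genuine gap, and it is not what the paper does. For the group $\cD$ one has $H^1\big(\cD,S^{2m}H(2m+2)\big)\cong\R(1)$, so the $\Ext^1$-term in Proposition~\ref{prop:ses} is one-dimensional and contains no cuspidal classes; any copy of $iH^1_\cusp(\M_{1,1},S^{2m}\H_\R)^{\overline{\Fr}_\infty}$ inside $H^2_\cD(\cD,\cdot)$ would therefore have to consist of lifts of Hodge classes in $\Hom_\MHS\big(\R,H^2(\cD,S^{2m}H_\R(2m+2))\big)$, and your proposed construction of such lifts as Deligne cup products of the classes $\epsilondual_{2j+2}$ cannot be completed with the tools you cite: Proposition~\ref{prop:dual_pollack} computes only the topological cup product, i.e.\ the Hodge-class component of the Deligne--Beilinson product, and says nothing about its $\Ext^1$ (period) component. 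The assertion that these cup products actually produce the cuspidal extension classes is precisely the Brown--Terasoma computation (Theorem~\ref{thm:cup}), the deep input reserved for Theorem~\ref{thm:quad}; without it the step you yourself flag as the ``main obstacle'' is not bookkeeping but the heart of the matter, and even granting it you would only conclude that each $iV_{f,\R}^-$ is hit non-trivially, which gives at best a non-canonical embedding rather than a natural inclusion. Finally, the phrase ``no cuspidal class is lost when passing from $H^2_\cD(\cG,-)$ down to $H^2_\cD(\cD,-)$'' presupposes a map that does not exist: functoriality of Deligne--Beilinson cohomology for the homomorphism $\cG\to\cD$ gives $H^2_\cD(\cD,-)\to H^2_\cD(\cG,-)$, not the reverse.
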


\begin{proof}
Since $\u$ is free, $H^2(\cG,S^mH)\cong H^2(\u,S^m H)^{\SL(H)}$
vanishes for all $m>0$. The result follows from Proposition~\ref{prop:ses},
the computation (Thm.~\ref{thm:admissible}) of $H^1(\u)$ and the
isomorphism
\begin{multline*}
\Ext^1_\MHS\big(\R,
H^1_\cusp(\M_{1,1},S^{2m}\H(2m+2))\big)^{\overline{\Fr}_\infty}\cr
\cong iH^1_\cusp(\M_{1,1},S^{2m}\H_\R)^{\overline{\Fr}_\infty}
= \bigoplus_{f\in \B_{2m+2}} iV_f^-.
\end{multline*}
which is well defined up to an even power of $2\pi i$ that depends upon the
choice of the first isomorphism.
\end{proof}

By Corollary~\ref{cor:h2D}, there is an $\overline{\Fr}_\infty$ invariant
projection
$$
H^2_\cD\big(\M_{1,1},S^{2n}\H_\R(2n+2)\big) \to
\Ext^1_\MHS\big(\R,V_f(2n+2)\big).
$$
The following computation is the key to proving that Pollack's quadratic
relations are motivic.

\begin{theorem}[{Brown \cite[Thm.~11.1]{brown},
Terasoma \cite[Thm.~7.3]{terasoma}}]
\label{thm:cup}
If $j,k>0$ and $n=j+k$ and if there is a cup form of weight $2n+2$, then the
image of the cup product
\begin{multline*}
H^1_\cD\big(\M_{1,1},S^{2j}\H_\R(2j+1)\big) \otimes
H^1_\cD\big(\M_{1,1},S^{2k}\H_\R(2k+1)\big)
\cr
\to H^2_\cD\big(\M_{1,1},S^{2n}\H_\R(2n+2)\big)
\end{multline*}
is non-zero. More precisely, the composition of the cup product with the
projection
$$
H^2_\cD\big(\M_{1,1},S^{2n}\H_\R(2n+2)\big)
\to \Ext^1_\MHS\big(\R,V_f(2n+2)\big)^{\overline{\Fr}_\infty} \cong V_{f,\R}^-
$$
is non-trivial for all $f\in \B_{2n+2}$. \qed
\end{theorem}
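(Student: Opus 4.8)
The statement to prove is the non-vanishing of the cup product
\[
H^1_\cD\big(\M_{1,1},S^{2j}\H_\R(2j+1)\big) \otimes H^1_\cD\big(\M_{1,1},S^{2k}\H_\R(2k+1)\big) \to H^2_\cD\big(\M_{1,1},S^{2n}\H_\R(2n+2)\big),
\]
and more precisely the non-triviality of its composite with the projection onto each $\Ext^1_\MHS\big(\R,V_f(2n+2)\big)^{\overline{\Fr}_\infty} \cong V_{f,\R}^-$ for $f\in\B_{2n+2}$. Since this is attributed to Brown and Terasoma, the proof I would give is a summary of their computation. The first step is to unwind both sides in concrete de~Rham terms. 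By Lemma~\ref{lem:1_dimnl}, the class $\edual_{2j+2}/2\pi i$ generating $H^1_\cD\big(\M_{1,1},S^{2j}\H_\R(2j+1)\big)$ is represented, via the exact sequence of Proposition~\ref{prop:ses} and Theorem~\ref{thm:del_ext}, by the Eisenstein 1-form $\psi_{2j+2}(\b^{2j})$ in Zucker's MHC (up to the normalizing scalar from Theorem~\ref{thm:images}); similarly for the $2k+2$ class. So the cup product on Deligne--Beilinson cohomology is computed by the standard cone-complex cup product formula: if $\psi_{2j+2}$ and $\psi_{2k+2}$ are closed forms in $F^0W_0\Dec_W K^\dot$, and $\eta_j,\eta_k$ are their $\Q$-primitives in $W_0\Dec_W K^\dot_\Q$ (which exist because $\psi_{2j+2}$ represents a class in $H^1$ with an integral period), then the cup product class in $H^2_\cD$ is represented by the pair involving $\psi_{2j+2}\wedge\eta_k$ (or $\eta_j\wedge\psi_{2k+2}$), contracted via the $\SL_2$-equivariant multiplication $S^{2j}H\otimes S^{2k}H\to S^{2n}H$ picking out the highest-weight (symmetric) component.

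**Key steps.** After this reduction, the plan is: (i) identify the primitive $\eta_{2k+2}$ of $\psi_{2k+2}(\bw^{2k})$ explicitly on $\h$ — this is an iterated-integral / Eichler-integral of the Eisenstein series $G_{2k+2}$, whose failure to be $\SL_2(\Z)$-invariant is governed by a period polynomial of $G_{2k+2}$; (ii) pair the resulting $H^2_\cD$-class against the projection to $\Ext^1_\MHS(\R,V_f(2n+2))$, which by Proposition~\ref{prop:ext_hodge} and Section~\ref{sec:frob} is detected by integrating against the cusp form $f$; (iii) recognize the resulting number as a Rankin--Selberg type integral $\int_{\M_{1,1}} f(\tau)\,G_{2j+2}(\tau)\,(\text{Eichler integral of }G_{2k+2})\,(\text{something})$, or equivalently as a special value expressible through the Petersson inner product $\langle f, G_{2j+2}\,E_{2k+2}\rangle$ up to periods; (iv) invoke the classical fact (Rankin's method, or Borisov--Gunnells / Beilinson's Eisenstein-symbol computation) that this pairing is the modular-symbol coefficient $\br_f^+(\a,\b)$ evaluated appropriately, hence is exactly the coefficient $c_{j,k}(f)$ in Proposition~\ref{prop:dual_pollack}. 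Non-vanishing then follows because, summed over $j+k=n$, these coefficients are precisely $\br_f^+(\a,\b)$, which is a \emph{non-zero} element of $S^{2n}H_\R$ for every normalized Hecke eigenform $f$ (a standard consequence of the fact that the $(2n+1,0)$-part of the cuspidal cohomology meets $H^1_\R$ trivially, as noted in Section~\ref{sec:hodge}).

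**The main obstacle.** The hard part is step (iii)--(iv): showing that the cup-product pairing computed in Deligne--Beilinson cohomology literally equals the modular-symbol coefficient $c_{j,k}(f)$ of $\br_f^+$, rather than just \emph{some} non-zero multiple of a Petersson-type integral. Both Brown's \cite{brown} approach (via the de~Rham realization of the Eisenstein symbol and explicit regularized iterated integrals of Eisenstein series, tracking the $2\pi i$'s and Bernoulli-number normalizations carefully) and Terasoma's \cite{terasoma} approach (via a more direct period computation) require a delicate bookkeeping of the normalizing constants — the $(2n-2)!$ factors from Theorem~\ref{thm:images}, the $(2\pi i)^{2n+1}$ in the modular symbol of Section~\ref{sec:mod_symbs}, and the $\Ext$-normalizations of Proposition~\ref{prop:ext_hodge} — so that the final answer is precisely the expression in equation~(\ref{eqn:poll_cup}). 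Rather than reproduce this, I would state the precise normalization, cite \cite{brown,terasoma} for the computation, and note that for the \emph{qualitative} conclusion (non-vanishing for all $f$) it suffices that the pairing is a non-zero scalar multiple — independent of $j,k$ — of the $\a^{2j}\b^{2k}$-coefficient of $\br_f^+$, together with the non-vanishing of $\br_f^+$ itself. The surjectivity of $H^2(\cD,S^{2n}H)\to\bigoplus_{f\in\B_{2n+2}}\R(-2n-2)$ from Proposition~\ref{prop:dual_pollack} then closes the argument, since it guarantees every $V_{f,\R}^-$ component is hit.
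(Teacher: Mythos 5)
The paper does not actually prove Theorem~\ref{thm:cup}: it is stated with a \verb|\qed| and imported from Brown \cite{brown} and Terasoma \cite{terasoma}, so your decision to cite those references for the period computation is exactly what the paper does, and your sketch (Eisenstein classes in $H^1_\cD$ represented by $\psi_{2j+2}$, the cone-complex cup product, Eichler integrals of Eisenstein series, a Rankin--Selberg/Petersson-type pairing detected via Proposition~\ref{prop:ext_hodge}, with the normalization bookkeeping left to \cite{brown,terasoma}) is a fair outline of what those computations involve. In that sense there is nothing in the paper to compare your argument against beyond the citation itself.

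One caveat about your claimed shortcut: knowing that the pairing equals a universal non-zero constant times the $\a^{2j}\b^{2k}$-coefficient $c_{j,k}(f)$ of $\br_f^+(\a,\b)$, together with the non-vanishing of $\br_f^+$ as a polynomial, does \emph{not} give the ``more precisely'' clause. For the fixed pair $(j,k)$ in the statement, non-triviality of the composite with the projection to $V_{f,\R}^-$ requires the individual coefficient $c_{j,k}(f)$ to be non-zero for every $f\in\B_{2n+2}$; the non-vanishing of $\br_f^+$ only guarantees that \emph{some} decomposition $n=j+k$ works for each $f$. The individual coefficients are (up to periods) critical $L$-values of $f$, whose non-vanishing away from the central point is classical but is precisely the quantitative content one must take from Brown/Terasoma (and the central point, which occurs when $j=k$, is genuinely delicate). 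So the qualitative reduction as you state it is too weak to replace the cited computation, even for the non-vanishing assertion about every $f$.
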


As we shall show below, a direct consequence is that Pollack's quadratic
relations hold in $\u^\eis$. Brown's period computations \cite{brown} are more
detailed and imply that all of Pollack's relations lift from a quotient of
$\Der^0 \p$ to relations in $\u^\eis$ and are therefore motivic. Full details
will appear in \cite{hain-matsumoto:mem}.

\begin{theorem}
\label{thm:quad}
Pollack's quadratic relations (\ref{eqn:ext_map}) hold in $\Gr^W_\dot\u^\eis$.
In particular, the pronilpotent radical $\u^\eis$ of $\g^\eis$ is not free.
\end{theorem}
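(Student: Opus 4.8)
The plan is to translate the question about relations in $\Gr^W_\dot\u^\eis$ into a statement about Deligne--Beilinson cohomology of $\cD$ (equivalently of $\cG^\eis$), and then invoke the Brown--Terasoma cup product computation, Theorem~\ref{thm:cup}, to see that the relevant cup product is non-zero. First I would recall the Sullivan exact sequence of Proposition~\ref{prop:cup}: for the pronilpotent Lie algebra $\u^\eis$ (equivalently for its graded version, since $\Gr^W_\dot$ is exact), the quadratic relations are dual to the cokernel of $(\Gr^2_\LCS)^\ast \to \Lambda^2 H^1(\u^\eis)$, i.e.\ to the image of the cup product $\Lambda^2 H^1(\u^\eis) \to H^2(\u^\eis)$. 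So a quadratic relation among the $\e_0^j\cdot\e_{2n}$ holds in $\Gr^W_\dot\u^\eis$ precisely when the corresponding element of $\Lambda^2 H^1(\u^\eis)$ dies under the cup product into $H^2(\u^\eis)$. Taking $\SL(H)$-invariants of everything tensored with appropriate symmetric powers (as in Section~\ref{sec:coho}), the class $[\e_{2j+2},\e_{2k+2}]$ in $\Gr^W_\dot\u^\eis$ (with the combinatorial coefficients $c_j(2j)!(2k)!$) vanishes if and only if the cup product
$$
H^1\big(\cG^\eis,S^{2j}H(2j+1)\big)\otimes H^1\big(\cG^\eis,S^{2k}H(2k+1)\big)
\to H^2\big(\cG^\eis,S^{2n}H(2n+2)\big)
$$
sends $\edual_{2j+2}\otimes\edual_{2k+2}$ (suitably normalized) to zero.

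Next I would pass from ordinary cohomology of $\cG^\eis$ to Deligne--Beilinson cohomology using Proposition~\ref{prop:ses}. Since $\u$ is free, $H^2(\cG,S^{2n}H)=0$, hence the analogous $H^2(\cG^\eis,S^{2n}H)$ lies entirely in the $\Ext^1_\MHS$-part; the exact sequence then identifies $H^2_\cD$ with the extension group, and similarly $H^1_\cD\big(\cG^\eis,S^{2n}\H(2n+1)\big)$ is one-dimensional by Lemma~\ref{lem:1_dimnl}. Now the key point: in Deligne cohomology the cup product $H^1_\cD\otimes H^1_\cD\to H^2_\cD$ of $\cG^\eis$ is compatible, via the homomorphism $\cG^\eis \to \cD$ and via Theorem~\ref{thm:del_ext} (which identifies $H^\dot_\cD(\cG_x,V_x)$ with $H^\dot_\cD(X_\G,\V)$ compatibly with products), with the cup product on $H^\dot_\cD(\M_{1,1},S^\bullet\H(\bullet))$. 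Theorem~\ref{thm:cup} says precisely that the latter cup product, followed by the projection to $\Ext^1_\MHS(\R,V_f(2n+2))^{\overline\Fr_\infty}\cong V_{f,\R}^-$, is non-zero for every eigen cusp form $f$ of weight $2n+2$. Tracking this back: if $\br_f^+(\a,\b)=\sum c_{j,k}(f)\a^{2j}\b^{2k}$, then the image of the cup product in $H^2_\cD\big(\cG^\eis,S^{2n}H(2n+2)\big)$ is, up to the nonzero normalization constants $c_{j,k}(f)(2j)!(2k)!$ coming from Theorem~\ref{thm:images} and Lemma~\ref{lem:1_dimnl}, the class $z_f$. Hence the specific linear combination $\sum_{j+k=n}c_j(2j)!(2k)![\e_{2j+2},\e_{2k+2}]$ maps to zero in $H^2_\cD$ (since the $\br_f^\pm$ satisfying the cocycle condition are exactly those coming from cusp forms, and Pollack's combination is the one annihilating every $z_f$), so by the dualized Sullivan sequence it is a relation in $\Gr^W_\dot\u^\eis$. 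Since there exist cusp forms (e.g.\ of weight $12$), at least one such relation is nontrivial, so $\u^\eis$ is not free; this also follows abstractly from Proposition after Proposition~\ref{prop:stallings} (a pronilpotent Lie algebra is free iff $H^2=0$) together with the nonvanishing of $H^2(\cG^\eis,S^{22}H)$.

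The main obstacle is the bookkeeping of normalization constants and the precise compatibility of the several cup products: one must verify that the cup product on $H^\dot_\cD$ of the affine group $\cG^\eis$ really does correspond, under the isomorphisms of Theorem~\ref{thm:del_ext} and the factorization $\cG \twoheadrightarrow \cG^\eis \twoheadrightarrow \cD$, to the geometric cup product on $H^\dot_\cD(\M_{1,1},-)$ computed by Brown and Terasoma, and that the element $\edual_{2j+2}\otimes\edual_{2k+2}\in H^1\otimes H^1$ is carried to the class appearing in Theorem~\ref{thm:cup}. This requires assembling Lemma~\ref{lem:1_dimnl}, Proposition~\ref{prop:dual_pollack}, and the identification of the cup product $H^1(\cD)\otimes H^1(\cD)\to H^2(\cD)$ with the bracket-dual in Proposition~\ref{prop:cup}, taking care that the ``$+$'' modular symbol $\br_f^+$ (which records the coefficients of the relation) is paired against the ``$-$'' part $V_f^-$ appearing in $H^2_\cD$ via the Petersson pairing, exactly as indicated in the remark after Proposition~\ref{prop:dual_pollack}. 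Once this dictionary is set up, the nonvanishing in Theorem~\ref{thm:cup} immediately forces the exactness of Pollack's combination of brackets, proving Theorem~\ref{thm:quad}.
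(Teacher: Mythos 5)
Your overall route---pass to Deligne--Beilinson cohomology of $\cG^\eis$ and $\cD$, use Lemma~\ref{lem:1_dimnl}, the product-compatibility in Theorem~\ref{thm:del_ext}, the Brown--Terasoma computation (Theorem~\ref{thm:cup}) and Proposition~\ref{prop:dual_pollack}, then conclude by the duality of Proposition~\ref{prop:cup}---is the same as the paper's, but you apply the key duality in the wrong direction, and this is the pivot of the whole argument. By Proposition~\ref{prop:cup}, $\ker\big(\cup\colon\Lambda^2H^1(\u^\eis)\to H^2(\u^\eis)\big)$ is exactly $(\Gr^2_\LCS\u^\eis)^\ast$: vanishing of cup products is the signature of the \emph{absence} of relations (for a free Lie algebra all cup products vanish), and a combination $\sum_j c_j(2j)!(2k)!\,[\e_{2j+2},\e_{2k+2}]$ is a relation precisely when the functional dual to it on $\Lambda^2 H_1(\u^\eis)$ factors through the cup product, i.e.\ equals $\lambda\circ\cup$ for some functional $\lambda$ on $H^2(\u^\eis)$. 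Your criterion ``the bracket class vanishes iff the cup product sends $\edual_{2j+2}\otimes\edual_{2k+2}$ to zero'' is the opposite of what is true (if that cup product vanished, the bracket could not appear in any relation), so with your dictionary the non-vanishing in Theorem~\ref{thm:cup} would argue \emph{against} Pollack's relations. Correspondingly your final step fails: Pollack's combination does not ``annihilate every $z_f$''---its coefficient vector $(c_{j,k}(f))_j$ is the $f$-th row of the matrix of $p\circ\cup$ in (\ref{eqn:poll_cup}), so it pairs non-trivially with $z_f$---and the claim that the combination ``maps to zero in $H^2_\cD$'' is neither well-typed (it lives on the homology side) nor what is needed. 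What must be proved, and what the paper proves, is that the ordinary cup product composed with a projection $p\colon\Hom_\MHS\big(\Q,H^2(\cG^\eis,S^{2n}H(2n+2))\big)\to\bigoplus_f V_{f,\R}$ is given by (\ref{eqn:poll_cup}); each $f$ then supplies the functional $\lambda$ above, exhibiting (\ref{eqn:ext_map}) as a relation.

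A second genuine gap is your assertion that, since $\u$ is free, $H^2_\cD(\cG^\eis,S^{2n}\H(2n+2))$ ``lies entirely in the $\Ext^1_\MHS$-part''. Freeness gives $H^2(\cG,S^{2n}H)=0$, not $H^2(\cG^\eis,S^{2n}H)=0$; indeed, if $\Hom_\MHS\big(\Q,H^2(\cG^\eis,S^{2n}H(2n+2))\big)$ vanished, then all the Eisenstein cup classes (which are Hodge classes) would be zero in $H^2(\u^\eis)$ and, by the correct duality, \emph{no} quadratic relation would hold---so your simplification would contradict the very statement being proved. The missing ingredient, which occupies most of the paper's proof, is the construction of a well-defined map $r$ from the Hodge classes in $H^2(\cG^\eis,S^{2n}H(2n+2))$ to $\Ext^1_\MHS\big(\Q,H^1_\cusp(\M_{1,1},S^{2n}\H(2n+2))\big)$, using the Manin--Drinfeld splitting (Theorem~\ref{thm:manin-drinfeld}) together with the fact that the cuspidal Hodge structure does not occur in $H^1(\u^\eis)$; this is what transports the Deligne-cohomological cup product of Brown--Terasoma (which lives in the $\Ext^1$-part of $H^2_\cD(\cG,\cdot)$) into a statement about the ordinary cup product on $H^2(\u^\eis)$, where Proposition~\ref{prop:cup} applies. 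Without this transfer, and with the duality reversed, the proposal does not establish the theorem, even though the normalization issues you flag (Lemma~\ref{lem:1_dimnl}, Theorem~\ref{thm:images}, the Petersson pairing of $V_f^+$ against $V_f^-$) are indeed the remaining bookkeeping once those two points are fixed.
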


We use the notation $\G V := \Hom_\MHS(\Q,V)$ to denote the set of Hodge classes
of type $(0,0)$ of a MHS $V$.

\begin{proof}
Suppose that $n>0$. Since the Hodge structure
$H^1_\cusp\big(\M_{1,1},S^{2n}\H(2n+2)\big)$ does not occur in $H^1(\u^\eis)$,
Proposition~\ref{prop:ses} and the Manin-Drinfeld Theorem imply that the
homomorphism
\begin{align*}
H^2_\cD\big(\cG^\eis,S^{2n}\H(2n+2)\big) &\to 
H^2_\cD\big(\cG,S^{2n}\H(2n+2)\big) \cr
&\cong \Ext^1_\MHS\big(\Q,H^1(\M_{1,1},S^{2n}\H(2n+2))\big)
\cr
&\to \Ext^1_\MHS\big(\Q,H^1_\cusp(\M_{1,1},S^{2n}\H(2n+2))\big)
\end{align*}
induces a well-defined map
$$
r : \G H^2(\cG^\eis,S^{2n}\H(2n+2)) \to
\Ext^1_\MHS\big(\Q,H^1_\cusp(\M_{1,1},S^{2n}\H(2n+2))\big).
$$
Compose this with the projection to
$$
\Ext^1_\MHS\big(\R,H^1_\cusp(\M_{1,1},S^{2n}\H(2n+2))\big)
\cong \bigoplus_{f\in \B_{2n+2}} V_{f,\R}
$$
to obtain a projection
$$
p : \G H^2(\cG^\eis,S^{2n}\H(2n+2)) \to  \bigoplus_{f\in \B_{2n+2}} V_{f,\R}.
$$
Consider the diagram
{\tiny
$$
\xymatrix@C=16pt{
H^2_\cD\big(\cD,S^{2n}H(2n+2)\big) \ar@{->>}[d] \ar[r] &
H^2_\cD\big(\cG^\eis,S^{2n}H(2n+2)\big) \ar@{->>}[d] \ar[r] &
H^2_\cD\big(\cG,S^{2n}\H(2n+2)\big) \ar@{->>}[d]
\cr
\G H^2(\cD,S^{2n}H(2n+2)\big) \ar[r] \ar[dr]_{p_\cD} &
\G H^2(\cG^\eis,S^{2n}H(2n+2)\big) \ar[r]^(.4)r \ar[d]^{p} &
\Ext^1_\MHS\big(\Q,H^1_\cusp(\M_{1,1},S^{2n}\H(2n+2))\big) \ar[dl]^{p_\cG}
\cr
& \bigoplus_{f \in \B_{2n+2}} V_{f,\R}
}
$$
}where $p_\cD$ is the projection dual to the Pollack relations
(Prop.~\ref{prop:dual_pollack}), $p$ is the projection constructed above and
$p_\cG$ is the standard projection. Naturality implies that the top left
square commutes; the right hand square commutes by the construction of $r$;
the bottom right triangle commutes by the definition of $p$. The cup product
computation (Thm.~\ref{thm:cup}) implies that the sum of the two triangles
commute. It follows that the diagram commutes.

Suppose that $j,k>0$ satisfy $n=j+k$. Lemma~\ref{lem:1_dimnl} and
Proposition~\ref{prop:dual_pollack} now imply that the composite of
$$
H^1\big(\cG^\eis,S^{2j}H(2j+1)\big) \otimes
H^1\big(\cG^\eis,S^{2k},S^{2k}H(2j+1)\big)
\to \G H^2\big(\cG^\eis,S^{2n}H(2n+2)\big)
$$
with the projection $p$ is given by the formula (\ref{eqn:poll_cup}).
The result now follows from the duality between cup product and quadratic
relations.
\end{proof}

Much of the discussion in this section can be generalized to modular curves
$X_\G$ where $\G$ is a congruence subgroup of $\SL_2(\Z)$. In particular, the
prounipotent radical $\u_\G^\eis$ is not free for all congruence subgroups.

\begin{remark}
\label{rem:manin}
This result implies that, when $\G=\SL_2(\Z)$, Manin's quotient $\u_B$ of
$\u_\G$ (cf.\ \ref{sec:manin}) is not a quotient of $\u_\G$ in the category of
Lie algebras with a MHS. If it were, it would $\u^\eis_\G$. But since $\u_B$ is
free, and since $\u_\G^\eis$ is not, $\U_B \to \U_\G^\eis$ cannot be an
isomorphism.
\end{remark}

\end{document}